\tikzset{
  ncone/.pic={
	\draw (0,0)--(0,0.2);
  }
}
\tikzset{
  nctwo/.pic={
    \draw (0,0)--(0,0.2);
	\draw (0.1,0)--(0.1,0.2);
  }
}
\tikzset{
  nctwoW/.pic={
    \draw (0,0.2)--(0,0)--(0.1,0)--(0.1,0.2);
  }
}
\tikzset{
  nctwoWW/.pic={
    \draw (0,0.2)--(0,0)--(0.2,0)--(0.2,0.2);
  }
}
\tikzset{
  ncthreeWW/.pic={
    \draw (0,0.2)--(0,0)--(0.3,0)--(0.3,0.2);
	\draw (0.2,0)--(0.2,0.2);
  }
}
\tikzset{
  ncthree/.pic={
    \draw (0,0)--(0,0.2);
	\draw (0.1,0)--(0.1,0.2);
	\draw (0.2,0)--(0.2,0.2);
  }
}
\tikzset{
  ncthreeW/.pic={
    \draw (0,0.2)--(0,0)--(0.2,0)--(0.2,0.2);
	\draw (0.1,0)--(0.1,0.2);
  }
}
\tikzset{
  ncfour/.pic={
    \draw (0,0)--(0,0.2);
	\draw (0.1,0)--(0.1,0.2);
	\draw (0.2,0)--(0.2,0.2);
	\draw (0.3,0)--(0.3,0.2);
  }
}
\tikzset{
  ncfive/.pic={
    \draw (0,0)--(0,0.2);
	\draw (0.1,0)--(0.1,0.2);
	\draw (0.2,0)--(0.2,0.2);
	\draw (0.3,0)--(0.3,0.2);
	\draw (0.4,0)--(0.4,0.2);
  }
}
\tikzset{
  ncfourW/.pic={
    \draw (0,0.2)--(0,0)--(0.3,0)--(0.3,0.2);
	\draw (0.1,0)--(0.1,0.2);
	\draw (0.2,0)--(0.2,0.2);
  }
}
\tikzset{
  ncfiveW/.pic={
    \draw (0,0.2)--(0,0)--(0.4,0)--(0.4,0.2);
	\draw (0.1,0)--(0.1,0.2);
	\draw (0.2,0)--(0.2,0.2);
	\draw (0.3,0)--(0.3,0.2);
  }
}
\tikzset{
  nconeinsidetwoWW/.pic={
    \path (0,0) pic {nctwoWW}; \path (0.1,0.1) pic {ncone};
  }
}
\tikzset{
  nconeinsidethreeWW/.pic={
    \path (0,0) pic {ncthreeWW}; \path (0.1,0.1) pic {ncone};
  }
}
\tikzset{
  nconeinsidethreerightWW/.pic={
    \draw (0,0.2)--(0,0)--(0.3,0)--(0.3,0.2);
    \draw (0.1,0)--(0.1,0.2); \draw (0.2,0.1)--(0.2,0.3);
  }
}
\tikzset{
  nconeinsidethreeleftWW/.pic={
    \draw (0,0.2)--(0,0)--(0.3,0)--(0.3,0.2);
    \draw (0.2,0)--(0.2,0.2); \draw (0.1,0.1)--(0.1,0.3);
  }
}
\tikzset{
  nctwoWWW/.pic={
    \draw (0,0.2)--(0,0)--(0.3,0)--(0.3,0.2);
  }
}
\tikzset{
  nconeoneinsidetwoWW/.pic={
	\path (0,0) pic {ncone};
    \path (0.1,0) pic {nctwoWW}; 
	\path (0.2,0.1) pic {ncone};
  }
}
\newcommand{\ncone}{\,\tikz[x=1.3cm,y=1.3cm]{\path (0,0) pic {ncone}}\,}
\newcommand{\nctwo}{\,\tikz[x=1.3cm,y=1.3cm]{\path (0,0) pic {nctwo}}\,}
\newcommand{\nctwoW}{\,\tikz[x=1.3cm,y=1.3cm]{\path (0,0) pic {nctwoW}}\,}
\newcommand{\ncthree}{\,\tikz[x=1.3cm,y=1.3cm]{\path (0,0) pic {ncthree}}\,}
\newcommand{\ncthreeW}{\,\tikz[x=1.3cm,y=1.3cm]{\path (0,0) pic {ncthreeW}}\,}
\newcommand{\ncfour}{\,\tikz[x=1.3cm,y=1.3cm]{\path (0,0) pic {ncfour}}\,}
\newcommand{\ncfive}{\,\tikz[x=1.3cm,y=1.3cm]{\path (0,0) pic {ncfive}}\,}
\newcommand{\ncfourW}{\,\tikz[x=1.3cm,y=1.3cm]{\path (0,0) pic {ncfourW}}\,}
\newcommand{\ncfiveW}{\,\tikz[x=1.3cm,y=1.3cm]{\path (0,0) pic {ncfiveW}}\,}
\newcommand{\nconeinsidetwoWW}{\,\tikz[x=1.3cm,y=1.3cm]{
  \path (0,0) pic {nconeinsidetwoWW};
}\,}
\newcommand{\nconetwoW}{\,\tikz[x=1.3cm,y=1.3cm]{
  \path (0,0) pic {ncone}; \path (0.1,0) pic {nctwoW};
}\,}
\newcommand{\nctwoWone}{\,\tikz[x=1.3cm,y=1.3cm]{
  \path (0,0) pic {nctwoW}; \path (0.2,0) pic {ncone};
}\,}
\newcommand{\nctwoWtwoW}{\,\tikz[x=1.3cm,y=1.3cm]{
  \path (0,0) pic {nctwoW}; \path (0.2,0) pic {nctwoW};
}\,}
\newcommand{\nctwoWinsidetwoWWW}{\,\tikz[x=1.3cm,y=1.3cm]{
  \path (0,0) pic {nctwoWWW}; \path (0.1,0.1) pic {nctwoW};
}\,}
\newcommand{\nconeonetwoW}{\,\tikz[x=1.3cm,y=1.3cm]{
  \path (0,0) pic {ncone}; \path (0.1,0) pic {ncone}; \path (0.2,0) pic {nctwoW};
}\,}
\newcommand{\nconethreeW}{\,\tikz[x=1.3cm,y=1.3cm]{
  \path (0,0) pic {ncone}; \path (0.1,0) pic {ncthreeW};
}\,}
\newcommand{\nconeinsidethreeWW}{\,\tikz[x=1.3cm,y=1.3cm]{
  \path (0,0) pic {nconeinsidethreeWW};
}\,}
\newcommand{\nconeinsidethreerightWW}{\,\tikz[x=1.3cm,y=1.3cm]{
  \path (0,0) pic {nconeinsidethreerightWW};
}\,}
\newcommand{\nconeinsidethreeleftWW}{\,\tikz[x=1.3cm,y=1.3cm]{
  \path (0,0) pic {nconeinsidethreeleftWW};
}\,}
\newcommand{\nconeoneinsidetwoWW}{\,\tikz[x=1.3cm,y=1.3cm]{
  \path (0,0) pic {nconeoneinsidetwoWW};
}\,}
\theoremstyle{definition}
\newtheorem{defi}{\indent Definition}[subsection]
\newtheorem{rem}[defi]{\indent Remark}
\newtheorem{ex}[defi]{\indent Example}
\theoremstyle{theorem}
\newtheorem{lemma}[defi]{\indent Lemma}
\newtheorem{cor}[defi]{\indent Corollary}
\newtheorem{theo}[defi]{\indent Theorem}
\newtheorem{prop}[defi]{\indent Proposition}
\newcommand{\frontstick}{\,\raisebox{-1pt}{\begin{tikzpicture}
\draw [line width=1pt,] (0,0)--(0,0.25);
\end{tikzpicture}}\kern+2pt}
\newcommand{\bl}{\operatorname{bl}}
\newcommand{\ho}{\operatorname{ho}}
\newcommand{\g}{\mathfrak{g}}
\newcommand{\Id}{\operatorname{Id}}
\newcommand{\id}{\operatorname{id}}
\newcommand{\Vect}{\operatorname{Vect}}
\newcommand{\Conv}{\operatorname{Conv}}
\newcommand{\st}[1]{#1}
\newcommand{\mst}[1]{\vec{#1}}
\newcommand{\rmst}[1]{\overset{\ldots}{#1}}
\newcommand{\nc}{\operatorname{nc}}
\newcommand{\ind}{\operatorname{ind}}
\newcommand{\latNCC}{\operatorname{NCC}}
\newcommand{\latNP}{\operatorname{NCP}} 
\newcommand{\latNCP}{\operatorname{NCP}}
\newcommand{\latSP}{\operatorname{SP}}
\newcommand{\latSC}{\operatorname{SC}}
\newcommand{\latSMP}{\operatorname{SMP}}
\newcommand{\latNMP}{\operatorname{NMP}}
\newcommand{\cut}{\operatorname{cut}}
\newcommand{\Bgap}{\mathbf{S}_0} 
\newcommand{\Bgapnc}{\mathbf{N}_0} 
\newcommand{\Hgap}{\mathbf{S}} 
\newcommand{\Hgapnc}{\mathbf{N}} 
\newcommand{\Bblcomp}{\mathbf{C}^{\operatorname{comp}}} 
\newcommand{\Bblnccomp}{\mathbf{B}^{\operatorname{comp}}} 
\newcommand{\Bbl}{\mathbf{C}} 
\newcommand{\Bblnc}{\mathbf{B}} 
\newcommand{\Expl}{\mathcal{E}_\prec}
\newcommand{\Expr}{\mathcal{E}_\succ}
\newcommand{\isopil}{\stackrel{\raisebox{0.1ex}[0ex][0ex]{\(\sim\)}}%
			{\raisebox{-0.15ex}[0.28ex]{\(\rightarrow\)}}}
\newcommand{\Pto}{\raisebox{-8pt}{%
\!\begin{tikzpicture}
  \node at (0.0,0.0) {$\to$};
  \node at (-0.04,-0.14) {\tiny $P$};
\end{tikzpicture}\!%
}}
\newcommand{\ordsum}{\uplus}
\newcommand{\mob}{\mu}
\newcommand{\K}{\mathbb{K}}
\newcommand{\N}{\mathbb{N}}
\newcommand{\NCP}{\mathcal{NCP}}
\newcommand{\SC}{\mathcal{SC}}
\newcommand{\NCC}{\mathcal{NCC}}
\newcommand{\SP}{\mathcal{SP}}
\renewcommand{\P}{\mathcal{P}}
\definecolor{red}{rgb}{1.,0.,0.}
\definecolor{green}{rgb}{0.,1.,0.}
\definecolor{blue}{rgb}{0.,0.,1.}
\definecolor{orange}{rgb}{1.,0.8431372549019608,0.}
\title{Operads of (noncrossing) partitions, interacting bialgebras,\\ and moment-cumulant relations}
\date{April 14, 2020}
\author{Kurusch Ebrahimi-Fard$^a$, Lo\"\i c Foissy$^b$, Joachim Kock$^c$, Fr\'ed\'eric Patras$^d$\\ \\
{\small \it $^a$Department of Mathematical Sciences}\\
{\small \it Norwegian University of Science and Technology (NTNU)}\\
{\small \it 7491 Trondheim, Norway.}\\
{\small \it email: {\rm kurusch.ebrahimi-fard@ntnu.no}}\\[0.2cm]    
{\small \it $^b$F\'ed\'eration de Recherche Math\'ematique du Nord Pas de Calais FR 2956}\\
{\small \it Laboratoire de Math\'ematiques Pures et Appliqu\'ees Joseph Liouville}\\
{\small \it Universit\'e du Littoral C\^ote d'Opale-Centre Universitaire de la Mi-Voix}\\ 
{\small \it 50, rue Ferdinand Buisson, CS 80699,  62228 Calais Cedex, France}\\
{\small \it email: {\rm foissy@univ-littoral.fr}}\\[0.2cm]
{\small \it $^c$Departament de Matem\`atiques}\\
{\small \it Universitat Aut\`onoma de Barcelona}\\
{\small \it 08193 Bellaterra (Barcelona), Spain}\\
{\small \it email: {\rm kock@mat.uab.cat}}\\[0.2cm]
{\small \it $^d$Universit\'e C\^ote d'Azur, CNRS, UMR 7351}\\ 
{\small \it Laboratoire J.A.Dieudonn\'e}\\
{\small \it Parc Valrose, 06108 Nice Cedex 02, France.}\\
{\small \it email: {\rm patras@unice.fr}}}
\begin{document}

\maketitle

\begin{abstract}
  We establish and explore a relationship between two approaches to
  moment-cumulant relations in free probability theory: on one side the
  main approach, due to Speicher, given in terms of M\"obius inversion on
  the lattice of noncrossing partitions, and on the other side the more
  recent non-commutative shuffle-algebra approach, where the moment-cumulant relations
  take the form of certain exponential-logarithm relations. We achieve this
  by exhibiting two operad structures on (noncrossing) partitions,
  different in nature: one is an ordinary, non-symmetric operad whose
  composition law is given by insertion into gaps between elements, the
  other is a coloured, symmetric operad with composition law expressing 
  refinement of blocks. We show that these operad structures interact
  so as to make the corresponding incidence bialgebra of the former a
  comodule bialgebra for the latter. Furthermore, this interaction is compatible with
  the shuffle structure and thus unveils how the two approaches are intertwined.
  Moreover, the constructions and results are general enough to extend to
  ordinary set partitions.
\end{abstract}
  
\tableofcontents


\vspace{0.5cm}
\noindent {\footnotesize{\bf Keywords}: operads; set partitions; noncrossing partitions; M\"obius inversion; free probability; moment-cumulant relations; combinatorial Hopf algebra; unshuffle bialgebra; interacting bialgebras; shuffle algebra.}

\noindent {\footnotesize{\bf MSC Classification}: 16T05; 16T10; 
16T30; 18M60; 46L53; 46L54.}

\section{Introduction}

One of Rota's main motivations for developing the now ubiquitous machinery
of incidence algebras and M\"obius inversion~\cite{Rota0} was to provide a
clean combinatorial explanation of the moment-cumulant relations in
classical probability, known already to involve the Bell numbers (see for
example~\cite{Kendall:vol1}). The idea is that both moments $m$ and
cumulants $c$ (of a fixed random variable) are considered as linear
forms on the incidence coalgebra of the lattice of set partitions, and that
the relationships amount to convolution with the zeta function and its inverse, the
M\"obius function of the lattice (see also Speed~\cite{Speed}):
$$ 
	m = \zeta * c
	\qquad\qquad
	c = \mu * m . 
$$
More generally, Rota's seminal paper with Wallstrom~\cite{RotaWallstrom97}
showed that many of the basic aspects of stochastic analysis can be
rendered combinatorially in terms of the lattice of set partitions.

A beautiful extension of Rota's ideas was found by
Speicher~\cite{Speicher94, Speicher} to account for the relationship
between moments and free cumulants in the setting of Voiculescu's theory of free
probability~\cite{DVoi}. Speicher showed that the free moment-cumulant formulae follow the same
pattern as the classical case if just the lattice of set partitions is
replaced by the lattice of noncrossing partitions~\cite{SpeicherNica}.
Beyond the importance of this in the context of free probability (see also
Biane~\cite{Biane0, Biane}), it is as well a striking new application of the
combinatorial topic of noncrossing partitions\footnote{Kreweras' 1972 paper ``Sur les partitions non crois\'ees d'un cycle'' \cite{Kre} initiated the study of noncrossing partitions in combinatorics.}, which is also
important in exploring the interface between 
algebraic geometry and representation theory 
(see for example the recent survey \cite{Baumeister-et.al}),
and which comes up in numerous other contexts~\cite{MC, Simion}.

Recently, two of the present authors proposed a different approach to
moment-cumulant relations~\cite{EFP15,EFP16,EFP17,EFP18}, which is not
based on the lattice of noncrossing partitions, incidence algebras or
M\"obius inversion. Instead, it employs a certain non-commutative shuffle
algebra and considers moments and free cumulants as images of a particular
Hopf algebra character, respectively infinitesimal character.
Moment-cumulant relations are encoded through exponential-logarithm
correspondences implied by fixpoint equations involving so-called
half-shuffle products $\prec$ and $\succ$. For instance, in the free case the moment character $\phi$
and the free cumulant infinitesimal character $\kappa$ are related through
a half-shuffle fixpoint equation in non-commutative shuffle algebra
$$
	\phi = \varepsilon + \kappa \prec \phi,
$$
where $\varepsilon$ is the counit.
The shuffle-algebra approach supports a Lie theoretic perspective, which is augmented by the notions of pre-Lie and, more generally, brace algebra \cite{chapotonMM}.

\medskip

The motivation for the present work was to uncover the relationship,
hitherto completely mysterious, between the two approaches to
moment-cumulant formulae. The general framework found to accommodate the
two disparate approaches is the theory of operads and their incidence
bialgebras. In a nutshell we show that the two approaches are governed each
by their particular operad of noncrossing partitions; we then show how
these two operads interact at the level of the corresponding bialgebras,
and derive comparisons between the two settings in terms of this
interaction.

Somewhat surprisingly, the framework turns out to be general enough
to cover also the case of ordinary set partitions. Here, too, there exists 
a pair of interacting operads. As a consequence we obtain in this case as well a close
connection between the M\"obius-theoretic moment-cumulant relations \`a la 
Rota, and the (little studied) shuffle-algebraic approach in that context.
In the following summary we concentrate on the case of noncrossing partitions, 
since this was our original motivation and central aim.

\medskip

The first operad is an ordinary nonsymmetric operad of noncrossing
partitions, where each noncrossing partition of degree $n$ is considered an
$(n+1)$-ary operation. The composition law for this operad consists in inserting $n+1$
noncrossing partitions into the gaps of a noncrossing partition of degree
$n$. This includes the ``outer gaps'', i.e., in front of the whole partition and at the end of
it. To any operad is associated a bialgebra (see \cite{Foissy} and \cite{GKT-comb}), 
sometimes called its incidence
bialgebra, which always has the feature of being ``right-sided'', meaning
that the coproduct is linear in the left-hand tensor factor but gives monomials
in the right-hand tensor factor. We show that this bialgebra is an unshuffle
bialgebra (also called codendriform bialgebra), and that the resulting
fixpoint equations in the graded dual of the bialgebra are analogous to those of \cite{EFP15}. 

The second operad is a coloured symmetric operad, whose colours are the
positive integers. If a noncrossing partition has $k$ blocks, then it is
considered a $k$-ary operation. The colour of each input slot is the number
of elements in the block, and the output colour of the operation is the
total number of elements. The composition law for this operad works by
substituting a noncrossing partition 
for a block with the same number of elements. The resulting effect is
thus to refine partitions. The incidence bialgebra corresponding to this operad is shown
to be closely related to the incidence coalgebra of the lattice of 
noncrossing partitions
in an interesting way: there is a canonical coalgebra homomorphism $\Phi :
\Bblnc_{\operatorname{lat}} \to \Bblnc_{\operatorname{opd}}$ from the incidence
coalgebra of the lattice of noncrossing partitions to the incidence bialgebra
of the operad. Intervals in the lattice are type equivalent (in the sense
of Speicher~\cite{Speicher94}) precisely when they have the same image
under $\Phi$. M\"obius inversion in $\Bblnc_{\operatorname{lat}}$ is induced from that in
$\Bblnc_{\operatorname{opd}}$, but the latter is closer to what is actually used
in free probability, since it works with the noncrossing partitions
themselves instead of intervals of noncrossing partitions. Because of this
we can now forget about the lattice, and work directly with the incidence
bialgebra of the (block-substitution) operad.

The two bialgebras of noncrossing partitions induced by the two operad
structures have essentially the same multiplicative basis, both being free on the set
of noncrossing partitions. The key point of this work, which allows for the
comparison envisaged, is that these two bialgebra
structures together form a comodule bialgebra.  This is an intricate 
structure of two interacting bialgebras which recently has appeared in numerical analysis~\cite{CEFM}, local dynamical systems \cite{EFM17}, and stochastic integration and 
renormalization~\cite{Bruned-Hairer-Zambotti}.
Precisely, in Theorem~\ref{thm:comodulebialg} we establish that
the gap-insertion bialgebra is an
unshuffle-type bialgebra object in the symmetric monoidal category of comodules
for the block-substitution bialgebra. In particular, the block-substitution
bialgebra coacts on the gap-insertion bialgebra by bialgebra homomorphisms.
An immediate consequence of this is that the convolution algebra of the
block-substitution bialgebra acts on the shuffle algebra, i.e., the convolution
algebra of the gap-insertion bialgebra. An attractive feature is now that
this action is compatible with the splitting of the shuffle product into
half-shuffles (Theorem~\ref{thm:rightact-unshuffle}), and therefore with the fixpoint equations mentioned
above. Finally the distinguished character (solution to a fixpoint equation)
can be convolution-inverted (to provide shuffle logarithms). These can be
computed by M\"obius-inversion style formulae.

Throughout we emphasize noncrossing partitions, since this was our original
motivation, but in fact the theory we develop is general enough to cover
also the case of ordinary set partitions, so as to get analogous formulae
also for classical cumulants. A subtle
point for this to work is the Galois connection between the
lattice of set partitions and the lattice of noncrossing partitions, which is used to
transfer certain noncrossing features to the setting of ordinary
partitions, and which also leads to some direct comparisons between
classical and free cumulants. In particular this involves
the nesting preorder of a partition, and
the notions of lowersets and uppersets for partitions.

\smallskip

Impetus for this project came from more abstract work by two of the present
authors, concerned with general relationships between operads and
combinatorial Hopf algebras (and bialgebras). Foissy~\cite{FoissyOperads}
developed a theoretical framework allowing to understand how quite generally
operads equipped with certain $B_\infty$-actions induce comodule
bialgebras. In a different line of investigation, G\'alvez, Kock and Tonks, in a series of papers starting with \cite{GKT1}, developed
the notion of decomposition spaces, a general homotopical framework for
incidence algebras and M\"obius inversion, revealing many classical
combinatorial Hopf algebras to be incidence algebras (but not of posets).
(This framework covers also operads, via their two-sided bar constructions.)

The use of comultiplications to describe convolution
products (in combinatorics) goes back to Rota~\cite{Rota0}.  The
importance of Hopf algebra structure rather than just coalgebra structure
was stressed by Schmitt~\cite{Schmitt:1994}. 
In the more specific context of non-commutative probability
theory, various authors have recently exploited Hopf algebras
from different perspectives. We briefly mention
the works of Friedrich--McKay~\cite{Friedrich}, Hasebe--Lehner~\cite{HasLeh},
Mastnak--Nica~\cite{MastNica},
Gabriel~\cite{Gabriel15}, and Manzel--Sch\"urmann~\cite{MSch}.
The precise relations to~\cite{EFP15,EFP16,EFP17,EFP18} are not yet fully understood.

Operadic approaches to moment-cumulant formulae have been
exploited in other ways in recent years. Josuat-Verg\`es,
Menous, Novelli, and Thibon~\cite{Thibon} studied the (free) operad of
Schr\"oder trees to obtain an operadic version of the
half-shuffle equations of Ebrahimi-Fard and Patras~\cite{EFP15}, and
also related this to Speicher's original formulae~\cite{Speicher94}.
They do not, however, consider operad structures directly on noncrossing
partitions. Proposition~\ref{prop:genrels} below gives an explicit
presentation of our gap-insertion operad in terms of generators
(corolla trees) and relations, premonished by the way Schr\"oder
trees are shown to encode noncrossing partitions in \cite{Thibon}.
Their encoding becomes an operad map from the Schr\"oder operad
to our gap-insertion operad of noncrossing partitions.

A different operadic viewpoint on free moment-cumulant relations
was taken by Drummond-Cole~\cite{DC1, DC2}, who gave an
operadic interpretation of Speicher's multiplicativity~\cite{Speicher94}, and
exhibited the free moment-cumulant relations in terms of convolution
of maps from a cooperad to an endomorphism operad.
Again, the operads and cooperads considered are not directly on 
noncrossing partitions, but rather on auxiliary classes of decorated trees.

Finally we mention the traffic spaces of Male~\cite{Male}; these are 
defined as algebras for a certain operad of graph operations. While 
this is a very general framework, the operad of graph operations does 
not seem to specialize directly to any of the main constructions of the
present work.

\bigskip


{\bf{Acknowledgments}}: We would like to thank 
G.~Drummond-Cole,
F.~Lehner, and R.~Speicher for fruitful discussions. 
This research project received financial support from the CNRS Program PICS 07376: 
«Alg\`ebres de Hopf combinatoires et probabilit\'es non commutatives». 
We also thank the Centre de Recerca Matem\`atica (CRM) in Barcelona for its hospitality. 
J.K.~was supported by grants MTM2016-80439-P (AEI/FEDER, UE) of 
Spain and 2017-SGR-1725 of Catalonia. This work was partially supported by the project Pure Mathematics in Norway, funded by Bergen Research Foundation and Troms{\o} Research Foundation and by the European Research Council (ERC) under the European Union's Horizon 2020 research and innovation program (grant agreement No.670624).


\section{Partitions and noncrossing partitions --- notation and 
conventions}
\label{CompPart}

This section introduces notation and conventions and briefly recalls some
basic notions related to set partitions.


\subsection{Definitions}
\label{ssect:def}

\begin{enumerate}

\item We denote by $\K$ a commutative base field of characteristic zero.
All algebraic objects in this work, such as vector spaces, algebras, 
coalgebras, pre-Lie algebras, etc., will be defined over $\K$. 

For algebras with an augmentation $\varepsilon: A \to \K$, we write 
$A_+:=\operatorname{Ker}\varepsilon$ for the augmentation ideal.

\item By $\N$ we denote the linearly ordered set of non-negative integers, 
and by $\N^\ast$ the set of positive integers.
For $n \in \N^\ast$, we denote by $[n]$ the set 
$\{ 1,2,3,\ldots,n\}=\llbracket 1,n\rrbracket$.

For a finite subset $X=\{x_1,\dots,x_k\} \subset \N$,
we write $\sharp X=k$ for its cardinality, and
we write $X+n$ for the $n$-shifted subset $\{x_1+n,\ldots,x_k+n\}$. 

\end{enumerate}

\medskip

Let $X$ be a finite, linearly ordered set. A \emph{partition} of $X$ into
disjoint subsets $\pi_i,\ i=1,\ldots,k$ (called blocks) is written
$P=\{\pi_1,\dots,\pi_k\}$. If the order on the set of blocks
matters we will write it as a
sequence, $(\pi_1,\ldots,\pi_k)$, and call it a \emph{composition} instead of a
partition.\footnote{The terminology is by analogy with integer 
compositions: a composition of an integer $n$ is a sequence of non-zero integers
$(n_1,\ldots,n_k)$ such that $n=n_1+\dots +n_k$.} The degree of the 
partition or the composition is the cardinality of $X$. Except when
otherwise stated, partitions are ordered by coarsening: $P\leq Q$ means that $P$ is finer than $Q$.

\bigskip

For the purposes of this paper, it is important to require a
linear order on all underlying sets that are partitioned. Many notions
(in particular the notion of noncrossing partition)
depend crucially on this linear order, but do not depend on the
specific underlying set. This is to say we are only interested
in set partitions up to isomorphism. An \emph{isomorphism} between two
partitions is a monotone bijection of the underlying linearly
ordered sets compatible with the block structure (and for
compositions also with the order on the set of blocks). It
is clear that every partition (or 
composition) is uniquely isomorphic to one with underlying
linearly ordered set $[n]=\{1,2,\ldots,n\}$, for some $n\in \N$. These are called
{\it{standard partitions}}, and the process of replacing a partition
with this standard representative of its isomorphism class (iso-class) is
often called {\it{standardization}}. It is convenient most of the time
to work with the standard representatives of each class. When
non-standard representatives arise in the constructions (such as
when considering subsets of $[n]$), it must be remembered that it is only the
isomorphism class that matters.  
Working with iso-classes rather than 
restricting to standardized partitions gets us closer to
the combinatorial intuition conveyed
by the common pictorial representation of set partitions by block 
diagrams
exploited throughout.
(Standardization could always be
performed as desired for convenience, but does not affect the 
validity of the constructions.)

\medskip

For $k,n \in \N^\ast$, we denote by $\latSP(k,n)$ the set of iso-classes 
  of
  partitions of $n$-element sets into $k$ blocks. The set $\latSP(0,0)$ contains only
  the empty partition. We put:
\begin{align*}
	\latSP	&=\bigsqcup_{1\leq k\leq n} \latSP(k,n),
	&   \latSP(n)&=\bigsqcup_{k\leq n} \latSP(k,n),
		&   \latSP_0&=\latSP(0,0) \sqcup \latSP.
\end{align*}
When referring to a member of any of these sets, we shall always pick the
standard representative, having underlying set $[n]$.

\medskip

Given a monotone inclusion of linearly orderd sets $X \subset Y$,
and given a 
partition $P=\{\pi_1,\dots,\pi_k\}$ of $Y$, we write $P_{|X}$ for the
induced partition of $X$ (whose blocks are the non-empty intersections 
$\pi_i\cap X$).

\begin{defi}\label{conv}
  Let $X$ be a non-empty, finite subset of $\N$ (or of an arbitrary 
  linearly ordered set $Y$).
  The \emph{convex hull} of
  $X$ is by definition $\Conv(X)=\llbracket \min(X),\max(X)\rrbracket$. We
  shall say that $X$ is \emph{convex} if $\Conv(X)=X$. Any finite subset $X\subset
  \N$ decomposes uniquely as \allowdisplaybreaks
\begin{align*}
	X&=X_1\sqcup \cdots \sqcup X_k
\end{align*}
with each $X_i$ convex and each $X_i \sqcup X_j$ {\em not} convex for  $i\neq j$.
The $X_1,\ldots, X_k$ are called the \emph{convex components} of $X$.
\end{defi}

\begin{defi}[Noncrossing partitions] 
  A partition $P=\{\pi_1,\ldots,\pi_k\}$ is called \emph{noncrossing}
  when there are no $a,b\in \pi_i,\ c,d\in \pi_j$ with $i\not=j$ such 
  that $a<c<b<d$.  For example, $\{\{1\},\{2,4\},\{3\}\} =$
  \nconeoneinsidetwoWW\ is noncrossing,
    whereas  $\{\{1,3\},\{2,4\}\} =$
  \begin{tikzpicture}[scale=1.2]
  \draw (0,0.2)--(0,0)--(0.2,0)--(0.2,0.2);
  \draw (0.1,0.14)--(0.1,-0.06)--(0.3,-0.06)--(0.3,0.14);
\end{tikzpicture} is crossing.

  For $k,n \in \N^\ast$, we denote by $\latNCP(k,n)$ the set of iso-classes
  of noncrossing partitions of an $n$-element set into $k$ 
  blocks. The set $\latNCP(0,0)$ contains only the empty partition. We put:
\begin{align*}
	\latNCP	&=\bigsqcup_{1\leq k\leq n} \latNCP(k,n),
	&   \latNCP(n)&=\bigsqcup_{k\leq n} \latNCP(k,n),
		&   \latNCP_0&=\latNCP(0,0) \sqcup \latNCP.
\end{align*}
\end{defi}

Note that if $X \subset [n]$ and $P$ is a noncrossing partition of
$[n]$ then $P_{|X}$ is a noncrossing partition of $X$.

\begin{defi}[Noncrossing closure]\label{nc}
The \emph{noncrossing closure} of a set partition $P=\{\pi_1,\dots,\pi_k\}$ of
degree $n$, written $\nc(P)$, is the noncrossing partition defined by
$$
	\nc(P):=\min\limits_{\leq}\{Q\in \latNCP(n) \mid P\leq Q\}.
$$
Here $P\leq Q$ means that $P$ is finer than $Q$. In plain words, $\nc$ joins any two blocks that cross.
For example, $\nc(\,\begin{tikzpicture}[scale=1.2]
  \draw (0,0.2)--(0,0)--(0.2,0)--(0.2,0.2);
  \draw (0.1,0.14)--(0.1,-0.06)--(0.3,-0.06)--(0.3,0.14);
\end{tikzpicture}\,) = \ncfourW$.
\end{defi}

Anticipating the two operads that we will define on partitions, 
we note that $\nc$ preserves the degree of a partition. It will therefore be useful in connection with the gap-insertion operad. In contrast, it does not preserve the number of blocks, and for this reason will not play any role for the block-substitution operads.

\begin{rem}
  Throughout this paper we work with ``naked'' partitions, like the ones
  introduced above. However, we wish to stress that all the constructions
  and results go through  with partitions decorated by elements from an alphabet
  $A$. By this we mean that each partition --- with underlying linearly
  ordered set $X$, say --- is equipped with a map $a: X \to A$ (not
  required to be injective), pictured as
  $$
  \begin{tikzpicture}
	  \draw [line width=1.0pt,] (0.0,0.5)--(0.0,0.0)--(1.5,0.0)--(1.5,0.5);
	  \draw [line width=1.0pt,] (0.5,0.9)--(0.5,0.4)--(1.0,0.4)--(1.0,0.9);
	  \node at (0.0,0.7) {\footnotesize $a_1$};
	  \node at (0.5,1.1) {\footnotesize $a_2$};
	  \node at (1.0,1.1) {\footnotesize $a_3$};
	  \node at (1.5,0.7) {\footnotesize $a_4$};
  \end{tikzpicture}
  $$
  These decorations do not interfere with any of the operations we shall
  perform on partitions.  For example, a decoration transfers 
  canonically along monotone bijections (such as standardization): if $X \to A$ is a decoration, and 
  $[n] \isopil X$ is the unique monotone bijection performing the 
  standardization, then the decoration of the standardized partition is simply 
  the composite map $[n] \to X \to A$.  The decorated situation is 
  relevant in probability theory where the $a_i$ will be random variables.
  Since such decorations do not interfere with the theory we develop,
  for the sake of simplicity
  we prefer to stick with ``naked'' partitions, which can be regarded
  the univariate case, that is, of a single random variable.  The only place where
  we shall need to refer to decorations is in Proposition~\ref{prop:sumofall}
  where we comment on relations with the double tensor algebra (over a 
  probability space $A$).
\end{rem}


\section{Gap-insertion operad and associated structures}
\label{1stoperad}

We describe here a series of algebraic structures on set partitions arising
from the idea of ``inserting a partition into another partition''.
We start with the basic insertion
operation, naturally encoded by an operad structure, and then turn to the
definition of further associated algebraic structures on partitions, such
as unshuffle coproducts and Hopf algebras.

In order to give an explicit combinatorial description of the bialgebra 
structure resulting from the operad, we exploit a preorder on the set of
blocks of a given partition, and the attendant notions of lowerset and 
upperset. These notions are quite standard in the context of noncrossing 
partitions, but less so for general partitions --- it is indeed tailored to 
study the relations between the two settings.

\smallskip

Before presenting the gap-insertion operad, we briefly recall that a
non-symmetric, single-coloured set operad $\mathcal P$ is a sequence of
sets ${\mathcal P}(n)$, $n \in \N$, equipped with a unit operation $I \in
{\mathcal P}(1)$ and a composition law: 
\allowdisplaybreaks
\begin{align*}
	{\mathcal P}(r)\ \times \ ({\mathcal P}(n_1)\times\cdots\times{\mathcal P}(n_r)) 
	& \to{\mathcal P}(n_1+\dots+n_r)\\
	(p, (q_1,\dots, q_r))&\mapsto p \circ (q_1,\dots,q_r),
\end{align*}
for $r\in \N$ and $n_1,\dots,n_r \in \N$, satisfying the axioms
$$
	I \circ (q) =q,
	\qquad 
	p \circ (I,\dots ,I) =p,
$$
and
$$
	\big ( p \circ 
	(q_1,\dots,q_r) \big) \circ ( h_1^1,\dots,h_{n_1}^1,\dots,h_1^r,\dots,h_{n_r}^r )
	= p \circ \big(q_1 \circ (h_1^1,\dots,h_{n_1}^1),\dots, q_r \circ 
	(h_1^r,\dots,h_{n_r}^r)\big).
$$ 
An operad $\mathcal{P}$ is \emph{reduced}\footnote{The term \emph{positive operad} is also used~\cite{Aguiar-Mahajan}.} if $\mathcal{P}(0) = \emptyset$, that is, it has no nullary operations. 

Equivalently, an operad can be defined using the notion of ``partial composition law''
$$
	\circ_i:{\mathcal P}(m)\times {\mathcal P}(n)\to {\mathcal P}(m+n-1),\quad i=1,\ldots,m,
$$
where
$$
	p\circ_i q:= p \circ (I,\dots,I,q,I,\dots,I),
$$
with $q$ located in position $i$. 

A set operad gives rise to an operad in the category of vector spaces by
replacing sets by the vector spaces they span, and cartesian products of
sets by tensor products of vector spaces.


\subsection{A non-symmetric single-colour operad of partitions}
\label{1stOperad}

The idea of the notion of \it gap-insertion \rm operad on partitions, which we proceed to define,
is easy (when displaying a partition pictorially): each partition $P \in \latSP(n)$ (or noncrossing partition $P\in
\latNCP(n)$) is considered an operation of arity $n+1$, its input slots
being the $n+1$ gaps between the elements, including the ``gap'' before $1$
and the ``gap'' after $n$. It can thus receive $n+1$ other partitions (or
noncrossing partitions) $Q_1,\ldots,Q_{n+1}$, which are simply inserted into
the gaps. For example, inserting the partition
$\{\{1,2\}\}$ between $2$ and $3$ into $\{\{1,2,3\}\}$ gives the partition
$\{\{1,2,5\},\{3,4\}\}$ and reads pictorially:
\begin{align*}
\begin{tikzpicture}[line cap=round,line join=round,>=triangle 45,x=0.3cm,y=0.3cm]
\clip(1.8,1.) rectangle (4.2,2.);
\draw [line width=0.8pt,] (2.,1.)-- (2.,2.);
\draw [line width=0.8pt,] (2.,1.)-- (4.,1.);
\draw [line width=0.8pt,] (3.,1.)-- (3.,2.);
\draw [line width=0.8pt,] (4.,1.)-- (4.,2.);
\end{tikzpicture}
\diamond_3
\begin{tikzpicture}[line cap=round,line join=round,>=triangle 45,x=0.3cm,y=0.3cm]
\clip(1.8,1.) rectangle (4.2,2.);
\draw [line width=0.8pt] (2.,1.)-- (2.,2.);
\draw [line width=0.8pt] (2.,1.)-- (3.,1.);
\draw [line width=0.8pt] (3.,1.)-- (3.,2.);
\end{tikzpicture}= \
\begin{tikzpicture}[line cap=round,line join=round,>=triangle 45,x=0.3cm,y=0.3cm]
\clip(3.8,1.) rectangle (14.2,3.);
\draw [line width=0.8pt] (4.,2.)-- (4.,1.);
\draw [line width=0.8pt] (4.,1.)-- (10.,1.);
\draw [line width=0.8pt] (10.,1.)-- (10.,2.);
\draw [line width=0.8pt] (7.,1.)-- (7.,2.);
\draw [line width=0.8pt] (8.,2.)-- (8.,3.);
\draw [line width=0.8pt] (9.,2.)-- (9.,3.);
\draw [line width=0.8pt] (8.,2.)-- (9.,2.);
\end{tikzpicture}
\end{align*}
where $\diamond_3$ denotes the partial composition law. Conceptually it is clear that
this defines an operad. Formalising it just requires appropriate reindexing
of the elements in the underlying sets. If $P$ is noncrossing and
$Q_1,\ldots,Q_{n+1}$ are all noncrossing, then it is clear that the result is again
noncrossing (as in the example just given).

It is simpler to describe the operad structure from the partial composition law viewpoint:

\begin{defi}[The gap-insertion operad]
We set $\SP(n):=\latSP(n-1)$, in particular, $\SP(0) = \emptyset$ (so that 
the operad will be reduced) and $\SP(1)=\{\emptyset\}$; the empty 
partition will be the operad unit.
This sequence of sets is given a unital non-symmetric operadic structure 
as follows: for $P=\{\pi_1,\dots,\pi_k\}\in \SP(m)$ 
and $Q=\{\rho_1,\dots,\rho_l\}\in \SP(n)$, we define the partial 
composition law (for $i=1,\ldots,m$) by:
\begin{equation*}
	P \diamond_i Q:=\{\chi(\pi_1),\dots,\chi(\pi_k),\rho_1+i-1,\dots,\rho_l+i-1\},
\end{equation*}
where $\chi(p):=p$ if $p<i$, $\chi(p):=p+n-1$ else.
Equivalently: given a sequence of partitions
$P\in \SP(m),Q_1\in \SP(n_1),\dots,Q_m\in\SP(n_m)$, the composition law is
\begin{equation}
\label{compositionlaw}
	P \diamond Q:=\gamma(P)\cup Q_1\cup Q_2+n_1\cup\dots\cup Q_m+n_1+\dots+n_{m-1},
\end{equation}
where $\gamma(i):=n_1+\dots+n_i$.
\end{defi}

\textbf{Example}.
1) \begin{align*}
	&\big\{\{1,2,3\}\big\} \diamond_3 \big(\big\{\{1\};\{2\}\big\}\big) 
	=\big\{\{1,2,5\};\{3\};\{4\}\big\}.
\end{align*}
Pictorially:
\begin{align*}
\begin{tikzpicture}[line cap=round,line join=round,>=triangle 45,x=0.3cm,y=0.3cm]
\clip(1.7,1.) rectangle (4.2,2.);
\draw [line width=0.8pt,] (2.,1.)-- (2.,2.);
\draw [line width=0.8pt,] (2.,1.)-- (4.,1.);
\draw [line width=0.8pt,] (3.,1.)-- (3.,2.);
\draw [line width=0.8pt,] (4.,1.)-- (4.,2.);
\end{tikzpicture}
\, \diamond_3 \;
\begin{tikzpicture}[line cap=round,line join=round,>=triangle 45,x=0.3cm,y=0.3cm]
\clip(1.8,1.) rectangle (4.2,2.);
\draw [line width=0.8pt,color=green] (2.,1.)-- (2.,2.);
\draw [line width=0.8pt,color=green] (3.,1.)-- (3.,2.);
\end{tikzpicture}
\! = \
\begin{tikzpicture}[line cap=round,line join=round,>=triangle 45,x=0.3cm,y=0.3cm]
\clip(3.8,1.) rectangle (14.2,3.);
\draw [line width=0.8pt] (4.,2.)-- (4.,1.);
\draw [line width=0.8pt] (4.,1.)-- (10.,1.);
\draw [line width=0.8pt] (10.,1.)-- (10.,2.);
\draw [line width=0.8pt] (7.,1.)-- (7.,2.);
\draw [line width=0.8pt,color=green] (8.,2.)-- (8.,3.);
\draw [line width=0.8pt,color=green] (9.,2.)-- (9.,3.);
\end{tikzpicture}
\end{align*}

2) 
\begin{align*}
&\big\{\{1,2,3\}\big\} \diamond 
\big(\big\{\{1\};\{2,3\}\big\}\;,\;\big\{\{1,2\}\big\}\;,\;\big\{\{1\};\{2\}\big\}\;,\;\big\{\{1,2,3,4\}\big\}\big)\\
&=\big\{\{1\};\{2,3\};\{4,7,10\};\{5,6\};\{8\};\{9\};\{11,12,13,14\}\big\}.
\end{align*}
Pictorially:
\begin{align*}
\begin{tikzpicture}[line cap=round,line join=round,>=triangle 45,x=0.3cm,y=0.3cm]
\clip(1.8,1.) rectangle (4.2,2.);
\draw [line width=0.8pt,] (2.,1.)-- (2.,2.);
\draw [line width=0.8pt,] (2.,1.)-- (4.,1.);
\draw [line width=0.8pt,] (3.,1.)-- (3.,2.);
\draw [line width=0.8pt,] (4.,1.)-- (4.,2.);
\end{tikzpicture}
\;\diamond\;
(\;\begin{tikzpicture}[line cap=round,line join=round,>=triangle 45,x=0.3cm,y=0.3cm]
\clip(1.8,1.) rectangle (4.2,2.);
\draw [line width=0.8pt,color=blue] (2.,1.)-- (2.,2.);
\draw [line width=0.8pt,color=blue] (4.,1.)-- (4.,2.);
\draw [line width=0.8pt,color=blue] (3.,1.)-- (3.,2.);
\draw [line width=0.8pt,color=blue] (3.,1.)-- (4.,1.);
\end{tikzpicture}
\;,\;
\begin{tikzpicture}[line cap=round,line join=round,>=triangle 45,x=0.3cm,y=0.3cm]
\clip(2.8,1.) rectangle (4.2,2.);
\draw [line width=0.8pt,color=red] (2.,1.)-- (2.,2.);
\draw [line width=0.8pt,color=red] (4.,1.)-- (4.,2.);
\draw [line width=0.8pt,color=red] (3.,1.)-- (3.,2.);
\draw [line width=0.8pt,color=red] (3.,1.)-- (4.,1.);
\end{tikzpicture}
\;,\;
\begin{tikzpicture}[line cap=round,line join=round,>=triangle 45,x=0.3cm,y=0.3cm]
\clip(2.8,1.) rectangle (4.2,2.);
\draw [line width=0.8pt,color=green] (2.,1.)-- (2.,2.);
\draw [line width=0.8pt,color=green] (4.,1.)-- (4.,2.);
\draw [line width=0.8pt,color=green] (3.,1.)-- (3.,2.);
\end{tikzpicture}
\;,\;
\begin{tikzpicture}[line cap=round,line join=round,>=triangle 45,x=0.3cm,y=0.3cm]
\clip(1.8,1.) rectangle (5.2,2.);
\draw [line width=0.8pt,color=orange] (2.,1.)-- (2.,2.);
\draw [line width=0.8pt,color=orange] (4.,1.)-- (4.,2.);
\draw [line width=0.8pt,color=orange] (3.,1.)-- (3.,2.);
\draw [line width=0.8pt,color=orange] (2.,1.)-- (5.,1.);
\draw [line width=0.8pt,color=orange] (5.,1.)-- (5.,2.);
\end{tikzpicture}\;)
\ = \
\begin{tikzpicture}[line cap=round,line join=round,>=triangle 45,x=0.3cm,y=0.3cm]
\clip(0.8,1.) rectangle (14.2,3.);
\draw [line width=0.8pt,color=blue] (1.,1.)-- (1.,2.);
\draw [line width=0.8pt,color=blue] (2.,1.)-- (2.,2.);
\draw [line width=0.8pt,color=blue] (2.,1.)-- (3.,1.);
\draw [line width=0.8pt,color=blue] (3.,1.)-- (3.,2.);
\draw [line width=0.8pt] (4.,2.)-- (4.,1.);
\draw [line width=0.8pt] (4.,1.)-- (10.,1.);
\draw [line width=0.8pt] (10.,1.)-- (10.,2.);
\draw [line width=0.8pt,color=red] (5.,2.)-- (6.,2.);
\draw [line width=0.8pt,color=red] (6.,2.)-- (6.,3.);
\draw [line width=0.8pt,color=red] (5.,2.)-- (5.,3.);
\draw [line width=0.8pt] (7.,1.)-- (7.,2.);
\draw [line width=0.8pt,color=green] (8.,2.)-- (8.,3.);
\draw [line width=0.8pt,color=green] (9.,2.)-- (9.,3.);
\draw [line width=0.8pt,color=orange] (11.,1.)-- (14.,1.);
\draw [line width=0.8pt,color=orange] (11.,1.)-- (11.,2.);
\draw [line width=0.8pt,color=orange] (12.,1.)-- (12.,2.);
\draw [line width=0.8pt,color=orange] (13.,1.)-- (13.,2.);
\draw [line width=0.8pt,color=orange] (14.,1.)-- (14.,2.);
\end{tikzpicture}
\end{align*}

As already observed, the set of noncrossing partitions is stable under the composition law of the operad $\SP$:

\begin{lemma}
  The sequence $\NCP(n):=\latNCP(n-1)$, equipped with the composition 
  law $\diamond$, defines a set operad called 
  the {\emph{noncrossing gap-insertion operad}}.
\end{lemma}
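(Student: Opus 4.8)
The plan is to verify that the operad axioms for $\SP$, already established (at least implicitly) in the preceding discussion, restrict to the subsequence $\NCP(n) = \latNCP(n-1) \subset \SP(n) = \latSP(n-1)$. Since $\NCP$ inherits the unit $\emptyset \in \latNCP(0) = \NCP(1)$, associativity, and unitality axioms from $\SP$ for free, the only thing requiring proof is \emph{closure}: that whenever $P$ and $Q$ are noncrossing, so is $P \diamond_i Q$ (equivalently $P \diamond (Q_1, \ldots, Q_m)$ when all arguments are noncrossing). This is exactly the point flagged as ``clear'' in the text preceding the lemma, so the task is to make that clear.

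First I would set up the bookkeeping. Write $P = \{\pi_1, \ldots, \pi_k\} \in \latNCP(m-1)$ and $Q = \{\rho_1, \ldots, \rho_l\} \in \latNCP(n-1)$, and recall the blocks of $P \diamond_i Q$ are $\chi(\pi_1), \ldots, \chi(\pi_k), \rho_1 + i - 1, \ldots, \rho_l + i - 1$, where $\chi$ is the order-preserving injection $[m-1] \hookrightarrow [m+n-2]$ that skips over the interval $J := \llbracket i, i+n-2 \rrbracket$ (i.e. $\chi(p) = p$ for $p < i$ and $\chi(p) = p + n - 1$ for $p \geq i$). The key structural observation is that the underlying set $[m+n-2]$ splits into the ``inner'' block $J$ (the image of the $Q$-part, namely $\{\,\rho_j + i - 1\,\}$, which sits inside $J$) and its complement $[m+n-2] \setminus J = \chi([m-1])$ (the image of the $P$-part), and that $J$ is a convex subset with \emph{no element of any $\chi(\pi_r)$ strictly between two elements of $J$} — because $J$ is an interval and $\chi$ avoids it.

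Now check the noncrossing condition by cases. Suppose $a < c < b < d$ with $a,b$ in one block and $c,d$ in another. The two blocks are of three possible types: both ``$P$-type'' $\chi(\pi_r)$; both ``$Q$-type'' $\rho_s + i - 1$; or one of each. If both are $P$-type, then since $\chi$ is order-preserving and injective, pulling back along $\chi$ produces a crossing in $P$, contradicting noncrossingness of $P$ (one must note $a,b,c,d$ all lie in $\chi([m-1])$, which holds because they lie in $P$-type blocks). If both are $Q$-type, the four elements lie in $J$; subtracting $i-1$ gives a crossing in $Q$, contradiction. The mixed case is where the interval structure does the work: if $a, b$ lie in a $P$-type block and $c, d$ in a $Q$-type block (or vice versa), then $c, d \in J$ while $a, b \notin J$; but $J$ is an interval, so $a < c < b$ forces $a < \min J \leq c \leq b$... more precisely $a$ and $b$ would have to lie on opposite sides of the interval $J$ while $c$ lies inside it, forcing also $d > b$ to lie inside $J$, i.e. $d \in J$ but then the block of $a,b$ being $P$-type means $a, b \in \chi([m-1])$ which is disjoint from $J$; writing out the inequalities $a < c < b$ with $c \in J$ and $a, b \notin J = \llbracket i, i+n-2 \rrbracket$ is impossible since $a < i \le c \le i+n-2 < b$ cannot hold simultaneously with $d$ in the same configuration — one checks directly that no assignment of $\{a,b\}$ to the $P$-side and $\{c,d\}$ to the $Q$-side (or the reverse) is compatible with $a<c<b<d$. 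This exhausts all cases, so $P \diamond_i Q$ is noncrossing.

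The only genuinely delicate step is the mixed case, and there the single fact one must isolate and state cleanly is: \emph{$J$ is an interval of $[m+n-2]$ disjoint from $\chi([m-1])$}, so a block contained in $J$ and a block contained in $\chi([m-1])$ can never cross (a block inside an interval cannot ``straddle'' anything outside it). Once that lemma-within-the-proof is stated, all three cases are one-liners. For the ``Equivalently'' form of the statement (full composition $P \diamond (Q_1, \ldots, Q_m)$), one either iterates the partial-composition result using the already-proven associativity, or repeats the same interval argument with the underlying set partitioned into the $m$ consecutive intervals hosting $Q_1, \ldots, Q_m$ plus the $\gamma$-image of $P$; the iteration route is cleanest and avoids any new work. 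I would therefore present the proof as: inheritance of all axioms from $\SP$ is immediate, closure under $\diamond_i$ follows from the interval observation and the three-case check above, and closure under the full $\diamond$ follows by iterating.
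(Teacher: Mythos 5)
Your proof is correct, and it formalizes exactly the reasoning the paper leaves implicit: the paper states this lemma without proof, appealing only to the earlier remark that insertion into a gap obviously preserves noncrossingness, which is precisely your key observation that the inserted copy of $Q$ occupies an interval $J=\llbracket i,i+n-2\rrbracket$ disjoint from $\chi([m-1])$, so a block inside $J$ cannot cross a block outside it, while same-type crossings would pull back to crossings in $P$ or $Q$. The inheritance of unit, unitality and associativity from $\SP$, and the reduction of the full composition $\diamond$ to iterated partial compositions, are handled as the paper would expect, so your write-up is a faithful (and complete) elaboration of the paper's one-line justification.
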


Since the composition laws $\diamond$ of the two operads $\SP$ and $\NCP$
work by insertion into gaps, and since neither the inclusion nor the 
noncrossing closure operator (see \ref{nc}) changes those gaps, the 
following is straightforward to check:

\begin{lemma}
  Both maps
\begin{tikzcd}
  \latNCP \ar[r, "i"', shift right] & 
  \latSP \ar[l, "\nc"', shift right]
\end{tikzcd}
induce morphisms of operads,
$$
\begin{tikzcd}
  \NCP \ar[r, "i"', shift right] & 
  \SP , \ar[l, "\nc"', shift right]
\end{tikzcd}
$$
exhibiting $\NCP$ as a retract of $\SP$.
\end{lemma}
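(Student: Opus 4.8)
The statement has two parts: first, that the two maps $i$ and $\nc$ on the level of sets lift to morphisms of operads $\NCP \to \SP$ and $\SP \to \NCP$; and second, that their composite $\NCP \xrightarrow{i} \SP \xrightarrow{\nc} \NCP$ is the identity, so that $\NCP$ is a retract of $\SP$. The plan is to verify each of these by unwinding the definition of the partial composition law $\diamond_i$, exploiting the key structural feature that $\diamond_i$ only inserts a block structure into a gap and relabels elements, without ever creating or destroying the gap structure of $P$ itself.

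First I would treat the inclusion $i \colon \latNCP \hookrightarrow \latSP$. Since $\NCP(n) = \latNCP(n-1)$ is by definition the \emph{subset} of $\SP(n) = \latSP(n-1)$ consisting of the noncrossing partitions, and the earlier lemma already records that $\NCP$ with the induced $\diamond$ is a sub-operad (the composition law restricts), the claim that $i$ is an operad morphism is immediate: it is the inclusion of a sub-operad, it sends the unit (empty partition) to the unit, and compatibility with $\diamond_i$ is just the assertion that the formula $P\diamond_i Q$ computed in $\SP$ agrees with the same formula computed in $\NCP$ — which is literally the same formula. So this direction is essentially formal once one has the sub-operad lemma.

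The substantive direction is that $\nc \colon \latSP \to \latNCP$ is an operad morphism, i.e.
$$
	\nc(P \diamond_i Q) = \nc(P) \diamond_i \nc(Q)
$$
for all $P \in \SP(m)$, $Q \in \SP(n)$, $1 \le i \le m$, and likewise $\nc$ sends the unit to the unit (which is clear, as the unit is already noncrossing). The idea is that $\diamond_i$ acts on the underlying set $[m-1+n-1]$ by the monotone relabelling $\chi$ together with placing the block structure of $Q$ on the interval $\llbracket i, i+n-1 \rrbracket$; two elements "cross" in $P \diamond_i Q$ exactly when either (a) their $\chi$-preimages cross in $P$, or (b) both lie in the inserted interval and their preimages cross in $Q$ — a crossing cannot straddle the inserted interval and its exterior because the interval is convex and $\chi$ is monotone. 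Since $\nc$ is characterized in the excerpt as "join any two blocks that cross", applying $\nc$ to $P \diamond_i Q$ therefore amounts to joining crossing blocks of $P$ (after relabelling) and, independently, joining crossing blocks of $Q$; and that is precisely $\nc(P) \diamond_i \nc(Q)$. I would make this rigorous by the following two observations, both stated as small lemmas: (i) $\nc$ is a closure operator that only ever joins blocks, hence commutes with the monotone relabelling $\chi$; (ii) for a partition of $[N]$ and a convex subset $I \subseteq [N]$, the crossing relation on the blocks decomposes as a disjoint union of the crossing relation "inside $I$" and the crossing relation "away from $I$ (with $I$ contracted to a point)". Feeding (i) and (ii) into the defining property of $\nc$ gives the identity.

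The main obstacle I anticipate is purely bookkeeping: making precise, with the reindexing map $\chi$ (and its global-composition avatar $\gamma$) fully explicit, the claim in observation (ii) that no crossing in $P \diamond_i Q$ can have one "foot" inside the inserted block and another outside it. This follows because the image of $Q$ occupies the convex interval $\llbracket i, i+n-1\rrbracket$ and $\chi$ is strictly monotone on its domain, so the four-element configuration $a < c < b < d$ witnessing a crossing must have all four elements inside that interval or all four outside it; but writing this out carefully over all the index ranges is the only place where genuine care is needed. Everything else — unit axioms, that $\nc(P)$ is noncrossing and has the same degree as $P$ (already noted in the excerpt), and the retract identity $\nc \circ i = \id$ (immediate, since $\nc(Q) = Q$ for $Q$ already noncrossing) — is routine.
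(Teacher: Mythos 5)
Your proposal is correct and is essentially the paper's own argument made explicit: the paper simply remarks that since the composition law $\diamond$ works by insertion into gaps and neither $i$ nor $\nc$ changes those gaps, the lemma is straightforward --- which is precisely the convexity/no-straddling observation you formalize, together with the trivial retraction identity $\nc\circ i=\id$. When writing it up, just make sure the closure step is phrased via the defining minimality of $\nc$ (namely $\nc(P)\diamond_i\nc(Q)$ is noncrossing, coarsens $P\diamond_i Q$, and lies below any noncrossing coarsening, as seen by restricting such a coarsening to the inserted interval and to its complement), since the informal ``join any two blocks that cross'' description is a priori iterative.
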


The noncrossing gap-insertion operad admits the following simple
presentation in terms of generators and relations.

\begin{prop}\label{prop:genrels}
For any $n\geq 2$, we put $p_n=\{[n-1]\} \in \NCP(n)$. Then the operad $(\NCP,\diamond)$ is generated by the elements $p_n$, $n\geq 2$, with the relations:
\begin{equation*}
	\forall m,n\geq 2,\qquad p_m \diamond_m p_n=p_n \diamond_1 p_m. 
\end{equation*}
\end{prop}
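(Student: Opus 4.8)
The plan is to identify $(\NCP,\diamond)$ with the operad presented by the stated generators and relations by means of a normal-form argument. Write $\mathcal F$ for the free non-symmetric set operad on one generator in each arity $n\ge 2$; concretely $\mathcal F(N)$ is the set of planar rooted trees with $N$ leaves all of whose internal vertices have at least two children, the trivial tree being the unit $I\in\mathcal F(1)$, and I keep the name $p_n$ for the generating $n$-corolla. By freeness there is a unique operad morphism $\Psi\colon\mathcal F\to\NCP$ with $\Psi(p_n)=p_n$, and a one-line application of the composition-law formula gives
$$
	p_m\diamond_m p_n \;=\; \{\{1,\dots,m-1\},\{m,\dots,m+n-2\}\} \;=\; p_n\diamond_1 p_m ,
$$
so $\Psi$ descends to $\bar\Psi\colon\mathcal F/\!\!\sim\ \to\NCP$, where $\sim$ is the operadic congruence generated by the pairs $(p_m\diamond_m p_n,\,p_n\diamond_1 p_m)$. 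The proposition amounts to the statement that $\bar\Psi$ is bijective.

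Next I would record the combinatorial decomposition driving both halves of the bijection. Given $P\in\latNCP(n)$ with $n\ge1$, let $B=\{1=b_1<\dots<b_{k-1}\}$ be the block containing $1$. Then $B$ is an outer block, and no other block of $P$ can meet two of the $k$ regions cut out by $B$ (the empty region to the left of $b_1$, the regions strictly between consecutive $b_i$, and the region to the right of $b_{k-1}$): a block doing so would, by non-crossing-ness, have to contain an element smaller than $b_1=1$. Hence
$$
	P=p_k\diamond(I,R_2,\dots,R_k),
$$
with $R_2,\dots,R_k$ the standardizations of the restrictions of $P$ to those regions, each of strictly smaller degree. Iterating produces a tree $\tau(P)$ with $\Psi(\tau(P))=P$, proving $\Psi$ --- and hence $\bar\Psi$ --- surjective. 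I would call a tree \emph{normal} if the first child of each internal vertex is a leaf; these are exactly the trees $\tau(P)$, and a degree induction (the root corolla of a normal tree recovers $B$, hence its arity, and the subtrees recover $R_2,\dots,R_k$) shows that $\Psi$ restricts to a \emph{bijection} from normal trees onto $\NCP$.

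For injectivity of $\bar\Psi$ the key claim I would prove is that every tree is $\sim$-equivalent to a normal one: granting this, if $\Psi(T)=\Psi(T')$ then normal representatives $N\sim T$ and $N'\sim T'$ satisfy $\Psi(N)=\Psi(N')$, hence $N=N'$ by the bijection above, hence $T\sim T'$. To prove the claim it suffices, by induction on the number of internal vertices, to show that $p_j\diamond(N_1,\dots,N_j)$ is $\sim$-normal whenever all the $N_i$ are normal; I would establish this by a second induction, on the number of internal vertices of $N_1$. If $N_1$ is a leaf the tree is already normal. Otherwise $N_1=p_\ell\diamond(I,M_2,\dots,M_\ell)$ with the $M_i$ normal, and peeling off the sub-composition $p_j\diamond_1 p_\ell$ by operad associativity, applying the relation in the form $p_j\diamond_1 p_\ell=p_\ell\diamond_\ell p_j$, and re-associating yields
$$
	p_j\diamond(N_1,N_2,\dots,N_j)\ \sim\ p_\ell\diamond\bigl(I,M_2,\dots,M_{\ell-1},\,p_j\diamond(M_\ell,N_2,\dots,N_j)\bigr).
$$
Since $M_\ell$ has fewer internal vertices than $N_1$, the inner induction rewrites $p_j\diamond(M_\ell,N_2,\dots,N_j)$ to a normal tree $N'$; substituting it on the right produces a tree whose root's first child is $I$ and all of whose proper subtrees are normal, that is, a normal tree. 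Together with surjectivity this gives the isomorphism.

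The formal scaffolding (the free operad, the congruence, the associativity manipulations, the verification that the relations hold) is routine; the substantive content, and the step I expect to be the real obstacle, is the last displayed rewriting together with the correct guess for the normal form. It is precisely the assertion that the various ``pull an outer block to the root'' presentations of a single noncrossing partition all differ by the one family of associativity-type relations $p_m\diamond_m p_n=p_n\diamond_1 p_m$, and the whole induction is bootstrapped from that single move.
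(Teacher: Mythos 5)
Your proposal is correct and follows essentially the same route as the paper: the same presentation morphism from the free operad on corollas, the same verification of the relation $p_m\diamond_m p_n=p_n\diamond_1 p_m$, the same surjectivity argument by peeling off the block containing $1$, and the same normal form (every internal vertex has a leaf as leftmost child), reached by the same relation-driven rewriting. The only divergence is the concluding step: the paper counts the normal trees ($cat_n$) and compares cardinalities with $\sharp\latNCP(n-1)$ to upgrade the surjection to a bijection, whereas you verify directly that the morphism is injective on normal trees via the root-block decomposition --- a mild variation that spells out the ``simple recursion'' the paper leaves implicit and avoids the Catalan enumeration.
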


This result (and its corollary below) is not essential
for later developments in this article, and we therefore limit ourselves to
a short proof assuming some familiarity with the theory of operads.
\begin{proof}
  Firstly, for any $m,n \geq 2$,
\begin{equation}\label{pn}
	p_m \diamond_m p_n=\{[m-1],[n-1]\}=p_n \diamond_1 p_m.
\end{equation}
We denote by $(\mathcal{Q},\circ)$ the non-symmetric operad generated by
the elements $q_m$, $m\geq 2$, with these relations. Classically, one
represents an $m$-ary operation $q_m$ by a corolla 
with $m$ leaves, and elements of the
non-symmetric operad freely generated by these elements by planar rooted
trees. Since (\ref{pn}) holds in $\NCP$, there
exists a unique operad morphism $\Theta:\mathcal{Q}\longrightarrow \NCP$
sending $q_m$ to $p_m$, for $m\geq 2$.

Let us prove that $\Theta$ is surjective. Let $\pi=\{\pi_1,\ldots ,
\pi_k\}\in \latNCP(k,n-1)$, we prove that it belongs to
$\Theta(\mathcal{Q}(n))$ using induction on $k$. If $k=1$, we have
$\pi=\Theta(q_n)$. Otherwise, assume that the block containing $1$
is $\pi_1$ and that it is of cardinality $l-1$. Then there exist
noncrossing partitions $\pi^{(2)},\ldots,\pi^{(l)}$ such that
\[
	\pi=p_l\diamond(I,\pi^{(2)},\ldots,\pi^{(l)}).
\]
The number of blocks of $\pi^{(i)}$ is strictly smaller than $k$, so for any $i$ there exist $q^{(i)}\in \mathcal{Q}$ such that $\pi^{(i)}=\Theta(q^{(i)})$. Hence:
\[
	\pi=\Theta(q_l)\diamond (I,\Theta(q^{(2)}),\ldots, \Theta(q^{(l)}))=\Theta(q_l\circ (I,q^{(2)},\ldots,q^{(l)})).
\]

Since for any $n$, the $n$th component of the free non-symmetric operad generated by the elements $q_n$ identifies with the set of planar rooted trees with $n$ leaves, to obtain $\mathcal{Q}$, one has to quotient by the relations
\[
\begin{tikzpicture}[line cap=round,line join=round,>=triangle 45,x=0.5cm,y=0.5cm]
\clip(2.,1.) rectangle (6.,4.);
\draw [line width=0.8pt] (4.,1.)-- (4.,2.);
\draw [line width=0.8pt] (4.,2.)-- (3.,3.);
\draw [line width=0.8pt] (3.,3.)-- (4.,4.);
\draw [line width=0.8pt] (3.,3.)-- (2.,4.);
\draw [line width=0.8pt] (4.,2.)-- (4.,3.);
\draw [line width=0.8pt] (4.,2.)-- (6.,3.);
\draw (2.5,4.2) node[anchor=north west] {{\small \textit{m}}};
\draw (4.,3.5) node[anchor=north west] {{\small \textit{n}-1}};
\end{tikzpicture}
=\begin{tikzpicture}[line cap=round,line join=round,>=triangle 45,x=0.5cm,y=0.5cm]
\clip(2.,1.) rectangle (6.,4.);
\draw [line width=0.8pt] (4.,1.)-- (4.,2.);
\draw [line width=0.8pt] (4.,2.)-- (5.,3.);
\draw [line width=0.8pt] (5.,3.)-- (6.,4.);
\draw [line width=0.8pt] (5.,3.)-- (4.,4.);
\draw [line width=0.8pt] (4.,2.)-- (2.,3.);
\draw [line width=0.8pt] (4.,2.)-- (4.,3.);
\draw (4.5,4.2) node[anchor=north west] {{\small \textit{n}}};
\draw (2.4,3.6) node[anchor=north west] {{\small \textit{m}-1}};
\end{tikzpicture},
\]
so (by a simple recursion) the quotient $\mathcal{Q}(n)$ identifies with
the set of planar rooted trees with $n$ leaves such that for any internal
vertex $v$, the leftmost child of $v$ is a leaf. The number of such trees
is the Catalan number $cat_n =\frac{1}{n}{2n-2\choose n-1}$, so 
$\sharp(\mathcal{Q}(n))\leq
cat_n=\sharp(\NCP(n))$. Therefore, $\Theta$ is bijective.
\end{proof}

\begin{cor}
  $\NCP$-algebras are vector spaces $V$ carrying for any $n\geq 2$ an
  $n$-multilinear map $\langle-,\ldots,-\rangle$ such that for any $m,n \geq 2$,
  and $x_1,\ldots,x_{m+n-1} \in V$:
\[
	\langle x_1,\ldots,x_{m-1},\langle x_m,\ldots,x_{m+n-1}\rangle\rangle
	=\langle \langle x_1,\ldots,x_m \rangle,x_{m+1},\ldots,x_{m+n-1}\rangle.
\]
\end{cor}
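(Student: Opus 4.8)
The plan is to read the description of $\NCP$-algebras straight off the presentation established in Proposition~\ref{prop:genrels}, using the universal property of an operad given by generators and relations. Recall that an algebra over a nonsymmetric operad $\mathcal P$ amounts to a vector space $V$ together with a morphism of operads $\mathcal P \to \operatorname{End}_V$, where $\operatorname{End}_V(n) = \operatorname{Hom}(V^{\otimes n},V)$; concretely this is a family of $n$-linear maps $p\colon V^{\otimes n}\to V$, one for each $p\in\mathcal P(n)$, compatible with the partial compositions $\diamond_i$ and sending the operad unit $I$ to $\id_V$. Since $\NCP$ is reduced and $\NCP(1)=\{I\}$, there is no data in arities $0$ and $1$ beyond the identity map, so all the structure of an $\NCP$-algebra is concentrated in arities $\geq 2$.

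First I would invoke the presentation. By Proposition~\ref{prop:genrels} the operad $\NCP$ is isomorphic to the operad $\mathcal Q$ obtained from the free nonsymmetric operad on one generator $q_n$ in each arity $n\geq 2$ by quotienting by the operadic ideal generated by the elements $q_m\diamond_m q_n-q_n\diamond_1 q_m$, $m,n\geq 2$. By the universal property of such a quotient, giving a $\mathcal Q$-algebra structure on $V$ is the same as giving an algebra structure over the free operad --- that is, an \emph{arbitrary} family of $n$-linear maps $\langle-,\dots,-\rangle\colon V^{\otimes n}\to V$, $n\geq 2$, with no constraints --- for which the operations corresponding to $q_m\diamond_m q_n$ and to $q_n\diamond_1 q_m$ coincide in $\operatorname{End}_V$ for all $m,n\geq 2$. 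Composing with the isomorphism $\NCP\cong\mathcal Q$ then identifies $\NCP$-algebras with such constrained families.

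It remains to spell out what that coincidence says. Under $q_m\mapsto\langle-,\dots,-\rangle$ (the $m$-ary bracket), the element $q_m\diamond_m q_n=q_m\circ(I,\dots,I,q_n)$ maps to the composite $V^{\otimes(m+n-1)}\to V$ that first applies the $n$-ary bracket to the last $n$ tensor factors and then applies the $m$-ary bracket to the resulting $m$ factors, i.e.\ $x_1\otimes\cdots\otimes x_{m+n-1}\mapsto\langle x_1,\dots,x_{m-1},\langle x_m,\dots,x_{m+n-1}\rangle\rangle$. Symmetrically, $q_n\diamond_1 q_m=q_n\circ(q_m,I,\dots,I)$ maps to $x_1\otimes\cdots\otimes x_{m+n-1}\mapsto\langle\langle x_1,\dots,x_m\rangle,x_{m+1},\dots,x_{m+n-1}\rangle$. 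Equating the two composites is exactly the displayed identity, and conversely any family of brackets satisfying it yields a well-defined operad morphism $\mathcal Q\to\operatorname{End}_V$, hence (via $\NCP\cong\mathcal Q$) an $\NCP$-algebra. This establishes the claimed characterization.

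The only delicate points are bookkeeping: one must check that the isomorphism of Proposition~\ref{prop:genrels} is compatible with the algebra-structure functors --- which is automatic, as it is an isomorphism of operads --- and keep track of index ranges so that $\diamond_m$ (insertion into the last slot) translates into the inner bracket occupying the \emph{right} end of the outer bracket, while $\diamond_1$ (insertion into the first slot) translates into it occupying the \emph{left} end. No axiom of arity $0$ or $1$ intervenes, since the operad is reduced with one-element arity-one part, so the single family of relations above is a complete list.
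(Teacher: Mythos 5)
Your proposal is correct and is essentially the paper's (implicit) argument: the corollary is read off directly from the presentation in Proposition~\ref{prop:genrels}, since an algebra over an operad given by generators and relations is a vector space with one multilinear operation per generator subject to the relations, and the single family of relations $p_m\diamond_m p_n=p_n\diamond_1 p_m$ translates precisely into the displayed bracket identity. Your bookkeeping of which bracket is outer/inner under $\diamond_m$ versus $\diamond_1$ is also correct.
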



\subsection{Induced brace algebra structures}
\label{ssect:algstructures}

The operad structure on $\SP$ and $\NCP$ induces various algebraic
structures on the free vector spaces. The ones in this section
are not directly used in this paper, but we mention them
briefly, for the sake of completeness, and since a brace algebra structure
on fundamental objects such as partitions or noncrossing partitions can be
expected to have meaningful applications (recall that brace algebras are
useful in the study of algebraic structures up to homotopy, for example
on the Hochschild complex \cite{gv}). The reader is referred to
\cite{FoissyOperads} for details, proofs and further references.

\bigskip

Denote by $T(V):=\bigoplus_{n\in\N} V^{\otimes n}$ the tensor 
algebra over $V$, and by $T^+(V):=\bigoplus_{n\in\N^\ast} V^{\otimes n}$ 
its augmentation ideal. We use word notation $v_1\dots v_n$ for 
the tensors $v_1\otimes \cdots \otimes v_n$ in $T(V)$, and write
$\mathbf{1} \in V^{\otimes 0}$ for the empty word.

\begin{defi} 
  A \emph{brace algebra} is a vector space $V$ equipped with a linear map: $\{ -,-
  \}$ from $V\otimes T(V)$ to $V$ such that:
  \begin{itemize}

  \item $\forall v \in V,\ \{v,\mathbf{1}\}=v$.

  \item $\forall v,y_1,\dots,y_k\in V, \forall w \in T(V):$
\[
	  \{\{v,y_1\dots y_k\},w\}=\sum\limits_{w=w_1\dots w_{2k+1}}
	  \{v,w_1\{y_1,w_2\}w_3\dots w_{2k-1}\{y_k,w_{2k}\}w_{2k+1}\},
\]
  where the sum on the right runs over all decompositions of the word $w$
  as a concatenation product of (possibly empty) subwords.
\end{itemize}
\end{defi}

Brace algebras were introduced to encode the properties of composition laws 
in operads. In particular (see \cite[Chap.~3]{FoissyOperads} for a proof):

\begin{lemma}
The vector spaces $\SP$ and $\NCP$ spanned by partitions respectively 
noncrossing partitions, carry a brace algebra structure by the following operations: $\forall p\in {\SP}(n),\ p_1,\dots,p_k \in {\SP}$:
$$
	\{p,p_1\dots p_k\}:=\sum\limits_{1\leq i_1<\cdots < i_k\leq n}p \circ (I,\dots, I,p_1, I,\dots, I,p_k, I,\dots, I), 
$$
where on the right-hand side $p_l$ is located in position $i_l$.
\end{lemma}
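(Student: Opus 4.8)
The plan is to verify directly that the stated operations $\{p, p_1 \cdots p_k\}$ on the free vector spaces $\SP$ and $\NCP$ satisfy the two brace axioms, and to note that the well-definedness (in particular the stability of the noncrossing condition under the relevant compositions) is already guaranteed by the operad structures established earlier in the section. Since the formula for $\{-,-\}$ on $\NCP$ is the restriction of the one on $\SP$, and $\NCP$ is a sub-operad of $\SP$ (indeed a retract, by the preceding lemma), it suffices to treat $\SP$; the $\NCP$ case follows by restriction. So the real content is a single computation in the operad $\SP$.

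First I would unwind the first axiom: for $v \in V$, $\{v, \mathbf{1}\}$ is by definition the empty sum convention giving just $v \circ (I, \ldots, I) = v$ by the operad unit axiom, so this is immediate. The substance is the second axiom. Here I would expand the left-hand side $\{\{v, y_1 \cdots y_k\}, w\}$ using the definition twice: the inner brace $\{v, y_1\cdots y_k\}$ is a sum over increasing tuples $1 \le i_1 < \cdots < i_k \le n$ of $v \circ (I,\ldots, y_1, \ldots, y_k, \ldots, I)$ (where $n$ is the arity of $v$ and $y_l$ sits in slot $i_l$), and then the outer brace inserts the letters of $w = z_1 \cdots z_m$ into increasing slots of this composite operation, whose arity is $n - k + \sum_l \mathrm{ar}(y_l)$. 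The key combinatorial observation is that each slot of the composite $v \circ (\ldots)$ is either a slot of $v$ (the $n-k$ slots not occupied by the $y_l$'s) or a slot internal to one of the inserted $y_l$'s; choosing an increasing tuple of target slots for the letters of $w$ therefore amounts precisely to cutting $w$ into $2k+1$ consecutive blocks $w = w_1 \cdots w_{2k+1}$, where the odd-indexed blocks $w_1, w_3, \ldots, w_{2k+1}$ get distributed among the slots of $v$ and the even-indexed block $w_{2j}$ gets distributed among the slots of $y_j$. Using the operad associativity axiom to reassociate, the piece of $w$ landing inside $y_j$ reassembles into $\{y_j, w_{2j}\}$, while the odd pieces land directly in $v$, and summing over all the interleavings reproduces exactly the right-hand side $\sum_{w = w_1 \cdots w_{2k+1}} \{v, w_1 \{y_1, w_2\} w_3 \cdots \{y_k, w_{2k}\} w_{2k+1}\}$.

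The step I expect to be the main obstacle — or at least the only place demanding genuine care — is the bookkeeping of the index shifts and the verification that the decomposition of target-slot tuples into ``$(2k+1)$ consecutive blocks of $w$'' is a bijection compatible with the operad associativity relation. One must check that increasingness of the global tuple of insertion positions corresponds exactly to consecutiveness of the blocks $w_i$ (the $y_j$-slots all lie, in the composite, between the slots of $v$ immediately to the left and right of position $i_j$, and among themselves are ordered as in $y_j$), and that no spurious terms or multiplicities appear. This is exactly the content of \cite[Chap.~3]{FoissyOperads}, where brace algebras are derived from operads in precisely this way; so rather than grind through the reindexing I would invoke that reference for the general statement that any operad $\mathcal{P}$ gives a brace algebra on $\bigoplus_n \mathcal{P}(n)$ via this formula, and simply observe that $\SP$ and $\NCP$ are operads (established above) and hence instances of it. The only paper-specific remark needed is that the noncrossing property is preserved, which is immediate since $\NCP \hookrightarrow \SP$ is an operad map.
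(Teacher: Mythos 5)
Your proposal is correct and matches the paper's treatment: the paper gives no independent argument for this lemma but simply cites \cite[Chap.~3]{FoissyOperads} for the general fact that an operad induces a brace algebra via exactly this insertion formula, which is where you also land. Your preliminary sketch of the slot-counting bijection (cutting $w$ into $2k+1$ consecutive blocks and reassociating via operad associativity) is the standard verification underlying that reference and contains no gaps.
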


For example, for  $p=\{\{1,2\}\}$, $p_1=\{\{1\}\}$, $p_2=\{\{1,2,3\}\}$, we get
$$
	\{p,p_1 p_2\}=\{\{2,6\},\{1\},\{3,4,5\}\}+\{\{2,3\},\{1\},\{4,5,6\}\}+\{\{1,3\},\{2\},\{4,5,6\}\}.
$$

The deconcatenation coproduct of a word
$$
	\Delta(v_1\dots v_n):= v_1\dots v_n \otimes \mathbf{1} + \mathbf{1} \otimes v_1\dots v_n 
	+ \sum\limits_{i=1}^{n-1} v_1\dots v_i\otimes v_{i+1}\dots v_n
$$
equips $T(V)$ with the structure of a connected graded coalgebra.

If an associative product $\ast : T(V) \otimes T(V) \to T(V)$ with unit
$\mathbf 1$ equips $(T(V),\ast,\Delta)$ with the structure of a bialgebra
and the restrictions of $\ast$ (on the image) to maps $\{\ ,\ 
\}_{n,m} : V^{\otimes n}\otimes V^{\otimes m} \to V$ are null when $n>1$ and $m
> 0$, then the maps $\{\ ,\ \}_{1,m}: V \otimes V^{\otimes m} \to V$ define a brace
algebra structure on $V$. The converse assertion is also true. Indeed, a
brace algebra structure on $V$ defines uniquely a bialgebra structure on
$T(V)$ with these properties (the two categories are equivalent, as
follows from the fact that $(T(V),\Delta)$ is the cofree (coaugmented 
conilpotent) coassociative coalgebra over $V$ in the category of connected 
graded coalgebras).

\begin{prop}
  The product $\ast$ on $T(\SP )$ (and, by restriction, on $T(\NCP)$) is
  obtained from the brace operations as follows. Given a non-empty sequence
  of set partitions $Q_1,\dots ,Q_n$ written in word notation $W=Q_1\dots
  Q_n$, then for all $P_1,\dots,P_k\in \SP$:
  $$
	P_1 \dots P_k \ast W:=\sum\limits_{W=W_1 \dots W_{2k+1}}
	W_1 \{P_1,W_2\}W_3 \dots W_{2k-1}\{P_k,W_{2k}\}W_{2k+1}.
  $$
\end{prop}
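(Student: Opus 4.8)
The plan is to exploit the uniqueness built into the equivalence recalled just above: there is exactly one associative unital product $\ast$ on $T(\SP)$ for which $(T(\SP),\ast,\Delta)$ is a bialgebra, for which the components $\{-,-\}_{n,m}\colon\SP^{\otimes n}\otimes\SP^{\otimes m}\to\SP$ of $\pi\circ\ast$ --- where $\pi\colon T(\SP)\to\SP$ is the projection onto the length-one part --- vanish whenever $n>1$ and $m>0$, and for which $\{-,-\}_{1,m}$ is the brace operation of the Lemma above. So it suffices to check that the bilinear map $\ast'$ defined by the displayed formula --- with the convention $\mathbf 1\ast'W=W$ and $P_1\cdots P_k\ast'\mathbf 1=P_1\cdots P_k$, which is incidentally the value of the formula when all subwords $W_j$ are empty, since $\{P,\mathbf 1\}=P$ --- enjoys all these properties; then $\ast'=\ast$. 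The statement for $T(\NCP)$ follows at once, the brace operations on $\NCP$ being the restrictions of those on $\SP$ (the operad composition preserving noncrossingness), so that $T(\NCP)$ is a subcoalgebra of $T(\SP)$ stable under $\ast$.

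First I would dispose of the easy points. Unitality of $\mathbf 1$ for $\ast'$ is immediate. For the projection to the cogenerators, observe that a term $W_1\{P_1,W_2\}W_3\cdots\{P_k,W_{2k}\}W_{2k+1}$ has word-length $k+\sum_j|W_{2j+1}|$, which equals $1$ only if $k=1$, $W_1=W_3=\mathbf 1$ and $W_2=W$; hence $\pi(P_1\cdots P_k\ast'W)$ is $\{P_1,W\}$ for $k=1$ and $0$ for $k>1$, so $\pi\circ\ast'$ has components of the prescribed form. For the bialgebra compatibility, I would apply the deconcatenation coproduct to each term $W_1\{P_1,W_2\}\cdots\{P_k,W_{2k}\}W_{2k+1}$: a cut falls either inside one of the spacer subwords $W_{2i+1}$ or immediately next to a braced letter, so it singles out an index $i\in\{0,\dots,k\}$ and a splitting $W_{2i+1}=W'W''$; reorganising the resulting double sum --- over decompositions of $W$ and over cuts --- by first recording the induced splitting $W=VV'$ and then decomposing $V$ and $V'$ separately yields
\[
\Delta\big(P_1\cdots P_k\ast'W\big)=\sum_{i=0}^{k}\ \sum_{W=VV'}\big(P_1\cdots P_i\ast'V\big)\otimes\big(P_{i+1}\cdots P_k\ast'V'\big),
\]
which is precisely the value forced by multiplicativity of $\Delta$ for $\ast'$. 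Compatibility with the counit being trivial, $\ast'$ together with the unit $\mathbf 1$ makes $(T(\SP),\Delta)$ a bialgebra, i.e.\ $\ast'$ is a coalgebra morphism $T(\SP)^{\otimes 2}\to T(\SP)$.

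The remaining and main point is associativity of $\ast'$, and here I would avoid a direct manipulation of the formula by invoking cofreeness. Since $\ast'$ is a coalgebra morphism and $\mathbf 1$ is a unit, both $(P,Q,R)\mapsto(P\ast'Q)\ast'R$ and $(P,Q,R)\mapsto P\ast'(Q\ast'R)$ are composites of coalgebra morphisms, hence coalgebra morphisms $T(\SP)^{\otimes 3}\to T(\SP)$; as $T(\SP)^{\otimes 3}$ is a connected graded conilpotent coalgebra and $(T(\SP),\Delta)$ is cofree among such coalgebras, the two maps coincide as soon as their composites with $\pi$ do. By the length count above these composites vanish unless the left argument is a single partition $P_1$, in which case only the length-one part $\{P_1,Q\}$ of $P_1\ast'Q$ survives, so that $\pi\big((P_1\ast'Q)\ast'R\big)=\{\{P_1,Q\},R\}$, whereas $\pi\big(P_1\ast'(Q\ast'R)\big)=\{P_1,Q\ast'R\}$; expanding $Q\ast'R$ by the defining formula turns the latter into the right-hand side of the second brace axiom with $v=P_1$, $y_i=Q_i$ and $w=R$, so the two projections agree exactly by that axiom. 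Associativity of $\ast'$ follows, whence $\ast'=\ast$ and the proposition is proved. The one step calling for genuine care is the reorganisation of sums in the coalgebra-morphism verification; everything else is formal once the brace relation of the Lemma is in hand.
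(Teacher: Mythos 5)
Your proposal is correct. Note that the paper does not actually prove this proposition: Subsection~3.2 defers all proofs to \cite{FoissyOperads}, and the statement is implicitly justified by the asserted equivalence (via cofreeness of $(T(V),\Delta)$ among connected graded coalgebras) between brace structures on $V$ and bialgebra products on $T(V)$ whose corestriction $\pi\circ\ast$ to $V$ kills $V^{\otimes n}\otimes V^{\otimes m}$ for $n>1$, $m>0$. Your argument supplies exactly the missing details, in the spirit the paper intends: you verify that the displayed formula $\ast'$ is unital, has the prescribed corestriction (the length count), is a coalgebra morphism for deconcatenation --- the reorganisation of the double sum via the bijection between cuts of a term and triples (index $i$, splitting $W=VV'$, decompositions of $V$ and $V'$ into $2i+1$ resp.\ $2(k-i)+1$ factors) is the right bookkeeping and does work out --- and is associative, where the reduction by cofreeness to the single identity $\{\{P_1,Q\},R\}=\{P_1,Q\ast'R\}$ is precisely the second brace axiom; uniqueness, which the paper asserts (and which follows from the same cofreeness, since unitality pins down the remaining components of $\pi\circ\ast$), then gives $\ast'=\ast$, and stability of $T(\NCP)$ is clear since the braces restrict. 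The only blemish is the sentence claiming the two corestricted triple products ``vanish unless the left argument is a single partition'': for left argument $\mathbf 1$ they need not vanish, but they coincide outright by unitality, so the cofreeness comparison still goes through; this is a one-line fix, not a gap.
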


\begin{rem}
The product $\ast$ is a non-commutative shuffle product (also called 
dendriform product), meaning that it splits into two half-shuffle products ($\ast =\prec +\succ$):
\begin{align*}
	P_1\dots P_k\prec W 
	&:=\sum\limits_{W=W_1\dots W_{2k}}\{P_1,W_1\}W_2\dots W_{2k-2}\{P_k,W_{2k-1}\}W_{2k},\\
	P_1\dots P_k\succ W 
	&:=\sum\limits_{W=W_1\dots W_{2k+1} \atop W_1\not
	=\emptyset}W_1\{P_1,W_2\}W_3\dots W_{2k-1}\{P_k,W_{2k}\}W_{2k+1}
\end{align*}
that satisfy the Eilenberg--Mac Lane shuffle relations:
\begin{align}	
	(U\prec V)\prec W&=U\prec (V\ast W) 	\label{shuf1}\\
	U\succ (V\prec W)&=(U\succ V)\prec W 	\label{shuf2}\\
	U\succ (V\succ W)&=(U\ast V)\succ W. 	\label{shuf3}
\end{align}
\end{rem}

These constructions are functorial. In particular, the embedding of $\NCP$
into $\SP$ and its left inverse, the noncrossing closure, induce 
homomorphisms of
brace algebras (and of the associated algebraic structures).


\subsection{Induced bialgebras and Hopf algebras}
\label{ssect:Hopfstruct}

To any (non-symmetric) operad $\mathcal{P}$ (subject to suitable finiteness
conditions), there is associated a canonical bialgebra, whose algebra
structure is the free algebra on the set of all operations. (This is
sometimes called the incidence bialgebra of the operad.) The coproduct of
an operation $R$ is defined as
$$
	\Delta(R) = \sum_{R=P \circ (Q_1,\ldots, Q_k)} P \otimes Q_1 \cdots Q_k.
$$
In plain words, the coproduct is given by summing over all the ways $R$
could have arisen from the composition law, and then putting the receiving
operation $P$ on the left and the monomial of all the operations fed into 
$P$ on the right. This bialgebra is graded (by arity-minus-one), but is never
connected, and hence not Hopf.  Indeed, degree zero contains all monomials
in the operad unit.
But one can obtain a Hopf algebra by passing to the quotient 
defined by dividing out by the coideal generated by $u-\mathbf{1}$ for all
unary operations $u$.  (In the cases of interest here, the only unary 
operation is the empty partition.)

These constructions can be described in various formal ways, for example
following from a general process
described in \cite{FoissyOperads}, applied to the symmetrization of 
$\mathcal{P}$ (the constructions are denoted $\mathbf{D}_\mathcal{P}^*$
and $\mathbf{B}_\mathcal{P}^*$ in that article), or via the so-called
two-sided bar construction, as exploited in~\cite{Kock-Weber}.

There will thus be four bialgebras, of which two are Hopf:
\begin{itemize}
  
  \item For general set partitions:
we shall denote by $\Bgap$ the bialgebra induced by the operad $\SP$,
and by $\Hgap$ its connected quotient.

\item For noncrossing partitions: we shall denote by $\Bgapnc$
the sub-bialgebra of $\Bgap$ induced by the sub-operad $\NCP \subset \SP$,
and by $\Hgapnc$ its connected quotient (a sub Hopf algebra of $\Hgap$).
\end{itemize}
Altogether these four bialgebras fit together by bialgebra homomorphisms like this:
$$
\begin{tikzcd}
	\Bgapnc \ar[r, hookrightarrow] \ar[d, twoheadrightarrow] & \Bgap \ar[d, twoheadrightarrow] \\
	\Hgapnc \ar[r, hookrightarrow] & \Hgap
\end{tikzcd}
$$
The Hopf algebra $\Hgapnc$ associated to $\NCP$ was introduced in \cite{EFP16}.
The arguments given there could be adapted to establish the bialgebra
axioms also for the other three cases.

Our aim here is to describe these bialgebras and Hopf algebras in purely
elementary and combinatorial terms for the particular cases of the
set-partitions operad $\SP$ and the noncrossing partitions operad $\NCP$
(in both cases with the gap-insertion composition law).

\bigskip

  As an algebra, $\Bgap$ is the free associative algebra generated by
  $\latSP_0$: it is spanned linearly by sequences of partitions, some of
  which are possibly empty. The algebra $\Hgap$ is the free associative algebra
  generated by $\latSP$, identified with the quotient of $\Bgap$ by the ideal
  generated by $\emptyset-\mathbf{1}$.

  As an algebra, $\Bgapnc$ is the free associative algebra generated by
  $\latNCP_0$: it is spanned linearly by sequences of noncrossing
  partitions, some of which are possibly empty. The algebra $\Hgapnc$ is the free
  associative algebra generated by $\latNCP$, identified with the quotient
  of $\Bgapnc$ by the ideal generated by $\emptyset-\mathbf{1}$.

The products on $\Bgap$, $\Bgapnc$, $\Hgapnc$ and $\Hgap$ are denoted by $\cdot$. 
In order to describe the coproducts $\Delta_0$ and $\Delta$ of
$\Bgap$ respectively $\Hgap$ (they restrict to the coproducts on the 
bialgebra $\Bgapnc$ and the Hopf algebra $\Hgapnc$ associated to $\NCP$), we need 
some further preliminaries regarding partitions.

\bigskip

Let $P = \{\pi_1,\ldots,\pi_k\}$ be a partition of a linearly ordered set $X$. The
set of blocks of $P$ carries a reflexive relation defined by declaring $\pi \Pto
\rho$ to mean that $\Conv(\pi) \cap \rho \neq \emptyset$. In plain words,
$\pi \Pto \rho$ means that either $\rho$ is nested inside $\pi$ or that the
two blocks cross. In the latter case we have also $\rho \Pto \pi$, which
shows that in general the relation $\Pto$ is not antisymmetric. We also write abusively $\Pto$ for the transitive closure of the relation (which
does not always define a poset). This preorder will play an essential role
for allowing the noncrossing and general partitions to be treated on equal
footing in the following algebraic constructions.

Note immediately that

\begin{lemma}
  A partition $P$ is noncrossing if and only if the associated preorder
  $\Pto$ is actually a poset (i.e.~is an antisymmetric relation).
\end{lemma}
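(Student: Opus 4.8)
The plan is to prove the two implications separately, the ``only if'' direction being essentially immediate and the ``if'' direction requiring a short argument about convex hulls.

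For the ``only if'' direction I would argue by contraposition: if $P$ is crossing, I exhibit a failure of antisymmetry already for the basic relation $\Pto$, hence a fortiori for its transitive closure. Choosing distinct blocks $\pi,\rho$ of $P$ and elements $a,b\in\pi$, $c,d\in\rho$ with $a<c<b<d$, the inequalities $a<c<b$ give $c\in\Conv(\pi)\cap\rho$, so $\pi\Pto\rho$, and $c<b<d$ gives $b\in\Conv(\rho)\cap\pi$, so $\rho\Pto\pi$; since $\pi\neq\rho$, antisymmetry fails.

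For the converse I assume $P$ noncrossing and aim to show the transitive closure of $\Pto$ is antisymmetric. The one place where genuine work is needed is the claim that for \emph{distinct} blocks $\pi,\rho$ of a noncrossing partition, $\pi\Pto\rho$ forces $\Conv(\rho)\subsetneq\Conv(\pi)$. To prove it I would start from an element $c\in\rho\cap\Conv(\pi)$, note that disjointness of blocks upgrades $\min\pi\le c\le\max\pi$ to $\min\pi<c<\max\pi$, and then exclude both $\max\rho>\max\pi$ and $\min\rho<\min\pi$ by exhibiting explicit crossing quadruples (in the first case $\min\pi<c<\max\pi<\max\rho$ with $\min\pi,\max\pi\in\pi$ and $c,\max\rho\in\rho$, contradicting noncrossing; the other case is symmetric). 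Strictness of the inclusion follows since $\Conv(\rho)=\Conv(\pi)$ would force $\min\rho=\min\pi$, impossible for disjoint blocks. This convex-hull claim is the main obstacle, but it is just this short two-case check.

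Granting the claim, I would conclude with a weight argument. Set $f(\pi):=\sharp\Conv(\pi)$; then $f$ never increases along a single step $\sigma\Pto\sigma'$ and strictly decreases whenever $\sigma\neq\sigma'$. Hence any walk $\pi=\sigma_0\Pto\cdots\Pto\sigma_m=\rho$ with $\pi\neq\rho$ must contain at least one strict step, so $f(\pi)>f(\rho)$; this is incompatible with the existence of a walk from $\rho$ back to $\pi$, so no pair of distinct blocks can be related in both directions in the transitive closure. Together with reflexivity (built in) and transitivity (automatic for a transitive closure), this shows $\Pto$ is a poset, completing the proof.
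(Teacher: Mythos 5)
Your proof is correct, and both directions check out: the crossing quadruple $a<c<b<d$ indeed gives $\pi\Pto\rho$ and $\rho\Pto\pi$ for distinct blocks, killing antisymmetry of the base relation and hence of its transitive closure; and in the noncrossing case your key claim that $\pi\Pto\rho$ with $\pi\neq\rho$ forces $\Conv(\rho)\subsetneq\Conv(\pi)$ is proved correctly (the two excluded cases $\max\rho>\max\pi$ and $\min\rho<\min\pi$ each produce an explicit crossing, and disjointness of blocks gives both the strict inequalities $\min\pi<c<\max\pi$ and the strictness of the inclusion). For comparison: the paper offers no argument at all --- the lemma is introduced with ``Note immediately that'' --- the implicit reasoning being that for a noncrossing partition $\pi\Pto\rho$ can only mean nesting, and nesting is visibly antisymmetric. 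What you add is the precise content behind that word ``immediately'': the strict convex-hull inclusion, together with the weight $f(\pi)=\sharp\Conv(\pi)$ that strictly decreases along any nontrivial step, is exactly the right bookkeeping to rule out cycles in the \emph{transitive closure} (not just two-element cycles in the base relation), which is the one point where a careless reading could leave a gap. So your write-up is a complete and slightly more careful version of the paper's intended one-line observation.
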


The standard notions of lowerset and upperset for a preorder will be important for $\Pto$, so let us recall:

\begin{defi}[Upperset and lowerset]\label{upperset-lowerset}
An \emph{upperset} of a partition $P$ is a subset $U$ of the
set of blocks such that if $\pi \in U$ and $\pi\Pto \rho$ in $P$ then also
$\rho\in U$. In plain words, if a block is in $U$ then all nested blocks
and all blocks crossing it are also in $U$. 

Similarly, a \emph{lowerset} of
$P$ is a subset $L$ of the set of blocks such that if $\pi \in U$ and
$\sigma\Pto \pi$ in $P$ then also $\sigma\in U$. In plain words, if a
block is in $L$ then all englobing blocks and all blocks crossing it are
also in $L$.
\end{defi}

\begin{rem}
If a partition $P$ is noncrossing, then all its lowersets and uppersets are again noncrossing.
\end{rem}

The following is obvious (and holds in any preorder).
\begin{lemma}\label{UcLc}
  If $U$ is an upperset of $P$ then the complement set of blocks $U^c$ is a lowerset.
  If $L$ is a lowerset of $P$ then the complement set of blocks $L^c$ is an upperset. 
\end{lemma}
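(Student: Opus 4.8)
The statement is a general fact about preorders: it holds for uppersets and lowersets of the relation $\Pto$ regardless of the failure of $\Pto$ to be antisymmetric, so the plan is simply to unwind the definitions in Definition~\ref{upperset-lowerset} and argue by contraposition. To see that $U^c$ is a lowerset when $U$ is an upperset, I would take a block $\pi$ with $\pi \in U^c$ and a block $\sigma$ with $\sigma \Pto \pi$, and show $\sigma \in U^c$. If instead $\sigma \in U$, then, since $U$ is an upperset and $\sigma \Pto \pi$, the defining property of an upperset forces $\pi \in U$, contradicting $\pi \in U^c$. Hence $\sigma \in U^c$, which is precisely the defining condition of a lowerset.

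The second assertion follows from the first by the evident symmetry: interchanging the source and target of $\Pto$ swaps the notions of upperset and lowerset, so the same argument, now with $L$ in place of $U$ and with the roles of the two arguments of $\Pto$ exchanged, shows that $L^c$ is an upperset whenever $L$ is a lowerset. No genuine obstacle is expected; the only point requiring care is to keep straight the direction conventions of Definition~\ref{upperset-lowerset} --- an upperset is closed under ``going up'' along $\Pto$ and a lowerset under ``going down'' --- after which both claims are immediate and mutually dual.
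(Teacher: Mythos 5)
Your proof is correct and matches the paper's treatment: the paper simply remarks that the statement is obvious and holds in any preorder, and your contraposition argument (together with the duality for the second half) is exactly the routine unwinding of Definition~\ref{upperset-lowerset} that this remark leaves implicit.
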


\begin{defi}[Cut]
  A \emph{cut} of a partition $P$ is a splitting of the set of blocks into a lowerset $L$
  and an upperset $U$. We write $(L,U) \in \cut(P)$ to express this
  situation.  Note that by Lemma~\ref{UcLc} a cut is completely determined
  by specifying either a lowerset or an upperset.
\end{defi}
The following picture illustrates the notion of cut:
  \begin{center}
	\begin{tikzpicture}[scale=0.90]
	  \draw [line width=1.0pt,] (0.0,0.5)--(0.0,0.0)--(2.5,0.0)--(2.5,0.5);
	  \draw [line width=1.0pt,] (3.0,0.5)--(3.0,0.0)--(4.0,0.0)--(4.0,0.5);
	  
	  \draw [line width=1.0pt,] (0.5,0.9)--(0.5,0.4)--(1.5,0.4)--(1.5,0.9);
	  \draw [line width=1.0pt,] (2.0,0.9)--(2.0,0.4);
	  \draw [line width=1.0pt,] (3.5,0.9)--(3.5,0.4);

	  \draw [line width=1.0pt,] (1.0,1.3)--(1.0,0.8);
	    
	  \draw [line width=0.3pt, color=red] 
		(-1.0,0.7) .. controls (1.2,1.0) and (-0.7,0.15) ..
		(1.0,0.15) .. controls (2.4,0.15) and (1.3,1.1) ..
		(2.0,1.1) .. controls (3.8,1.1) and (2.8,0.15) ..
		(3.5,0.15) .. controls (4.0,0.15) and (3.5,1.2) ..
		(5.0,0.7);
			 			 
	  \node at (0.0,-0.3) {\tiny $1$};
	  \node at (0.5,-0.3) {\tiny $2$};
	  \node at (1.0,-0.3) {\tiny $3$};
	  \node at (1.5,-0.3) {\tiny $4$};
	  \node at (2.0,-0.3) {\tiny $5$};
	  \node at (2.5,-0.3) {\tiny $6$};
	  \node at (3.0,-0.3) {\tiny $7$};
	  \node at (3.5,-0.3) {\tiny $8$};
	  \node at (4.0,-0.3) {\tiny $9$};

	  \end{tikzpicture}
  \end{center}

\begin{rem}
\label{cor:fundam1}
  If $P$ is a partition with a cut $(L,U)$, and if $\pi$ and $\pi'$
  are two crossing blocks, then either they both belong to $L$ or they
  both belong to $U$.
\end{rem}

\begin{cor}
\label{cor:samecuts}
If two partitions have the same noncrossing closure (as in \ref{nc}), then there is a natural bijection between their sets of cuts.
\end{cor}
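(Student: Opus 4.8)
The plan is to prove the apparently stronger statement that for a \emph{single} partition $P$ there is a canonical bijection $\cut(P)\cong\cut(\nc(P))$; the corollary then follows by composing
$\cut(P)\cong\cut(\nc(P))=\cut(\nc(P'))\cong\cut(P')$, the middle equality being literal since $\nc(P)=\nc(P')$. First I would recast the data: by the definition of a cut together with Lemma~\ref{UcLc}, a cut of a partition $Q$ is exactly a lowerset of the preorder $\Pto$ on the set $\bl(Q)$ of blocks. Let $q\colon\bl(P)\to\bl(\nc(P))$ send each block of $P$ to the block of $\nc(P)$ containing it. The easy first observation is that $q$ is monotone for the preorders $\Pto$: if $\Conv(\pi)$ meets $\rho$ then $\Conv(q(\pi))\supseteq\Conv(\pi)$ meets $\rho\subseteq q(\rho)$, so $q(\pi)\Pto q(\rho)$. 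Hence pulling back lowersets along $q$ gives a map $q^{*}\colon\cut(\nc(P))\to\cut(P)$, and it is injective because $q$ is surjective. Everything then reduces to proving $q^{*}$ \emph{surjective}: that every lowerset $L$ of $\bl(P)$ is a union of fibres of $q$, and that its image $q(L)$ is a lowerset of $\bl(\nc(P))$.

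The crux, which I expect to be the main obstacle, is to identify the fibres of $q$: I claim the blocks of $\nc(P)$ are exactly the \emph{crossing-connected components} of the blocks of $P$, where two blocks of $P$ are connected when joined by a chain of blocks of $P$ in which consecutive ones cross. Equivalently, the partition $\widetilde P$ whose blocks are these components is already noncrossing; granting that, $\widetilde P$ is a noncrossing coarsening of $P$, and since any noncrossing coarsening must merge crossing blocks, minimality forces $\widetilde P=\nc(P)$. I would prove that $\widetilde P$ is noncrossing from a handful of elementary lemmas about $\Pto$: (a) for \emph{distinct} blocks $\sigma,\tau$, one has $\sigma\Pto\tau$ and $\tau\Pto\sigma$ simultaneously if and only if $\sigma$ and $\tau$ cross (a short case analysis comparing $\min\sigma$ with $\min\tau$); (b) if $\sigma\Pto\tau\Pto\upsilon$ and $\sigma,\tau$ do not cross, then $\sigma\Pto\upsilon$ (by (a) one gets $\sigma\cap\Conv(\tau)=\emptyset$ and hence $\Conv(\tau)\subseteq\Conv(\sigma)$, so an element of $\upsilon$ inside $\Conv(\tau)$ lies inside $\Conv(\sigma)$); (c) therefore, inducting on length and using (b) to bypass non-crossing consecutive pairs, every directed cycle for $\Pto$ lies inside a single crossing-connected component; and (d) if $A$ is such a component and $x\in\Conv(A)\setminus A$, then some block $\pi\subseteq A$ has $x\in\Conv(\pi)$ (otherwise the blocks of $A$ would split into those lying entirely left of $x$ and those entirely right of $x$, with no crossings between the two parts, contradicting connectedness). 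With these in hand: if $\widetilde P$ had distinct crossing blocks $A,B$ witnessed by $a<c<b<d$ with $a,b\in A$ and $c,d\in B$, then (d) applied to $A$ at $c\in\Conv(A)$ and to $B$ at $b\in\Conv(B)$, chained with crossing-paths inside $A$ and inside $B$, produces a $\Pto$-cycle meeting a block of $A$ and a block of $B$; by (c) those are crossing-connected, so $A=B$, a contradiction.

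Once the fibres of $q$ are known to be the crossing-connected components, the rest is routine. Each fibre consists of mutually crossing-connected blocks of $P$, so by Remark~\ref{cor:fundam1} applied repeatedly along a crossing-chain all blocks of a fibre lie on the same side of any given cut of $P$; hence every lowerset $L$ of $\bl(P)$ is a union of fibres, and so is its complement. Moreover $q(L)$ is a lowerset of $\bl(\nc(P))$: given $\beta\in q(L)$ and $\beta'\Pto\beta$, choose $x\in\Conv(\beta')\cap\beta$; if $\beta'\ne\beta$ then $x\notin\beta'$, and (d) applied to the component $\beta'$ at $x$ yields a block $\pi'\subseteq\beta'$ with $\pi'\Pto\pi$, where $\pi$ is the block of $P$ containing $x$, necessarily a subset of $\beta$; since $\pi\in L$ and $L$ is a lowerset, $\pi'\in L$, and by fibre-saturation the whole of $q^{-1}(\beta')$ lies in $L$, i.e.\ $\beta'\in q(L)$ (the transitive-closure case follows step by step). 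Thus $q^{*}$ is a bijection, which proves the corollary; naturality is immediate because $q$ is defined canonically. The difficulty is entirely concentrated in the fibre identification, i.e.\ in lemmas (a)--(d) showing that the crossing-connected components of a partition form a noncrossing partition; once that fact is available, the bijection is pure bookkeeping with monotone maps and lowersets.
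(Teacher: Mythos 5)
Your overall reduction is sound and is essentially the elaboration the paper leaves implicit (the paper states the corollary without proof, on the strength of Remark~\ref{cor:fundam1} and the description of $\nc$ as ``joining any two blocks that cross''): identifying cuts with lowersets for $\Pto$, transporting them along the block-collapse map $q$, the saturation of lowersets by fibres, and your lemmas (a), (b), (d) are all correct. The genuine gap is in your justification of (c). The statement of (c) is true, but the induction ``bypass non-crossing consecutive pairs using (b)'' does not prove it: when you shortcut $\sigma_i\Pto\sigma_{i+1}\Pto\sigma_{i+2}$ to $\sigma_i\Pto\sigma_{i+2}$, the induction hypothesis applies to the shorter cycle and says nothing about the discarded block $\sigma_{i+1}$, and there is no local way to put it back, since a block contained in $\Conv(\sigma_i)$ without crossing $\sigma_i$ need not lie in $\sigma_i$'s crossing-component (e.g.\ $\{\{1,4\},\{2,3\}\}$). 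So what the induction actually yields is only that the blocks \emph{surviving} the reduction lie in one component.

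Worse, in your application this loss is unavoidable: the two bridge steps $\pi\Pto\rho_c$ and $\rho\Pto\pi_b$ are automatically non-crossing, because blocks of distinct crossing-components never cross; and after deleting $\rho_c$ the new step from $\pi$ to the next block of the $B$-chain is again a step between different components, hence again non-crossing. The reduction therefore eats the entire $B$-side of the cycle and terminates with a cycle inside $A$, giving a vacuous conclusion instead of $A=B$. To repair the crux, replace the cycle lemma by a propagation argument: if a block $\rho$ of another component meets $\Conv(\pi)$ for some block $\pi\subseteq A$, then, being unable to cross $\pi$, it satisfies $\rho\subseteq\Conv(\pi)$; any block crossing $\rho$ meets $\Conv(\rho)\subseteq\Conv(\pi)$ and is likewise swallowed, so iterating along crossing chains the whole component $B$ lies in $\Conv(\pi)$, in fact in a single gap of $\pi$; the crossing witness then puts an element $b$ of $A$ inside $\Conv(B)$, and the symmetric propagation forces all of $A$ into $\Conv(B)$, which is impossible because the endpoints of that gap belong to $\pi\subseteq A$ but lie outside $\Conv(B)$. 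With that substitution (or by simply quoting the standard fact that the blocks of $\nc(P)$ are the crossing-connected components, which the paper takes as read), the rest of your argument goes through.
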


The following more refined version of the complement $L^c$ is specific to partitions, and is the
key to giving a direct combinatorial description of the coproduct $\Delta_0$.

\begin{lemma}
\label{Ui}
A lowerset $L$ of $P$ defines canonically a list of uppersets $U_0,\ldots,U_k$ whose union is $L^c$.  (Here $k$ is the cardinality of the underlying set of $\cup_{\pi\in L} \pi$.)
\end{lemma}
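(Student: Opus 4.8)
The plan is to realise the $U_i$ as the sets of blocks of $L^c$ that fall into the successive \emph{gaps} cut out in the underlying linearly ordered set $X$ of $P$ by $Y := \bigcup_{\pi \in L} \pi$. Concretely, I would put $k := \sharp Y$, list the elements of $Y$ in increasing order as $y_1 < \cdots < y_k$ inside $X$, and define, with the conventions $y_0 = -\infty$ and $y_{k+1} = +\infty$,
\[
	G_i := \{\, x \in X \mid y_i < x < y_{i+1} \,\}, \qquad i = 0, \dots, k,
\]
each a (possibly empty) convex subset of $X$, so that $X \setminus Y = G_0 \sqcup \cdots \sqcup G_k$. Since every block of $L^c$ is disjoint from every block of $L$, it is disjoint from $Y$, hence contained in $X \setminus Y$; one then sets $U_i := \{\, \rho \in L^c \mid \rho \subseteq G_i \,\}$, so that visibly $\bigcup_{i=0}^k U_i = L^c$ once we know each block lands in some $G_i$.

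The first key step is that each block $\rho \in L^c$ lies in \emph{exactly one} $G_i$; equivalently, no block of $L^c$ straddles an element of $Y$. This is the one place the lowerset hypothesis is used. If $\rho$ met both $G_i$ and $G_j$ with $i < j$, then $\rho$ would have elements on both sides of $y_j$, so $y_j \in \Conv(\rho)$; taking $\pi \in L$ with $y_j \in \pi$ we get $y_j \in \Conv(\rho) \cap \pi$, i.e.\ $\rho \Pto \pi$, and since $L$ is a lowerset and $\pi \in L$ this would force $\rho \in L$, contradicting $\rho \in L^c$. Hence the $U_i$ are in fact pairwise disjoint with union $L^c$.

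The second step is that each $U_i$ is an upperset of $P$. Suppose $\rho \in U_i$ and $\rho \Pto \tau$ in $P$. First, $\tau \in L^c$, because $L^c$ is an upperset by Lemma~\ref{UcLc} and $\rho \in L^c$. Second, $\rho \subseteq G_i$ and $G_i$ convex give $\Conv(\rho) \subseteq G_i$, while $\rho \Pto \tau$ gives $\Conv(\rho) \cap \tau \neq \emptyset$; hence $\tau$ meets $G_i$, and by the first step the unique gap containing $\tau$ is $G_i$, i.e.\ $\tau \in U_i$. Thus $U_i$ is closed under $\Pto$, hence under its transitive closure, and is therefore an upperset. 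As the construction used only $P$ and $L$, the list $(U_0, \dots, U_k)$ is canonical, and $k = \sharp\bigl(\bigcup_{\pi\in L}\pi\bigr)$ as required.

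I do not anticipate a real obstacle: once one views things through the gaps of $Y$, the construction is forced. The only point needing care is the first step — that a block of $L^c$ cannot straddle a point of $Y$ — which is precisely where a crossing, i.e.\ a failure of antisymmetry of $\Pto$, might a priori interfere. It does not, for the same reason as in Remark~\ref{cor:fundam1}: any block straddling $y_j$ is automatically $\Pto$-related to the block of $L$ through $y_j$, so cannot itself lie in $L^c$.
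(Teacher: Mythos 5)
Your proof is correct and follows essentially the same route as the paper: you cut the complement of the underlying set of $L$ into the $k+1$ gaps determined by its $k$ points, use the lowerset condition to show no block of $L^c$ straddles one of these points, and identify each $U_i$ with the blocks falling in the $i$th gap. The paper's proof does exactly this (with $D_i$ in place of your $G_i$), merely compressing your second step into the observation that the restriction of the upperset $L^c$ to each interval is again an upperset.
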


For the picture above, the resulting list of uppersets is
$$
	\emptyset \ \cdot 
  \raisebox{-15pt}{
	\begin{tikzpicture}[scale=0.70]
	  \draw [line width=1.0pt,] (0.0,0.5)--(0.0,0.0)--(1.0,0.0)--(1.0,0.5);
	  \draw [line width=1.0pt,] (0.5,0.9)--(0.5,0.4);
	  \node at (0.0,-0.3) {\tiny $2$};
	  \node at (0.5,-0.3) {\tiny $3$};
	  \node at (1.0,-0.3) {\tiny $4$};
	\end{tikzpicture}
	}
	\cdot\
		\emptyset \ \cdot \
	\emptyset \ \cdot 
  \raisebox{-15pt}{
	\begin{tikzpicture}[scale=0.70]
	  \draw [line width=1.0pt,] (0.0,0.5)--(0.0,0.0);
	  \node at (0.0,-0.3) {\tiny $8$};
	\end{tikzpicture}
	}
\cdot
		\ \emptyset 
$$
(with monomial ``dot'' notation, as will be used later).

\begin{proof}
  Let $\{x_1,\ldots,x_k\}$ denote the underlying set of the lowerset $L$.
  Since the underlying set $[n]$ of $P$ is linearly ordered, these $k$ points
  define $k+1$ (possibly empty) intervals in $[n]$, denoted $D_0,\ldots,D_k$, where
  $D_0 := \{ y \in [n] \mid y < x_1 \}$ and $D_k := \{ y \in [n] \mid
  y > x_k\}$. The intervening 
  intervals are $D_i := \{ y \in [n] \mid x_i < y < x_{i+1} \}$.
  The promised uppersets are simply the restrictions $U_i := P_{|D_i}$, for $i=0,\ldots,k$.
  To see that this is meaningful, note first that
  each $D_i$ is a union of blocks of $P$, because the lowerset condition on 
  $L$ prevents the blocks in the complement from straddling any of the 
  points $x_i$. Second, since we are inside the complement  
  $U := L^c$, we have $U_i = P_{|D_i} = U_{|D_i}$, and the 
  restriction of an upperset is an upperset.
\end{proof}

\begin{rem}
  Note that the $U_i$ are either empty or are the convex 
  components of the underlying set of $U$ in the sense of Definition \ref{conv}.
\end{rem}

\begin{defi}[Gap monomial]\label{gap-monomial}
  For $U$ an upperset, we define the 
  \emph{gap monomial}
  to be the monomial in $\Bgap$ given by
  $$
  	\mst{U} := \st{U_0} \cdots \st{U_k}
  $$
  Here, $U_0,\ldots,U_k$ is the list of uppersets defined by the lowerset
  $U^c$ as in Lemma~\ref{Ui}.
  
  We also define the \emph{reduced gap monomial} $\rmst{U}$
  to be the class of $\mst{U}$ in $\Hgap$, that is, the monomial
  obtained from $\mst{U}$ by omitting those 
  entries in the list that are empty partitions.
  This can also be characterized as the monomial of
  the convex components of the underlying set of $U$.
\end{defi}

With the concepts and notations just introduced, we can now 
give the following interpretation of the operadic composition law:
\begin{quote}
  $P \diamond (Q_0,\ldots,Q_k) = R$
  \\[4pt]
  if and only if
  \\[4pt]
  $P$ is a lowerset of $R$ with complement list of uppersets 
  $Q_0,\ldots,Q_k$
\end{quote}
This leads to a description of the coproduct in explicit combinatorial terms:

\begin{prop}
  The coproduct of the incidence bialgebra $\Bgap$ of the gap-insertion operad is 
  given by
  $$ 
  	\Delta_0(P) = \sum_{(L,U) \in \cut(P)} \st{L} \otimes \mst{U}.
  $$
\end{prop}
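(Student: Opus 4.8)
The plan is to unwind the general definition of the incidence bialgebra coproduct of an operad and match it term by term against the combinatorial sum over cuts. First I would specialise the general formula $\Delta(R)=\sum_{R=S\circ(Q_1,\dots,Q_r)}S\otimes Q_1\cdots Q_r$ to the gap-insertion operad $\SP$: for a partition $P\in\latSP(n)$, viewed as an operation of arity $n+1$, the coproduct $\Delta_0(P)$ is the sum, over all ways of writing $P=S\diamond(Q_0,\dots,Q_k)$ with $S$ a partition of degree $k$ and $Q_0,\dots,Q_k$ partitions whose degrees add up to $n-k$, of the terms $S\otimes Q_0\cdots Q_k$.

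The heart of the argument is to establish the dictionary displayed just before the statement: decompositions $P=S\diamond(Q_0,\dots,Q_k)$ are in natural bijection with the lowersets $L$ of $P$, under which $S$ is the standard representative of $L$ and $(Q_0,\dots,Q_k)$ is precisely the complement list of uppersets $(U_0,\dots,U_k)$ attached to $L$ by Lemma~\ref{Ui} (with $U=L^c$). To prove this I would read off the explicit composition law~(\ref{compositionlaw}): every block of $S\diamond(Q_0,\dots,Q_k)$ is either the $\gamma$-reindexing of a block of $S$ or a shifted block of exactly one $Q_i$; the ``$S$-blocks'' are exactly those meeting the $k$ reindexed points, and those points cut $[n]$ into the $k+1$ intervals $D_0,\dots,D_k$ which carry, up to shift, the partitions $Q_0,\dots,Q_k$. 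Two verifications remain. For the forward direction, the set of $S$-blocks is a lowerset: no $Q_i$-block $\sigma$ can satisfy $\sigma\Pto\pi$ for an $S$-block $\pi$, because $\Conv(\sigma)$ stays inside the interval $D_i$ while $\pi$ is disjoint from $D_i$, so $\Conv(\sigma)\cap\pi=\emptyset$ (and being down-closed for the relation is equivalent to being down-closed for its transitive closure). For the backward direction, given a lowerset $L$ the intervals $D_0,\dots,D_k$ produced by Lemma~\ref{Ui} from the underlying set of $L$ reconstruct a unique decomposition with $S$ the standard representative of $L$ and $Q_i$ the standard representative of $P_{|D_i}$; and these two assignments are visibly mutually inverse.

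It then remains only to repackage the sum. By Lemma~\ref{UcLc} a lowerset $L$ of $P$ is the same datum as a cut $(L,U)\in\cut(P)$, so the index set becomes $\cut(P)$. By Definition~\ref{gap-monomial} the right-hand tensor factor is $Q_0\cdots Q_k=\st{U_0}\cdots\st{U_k}=\mst{U}$ with $U=L^c$, while the left-hand factor is $S=\st{L}$. Assembling these identifications gives $\Delta_0(P)=\sum_{(L,U)\in\cut(P)}\st{L}\otimes\mst{U}$, as claimed. The identical computation restricts verbatim to $\Bgapnc$, since by the remark following Definition~\ref{upperset-lowerset} all lowersets and uppersets of a noncrossing partition are again noncrossing.

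The step I expect to be the main obstacle is the bijective dictionary of the second paragraph — specifically, checking that an arbitrary operadic decomposition of $P$ really produces a $\Pto$-closed set of blocks (the convex-hull argument) and that the reconstruction of a decomposition from a cut is well defined and inverse to the former. Everything else is bookkeeping with the definitions of the gap monomial $\mst{U}$ and of $\cut(P)$.
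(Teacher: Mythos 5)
Your proposal is correct and follows exactly the route the paper takes implicitly: the paper states the dictionary ``$P \diamond (Q_0,\ldots,Q_k)=R$ iff $P$ is a lowerset of $R$ with complement list of uppersets $Q_0,\ldots,Q_k$'' without proof and lets the proposition follow, while you verify that dictionary (the convex-hull argument for the forward direction, Lemma~\ref{Ui} for the reconstruction) and then do the same repackaging via $\cut(P)$ and the gap monomial. No gaps; your filled-in bijection is precisely the content the paper leaves to the reader.
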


\noindent
Here the right-hand tensor factor is  the gap monomial of 
Definition~\ref{gap-monomial}.

\begin{cor}
\label{coro:subHopfalg}
  The partitions $J_n:=\{\{1\},\ldots,\{n\}\} \in \latSP(n,n)$ generate
  the sub-bialgebra $\Bgap^|$ in $\Bgap$.
\end{cor}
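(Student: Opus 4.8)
The plan is to show that the subalgebra $\Bgap^{|}$ generated by the partitions $J_n$ is closed under the coproduct $\Delta_0$ (adopting the convention $J_0=\emptyset$, so that the empty partition is among the generators — this is forced, since empty partitions occur inside gap monomials). Because $\Delta_0$ is an algebra homomorphism, it then suffices to check that $\Delta_0(J_n)\in\Bgap^{|}\otimes\Bgap^{|}$ for every $n$; granting this, $\Bgap^{|}$ is a subcoalgebra, and being a unital subalgebra it is automatically a sub-bialgebra.

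First I would determine the preorder $\Pto$ on the blocks of $J_n$. Each block is a singleton $\{i\}$, so $\Conv(\{i\})=\{i\}$ and $\{i\}\Pto\{j\}$ holds precisely when $i=j$; hence the preorder is discrete. Consequently every subset of the set of blocks is simultaneously a lowerset and an upperset, so $\cut(J_n)$ consists of all pairs $(L,L^{c})$ with $L$ an \emph{arbitrary} subset of $\{\{1\},\dots,\{n\}\}$.

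Next, fix such a cut and let $\{x_1<\dots<x_j\}\subseteq[n]$ be the underlying set of $L$. On the left-hand tensor factor of $\Delta_0$, the partition $\st{L}$ is the restriction of $J_n$ to this $j$-element set, which is again discrete, hence isomorphic to $J_j$ and so in $\Bgap^{|}$. On the right-hand factor, Lemma~\ref{Ui} and Definition~\ref{gap-monomial} give $\mst{L^{c}}=U_0\cdots U_j$, where $U_i=(J_n)_{|D_i}$ and $D_0,\dots,D_j$ are the $j+1$ gap intervals of $[n]$ cut out by $x_1,\dots,x_j$. Each $U_i$ is again a discrete partition, so $U_i\cong J_{\sharp D_i}$, and therefore $\mst{L^{c}}$ is a product of generators $J_m$, hence lies in $\Bgap^{|}$.

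Combining the two computations, every term $\st{L}\otimes\mst{L^{c}}$ of $\Delta_0(J_n)$ belongs to $\Bgap^{|}\otimes\Bgap^{|}$, which finishes the argument. I do not expect a genuine obstacle: the whole proof is an unwinding of the descriptions of $\cut$ and of the gap monomial specialized to the discrete partitions. The only point needing a little care is the bookkeeping — remembering that $J_0=\emptyset$ must be allowed as a generator so that the (possibly empty) entries of a gap monomial remain inside $\Bgap^{|}$; one can, if desired, record the outcome as the closed formula $\Delta_0(J_n)=\sum_{j=0}^{n}\ \sum_{a_0+\cdots+a_j=n-j} J_j\otimes J_{a_0}\cdots J_{a_j}$.
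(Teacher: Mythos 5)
Your proof is correct and follows essentially the same route as the paper: both arguments observe that every cut of $J_n$ has a forest-of-sticks lowerset and a gap monomial of forests of sticks (with $J_0$ identified with the empty partition), yielding the same closed formula for $\Delta_0(J_n)$ as in the paper, just reindexed by compositions instead of subsets $L\subseteq[n]$. Your extra remark that the preorder $\Pto$ on $J_n$ is discrete, so cuts are arbitrary subsets of blocks, is a fine elaboration of what the paper calls ``clear''.
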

We refer to the finest partitions $J_n$ colloquially as ``forests of sticks''.

\begin{proof}
  This is clear, as the lowerset and upperset corresponding to a cut in a
  partition $J_n$ are again such a partition, respectively a monomial of
  such partitions. Hence, the coproduct can be written
$$
	\Delta_0(J_n)=\sum_{L \subseteq [n]} 
	\st{J_{\sharp  L}} \otimes \st{J_{n_0}} \cdot\  \cdots \cdot\ \st{J_{n_{\sharp  L}}}. 
$$
Here $J_0$ is identified with the empty set. As an illustration:
\begin{center}
 \begin{tikzpicture}[scale=0.90]
	\draw [line width=1.0pt,] (0.0,0.5)--(0.0,0.0);
	\draw [line width=1.0pt,] (0.5,0.5)--(0.5,0.0);
	\draw [line width=1.0pt,] (1.0,0.5)--(1.0,0.0);
	\draw [line width=1.0pt,] (1.5,0.5)--(1.5,0.0);
	\draw [line width=1.0pt,] (2.0,0.5)--(2.0,0.0);
	\draw [line width=0.3pt, color=red]
	  (-0.7,0.7) .. controls (1.25+0.8,1.0) and (1.25-1.4,-0.2) ..
	  (1.25,-0.2) .. controls (1.25+1.4,-0.2) and (1.25-0.8,1.0) ..
	  (2.7,0.7);	
	\node at (0.0,-0.4) {\tiny $1$};
	\node at (0.5,-0.4) {\tiny $2$};
	\node at (1.0,-0.4) {\tiny $3$};
	\node at (1.5,-0.4) {\tiny $4$};
	\node at (2.0,-0.4) {\tiny $5$};
 \end{tikzpicture}
\end{center}
corresponds to the tensor product  
$$
 \raisebox{-12pt}{
	\begin{tikzpicture}[scale=0.70]
	  \draw [line width=1.0pt,] (0.0,0.5)--(0.0,0.0);
	  \draw [line width=1.0pt,] (0.5,0.5)--(0.5,0.0);
	  \draw [line width=1.0pt,] (1.3,0.5)--(1.3,0.0);
	  \node at (0.0,-0.3) {\tiny $1$};
	  \node at (0.5,-0.3) {\tiny $2$};
	  \node at (1.3,-0.3) {\tiny $5$};
	\end{tikzpicture}
 }
 \quad
 \bigotimes
 \quad
 \ \emptyset \ \cdot \ \emptyset \ \cdot
 \raisebox{-12pt}{
	\begin{tikzpicture}[scale=0.70]
	  \draw [line width=1.0pt,] (0.0,0.5)--(0.0,0.0);
	  \draw [line width=1.0pt,] (0.5,0.5)--(0.5,0.0);
	  \node at (0.0,-0.3) {\tiny $3$};
	  \node at (0.5,-0.3) {\tiny $4$};
	\end{tikzpicture}
 }
 \cdot \ \emptyset
$$
in the coproduct $\Delta_0(J_5)$.
\end{proof}

Recall that the Hopf algebra $\Hgap$ is obtained from $\Bgap$ by 
quotiening by 
the ideal generated by $\emptyset - \mathbf{1}$. 

\begin{prop}
  The coproduct of the reduced incidence Hopf algebra $\Hgap$ of the gap-insertion operad is 
  given (for $P$ a nonempty partition) by:
  $$
  	\Delta(P) = \sum_{(L,U) \in \cut(P)} \st{L} \otimes \rmst{U}.
  $$
  The forest of sticks, $\Hgap^|$, form a sub Hopf algebra in $\Hgap$. 
\end{prop}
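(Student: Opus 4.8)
The plan is to obtain both assertions by transporting the already-established description of $\Delta_0$ on $\Bgap$ along the canonical surjection $q\colon\Bgap\twoheadrightarrow\Hgap$. Recall that $q$ is a bialgebra homomorphism (this is part of the set-up of $\Hgap$ as the connected quotient of $\Bgap$, obtained by dividing out the ideal generated by $\emptyset-\mathbf{1}$), that it restricts to the identity on non-empty standard partitions (after the usual identification of generators), and that it sends the empty partition to $\mathbf{1}$. Hence for a non-empty partition $P$ we have $q(P)=P$, and, using the preceding proposition for $\Delta_0$,
\[
  \Delta(P)=\Delta\bigl(q(P)\bigr)=(q\otimes q)\bigl(\Delta_0(P)\bigr)
  =\sum_{(L,U)\in\cut(P)} q(\st{L})\otimes q(\mst{U}).
\]
In the left tensor factor, $q(\st{L})=\st{L}$ when $L$ is a non-empty lowerset and $q(\st{L})=\mathbf{1}$ when $L=\emptyset$; in either case we keep writing $\st{L}$ for its class in $\Hgap$. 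In the right factor, applying $q$ to the gap monomial $\mst{U}=\st{U_0}\cdots\st{U_k}$ (the list of uppersets of Lemma~\ref{Ui}) deletes exactly the entries equal to the empty partition and leaves the others untouched, because $q$ identifies only $\emptyset$ with $\mathbf{1}$ and is injective on the remaining generators; by Definition~\ref{gap-monomial} this is precisely the reduced gap monomial $\rmst{U}$. This proves the formula. As a check, the extreme cuts, where $L$ is empty resp.\ contains all blocks of $P$, contribute the terms $\mathbf{1}\otimes P$ and $P\otimes\mathbf{1}$, as they must in a connected graded bialgebra.

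For the second assertion I would invoke Corollary~\ref{coro:subHopfalg}: the forests of sticks $J_n$ span a sub-bialgebra $\Bgap^|\subseteq\Bgap$. Set $\Hgap^|:=q(\Bgap^|)$; since the image of a sub-bialgebra under a bialgebra homomorphism is again a sub-bialgebra, $\Hgap^|$ is a sub-bialgebra of $\Hgap$, namely the subalgebra generated by the (non-empty) partitions $J_n$ for $n\ge 1$. One can also see closure under $\Delta$ directly from the formula just obtained: on $J_n$ the preorder $\Pto$ is just equality (each block is a singleton $\{i\}$, and $\Conv(\{i\})\cap\{j\}\neq\emptyset$ only for $i=j$), so the cuts of $J_n$ are the subsets $L\subseteq[n]$, with $\st{L}=J_{\sharp L}$ and $\rmst{U}$ the monomial in the $J$'s indexed by the lengths of the maximal runs of consecutive integers in $[n]\setminus L$; in particular $\Delta(J_n)\in\Hgap^|\otimes\Hgap^|$. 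Finally $\Hgap^|$ is graded, with each $J_n$ homogeneous of positive degree, so it meets degree zero only in $\K\mathbf{1}$; a connected graded bialgebra automatically admits an antipode, so $\Hgap^|$ is a sub Hopf algebra of $\Hgap$.

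There is no serious obstacle: the statement is a push-forward along $q$ of results already proved, and nothing needs to be reorganised in the sum over cuts, since $q$ is merely an algebra map applied term by term. The one point I would take care to write out is the identity $q(\mst{U})=\rmst{U}$ --- i.e.\ that no non-empty entry of the gap-monomial list is accidentally collapsed to $\mathbf{1}$ --- which is the hinge that makes the combinatorial formula come out verbatim; it is immediate from the explicit description of the quotient $\Bgap\to\Hgap$.
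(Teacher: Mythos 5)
Your argument is correct and is essentially the route the paper itself takes: the paper states this proposition without a separate proof, treating it as the immediate consequence of the $\Delta_0$-formula on $\Bgap$, the description of $\Hgap$ as the quotient by the ideal generated by $\emptyset-\mathbf{1}$, and Corollary~\ref{coro:subHopfalg} for the forests of sticks — exactly what you spell out, including the key observation that applying the quotient map to the gap monomial $\mst{U}$ yields the reduced gap monomial $\rmst{U}$. Your extra checks (the group-like extremes of the sum and the connected-graded antipode argument for $\Hgap^|$) are sound and consistent with the paper's conventions.
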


\noindent
Here the right-hand tensor factor $\rmst{U}$ is the reduced gap monomial of
Definition \ref{gap-monomial}. The exact same formulae hold for the bialgebras $\Bgapnc$ and 
$\Hgapnc$ of noncrossing partitions. The sub bi- and Hopf algebras of forests 
of sticks are denoted $\Bgapnc^|$ respectively $\Hgapnc^|$. 

\begin{rem}
  With the cut formulation of the coproduct,
  a tight analogy with the Butcher--Connes--Kreimer Hopf
  algebra of rooted trees becomes clear: the way the upperset
  is split into a monomial is analogous to the way the crown of
  a tree with a cut is interpreted as a forest. In fact this is
  more than just an analogy: the underlying poset of a
  noncrossing partition is actually a forest (in the sense that
  for any block $\pi$, the set $\{\sigma \mid \sigma \Pto \pi\}$
  is a linear order), and the notion of cut is the same as that
  for forests. In fact:
\end{rem}

\begin{prop}
  The assignment sending a noncrossing
  partition to its underlying forest defines a bialgebra
  homomorphism from $\Hgapnc$ to the Butcher--Connes--Kreimer
  Hopf algebra.
\end{prop}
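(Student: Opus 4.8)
The plan is to define $\Phi$ on generators and extend it multiplicatively. For a noncrossing partition $P$, write $F(P)$ for its underlying rooted forest: the vertices are the blocks, a block $\rho$ is a child of $\pi$ when $\rho$ is nested immediately inside $\pi$ (that is, $\pi\Pto\rho$ with no block strictly between them), and the roots are the $\Pto$-maximal (outermost) blocks; by the remark above this really is a forest, since $\Pto$ is a poset for noncrossing $P$. I set $\Phi(P):=F(P)$, regarded as the monomial of its trees in the Butcher--Connes--Kreimer Hopf algebra $H_{\mathrm{BCK}}$, and extend $\Phi$ to all of $\Hgapnc$ as an algebra homomorphism; this is legitimate because $\Hgapnc$ is free associative on $\latNCP$, the commutativity of $H_{\mathrm{BCK}}$ merely meaning that $\Phi$ factors through the abelianization. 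Compatibility with counits is immediate ($F(P)$ is a non-empty forest for $P\neq\mathbf 1$), so the real content is the identity $\Delta_{\mathrm{BCK}}\circ\Phi=(\Phi\otimes\Phi)\circ\Delta$; since both coproducts are algebra maps and $\latNCP$ generates $\Hgapnc$, it suffices to verify this on one generator $P\in\latNCP$, using the cut formula $\Delta(P)=\sum_{(L,U)\in\cut(P)}\st L\otimes\rmst U$.

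First I would set up a bijection between $\cut(P)$ and the admissible cuts of $F(P)$, where I read the latter as a splitting of the vertex set of $F(P)$ into an ancestor-closed ``trunk'' and the complementary ``crown'' (this includes the two improper splittings, responsible for the degenerate terms $P\otimes\mathbf 1$ and $\mathbf 1\otimes P$). Given $(L,U)\in\cut(P)$, the block set of $L$ is ancestor-closed in $F(P)$ because $L$ is a lowerset and the $F(P)$-ancestors of a block $\pi$ are exactly the blocks $\sigma$ with $\sigma\Pto\pi$; conversely an ancestor-closed vertex subset of $F(P)$ is a lowerset of $P$, again since $\Pto$ is the transitive closure of the nesting relation. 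Along any root-to-leaf path of $F(P)$ this splitting moves from trunk to crown at most once — the upperset property of $U$ keeps later vertices in the crown — so it is an admissible cut in the usual edge-set sense too. (This is essentially the content of the preceding remark.)

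Next I would identify the two tensor factors under this correspondence, showing $\Phi(\st L)=R_c(F(P))$, the trunk forest, and $\Phi(\rmst U)=P_c(F(P))$, the crown forest. For $L$: the lowerset condition forces that if $\pi,\rho\in L$ with $\rho$ nested in $\pi$ then every block between them is in $L$ as well, so the Hasse structure of $L$ viewed as a standalone noncrossing partition is exactly the induced subforest of $F(P)$ on the vertex set $L$, and its maximal blocks are precisely the roots of $F(P)$ lying in $L$; hence $F(L)=R_c(F(P))$. For $U$: by Lemma~\ref{Ui}, $\rmst U=\st{U_0}\cdots\st{U_k}$ with $U_i=P_{|D_i}$ the non-empty convex components of the underlying set of $U$; since (by the non-straddling property established in the proof of Lemma~\ref{Ui}) each block of $U$ lies inside one interval $D_i$, two blocks in different $D_i$ are neither nested nor crossing and so fall into different trees, while within a single $D_i$ the nesting of $P_{|D_i}$ agrees with that of $P$ — a block of $P$ meeting $D_i$ is contained in $D_i$, so no intermediate block escapes — using that in a noncrossing partition two blocks englobing a common block are comparable. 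So $\Phi(\rmst U)=F(U_0)\cdots F(U_k)$ is exactly $P_c(F(P))$, with the correct roots (those blocks of $U$ whose $F(P)$-parent lies in $L$, together with the outermost ones).

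Putting these together gives $\Delta_{\mathrm{BCK}}(\Phi(P))=\sum_{c}R_c(F(P))\otimes P_c(F(P))=\sum_{(L,U)\in\cut(P)}\Phi(\st L)\otimes\Phi(\rmst U)=(\Phi\otimes\Phi)\Delta(P)$, so $\Phi$ is a coalgebra map, hence a bialgebra homomorphism — and it then commutes with antipodes automatically, both sides being Hopf. Here I use the convention for the Connes--Kreimer coproduct in which the trunk sits in the left-hand tensor factor, which is the one under which the analogy of the preceding remark is literal. The hard part is the third step: reconciling the bare combinatorial recipe for the right-hand factor $\rmst U$ (its convex-component decomposition) with the tree-theoretic crown forest $P_c$; both that distinct convex components of $U$ give disjoint trees and that passing to $P_{|D_i}$ does not disturb the nesting rely on the non-straddling property of $L$ together with noncrossing-ness, and one must check with care that the roots of $\bigsqcup_i F(U_i)$ match the roots of $P_c$.
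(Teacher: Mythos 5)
Your proof is correct and follows essentially the same route as the paper, which leaves the argument at the level of the preceding remark: the nesting poset of a noncrossing partition is a rooted forest, lowerset/upperset cuts correspond exactly to admissible cuts (ancestor-closed trunk versus crown), and the cut formula for $\Delta$ then matches the Butcher--Connes--Kreimer coproduct in the convention with the trunk in the left tensor factor. Your write-up merely fills in the details the paper omits, in particular the identification of the reduced gap monomial $\rmst{U}$ with the crown forest via the convex components $D_i$, which is exactly the intended justification.
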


\noindent
In fact, this bialgebra homomorphism factors through the
  non-commutative Hopf algebra of planar forests of \cite{Foissy:trees}. 

\bigskip

These four bialgebras are $\N^2$-graded: the bidegree of a partition $P$ 
of $[n]$ with $k$ blocks is $(k,n)$. (Note that the arity of a degree-$n$ partition is $n+1$, but that the bialgebra construction lowers the degree 
by $1$.) Note that the bialgebras $\Bgap$ and $\Bgapnc$ are not connected: the degree zero component is spanned by all the monomials in $\emptyset$. The coarsest partitions are `skew-primitive', meaning for example
$$
	\Delta_0 ( \ncthreeW ) = 
	\ncthreeW \otimes \emptyset{\cdot}\emptyset{\cdot}\emptyset{\cdot}\emptyset
	\ + \ \emptyset \otimes \ncthreeW .
$$
On the other hand, $\Hgap$ and $\Hgapnc$ are connected by construction (and therefore
automatically Hopf algebras). For each of the four bialgebras, the counit is given by $\varepsilon(P)=0$ 
for any nonempty partition (or noncrossing partition).

\bigskip

Applications in free probability of the theory of partitions and noncrossing partitions suggest to introduce another Hopf algebra map from $\Hgapnc$ to $\Hgap$ than the obvious embedding.

\begin{prop}\label{prop:ncstar}
  The linear map
  \begin{align*}
	\nc^\ast : 	&\left\{\begin{array}{rcl}
	\NCP 	&\longrightarrow	& \SP \\
	P		&\longmapsto		& \sum\limits_{\nc(P')=P} P'
			   \end{array}\right.
\end{align*}
  defines a bialgebra homomorphism from $\Bgapnc$ to $\Bgap$ (and from $\Hgapnc$ to 
  $\Hgap$).
\end{prop}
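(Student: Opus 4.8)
\emph{Plan of proof.} The strategy is to reduce everything to the operadic description of the coproduct and to the fact, recorded above, that the noncrossing closure $\nc\colon\SP\to\NCP$ is a morphism of operads. First, since $\Bgapnc$ is the free associative algebra on $\latNCP_0$, the assignment $\nc^\ast$ on generators extends uniquely to an algebra homomorphism $\nc^\ast\colon\Bgapnc\to\Bgap$; it descends to $\Hgapnc\to\Hgap$ because $\emptyset$ is the only partition of degree $0$, so $\nc^\ast(\emptyset)=\emptyset$ and the defining ideal $(\emptyset-\mathbf{1})$ is mapped into $(\emptyset-\mathbf{1})$. Compatibility with the counit is immediate: $\nc$ preserves degree, so $\nc^\ast(P)$ is a sum of partitions of the same degree as $P$, hence a sum of \emph{nonempty} partitions when $P$ is nonempty, whence $\varepsilon\circ\nc^\ast=\varepsilon$. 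What remains is the coproduct identity $\Delta_0\circ\nc^\ast=(\nc^\ast\otimes\nc^\ast)\circ\Delta_0$ on a generator $P\in\latNCP_0$.

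For this I would work with the raw operadic coproduct, which for the gap-insertion operad reads $\Delta_0(R)=\sum_{R=M\diamond(N_0,\dots,N_k)}M\otimes N_0\cdots N_k$, the sum being over all decompositions of $R$ under the composition law. Expanding the left-hand side,
$$
\Delta_0(\nc^\ast(P))=\sum_{\nc(P')=P}\ \sum_{P'=M\diamond(N_0,\dots,N_k)}M\otimes N_0\cdots N_k .
$$
Since a decomposition $(M,N_0,\dots,N_k)$ determines $P'=M\diamond(N_0,\dots,N_k)$, this double sum is precisely the sum, with no repetitions, over all tuples $(M,N_0,\dots,N_k)$ satisfying $\nc\big(M\diamond(N_0,\dots,N_k)\big)=P$. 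Now invoke the operad-morphism property of $\nc$,
$$
\nc\big(M\diamond(N_0,\dots,N_k)\big)=\nc(M)\diamond\big(\nc(N_0),\dots,\nc(N_k)\big),
$$
and reindex by $L:=\nc(M)$ and $U_i:=\nc(N_i)$ (the arities match, as $\nc$ preserves degree). Grouping the tuples by their image $(L,U_0,\dots,U_k)$ and using multiplicativity of $\nc^\ast$ to carry the inner sum through the word on the right-hand tensor factor gives
\begin{align*}
\Delta_0(\nc^\ast(P))
&=\sum_{P=L\diamond(U_0,\dots,U_k)}\nc^\ast(L)\otimes\nc^\ast(U_0)\cdots\nc^\ast(U_k)\\
&=(\nc^\ast\otimes\nc^\ast)\Bigl(\sum_{P=L\diamond(U_0,\dots,U_k)}L\otimes U_0\cdots U_k\Bigr)\\
&=(\nc^\ast\otimes\nc^\ast)\bigl(\Delta_0(P)\bigr).
\end{align*}
Running the identical computation in the connected quotients, where empty entries are simply dropped, settles the statement for $\Hgapnc\to\Hgap$.

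There is no genuinely difficult step; the proof is in essence a reindexing of a double sum, and the only point that calls for a little care is the bookkeeping in that reindexing --- namely that replacing ``sum over $P'$ with $\nc(P')=P$, then over decompositions of $P'$'' by ``sum over decompositions $(M,N_0,\dots,N_k)$ with $\nc\big(M\diamond(N_0,\dots,N_k)\big)=P$'' creates no multiplicities, which holds because $M\diamond(N_0,\dots,N_k)$ recovers $P'$ uniquely. If one prefers to avoid the operadic formalism, the same result can be obtained from the cut formula $\Delta_0(P)=\sum_{(L,U)\in\cut(P)}\st{L}\otimes\mst{U}$: one matches cuts of an arbitrary $P'$ with $\nc(P')=P$ to cuts of $P$ via Corollary~\ref{cor:samecuts}, and then checks that under this matching the lowerset $L'$ of $P'$ has noncrossing closure the lowerset $L$ of $P$, and likewise each gap component $U'_i$ has closure $U_i$. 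Concretely this amounts to the observation that $\nc$ commutes with restriction to a subset which is a union of blocks and consistent with the cut --- exactly the content that the operad-morphism property of $\nc$ packages away, which is why I would take the first route.
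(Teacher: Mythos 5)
Your proof is correct, and its main line is a somewhat different packaging of the argument than the paper's. The paper proves the proposition in one sentence from the cut description of the coproduct: since partitions with a common noncrossing closure have naturally isomorphic sets of cuts (Corollary~\ref{cor:samecuts}, which rests on Remark~\ref{cor:fundam1}), the sum $\sum_{\nc(P')=P}\Delta_0(P')$ reorganizes into $(\nc^\ast\otimes\nc^\ast)\Delta_0(P)$ --- this is exactly the route you sketch in your closing remark. Your primary argument instead uses the raw operadic coproduct together with the unnumbered lemma of Subsection~\ref{1stOperad} saying that $\nc$ is an operad morphism (exhibiting $\NCP$ as a retract of $\SP$), i.e.\ $\nc\bigl(M\diamond(N_0,\dots,N_k)\bigr)=\nc(M)\diamond\bigl(\nc(N_0),\dots,\nc(N_k)\bigr)$. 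What this buys is that the bookkeeping the paper's citation leaves implicit is made explicit: over a fixed decomposition $P=L\diamond(U_0,\dots,U_k)$ (whose factors are automatically noncrossing, being restrictions of $P$), the fibre of $\nc$ over $P$ factors as independent fibres over $L$ and over each $U_i$; note that your regrouping uses both directions of the morphism identity --- the forward direction to see that every tuple $(M,N_0,\dots,N_k)$ with $\nc(M\diamond(N_0,\dots,N_k))=P$ sits over a decomposition of $P$, and the backward direction, used tacitly when you write the grouped inner sum as $\nc^\ast(L)\otimes\nc^\ast(U_0)\cdots\nc^\ast(U_k)$, to see that \emph{every} choice of lifts occurs --- but both are instances of the same displayed identity, so there is no gap. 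Your treatment of the algebra structure, counit (via degree preservation of $\nc$), and descent to the connected quotients is also fine; the two approaches ultimately rest on the same combinatorial fact that $\nc$ joins only crossing blocks and does not change gaps, with the operadic phrasing arguably more transferable (it works verbatim for any degreewise finite operad retraction), and the cut phrasing closer to the formulas used later for the half-shuffle splittings.
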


\begin{proof}
  This follows since partitions with common noncrossing closure have 
  isomorphic sets of cuts, cf.~Corollary~\ref{cor:samecuts}. 
\end{proof}


\subsection{Unshuffling coproducts on partitions}
\label{ssec:unshuffle}

A (non-commutative) monomial of partitions will also be called a
\emph{multipartition} (resp.~\emph{noncrossing multipartition}): these are
the elements $P=P_1\cdots P_k\in \Hgap$ (resp.~in $\Hgapnc$), where for
each $i \in [k]$, $P_i$ is a partition (resp.~a noncrossing partition) in
$\latSP(n_i)$ (resp.~in $\latNCP(n_i))$, $n_i\not= 0$.

The set of multipartitions (resp.~noncrossing multipartitions) is denoted
by $\latSMP$ (resp.~$\latNMP$). These elements form a linear basis of
$\Hgap$ (resp.~$\Hgapnc$). By multiplicativity, we define bigradings
$\latSMP(k,n)$ ($\latNMP(k,n)$) for all $k,n\geq 0$, where $k$ stands for
the total number of blocks. The notions of upperset, lowerset, and cut
defined in  Subsection~\ref{ssect:Hopfstruct}, 
extend multiplicatively to multipartitions in a
straightforward manner. 
For example, a lowerset of a multipartition
$P=P_1\cdots P_k$ is a sequence of lowersets $L_i$ of each $P_i$, written
as a monomial $L_1\cdots L_k$.

Moreover, shifting the elements of $P_2$ by $\deg(P_1)$, $\ldots$, the
elements of $P_k$ by $\deg(P_1)+\cdots+\deg(P_{k-1})$, such a $P$ can be
seen as a family of sets of subsets of $[n_1+\cdots+n_k]=[n]$, these
subsets forming a partition of $[n]$. With these conventions and this
identification, which will be used systematically in this section, for any
$P\in \latSMP$ or $P \in \latNMP$ as above,
the coproducts can be written
\begin{align}
	\Delta_0(P)&=\sum_{(L,U) \in \cut(P)} L \otimes \mst{U} \nonumber
	\\
	\Delta(P)&=\sum_{(L,U) \in \cut(P)} L \otimes \rmst{U}. \label{thecoprod}
\end{align}
Here $\Delta_0$ and $\Delta$ are extended multiplicatively 
from single partitions to multipartitions (monomials in partitions)
in the usual way.
As a result, since $P$ is a multipartition (i.e.~a monomial),
also $L$ is a monomial.
In the right-hand tensor factors, $U$ is itself a monomial for the 
same reason, and $\mst{U}$ (resp.~$\rmst{U}$) are furthermore 
``monomials of monomials'', 
namely the product of the
gap monomials (resp.~reduced gap monomials), as in
Definition \ref{gap-monomial}. 

In the following we work only with $\Delta$.

\begin{defi}[Unshuffle structure]
\label{def:shuffle}
  For any non-empty $P\in \latSMP$ we put:
\begin{align*}
	\Delta_\prec(P)&=\sum_{(L,U) \in \cut(P) \atop 1\in L} 
				L \otimes \rmst{U}, \\
	\Delta_\succ(P)&=\sum_{(L,U) \in \cut(P) \atop 1\in U} 
				 L \otimes \rmst{U}.
\end{align*}
Here the $L$ are the lowerset monomials of the multipartition $P$,
whereas the $\rmst{U}$ are the reduced gap monomials of each cut, as in 
Definition~\ref{gap-monomial}.

These definitions restrict to (non-empty) noncrossing partitions.  
\end{defi}

\begin{rem}
  It follows from Remark~\ref{cor:fundam1} that the multiplicative extension to
  a map from $\latNCP$ to $ \latSMP$ of the map $\nc^*$ from
  Proposition~\ref{prop:ncstar} commutes with these three operations.
\end{rem}

\begin{prop}[Unshuffle Hopf algebra structures]\label{prop:halfshuffles}
  The two coproducts just defined, on the augmentation ideals $\Hgap_+ \subset \Hgap$
  and $\Hgapnc_+ \subset \Hgapnc$, turn $\Hgap$ and $\Hgapnc$ into unshuffle Hopf algebras (also 
  called codendriform Hopf algebras~\cite{Foissy}).  
\end{prop}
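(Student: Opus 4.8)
The plan is to verify that $(\Hgap, \Delta_\prec, \Delta_\succ)$ (and likewise the noncrossing version) satisfies the three co-unshuffle relations dual to \eqref{shuf1}--\eqref{shuf3}, together with the compatibility of each half-coproduct with the product $\cdot$. Since $\Hgap$ is free as an algebra on $\latSP$, and both $\Delta_\prec$ and $\Delta_\succ$ are defined multiplicatively from their values on single partitions, the multiplicativity (codendriform compatibility) is essentially built in; the real content is the three coassociativity-type identities. First I would record that $\Delta = \Delta_\prec + \Delta_\succ$ on $\Hgap_+$, which is immediate from Definition~\ref{def:shuffle} since every cut $(L,U)$ has $1$ in exactly one of $L$ or $U$. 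Thus the sought structure is a genuine \emph{splitting} of the known coalgebra structure, and one only needs to track on which side the element $1$ lands after iterating.

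The key computation is to expand $(\Delta_\prec \otimes \id)\Delta_\prec$, $(\id\otimes\Delta)\Delta_\prec$, $(\Delta_\succ\otimes\id)\Delta_\prec$, etc., in terms of iterated cuts. Iterating the coproduct on a (multi)partition $P$ amounts to choosing a chain of lowersets, or equivalently a flag $L_1 \subseteq L_1 \cup (\text{part of } U)$ — more precisely, using the operadic interpretation recorded just before the coproduct proposition, a two-step cut corresponds to an expression $R = P \diamond (Q_0,\dots,Q_k)$ followed by a cut of $P$, where the gap monomial reorganizes the $Q_i$. The crucial structural fact is the one highlighted in Remark~\ref{cor:fundam1}: crossing blocks are never separated by a cut, so the position of the element $1 \in [n]$ relative to the lowersets in a flag is governed purely by the linear order and by which block $1$ lies in. Concretely: in $(\Delta_\bullet \otimes \id)\Delta_\bullet(P)$ the outermost tensor factor sees $1$ iff $1 \in L'$ for the inner lowerset $L'$ of $P$ restricted to the first-stage lowerset $L$; and in $(\id \otimes \Delta_\bullet)\Delta_\bullet(P)$ the middle factor sees the smallest element of the \emph{complement} monomial $\rmst U$, which, because $\rmst U$ is the product of convex components of $U$ listed in order, is again controlled by the linear order on $[n]$. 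Matching these up gives exactly relations \eqref{shuf1}--\eqref{shuf3} dualized: $(\Delta_\prec\otimes\id)\Delta_\prec = (\id\otimes\Delta)\Delta_\prec$, $(\Delta_\succ\otimes\id)\Delta_\prec = (\id\otimes\Delta_\prec)\Delta_\succ$, and $(\Delta_\succ\otimes\id)\Delta_\succ = (\id\otimes\Delta_\succ)\Delta$.

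For bookkeeping it is cleanest to reduce to single partitions and then extend multiplicatively, checking that the multiplicative extension is consistent with the condition ``$1 \in L$'' vs.\ ``$1 \in U$'' — here one uses that for a multipartition $P = P_1 \cdots P_k$ the element $1$ lies in $P_1$, so the half-coproduct distinction only ever concerns the first tensor factor $P_1$, and the remaining factors contribute via the full $\Delta$; this is precisely the pattern of an unshuffle bialgebra. Having the three relations and the multiplicativity, Proposition~\ref{prop:halfshuffles} follows: $\Hgap$ and $\Hgapnc$ are connected graded, hence the unshuffle bialgebra structure automatically upgrades to an unshuffle (codendriform) Hopf algebra. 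The noncrossing case is identical once one invokes the Remark that all lowersets and uppersets of a noncrossing partition are again noncrossing, so every cut stays inside $\latNCP$; alternatively one transports the result along $\nc^\ast$ using the Remark after Definition~\ref{def:shuffle} that $\nc^\ast$ intertwines all three operations.

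I expect the main obstacle to be the combinatorial bookkeeping in the second step: correctly identifying, in an iterated cut, which subword of the reduced gap monomial $\rmst U$ receives the element that determines the $\prec/\succ$ split, and checking that the reindexing inherent in the gap-monomial construction of Definition~\ref{gap-monomial} is compatible with iterating $\Delta$. Once one adopts the identification (used systematically in Subsection~\ref{ssec:unshuffle}) of a multipartition with a partition of $[n]$, this becomes a statement purely about the linear order on $[n]$ and about convex components, and Remark~\ref{cor:fundam1} guarantees no crossing-block pathologies interfere; but writing it out carefully is where the work lies.
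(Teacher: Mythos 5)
Your strategy is the same as the paper's: split $\Delta$ according to whether the block of the element $1$ lies in the lowerset or the upperset, note that the product-compatibility is automatic from the multiplicative extension (the element $1$ of a multipartition always sits in the first factor), and verify the three coassociativity-type identities by organizing iterated coproducts via compatible double cuts $(L,M,U)\in\cut_2(P)$ and tracking where $1$ lands ($1\in L$, $1\in M$, or $1\in U$). The noncrossing case is handled exactly as you say, by stability of cuts under the noncrossing condition. So in outline your proposal reproduces the paper's argument.

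There is, however, a concrete error in the list of identities you propose to verify: your third relation, $(\Delta_\succ\otimes\Id)\circ\Delta_\succ=(\Id\otimes\Delta_\succ)\circ\Delta$, is not the codendriform axiom and is in fact false. The correct dual of relation \eqref{shuf3} is $(\Delta\otimes\Id)\circ\Delta_\succ=(\Id\otimes\Delta_\succ)\circ\Delta_\succ$, and it is exactly this form that your own tracking argument produces: the three axioms correspond precisely to restricting the sum over $\cut_2(P)$ to $1\in L$, $1\in M$, $1\in U$ respectively, and in each correct axiom the half-coproducts are only ever applied to a tensor factor whose minimal element is the original $1$. Your version violates this: the outer $\Delta_\succ$ on the left-hand side is applied to a lowerset factor that does not contain $1$, so the selection condition refers to a different element and the two sides count different sets of double cuts. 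A minimal counterexample is $P=\nctwo$: one computes $\Delta_\succ(\nctwo)=\mathbf{1}\otimes\nctwo+\ncone\otimes\ncone$ and $\Delta_\succ(\ncone)=\mathbf{1}\otimes\ncone$, so $(\Delta_\succ\otimes\Id)\circ\Delta_\succ(\nctwo)$ contains no term of the form $\ncone\otimes\mathbf{1}\otimes\ncone$, whereas $(\Id\otimes\Delta_\succ)\circ\Delta(\nctwo)$ contains $2\,\ncone\otimes\mathbf{1}\otimes\ncone$ (coming from the two cuts giving $\ncone\otimes\ncone$ in $\Delta(\nctwo)$), and this discrepancy is independent of any convention for $\Delta_\succ(\mathbf{1})$. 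With the third axiom corrected to $(\Delta\otimes\Id)\circ\Delta_\succ=(\Id\otimes\Delta_\succ)\circ\Delta_\succ$, the rest of your plan goes through as in the paper.
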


In detail, these coproducts satisfy the dual of the shuffle relations \eqref{shuf1}--\eqref{shuf3}:
\begin{align}
	(\Delta_\prec \otimes \Id)\circ \Delta_\prec&=(\Id \otimes 
	\Delta)\circ \Delta_\prec, \label{Dprec} \\
	(\Delta_\succ \otimes \Id)\circ \Delta_\prec&=(\Id \otimes 
	\Delta_\prec)\circ \Delta_\succ, \notag\\
	(\Delta \otimes \Id)\circ \Delta_\succ&=(\Id \otimes 
	\Delta_\succ)\circ \Delta_\succ. \notag
\end{align}
Moreover, for any $x,y\in \Hgap_+$, introducing Sweedler-like notation for 
these three coproducts: 
\begin{align*}
	\Delta_\prec(x) &=x\otimes \mathbf{1} + x'_\prec\otimes x''_\prec\\
	\Delta_\succ(x) &= \mathbf{1}\otimes x+x'_\succ \otimes x''_\succ\\
	\Delta(y)	     &=y\otimes  \mathbf{1} +  \mathbf{1}\otimes y+y'\otimes y'',
\end{align*}
we have
\begin{align*}
	\Delta_\prec(x\cdot y)&=x\cdot y\otimes  \mathbf{1}+x\otimes y+x\cdot y'\otimes y''
				+x'_\prec\cdot y\otimes x''_\prec+x'_\prec\otimes x''_\prec\cdot y
				+x'_\prec\cdot y'\otimes x''_\prec\cdot y'',\\
	\Delta_\succ(x\cdot y)&= \mathbf{1}\otimes x\cdot y+y\otimes x+y'\otimes x\cdot y''
				+x'_\succ\cdot y\otimes x''_\succ+x'_\succ\otimes x''_\succ\cdot y
				+x'_\succ\cdot y'\otimes x''_\succ\cdot y''.
\end{align*}

We have used Sweedler-type notations for the reduced coproducts, i.e., $\Delta'_\prec(x):=x'_\prec\otimes x''_\prec$, $\Delta'_\succ(x):=x'_\succ \otimes x''_\succ$ and $\Delta'(x):=x' \otimes x''$. 

\medskip

\begin{proof} 
  We briefly comment on the coproduct \eqref{thecoprod} and its splitting,
  $\Delta=\Delta_\prec + \Delta_\succ$, satisfying the shuffle relations.
  For more details the reader is referred to \cite{EFP16}. Starting from $P
  \in \latSMP(k,n)$, with $k \geq 1$ each cut $(L,U) \in \cut(P)$
  in the definition of the
  coproduct \eqref{thecoprod} corresponds to a decomposition of $P$ into
  a lowerset $L$ and an upperset $U$, which are the complements of each 
  other. Recall that 
  $U$ and $L$ are partitions themselves.
  We shall need \emph{compatible pairs of cuts}, writing $(L,M,U)  \in
  \cut_2(P)$ for the situation where $L$ is a lowerset of $P$ and $U$
  is an upperset of $P$, with complements $L^c = M \sqcup U$ and $U^c = L \sqcup M$.
  (Coassociativity amounts to saying that the last two conditions are equivalent.) 
  The shuffle identities are then checked by keeping track of the first element $1 \in P$:
\begin{align*}
	(\Delta_\prec \otimes \Id)\circ \Delta_\prec(P)&
	=\sum_{(U,M,L) \in \cut_2(P) \atop 1 \in L} L \otimes \rmst{M}\otimes \rmst{U}
	= \ (\Id \otimes \Delta)\circ \Delta_\prec(P),\\
	(\Delta_\succ \otimes \Id)\circ \Delta_\prec(P)&
	=\sum_{(U,M,L) \in \cut_2(P) \atop 1 \in M} L \otimes \rmst{M}\otimes \rmst{U}
	= \ (\Id \otimes \Delta_\prec)\circ \Delta_\succ(P),\\
	(\Delta \otimes \Id)\circ \Delta_\succ(P)&
	=\sum_{(U,M,L) \in \cut_2(P) \atop 1 \in U} L \otimes \rmst{M}\otimes \rmst{U}
	= \ (\Id \otimes \Delta_\succ)\circ \Delta_\succ(P).
\end{align*}
(Here $L$ is a monomial just because $P$ is, whereas $\rmst{M}$ and 
$\rmst{U}$ are reduced gap monomials, as in Definition~\ref{gap-monomial}.)
The compatibility between $\Delta_\prec$, $\Delta_\succ$ and the product $\cdot$ follows by the multiplicative extension of $\Delta_\prec$, $\Delta_\succ$ implied by Definition \ref{def:shuffle}.
\end{proof}
Notice that the three coproducts $\Delta,\Delta_\prec,\Delta_\succ$ dualize respectively to the products denoted $\ast,\prec,\succ$ on $\Hgap_+^\ast$ and $\Hgapnc_+^\ast$. It follows from Proposition~\ref{prop:halfshuffles} that the latter satisfy the shuffle identities \eqref{shuf1}--\eqref{shuf3}.


\subsection{Moments and cumulants}
\label{ssect:momcum}

In this subsection we briefly revisit the relations between classical
cumulants and free cumulants in the context of free probability, a point
usually addressed using the properties of the inclusion of the lattice
of noncrossing partitions into the one of partitions \cite{Lehner}. We use
freely the results of \cite{EFP16} and restrict the study to the case of a
single free random variable; this allows us to consider solely computations
with formal power series. The multivariate case can be addressed with
exactly the same tools using the techniques in \cite{EFP16}.

Recall that a classical probability space is a pair $(A,\varphi)$ where $A$
is a commutative ring of random variables and $\varphi$ a unital linear
form on it called the expectation. The moments of a random variable $a \in A$ are
defined as $m_n:=\varphi(a^n),\ n\in\N^\ast$. The moment-generating
function is the associated exponential series $E(z):=1+\sum_{n\geq
1}m_n\frac{z^n}{n!}$, and the classical cumulants $\{c_n\}_{n\geq 1}$ are
defined as the coefficients of the cumulant-generating function
$C(z)=\sum_{n\geq 1}c_n\frac{z^n}{n!}$ given by the formulae
$$
	C(z) := \log (E(z)), \qquad E(z)=\exp(C(z)).
$$

In free probability \cite{SpeicherNica}, the ring $A$ is not necessarily commutative, and an
element $a\in A$ is interpreted as a non-commutative random variable.
The moments are defined as in the classical case, but the \emph{free cumulants}
$\{k_n\}_{n\geq 1}$
are defined instead using the {\em ordinary}
generating series 
$$
	M(z):=1+\sum_{n\geq 1}m_nz^n \quad \text{ and } \quad K(z):=\sum_{n\geq 1}k_n z^n ,
$$
related through the fixpoint formula
$$
	M(z) = 1+ K(zM(z)).
$$

For various reasons (see for example \cite{SpeicherNica}), it is fruitful
to lift these relations to the framework of set partitions. As far as
classical cumulants are concerned, the moment-cumulant relations translate
into
\begin{equation}
\label{clascumu}
	m_n=\sum\limits_{P\in \latSP(n)}c_{P},
\end{equation}
where, for $P:=\{\pi_1,\dots, \pi_k\}\in \latSP(n)$,
$$
	c_{P}:=\prod\limits_{i\leq k}c_{\sharp \pi_i}.
$$
For free cumulants, Speicher showed that one has instead
\begin{equation}
\label{freecumu}
	m_n=\sum\limits_{Q\in \latNCP(n)}k_{Q},
\end{equation}
where, for $Q=\{\tau_1,\dots, \tau_k\}\in \latNCP(n)$,
$$
	k_{Q}:=\prod\limits_{i\leq k}k_{\sharp \tau_i}.
$$
From the properties of the embedding $\latNCP \subset \latSP$, one can 
derive (see \cite{Lehner}) that
$$
	k_Q=\sum\limits_{\nc(P)=Q}c_P.
$$
In other words, if $k$ and $c$ are extended to linear functions on
$\latNCP$, respectively $\latSP$, we arrive at the following
fundamental relation between free and classical cumulants

\begin{equation}\label{Lehfla}
	k=c\circ \nc^\ast,
\end{equation}
where $\nc^\ast$ was defined in Proposition \ref{prop:ncstar}.

As mentioned in the introduction, the papers \cite{EFP15,EFP18,EFP17} 
proposed an approach to the relations between moments and free cumulants, which is
based on a non-commutative shuffle bialgebra structure \cite{Foissy} on the
dual of a particular connected, graded, non-commutative, non-cocommutative
Hopf algebra $\mathbf{H}$ of words constructed as the double tensor algebra
from $(A,\varphi)$. The group of characters and the corresponding Lie
algebra of infinitesimal characters over this Hopf algebra can then be
shown to be related by three naturally defined exponential-type maps and
the corresponding logarithms. The unital linear map $\varphi$, which is
part of the data of a non-commutative probability space $(A,\varphi)$,
induces a particular Hopf algebra character on $\mathbf{H}$. One can show
that the three logarithms applied to this character encode the three
families of monotone, free, and boolean cumulants as infinitesimal Hopf
algebra characters. This algebraic setting allows to recover many of the
results and formulas in Speicher's approach, although the lattice of
noncrossing partitions plays a rather marginal role. Indeed, noncrossing
partitions appear only after evaluating the Hopf algebra character
corresponding to $\varphi$ on elements of $\mathbf{H}$.

In \cite{EFP16} these results were transferred to the Hopf algebra
$\Hgapnc$ defined on noncrossing partitions in the following way. Define the 
linear form $\kappa$ on $\Hgapnc$ to be zero on all noncrossing
multipartitions in $\latNMP(k,n)$ for $k>1$ and by $\kappa([n]):=k_n$ else.
Solving the half-shuffle fixpoint equation in the graded dual
$\Hgapnc^\ast$:
\begin{equation}
\label{fixedpt}
	\phi=\varepsilon_N+\kappa\prec\phi,
\end{equation}
where $\varepsilon_N$ is the counit of $\Hgapnc$ (it is 
essentially the Kronecker delta $\varepsilon_N(P):=\delta_{P,\emptyset}$), one
obtains a character which evaluates any noncrossing partition $Q$ to
$\phi(Q)=k_Q$, i.e., it maps $Q$ to the product of cumulants for each block
in $Q$. Therefore, the $n$th moment is obtained as $m_n=\sum_{Q\in
\latNCP(n)}\phi(Q)$. We shall see in Proposition~\ref{prop:doubletensor} that \eqref{fixedpt} 
and the sum formula for $m_n$ find a more natural expression
in terms of the bialgebra interaction we introduce in this paper.
We can thus interpret Equation~\eqref{fixedpt} as
an algebraic lift of (\ref{freecumu}).
 
Define now a linear form $\gamma$ on $\Hgap$ by $\gamma(Q):=c_Q$ if $Q$ is a
partition of $[n]$ such that $\nc(Q)=[n]$, and to be zero on all other
partitions and multipartitions. Since
$$
	k_n=\sum_{P\in\latSP(n) \atop \nc(P)=[n]}c_P,
$$ 
we obtain that
\begin{equation*}
	\kappa=\gamma\circ \nc^\ast .
\end{equation*}

\begin{prop}
  With $\gamma$ defined as above, the solution $\psi$ to the half-shuffle
  fixpoint equation in the graded dual $\Hgap^\ast$
  \begin{equation}
  \label{newcmfla}
	  \psi=1+\gamma\prec\psi
  \end{equation}
  is such that 
  \begin{equation}
  \label{newcmfla2}
	  \phi = \psi \circ \nc^\ast.
  \end{equation}
\end{prop}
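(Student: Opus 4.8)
The strategy is to show that $\nc^\ast$ intertwines the half-shuffle products on the two graded duals, so that applying $\circ\,\nc^\ast$ to the fixpoint equation \eqref{newcmfla} produces exactly \eqref{fixedpt}, and then invoke uniqueness of the solution. First I would recall from the Remark following Definition~\ref{def:shuffle} that the multiplicative extension of $\nc^\ast$ commutes with the three coproducts $\Delta$, $\Delta_\prec$, $\Delta_\succ$; equivalently, the transpose map $(\nc^\ast)^t$ on graded duals is a morphism for the three dual products $\ast,\prec,\succ$. Since $\psi\mapsto \psi\circ\nc^\ast$ is precisely precomposition with $\nc^\ast$, this transpose property gives, for any $\alpha,\beta\in\Hgap^\ast$,
\[
	(\alpha\prec\beta)\circ\nc^\ast=(\alpha\circ\nc^\ast)\prec(\beta\circ\nc^\ast),
\]
and likewise for $\ast$ and $\succ$, and it sends the unit $1$ (the counit of $\Hgap$) to the unit $\varepsilon_N$ (the counit of $\Hgapnc$), because $\nc$ preserves the empty partition.

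Next I would apply $\,\cdot\,\circ\,\nc^\ast$ to both sides of \eqref{newcmfla}. The left side becomes $\psi\circ\nc^\ast$. The right side becomes $1\circ\nc^\ast+(\gamma\prec\psi)\circ\nc^\ast=\varepsilon_N+(\gamma\circ\nc^\ast)\prec(\psi\circ\nc^\ast)$. Using the already-established identity $\kappa=\gamma\circ\nc^\ast$, this reads $\varepsilon_N+\kappa\prec(\psi\circ\nc^\ast)$. Hence $\psi\circ\nc^\ast$ satisfies
\[
	\psi\circ\nc^\ast=\varepsilon_N+\kappa\prec(\psi\circ\nc^\ast),
\]
which is exactly the fixpoint equation \eqref{fixedpt} defining $\phi$.

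Finally I would invoke uniqueness: the half-shuffle fixpoint equation $x=\varepsilon_N+\kappa\prec x$ has a unique solution in the graded dual, since the equation determines the component of $x$ in each bidegree recursively from strictly lower bidegrees (the half-shuffle $\kappa\prec x$ raises degree because $\kappa$ vanishes on the empty partition). Therefore $\psi\circ\nc^\ast=\phi$, which is \eqref{newcmfla2}. The one point deserving care — and the only genuine obstacle — is verifying cleanly that $\nc^\ast$ transposes to a $\prec$-morphism, i.e.\ that the multiplicative extension of $\nc^\ast$ is compatible with $\Delta_\prec$ and not merely with $\Delta$; this rests on Remark~\ref{cor:fundam1}, which guarantees that crossing blocks lie on the same side of any cut, so that the bijection between cuts of $P'$ and cuts of $\nc(P')$ from Corollary~\ref{cor:samecuts} respects the condition "$1\in L$" that distinguishes $\Delta_\prec$ from $\Delta_\succ$.
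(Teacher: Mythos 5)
Your proof is correct and takes essentially the same route as the paper: both rest on the identity $\kappa=\gamma\circ \nc^\ast$ and on the fact (from the Remark after Definition~\ref{def:shuffle}) that the multiplicative extension of $\nc^\ast$ commutes with $\Delta_\prec$, so that precomposition with $\nc^\ast$ transports the fixpoint equation for $\psi$ into the one defining $\phi$. The paper leaves the transport of the equation and the uniqueness-of-solution step implicit; you have simply spelled them out.
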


  In particular, $m_n=\sum_{P\in \latSP(n)}\psi(P)$ and
  equations~(\ref{newcmfla}) and (\ref{newcmfla2}) can be interpreted as
  algebraic and operadic lifts of the classical moment-cumulant formula,
  respectively formula (\ref{Lehfla}).

\begin{proof} 
  The proposition follows from the identity $\kappa=\gamma\circ \nc^\ast$
  and the fact that $\nc^\ast$ commutes with $\Delta_\prec$
  defined in Proposition~\ref{prop:halfshuffles}.
\end{proof}


\section{Block-substitution operad and associated structures}
\label{2ndOperad}

We introduce now a second family of operads and algebraic structures on
partitions and noncrossing partitions. We shall see that they are closely
related to the lattice properties of the two families of partitions.

\smallskip

Before presenting the block-substitution operad, we briefly recall that a
coloured symmetric set operad $\mathcal{P}$ is given by:
\begin{itemize}
  
  \item A set of \emph{colours} $C$;
  
  \item For each $n\in \N$, and each $(n+1)$-tuple of colours 
  $(c_1,\ldots,c_n;c) \in C^{n+1}$, a set of \emph{$n$-ary operations}
  $\mathcal{P}(c_1,\ldots,c_n; c)$; 
  
  \item For each colour $c\in C$, an \emph{identity operation} $\id_c \in 
  \mathcal{P}(c;c)$;
  
  \item An operadic \emph{composition law}
  $$
  \begin{tikzcd}
	\mathcal{P}(c_1,\ldots,c_k; c)
	\ \times \
    \mathcal{P}(d_{1,1},\ldots,d_{1,i_1}; c_1) \times \cdots \times
	\mathcal{P}(d_{k,1},\ldots,d_{k,i_k}; c_k) 
	\ar[d] \\
	\mathcal{P}(d_{1,1},\ldots,d_{1,i_1},\ldots,d_{k,1},\ldots,d_{k,i_k}; c) 
  \end{tikzcd}
  $$
  denoted $(p,(q_1,\ldots,q_k)) \longmapsto p \circ (q_1,\ldots,q_k)$,
 
  \item For each $n\in \N$, for each 
  $(c_1,\ldots,c_n;c) \in C^{n+1}$, and for each permutation $\sigma\in 
  \mathfrak{S}_n$, a map
  $$
  \mathcal{P}(c_1,\ldots,c_n; c) \to  
  \mathcal{P}(c_{\sigma(1)},\ldots,c_{\sigma(n)}; c)
  $$
  denoted $P \mapsto P^\sigma$.
\end{itemize}

This data is subject to axioms completely analogous to the ordinary operad
axioms: there is an associative axiom, a unit axiom (for each identity
operation), and a symmetry axiom. It is simply a typed version of the
notion of operad, where the composition law requires strict type-checking
in terms of colours: an operation can be substituted into an input slot of
another operation if and only if its output colour matches the colour of
the receiving input slot.

A coloured operad $\mathcal{P}$ is \emph{reduced} if it has no nullary 
operations.

\begin{ex}
  A small category is the same thing as a coloured operad with only unary 
  operations. The objects are then the colours, and the arrows are the 
  operations (with domain as input colour and codomain as output colour).
\end{ex}


\subsection{A coloured symmetric operad of compositions}
\label{ssec:coloured}

Central objects in this work are the block-substitution operads $\latSC$ of set
partitions and $\latNCC$ of noncrossing partitions, which we define in this
subsection. They are both coloured symmetric operads with colour set
$\N^\ast$. We concentrate here on the ordinary partitions, since the
noncrossing condition is orthogonal to the constructions, 
cf.~Proposition~\ref{prop:opd-nc} below. 

The idea is simple: a (non-empty) partition $P = \{\pi_1,\ldots,\pi_k\}$
with $k$ blocks is considered a $k$-ary operation. Its output colour is the
number of elements of the underlying set $[n]$. Each block $\pi_i$ is
considered to be an input slot of arity $n_i = \sharp \pi_i$, and into it
one can substitute any partition $Q_i$ of $[n_i]$ by replacing the block
with the partition $Q_i$ (under the unique order-preserving bijection
between $[n_i]$ and the underlying set of $\pi_i$). The effect of the
substitution is thus to refine the partition, while leaving fixed the total
number of elements $n$. The identity operations are clearly the single-block
partitions $I_n$, since substituting such a partition into a block of size $n$
does not refine it further.

To actually implement this idea and make it fit the formal definition,
it is necessary to number the blocks, so as to know in which order the input 
slots come, and where to substitute given inputs. For this reason,
the operations will have to be set compositions rather than set partitions,
and the actions of the symmetric groups required for symmetric operads
will then simply be renumbering of the blocks. This is a standard 
procedure in the construction of symmetric operads.

\begin{defi}
  A \emph{set composition} is a set partition equipped with a numbering of
  its blocks.  Equivalently, it is given by a list of blocks 
  $P=(\pi_1,\ldots\pi_k)$ rather than a set of blocks.
We denote by $\latSC(k,n)$ the set of iso-classes of
compositions of an $n$-element set into $k$ blocks, that is, sequences
$C=(\pi_1,\ldots,\pi_k)$ such that $P=\{\pi_1,\ldots,\pi_k\}$ is a
partition. This includes the case $\latSC(0,0) = \{\emptyset\}$, consisting
only of the empty composition.
We also put:
\begin{align*}
	\latSC&=\bigsqcup_{1\leq k\leq n} \latSC(k,n),
	&   \latSC(n)&=\bigsqcup_{k\leq n} \latSC(k,n),
		&   \latSC_0&=\latSC(0,0) \sqcup \latSC.
\end{align*}
  
  Similarly, a \emph{noncrossing composition} is
  a noncrossing partition with a numbering of its blocks.  
We denote by $\latNCC(k,n)$ the set of iso-classes  of noncrossing
compositions of an $n$-element set into $k$ blocks, that is, sequences
$C=(\pi_1,\ldots,\pi_k)$ such that $P=\{\pi_1,\ldots,\pi_k\}$ is a
noncrossing partition.
We put:
\begin{align*}
	\latNCC&=\bigsqcup_{1\leq k\leq n} \latNCC(k,n),
	&   \latNCC(n)&=\bigsqcup_{k\leq n} \latNCC(k,n),
		&   \latNCC_0&=\latNCC(0,0) \sqcup \latNCC.
\end{align*}

\end{defi}

We can now specify the data of the operad $\SC$ of set partitions (which 
should more precisely be called of set compositions):

\begin{itemize}
  
  \item
  The set of colours is the set of positive integers $\N^\ast$;
  
  \item For fixed $n_1,\ldots,n_k,n \in \N^\ast$, we denote by 
  $\SC(n_1,\ldots,n_k;n)$ the set of set 
  compositions of $[n]$, say
  $C=(\pi_1,\ldots,\pi_k)$, such that $\sharp
  \pi_i=n_i$ for each $i\in [k]$.  (Note that if $n\neq n_1+\cdots+n_k$,
  then $\SC(n_1,\ldots,n_k;n)=\emptyset$.)
  
  A given composition $P=(\pi_1,\ldots,\pi_k)$ of $[n]$ is thus regarded as a 
  $k$-ary operation of output colour $n$, and input colour-list $(\sharp 
  \pi_1,\ldots,\sharp \pi_k)$.
  
  \item The identity operations are by definition the coarsest 
  compositions, i.e., single blocks of $n$ elements, $I_n \in \SC(n;n)$,
  for each $n \geq 1$.
  
  \item The actions of the symmetric groups $\mathfrak{S}_k$ are given
  by permutation of blocks:
 for $P=(\pi_1,\ldots,\pi_k) \in \SC(n_1,\ldots,n_k;n)$ and $\sigma \in 
 \mathfrak{S}_k$:
  \begin{align*}
	P^\sigma&=(\pi_{\sigma(1)},\ldots,\pi_{\sigma(k)})
	\in
	\latSC(n_{\sigma(1)},\ldots,n_{\sigma(k)} ; n) .
  \end{align*}
	(That is, $\sigma$ acts by renumbering blocks.)

\item 
  The composition law on $\SC$ is given as follows.  
  For any 
  $P=(\pi_1,\ldots,\pi_k) \in \SC(n_1,\ldots,n_k;n)$ 
  and a list of compositions $(Q_1,\ldots,Q_k)$  with 
  $Q_i=(\tau_{i,1},\ldots,\tau_{i,l_i})\in 
 \SC( m_{i,1},\ldots,m_{i,l_i} ; n_i)$, we define:
\begin{align*}
  P \circ (Q_1,\ldots,Q_k) &:= 
  (\tau_{1,1},\ldots, \tau_{1,l_1},\ \ldots \ , \ 
  \tau_{k,1},\ldots, \tau_{k,l_k} ).
  \end{align*}
  This requires reindexing: implicitly we are transporting the set-composition
  structure of each $Q_i$ on $[n_i]$ along the unique monotone bijection
  $[n_i] \isopil \pi_i \subset [n]$.

\end{itemize}
Checking that these data constitute indeed a coloured symmetric operad $\SC$
is a straightforward, but rather cumbersome, routine exercise. The only 
difficulty is index bookkeeping.

\begin{prop}
\label{prop:opd-nc}
  The composition law preserves noncrossing compositions. In particular,
  the same definitions as above, but using only noncrossing compositions,
  defines a (coloured symmetric) operad $\NCC$ of noncrossing compositions.
\end{prop}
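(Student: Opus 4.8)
The plan is to reduce the statement to a single elementary observation: block substitution can only \emph{refine} a partition, and refinement within a fixed underlying linearly ordered set cannot create a crossing where there was none. More precisely, I would first recall that for a noncrossing composition $P = (\pi_1,\ldots,\pi_k)$ of $[n]$ and noncrossing compositions $Q_i = (\tau_{i,1},\ldots,\tau_{i,l_i})$ of $[n_i]$ (with $n_i = \sharp\pi_i$), the blocks of $R := P \circ (Q_1,\ldots,Q_k)$ are exactly the images of the blocks of the $Q_i$ under the monotone bijections $\beta_i : [n_i] \isopil \pi_i \subset [n]$. So every block $\rho$ of $R$ satisfies $\rho \subseteq \pi_i = \beta_i([n_i])$ for a unique $i$, and within that $\pi_i$ the blocks coming from $Q_i$ form a noncrossing partition (since $\beta_i$ is monotone and $Q_i$ is noncrossing). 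The only thing left to rule out is a crossing between a block $\rho \subseteq \pi_i$ and a block $\rho' \subseteq \pi_j$ with $i \neq j$.

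The key step is then to show that two blocks lying in distinct blocks $\pi_i, \pi_j$ of the noncrossing partition $\{\pi_1,\ldots,\pi_k\}$ cannot cross. Suppose $a < c < b < d$ with $a,b \in \rho \subseteq \pi_i$ and $c,d \in \rho' \subseteq \pi_j$. Then $a,b \in \pi_i$ and $c,d \in \pi_j$ with $a < c < b < d$, which is precisely a crossing of $\pi_i$ and $\pi_j$ in $P$, contradicting that $P$ is noncrossing. This is immediate from the definition of noncrossing partition and needs no real work. Combining the two cases, no two blocks of $R$ cross, so $R$ is a noncrossing composition. Thus $\NCC$ is closed under the operadic composition law inherited from $\SC$.

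To finish, I would note that the identity operations $I_n$ of $\SC$ are single-block compositions, which are trivially noncrossing, so $\NCC$ contains the units; the symmetric group actions are just renumberings of blocks, which obviously preserve the noncrossing property; and the operad axioms (associativity, unitality, equivariance) hold for $\NCC$ because they already hold in $\SC$ and $\NCC$ is a full sub-collection (a sub-operad in the strong sense that it contains units, is closed under composition, and is closed under the symmetric actions). Hence $\NCC$ is a coloured symmetric sub-operad of $\SC$, which is exactly the assertion of the proposition.

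I do not anticipate a genuine obstacle here: the statement is essentially a closure property, and all the content is in the one-line crossing argument above. The only mild care required is the bookkeeping of the reindexing bijections $\beta_i$, to make sure that ``noncrossing'' is read with respect to the correct linear order — but since each $\beta_i$ is the \emph{monotone} bijection $[n_i] \isopil \pi_i$, it transports noncrossing partitions to noncrossing partitions of $\pi_i$ (with the order induced from $[n]$), and the inter-block argument uses only the original noncrossingness of $P$. So the proof is short, and the ``hard part'' is merely to state it cleanly rather than to discover any nontrivial idea.
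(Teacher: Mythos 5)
Your proof is correct and follows essentially the same route as the paper's: a case analysis on whether two blocks of $P\circ(Q_1,\ldots,Q_k)$ originate in the same $Q_i$ (handled by monotonicity of the reindexing bijection) or in distinct $Q_i,Q_j$ (handled by the noncrossingness of $P$, which you spell out explicitly via the $a<c<b<d$ definition). The additional remarks about units, symmetric actions, and inheritance of the operad axioms are exactly what the paper leaves implicit.
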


\begin{proof}
  For any application of the composition law, say $P \circ
  (Q_1,\ldots,Q_k)$, assume that all of $P$ and $Q_i$ are noncrossing. We
  assume the notation from above. Consider two blocks $\tau$ and $\tau'$ of
  $P\circ (Q_1,\ldots,Q_k)$. These blocks $\tau$ and $\tau'$ are
  essentially blocks of some of the $Q_i$, but shifted around according to
  the identifications made. If $\tau$ and $\tau'$ originate in the same $Q_i$,
  then they were noncrossing in $Q_i$, and since the shifting into block
  $\pi_i$ is effectuated by a monotone bijection $[n_i] \to \pi_i \subset
  [n]$, this clearly preserves the noncrossing condition. If the two 
  blocks $\tau$ and $\tau'$
  originate in distinct compositions $Q_i$ and $Q_j$, then they are mapped
  into distinct blocks $\pi_i$ and $\pi_j$ of $P$, and since $\pi_i$ and
  $\pi_j$ do not cross in $P$, a block of a refinement of $\pi_i$ cannot 
  cross a block of a refinement of $\pi_j$. In either case we see that
  $\tau$ and $\tau'$ do not cross in $P\circ (Q_1,\ldots,Q_k)$.
\end{proof}

\medskip

\textbf{Example}.
\begin{align*}
	&\big(\{1,5,6\};\{2,3,4\};\{7,8,9,10,11\}\big) 
	\circ \big((\{\{1\};\{2,3\}),(\{1,3\};\{2\}),(\{1,2,5\};\{3,4\})\big)\\
	&=(\{1\};\{5,6\};\{2,4\};\{3\};\{7,8,11\};\{9,10\}).
\end{align*}
Graphically:
\begin{align*}
\begin{tikzpicture}[line cap=round,line join=round,>=triangle 45,x=0.3cm,y=0.3cm]
\clip(0.8,1.) rectangle (11.2,4.);
\draw [line width=0.8pt,color=red] (1.,1.)-- (5.,1.);
\draw [line width=0.8pt,color=red] (5.,1.)-- (6.,1.);
\draw [line width=0.8pt,color=red] (6.,1.)-- (6.,2.);
\draw [line width=0.8pt,color=red] (5.,1.)-- (5.,2.);
\draw [line width=0.8pt,color=red] (1.,1.)-- (1.,2.);
\draw [line width=0.8pt,color=green] (2.,3.)-- (2.,2.);
\draw [line width=0.8pt,color=green] (2.,2.)-- (4.,2.);
\draw [line width=0.8pt,color=green] (4.,2.)-- (4.,3.);
\draw [line width=0.8pt,color=green] (3.,2.)-- (3.,3.);
\draw [line width=0.8pt,color=blue] (7.,2.)-- (7.,1.);
\draw [line width=0.8pt,color=blue] (7.,1.)-- (11.,1.);
\draw [line width=0.8pt,color=blue] (11.,1.)-- (11.,2.);
\draw [line width=0.8pt,color=blue] (10.,1.)-- (10.,2.);
\draw [line width=0.8pt,color=blue] (8.,1.)-- (8.,2.);
\draw [line width=0.8pt,color=blue] (9.,1.)-- (9.,2.);
\draw (0.2,3.4) node[anchor=north west,color=red] {\footnotesize $1$};
\draw (1.2,4.4) node[anchor=north west,color=green] {\footnotesize $2$};
\draw (6.2,3.4) node[anchor=north west,color=blue] {\footnotesize $3$};
\end{tikzpicture}
\circ
\bigg(\;
\begin{tikzpicture}[line cap=round,line join=round,>=triangle 45,x=0.3cm,y=0.3cm]
\clip(3.8,1.) rectangle (6.2,3.);
\draw [line width=0.8pt,color=red] (5.,1.)-- (6.,1.);
\draw [line width=0.8pt,color=red] (6.,1.)-- (6.,2.);
\draw [line width=0.8pt,color=red] (5.,1.)-- (5.,2.);
\draw [line width=0.8pt,color=red] (4.,1.)-- (4.,2.);
\draw (3.2,3.4) node[anchor=north west,color=red] {\footnotesize $1$};
\draw (4.2,3.4) node[anchor=north west,color=red] {\footnotesize $2$};
\end{tikzpicture}
\ , \
\begin{tikzpicture}[line cap=round,line join=round,>=triangle 45,x=0.3cm,y=0.3cm]
\clip(1.8,1.) rectangle (4.2,4.);
\draw [line width=0.8pt,color=green] (2.,2.)-- (2.,1.);
\draw [line width=0.8pt,color=green] (4.,1.)-- (4.,2.);
\draw [line width=0.8pt,color=green] (3.,2.)-- (3.,3.);
\draw [line width=0.8pt,color=green] (2.,1.)-- (4.,1.);
\draw (1.2,3.4) node[anchor=north west,color=green] {$1$};
\draw (2.2,4.4) node[anchor=north west,color=green] {$2$};
\end{tikzpicture}
\ , \
\begin{tikzpicture}[line cap=round,line join=round,>=triangle 45,x=0.3cm,y=0.3cm]
\clip(6.8,1.) rectangle (11.2,4.);
\draw [line width=0.8pt,color=blue] (7.,2.)-- (7.,1.);
\draw [line width=0.8pt,color=blue] (7.,1.)-- (11.,1.);
\draw [line width=0.8pt,color=blue] (11.,1.)-- (11.,2.);
\draw [line width=0.8pt,color=blue] (8.,1.)-- (8.,2.);
\draw [line width=0.8pt,color=blue] (9.,3.)-- (9.,2.);
\draw [line width=0.8pt,color=blue] (9.,2.)-- (10.,2.);
\draw [line width=0.8pt,color=blue] (10.,2.)-- (10.,3.);
\draw (6.2,3.4) node[anchor=north west,color=blue] {\footnotesize $1$};
\draw (8.2,4.4) node[anchor=north west,color=blue] {\footnotesize $2$};
\end{tikzpicture}
\bigg)
\ = \
\begin{tikzpicture}[line cap=round,line join=round,>=triangle 45,x=0.3cm,y=0.3cm]
\clip(0.8,1.) rectangle (11.2,5.);
\draw [line width=0.8pt,color=red] (5.,1.)-- (6.,1.);
\draw [line width=0.8pt,color=red] (6.,1.)-- (6.,2.);
\draw [line width=0.8pt,color=red] (5.,1.)-- (5.,2.);
\draw [line width=0.8pt,color=red] (1.,1.)-- (1.,2.);
\draw [line width=0.8pt,color=green] (2.,3.)-- (2.,2.);
\draw [line width=0.8pt,color=green] (2.,2.)-- (4.,2.);
\draw [line width=0.8pt,color=green] (4.,2.)-- (4.,3.);
\draw [line width=0.8pt,color=blue] (7.,2.)-- (7.,1.);
\draw [line width=0.8pt,color=blue] (7.,1.)-- (11.,1.);
\draw [line width=0.8pt,color=blue] (11.,1.)-- (11.,2.);
\draw [line width=0.8pt,color=blue] (8.,1.)-- (8.,2.);
\draw [line width=0.8pt,color=green] (3.,3.)-- (3.,4.);
\draw [line width=0.8pt,color=blue] (9.,3.)-- (9.,2.);
\draw [line width=0.8pt,color=blue] (9.,2.)-- (10.,2.);
\draw [line width=0.8pt,color=blue] (10.,2.)-- (10.,3.);
\draw (0.2,3.4) node[anchor=north west,color=red] {\footnotesize $1$};
\draw (4.2,3.4) node[anchor=north west,color=red] {\footnotesize $2$};
\draw (1.2,4.4) node[anchor=north west,color=green] {\footnotesize $3$};
\draw (2.2,5.4) node[anchor=north west,color=green] {\footnotesize $4$};
\draw (6.2,3.4) node[anchor=north west,color=blue] {\footnotesize $5$};
\draw (8.2,4.4) node[anchor=north west,color=blue] {\footnotesize $6$};
\end{tikzpicture}
\ = \
\begin{tikzpicture}[line cap=round,line join=round,>=triangle 45,x=0.3cm,y=0.3cm]
\clip(0.8,1.) rectangle (11.2,4.);
\draw [line width=0.8pt] (5.,1.)-- (6.,1.);
\draw [line width=0.8pt] (6.,1.)-- (6.,2.);
\draw [line width=0.8pt] (5.,1.)-- (5.,2.);
\draw [line width=0.8pt] (1.,1.)-- (1.,2.);
\draw [line width=0.8pt] (2.,2.)-- (2.,1.);
\draw [line width=0.8pt] (4.,1.)-- (4.,2.);
\draw [line width=0.8pt] (7.,2.)-- (7.,1.);
\draw [line width=0.8pt] (7.,1.)-- (11.,1.);
\draw [line width=0.8pt] (11.,1.)-- (11.,2.);
\draw [line width=0.8pt] (8.,1.)-- (8.,2.);
\draw [line width=0.8pt] (3.,2.)-- (3.,3.);
\draw [line width=0.8pt] (9.,3.)-- (9.,2.);
\draw [line width=0.8pt] (9.,2.)-- (10.,2.);
\draw [line width=0.8pt] (10.,2.)-- (10.,3.);
\draw [line width=0.8pt] (2.,1.)-- (4.,1.);
\draw (0.2,3.4) node[anchor=north west] {\footnotesize $1$};
\draw (4.2,3.4) node[anchor=north west] {\footnotesize $2$};
\draw (1.2,3.4) node[anchor=north west] {\footnotesize $3$};
\draw (2.2,4.4) node[anchor=north west] {\footnotesize $4$};
\draw (6.2,3.4) node[anchor=north west] {\footnotesize $5$};
\draw (8.2,4.4) node[anchor=north west] {\footnotesize $6$};
\end{tikzpicture} \ .
\end{align*}
Here the numbers indicate the block numbering that makes a partition into a 
composition.

\begin{rem}
  The axioms force us to exclude the colour $0$ and the empty composition.
  The only composition 
  of $\emptyset$ is of course the empty composition, and it has arity $0$,
  not $1$.  Were we to insist on including the empty composition (as a
  nullary operation) we would have to give up the unit axiom.  We shall
  return to this issue when we come to comodule bialgebras in 
  Section~\ref{ssect:comod}.
  
  In particular, since every non-empty composition has at least one block,
  there are no nullary operations, which is to say that the operad is
  reduced.
\end{rem}

\medskip

The following is a general construction of an ordinary operad from a 
coloured one, but it requires the setting of linear operads, i.e.,~operads 
in the symmetric monoidal category $(\Vect, \otimes, \K)$ instead of
the category of sets.

\begin{prop}
Let $C$ be a set and let $\P$ be a $C$-coloured linear operad. 
For any $n\geq 1$, we put:
\begin{align*}
	\widehat{\P}(n)&=\prod_{c_1,\ldots,c_n,c\in C}\P(c_1,\ldots,c_n;c).
\end{align*}
Let us assume that for any $(c_1,\ldots,c_n)\in C^n$, the set
$\ind(c_1,\ldots,c_n)=\{c\in C\mid \P(c_1,\ldots,c_n;c)\neq 0 \}$ is
finite. We define a composition law on $\widehat{\P}$ in the following way: we
consider elements $p\in \widehat{\P}(n)$, $q_i\in \widehat{\P}(k_i)$ for any $i\in
[n]$ and put:
\begin{align*}
	   p=\prod_{c_1,\ldots,c_n,c\in C} p_{c_1,\ldots,c_n;c}, \qquad
	q_i=\prod_{c_{i,1},\ldots,c_{i,k_i},c_i\in C} q_{c_{i,1},\ldots,c_{i,k_i};c_i}.
\end{align*} 
Then:
\begin{align*}
	p\bullet (q_1,\ldots,q_n)
	&=\prod_{\substack{c_{1,1},\ldots,c_{n,k_n},c\in C}}\Bigg(\sum_{\substack{c_1\in \ind(c_{1,1},\ldots,c_{1,k_1}),\\ 
	\vdots\\ 
	c_n\in \ind(c_{n,1},\ldots,c_{n,k_n})}} p_{c_1,\ldots,c_n;c}\circ (q_{c_{1,1},\ldots,c_{1,k_1};c_1},
	\ldots,q_{c_{n,1},\ldots,c_{n,k_n};c_n})\Bigg).
\end{align*}
The unit of $\widehat{\P}$ is:
\begin{align*}
	I&=\prod_{c\in C} I_c\in \prod_{c\in C}\P(c;c)\subseteq \widehat\P(1).
\end{align*}
\end{prop}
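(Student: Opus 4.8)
The plan is to verify the operad axioms for $(\widehat{\P},\bullet,I)$ one component at a time, reducing each to the corresponding axiom of the coloured operad $\P$. The finiteness hypothesis on the sets $\ind(c_1,\ldots,c_n)$ enters exactly once: it guarantees that every sum occurring in the formula for $\bullet$ is finite, hence that $\bullet$ is a well-defined map valued in $\widehat{\P}$. Indeed, fixing an output colour $c$ and a colour-list $(c_{1,1},\ldots,c_{n,k_n})$, the corresponding component of $p\bullet(q_1,\ldots,q_n)$ is the sum, over $(c_1,\ldots,c_n)\in\prod_{i=1}^n\ind(c_{i,1},\ldots,c_{i,k_i})$, of the composites $p_{c_1,\ldots,c_n;c}\circ(q_{c_{1,1},\ldots,c_{1,k_1};c_1},\ldots,q_{c_{n,1},\ldots,c_{n,k_n};c_n})$, each lying in $\P(c_{1,1},\ldots,c_{n,k_n};c)$; the index set is finite by hypothesis, and the composition law is multilinear because $\circ$ in $\P$ is. One preliminary observation streamlines everything: a composite $p_{c_1,\ldots,c_n;c}\circ(r_1,\ldots,r_n)$ of homogeneous elements vanishes unless $c\in\ind(c_1,\ldots,c_n)$ and unless the output colour of each $r_i$ equals $c_i$ (otherwise one of the tensor factors lies in a zero space). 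Consequently, in any of the sums below we may enlarge a summation range from some $\ind(\cdots)$ to all of $C$ without changing the value and without spoiling finiteness, since the extra terms are all zero (the relevant $q$- or $h$-factor already forces the colour into a finite set).

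For the unit axioms, write $I=\prod_{c',c}I_{c';c}$ with $I_{c';c}=\id_c$ if $c'=c$ and $0$ otherwise. In $I\bullet(q)$ with $q\in\widehat{\P}(k)$, the $(c_{1,1},\ldots,c_{1,k};c)$-component is $\sum_{c_1\in\ind(c_{1,1},\ldots,c_{1,k})}I_{c_1;c}\circ(q_{c_{1,1},\ldots,c_{1,k};c_1})$, which collapses to $\id_c\circ q_{c_{1,1},\ldots,c_{1,k};c}=q_{c_{1,1},\ldots,c_{1,k};c}$; if $c\notin\ind(c_{1,1},\ldots,c_{1,k})$ the term is absent but then $q_{c_{1,1},\ldots,c_{1,k};c}=0$ anyway, so $I\bullet(q)=q$. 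In $p\bullet(I,\ldots,I)$ (all $k_i=1$) the $(c_{1,1},\ldots,c_{n,1};c)$-component is $\sum_{c_1\in\ind(c_{1,1}),\ldots,c_n\in\ind(c_{n,1})}p_{c_1,\ldots,c_n;c}\circ(I_{c_{1,1};c_1},\ldots,I_{c_{n,1};c_n})$; since $I_{c_{i,1};c_i}=0$ unless $c_i=c_{i,1}$, and $c_{i,1}\in\ind(c_{i,1})$ because $\id_{c_{i,1}}\in\P(c_{i,1};c_{i,1})$, only the term with $c_i=c_{i,1}$ for all $i$ survives, giving $p_{c_{1,1},\ldots,c_{n,1};c}\circ(\id_{c_{1,1}},\ldots,\id_{c_{n,1}})=p_{c_{1,1},\ldots,c_{n,1};c}$ by the unit axiom in $\P$. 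Hence $p\bullet(I,\ldots,I)=p$.

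The main work, and the only step I expect to be a real obstacle, is associativity, which is essentially an exercise in keeping two layers of summation aligned. I would expand both $\big(p\bullet(q_1,\ldots,q_n)\big)\bullet(h_1^1,\ldots,h_{k_1}^1,\ldots,h_1^n,\ldots,h_{k_n}^n)$ and $p\bullet\big(q_1\bullet(h_1^1,\ldots,h_{k_1}^1),\ldots,q_n\bullet(h_1^n,\ldots,h_{k_n}^n)\big)$ into components. Using the preliminary observation to enlarge every summation range to all of $C$, each component of either side becomes a finite sum indexed by the same data -- a choice of intermediate colour at each block of $p$ (the colours $c_1,\ldots,c_n$) and at each block of each $q_i$ -- whose general term is a twice-iterated $\P$-composite: on the left the composition is performed by first composing $p$ with $(q_1,\ldots,q_n)$ and then composing the result with the remaining inputs, while on the right one first composes each $q_i$ with its own inputs and then composes $p$ with the results. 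For each fixed choice of intermediate colours the two twice-iterated composites are equal by the associativity axiom of $\P$ (in the parallel form recalled earlier in the text), and summing over all choices yields the equality of the two components, hence of the two operations in $\widehat{\P}$. Finally, when $\P$ is a symmetric coloured operad, $\widehat{\P}$ carries symmetric group actions defined componentwise -- $\sigma\in\mathfrak{S}_n$ acts on $\widehat{\P}(n)$ by permuting the colour indices $(c_1,\ldots,c_n)$ and applying the $\P$-action on each $\P(c_1,\ldots,c_n;c)$ -- and the equivariance axioms transfer componentwise by exactly the same bookkeeping.
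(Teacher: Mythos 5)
Your proof is correct and follows essentially the same route as the paper's: well-definedness of $\bullet$ from the finiteness of the sets $\ind(\cdots)$ together with multilinearity, explicit componentwise verification of the unit axioms, and reduction of associativity and equivariance to the corresponding axioms of $\P$. In fact your vanishing observation (allowing the summation ranges over intermediate colours to be aligned because the mismatched terms involve a factor in a zero space) spells out the index bookkeeping that the paper's proof leaves implicit when it simply asserts that associativity of $\circ$ implies associativity of $\bullet$.
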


\begin{proof}
  By hypothesis on $\ind(c_1,\ldots,c_n)$, the sum defining $p\bullet
  (q_1,\ldots,q_n)_{c_{1,1},\ldots,c_{n,k_n};c}$ is finite, so the law $\bullet$ is
  well-defined. The associativity of $\circ$ and its compatibility with the
  action of the symmetric groups imply that $\bullet$ is associative and
  compatible with the action of the symmetric groups. For any $p\in
  \widehat{\P}(n)$:
\begin{align*}
	I\bullet p
	&=\prod_{c_1,\ldots,c_n,c\in C}(I_c\circ p_{c_1,\ldots,c_n;c}+0)
	  =\prod_{c_1,\ldots,c_n,c\in C}p_{c_1,\ldots,c_n;c}
	  =p,\\ 
	p\bullet(I,\ldots,I)
	&=\prod_{c_1,\ldots,c_n,c\in C}p_{c_1,\ldots,c_n;c}\circ (I_{c_1},\ldots,I_{c_n})
	  =\prod_{c_1,\ldots,c_n,c\in C}p_{c_1,\ldots,c_n;c}
	  =p.
\end{align*}
So $I$ is a unit of $\P$. (Notice that in all the above constructions,
products could be replaced by direct sums except for the fact that then
the operad $\widehat\P$ would not have a unit since $I\notin \bigoplus_{c\in C}
I_c$.)
\end{proof}

\begin{defi}[Operads of compositions]
  Let $\P$ be the linearization of $\SC$ or $\NCC$, with set of
  colours $C=\N^\ast$.
  Then, for any $c_1,\ldots,c_n\in C$:
\begin{align*}
	\ind(c_1,\ldots,c_n)&=\{c_1+\cdots+c_n\}.
\end{align*}
The construction of the previous proposition can thus be applied to $\SC$
and $\NCC$ to obtain two ordinary (linear) operads $\widehat{\SC}$ and
$\widehat{\NCC}$.
\end{defi}


\subsection{Induced bialgebras}
\label{indbialg}

The general construction of a bialgebra from an operad (already invoked in
Subsection~\ref{ssect:Hopfstruct}) applies equally well to coloured
symmetric operads $\mathcal{P}$ (subject to suitable finiteness
conditions).  As an algebra, the associated bialgebra is free on the set of all operations.
The coproduct of
an operation $R$ is defined as
$$
	\Delta(R) = \sum_{R=P \circ (Q_1,\ldots, Q_k)} P \otimes Q_1 \cdots Q_k.
$$
In plain words, the coproduct is given by summing over all the ways $R$
could have arisen from the composition law, and then putting the receiving
operation $P$ on the left and the monomial of all the operations fed into
$P$ on the right. Like in the one-colour case, this bialgebra is graded (by
arity-minus-one), but is never connected.
In the present case, degree zero is spanned by the identity operations $I_n$, 
the single-block partitions,
and for most purposes it is too drastic to divide out by those.  We shall 
therefore mostly accept that the resulting bialgebras are not Hopf.
Another ``drawback'' is that the general bialgebra construction yields a
bialgebra spanned by compositions rather than partitions.  This is an 
artifact of having introduced the numbering of blocks, and it is
corrected easily by passing to coinvariants, meaning dividing out by the
actions of the symmetric groups.  Since these actions are free (because
the numberings were introduced freely), this quotiening is well behaved.

Although it is possible to describe these construction by hand, it is
usually better to invoke general theorems to the effect that the
construction works. One general framework for the construction is described
in \cite{FoissyOperads}, applied to single-coloured operads $\widehat{\SC}$
and $\widehat{\NCC}$. (The construction is denoted
$\mathbf{D}_\mathcal{P}^\ast$ in that article.)
With the previous notation for coloured linear operads,
$\prod_{c_1,\ldots,c_n,c\in C}\P(c_1,\ldots,c_n;c)$ is the linear dual of
$\oplus_{c_1,\ldots,c_n,c\in C}\P(c_1,\ldots,c_n;c)^\ast$. These
constructions allow to define bialgebras $\Bblcomp$ and $\Bblnccomp$ of
set compositions and noncrossing compositions, and pass to coinvariants to
get bialgebras $\Bbl$ and $\Bblnc$ of set partitions and noncrossing
partitions. (This composite construction is denoted $D_{\P}^\ast$ in 
\cite{FoissyOperads}.)

Another general construction goes via the so-called two-sided bar
construction, as exploited in \cite{Kock-Weber}: to any (coloured) operad
one can first assign its two-sided bar construction, which is a simplicial
groupoid (see~\cite{Kock-Weber}). This simplicial groupoid is in fact a
symmetric monoidal decomposition space in the sense of \cite{GKT1}, and
therefore admits an incidence bialgebra, which in the present two cases are
$\Bbl$ and $\Bblnc$ directly ($\Bblcomp$ and $\Bblnccomp$ cannot be
obtained in this way). (Incidence coalgebras of posets and categories are
also special cases of this, and the setting allows for precise comparison
results. We shall briefly touch upon this below when we  compare the 
operad approach with the classical lattice approach (see
Remark~\ref{decalage}).)

\bigskip

We summarize the outcome of these constructions in the following proposition, and
proceed to give explicit descriptions of the structures.

\begin{prop}
  The coloured symmetric operads $\SC$ and $\NCC$ induce four bialgebra structures
  and bialgebra homomorphisms:
  $$
\begin{tikzcd}[sep=small]
   &  \text{\footnotesize noncrossing} && \text{\footnotesize general} 
   & \phantom{xxxxxxx} \\
	 \text{\footnotesize compositions} & 
	 \Bblnccomp  \ar[dd, twoheadrightarrow] && 
	 \Bblcomp \ar[dd, twoheadrightarrow] & \\
	 &&&& \\
	\text{\footnotesize partitions} & \Bblnc  && 
	\Bbl &
\end{tikzcd}
$$
\end{prop}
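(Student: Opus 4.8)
The plan is to deduce the statement from the general operad-to-bialgebra construction recalled above, so that the real content is just checking that the coloured symmetric operads $\SC$ and $\NCC$ satisfy the hypotheses of that construction, and then reading off the four bialgebras together with the two quotient homomorphisms. First I would apply the construction denoted $\mathbf{D}_\P^\ast$ in \cite{FoissyOperads} to the ordinary linear operads $\widehat{\SC}$ and $\widehat{\NCC}$; these are available by the $\widehat{\P}$ construction given above, since for the colour set $\N^\ast$ one has $\ind(c_1,\ldots,c_n)=\{c_1+\cdots+c_n\}$, a singleton, and Proposition~\ref{prop:opd-nc} ensures that $\NCC$ really is a suboperad of $\SC$, so the recipe applies verbatim in the noncrossing case. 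The one genuine verification is the finiteness needed for the coproduct $\Delta(R)=\sum_{R=P\circ(Q_1,\ldots,Q_m)}P\otimes Q_1\cdots Q_m$ to be a finite sum: a decomposition of a composition $R$ with $k$ blocks is nothing but an ordered splitting of the set of blocks of $R$ into non-empty groups (together with the induced transported sub-compositions), and there are only finitely many such. Granting this, coassociativity is exactly the associativity axiom of the operad, multiplicativity of $\Delta$ and the counit axiom are automatic from the freeness of the underlying algebra, and grading by arity-minus-one, i.e.\ by number of blocks minus one, makes $\Bblcomp$ and $\Bblnccomp$ graded bialgebras; they fail to be connected, and hence are not Hopf, because degree $0$ is spanned by the monomials in the identity operations $I_n$.

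Next I would pass from compositions to partitions. The symmetric group $\mathfrak{S}_k$ acts on the $k$-ary operations by renumbering blocks, and this action is free, the blocks of a set composition being pairwise distinct; by the symmetry axiom of the operad this action is compatible with product and coproduct, so the block-numbering coinvariants inherit a bialgebra structure. This is precisely the composite construction $D^\ast_\P$ of \cite{FoissyOperads}, whose output here is $\Bbl$, respectively $\Bblnc$. Equivalently, one can skip the compositions and obtain $\Bbl$ and $\Bblnc$ at once as the incidence bialgebras of the symmetric monoidal decomposition spaces given by the two-sided bar constructions of $\SC$ and $\NCC$, in the manner of \cite{Kock-Weber,GKT1}. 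In either description the coinvariant projections $\Bblcomp\twoheadrightarrow\Bbl$ and $\Bblnccomp\twoheadrightarrow\Bblnc$ are bialgebra homomorphisms by the very way the structure on the quotient is defined, which yields the two arrows in the displayed diagram. I would not claim any horizontal map: the operad inclusion $\NCC\hookrightarrow\SC$ does not induce a coalgebra map, since a noncrossing composition can be decomposed inside $\SC$ through a \emph{crossing} coarsening, so the noncrossing decompositions form a proper subset of all decompositions.

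The step I expect to require real care is not conceptual but combinatorial: checking that the coproduct genuinely descends along the block-renumbering quotient, i.e.\ that the decompositions of $R^\sigma$ correspond $\mathfrak{S}$-equivariantly to those of $R$ in a way compatible with forgetting the numbering both on the receiving operation and on the monomial of inputs. This is forced by the equivariance axiom of a symmetric operad, but pinning it down precisely involves the same index shuffling that makes the verification of the operad axioms for $\SC$ itself, in the words of the discussion above, ``straightforward but rather cumbersome''. If one instead simply invokes the cited general theorems after checking the finiteness above, there is no remaining obstacle, and the explicit combinatorial formulas for the coproducts can then be recorded as in the discussion that immediately follows this proposition.
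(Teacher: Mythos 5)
Your proposal is correct and takes essentially the same route as the paper, which likewise gives no separate proof but simply invokes the general constructions — the $\mathbf{D}_\mathcal{P}^\ast$ construction of \cite{FoissyOperads} applied to $\widehat{\SC}$ and $\widehat{\NCC}$ followed by passage to coinvariants under the free block-renumbering actions (the composite $D_\mathcal{P}^\ast$), or equivalently the two-sided bar construction of \cite{Kock-Weber} — exactly as you do. Your supplementary checks are sound: the decomposition sum is finite, the coinvariant projections are bialgebra homomorphisms by construction, and you are right that no horizontal map is asserted, since a noncrossing composition can be decomposed through a crossing coarsening.
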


The bialgebra $\Bblcomp$ is freely generated as an algebra by $\latSC$,
and $\Bblnccomp$ is freely generated by $\latNCC$.
It is the coproduct, denoted
$\delta_0$, which is the most interesting structure. We proceed to describe
it in elementary terms.

For any $P$, $Q\in \latSC(n)$, recall that $P\leq Q$ if there exist $R_1,\ldots,R_k\in \latSC$ such that: 
\begin{align*}
	P=Q\circ (R_1,\ldots,R_k).
\end{align*}
This is an adaptation to compositions of the usual ordering by
coarsening of partitions ($P \leq Q$ when $Q$ is coarser than $P$). 
For fixed $P$ and $Q$, such $R_1,\ldots,R_k$ are unique if they 
exist;  in that case we put $P/Q:=R_1\cdots R_k$.
We can now describe the coproduct 
$\delta_0 \colon \Bblcomp \to \Bblcomp \otimes \Bblcomp$
explicitly. For $P \in \latSC$ we have
\begin{align*}
	\latSC \ni P\mapsto \delta_0(P) =\sum_{P\leq Q} Q\otimes P/Q.
\end{align*}

The space of invariants under the action of the symmetric groups of
$\Vect(\latSC)$ is naturally identified with $\Vect(\latSP)$.
Accordingly, $\Bbl$ is the free {\em commutative} algebra generated by 
$\latSP$, and  similarly $\Bblnc$ is the free {\em commutative}
algebra generated by $\latNCP$.

The coproduct $\delta$ on $\Bbl$ is induced from $\delta_0$.
To describe it, further notation is required:

For $P$, $Q\in \latSP(n)$, we shall say that $P\leq Q$ if each block of
$Q$ is the union of the blocks of $P$ it contains. This is the usual poset
of partitions of degree $n$. Its minimal element is the finest partition
$J_n=\{\{1\},\ldots,\{n\}\}$, the unique element of $\latSP(n,n)$, and its
maximal element is the coarsest partition $I_n=\{[n]\}$, the unique element
of $\latSP(1,n)$.

Denoting $Q=\{\tau_1,\ldots,\tau_l\}$, we put
\begin{align*}
	P/Q&\ =\ \st{P_{\mid \tau_1}}\cdots \st{P_{\mid \tau_l}} \ \in \Bbl .
\end{align*}
Finally we can describe the coproduct explicitly. We have $\delta \colon \Bbl \to \Bbl \otimes \Bbl$, and:
\begin{align}
	\latSP \ni P\mapsto \delta(P) =\sum_{P\leq Q} Q\otimes P/Q \label{refinecoprod}.
\end{align}

The counit of $\Bbl$ is denoted by $\varepsilon_\Bbl$; for 
$P\in \latSP(k,n)$, we have $\varepsilon_\Bbl(P)=\delta_{k,1}$.

\begin{rem}
For the noncrossing case the description is exactly the same. The sum runs over noncrossing partitions $Q$ coarser than $P$.
This is meaningful because if $P$ is noncrossing then for any coarser noncrossing partition 
$Q$, the restrictions $P_{\mid \tau_i}$ forming the monomial
are noncrossing as well.
\end{rem}

\begin{ex}
We give some example computation in the case of noncrossing partitions.
Since the sum defining $\delta$ is over all coarser noncrossing 
partitions, the coarsest partitions $I_n$ are all group-like:
\allowdisplaybreaks
\begin{eqnarray*}
  \delta( \ncone )  =  
  \ncone \otimes \ncone
  \qquad
  \delta( \nctwoW )  =  
  \nctwoW \otimes \nctwoW
  \qquad
  \delta( \ncthreeW )  = 
  \ncthreeW \otimes \ncthreeW
\end{eqnarray*}
and 
the finest partitions $J_n$ have the longest formulas for coproduct:
\begin{eqnarray*}
  \delta( \nctwo ) & = & 
   \nctwoW\otimes\nctwo 
  +  \nctwo\otimes\ncone {\cdot} \ncone 
  \\
  \delta( \ncthree ) & = &\ncthreeW\otimes 
  \ncthree 
  +  
  \big( \nconeinsidetwoWW  +  \nconetwoW  +  \nctwoWone \big) \otimes\ncone {\cdot} \nctwo  
  + \ncthree  \otimes \ncone {\cdot} \ncone {\cdot} \ncone  
  \\
  \delta(\ncfour) & = &
  \ncfourW \otimes \ncfour  
   +  \nctwoWtwoW \otimes \nctwo {\cdot} \nctwo 
   +  \nctwoWinsidetwoWWW \otimes \nctwo {\cdot} \nctwo 
   +   \nconeonetwoW \otimes \ncone {\cdot} \ncone {\cdot} \nctwo
   +  \nconeoneinsidetwoWW \otimes \ncone {\cdot} \ncone {\cdot} \nctwo \\ 
   & &
   + \ncone \nctwoW \ncone   \otimes  \ncone {\cdot} \ncone {\cdot} \nctwo
   +  \tikz[x=1.3cm,y=1.3cm]{ 
\draw (0.5,0.2)--(0.5,0)--(0.8,0)--(0.8,0.2);
\path (0.6,0.1) pic {nctwo};}\otimes
   \ncone {\cdot} \ncone {\cdot} \nctwo 
   + \nconeinsidetwoWW \ncone  \otimes   \ncone {\cdot} \ncone {\cdot} \nctwo 
   +  \nctwoW \ncone \ncone \otimes
   \ncone {\cdot} \ncone {\cdot} \nctwo  \\
   & &
   + \nconethreeW  \otimes  \ncone {\cdot} \ncthree 
   + \nconeinsidethreeWW \otimes\ncone {\cdot} \ncthree 
   + \nconeinsidethreerightWW \otimes\ncone {\cdot} \ncthree 
   + \ncthreeW \ncone \otimes  \ncone {\cdot} \ncthree 
   +  \ncfour \otimes\ncone {\cdot} \ncone {\cdot} \ncone {\cdot} \ncone .
\end{eqnarray*}
Here are a few other examples, which will be referenced later:
\label{examples:sticksandcups}
\begin{eqnarray*}
  \delta(\nconetwoW) & = & 
   \nconetwoW \otimes\ncone {\cdot} \nctwoW  + \ncthreeW\otimes  \nconetwoW   
  \\
  \delta(\nctwoWtwoW) & = & 
   \nctwoWtwoW \otimes \nctwoW{\cdot} \nctwoW 
  + \ncfourW \otimes \nctwoWtwoW 
  \\
  \delta( \nctwoWinsidetwoWWW ) & = & 
   \nctwoWinsidetwoWWW \otimes \nctwoW{\cdot} \nctwoW 
  +  \ncfourW \otimes\nctwoWinsidetwoWWW 
  \\
  \delta(\nconeonetwoW) & = &
   \nconeonetwoW \otimes\ncone {\cdot} \ncone {\cdot} \nctwoW 
  + 
   \nctwoWtwoW \otimes \nctwo {\cdot} \nctwoW 
  +
   \big( \nconethreeW + 
  \nconeinsidethreeWW \big)\otimes \ncone {\cdot} \nconetwoW 
  +  \ncfourW\otimes\nconeonetwoW 
  \\
  \delta( \nconeoneinsidetwoWW ) & = &
   \nconeoneinsidetwoWW \otimes\ncone {\cdot} \ncone {\cdot} \nctwoW 
  +
   \nconeinsidethreerightWW \otimes\ncone {\cdot} \nconetwoW 
  + 
   \nconethreeW \otimes\ncone {\cdot}  \nconeinsidetwoWW 
  + \ncfourW \otimes \nconeoneinsidetwoWW .
\end{eqnarray*}
\end{ex}


\subsection{Comparison with the lattice approach and M\"obius inversion}
\label{subsec:Phi}

The monomial $P/Q$ appearing on the right-hand side in the coproduct
formula \eqref{refinecoprod} plays an important role in relating the
bialgebras $\Bbl$ and $\Bblnc$ with the incidence coalgebras of the
lattices of partitions and noncrossing partitions. In this subsection,
for simplicity 
we treat only the case of noncrossing partitions, but the whole discussion
(though not the example computations) carries over to the case of general
set partitions.

We shall see, for example, that Speicher's basic
moment-cumulant formula given in terms of M\"obius inversion in the
incidence algebra of the lattice of noncrossing partitions can be rendered
elegantly in the bialgebra $\Bblnc$.

\begin{defi}[Fibre]
\label{def:fibre}
Given two (noncrossing) partitions, $P,Q \in \latNCP$, one finer than the other, $P\leq Q,$
the {\em fibre} map is defined by: 
$$
	\Phi(P,Q) := P/Q.
$$ 
\end{defi}

{\bf Example:}
$$
	\Phi\big(  \
\tikz[x=1.3cm,y=1.3cm]{\path (0,0) pic {ncone}; \path (0.1,0.1) pic {nctwoW};
\path (0.3,0.1) pic {nctwo};\path (0.5,0) pic {ncone}; \path (0.6,0) pic {nconeinsidetwoWW};}
\ , \
\tikz[x=1.3cm,y=1.3cm]{\draw (0,0.2)--(0,0)--(0.5,0)--(0.5,0.2); \path (0.1,0.1) pic {ncfourW}; 
\path (0.6,0) pic {ncthreeW};}
\ \big)
\ \ = \ \ 
\nctwo \cdot \tikz[x=1.3cm,y=1.3cm]{\path (0,0) pic {nctwoW}; \path (0.2,0) pic 
{nctwo};} \cdot \nconeinsidetwoWW.
$$

The linear dual $\Bblnc^\ast$ is an algebra under the corresponding convolution product:
for two linear maps $\phi, \psi: \Bblnc \to \K$, the convolution product is given by
$$
	(\phi*\psi)(P) = \sum_{P\leq Q} \phi(Q) \psi(P/Q)  .
$$
The neutral element for the convolution product is the bialgebra counit
$\varepsilon_\Bblnc$, which takes value $1$ on all $I_n$ (as
well as on monomials in the $I_n$) and value $0$ on all other noncrossing
(multi)partitions.

Recall that for any bialgebra $T$, the characters (algebra homomorphisms 
$T \to \K$) form a
monoid for the convolution product: the product of two characters
$\phi,\psi$ is given by $\phi \ast \psi := m_\K\circ (\phi\otimes\psi
)\circ\Delta_T$. For the case of $\Bblnc$, a character $\psi$ is thus multiplicative, in
the sense that if $P$ is a commutative monomial of partitions, $P= P_1\cdots P_r$, then
$\psi(P) = \psi(P_1) \cdots \psi(P_r)$ (and we also have
$\psi(\mathbf{1})=1$, where the argument $\mathbf{1}$ denotes the empty product, the unit
element in the algebra).

A basic character on
$\Bblnc$ is the {\em zeta function}
\begin{align*}
	\zeta:&\left\{\begin{array}{rcl}
	\Bblnc&\longrightarrow& \K \\
	P&\longmapsto& 1.
\end{array}\right.
\end{align*}
It is a general fact that the zeta function is convolution invertible;
its inverse is by definition the M\"obius function $\mu$, which is again a 
character, determined by the following recursive formula:
\begin{eqnarray*}
  \mu(I_n) & = & 1   \qquad\qquad\qquad\ \text{ for all $n>0$},\\
  \mu(P) & = & - \sum_{P<Q} \mu(Q) \qquad \text{for $P\neq I_n$}.
\end{eqnarray*}
The sum is over all {\em strictly} coarser partitions or noncrossing partitions.

\begin{lemma}
In the dual $\Bblnc^\ast$, given a character $\kappa$,
define 
$$
	\phi :=  \zeta *\kappa .
$$
Then: 
\begin{equation}\label{eqB}
	\phi(J_n)=\sum\limits_{P=\{\pi_1,\dots,\pi_k\}\in \latNCP(n)}\kappa(J_{\sharp\pi_1})\cdots \kappa(J_{\sharp\pi_k}).
\end{equation}
\end{lemma}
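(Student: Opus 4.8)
The plan is a direct computation from the definition of the convolution product. First I would unfold $\phi = \zeta * \kappa$: since $\zeta$ takes the value $1$ on every noncrossing partition, for any $P \in \latNCP$ one has
$$
  \phi(P) \;=\; \sum_{P \leq Q} \zeta(Q)\,\kappa(P/Q) \;=\; \sum_{P \leq Q} \kappa(P/Q),
$$
where $Q$ runs over the noncrossing partitions coarser than $P$ and, writing $Q = \{\tau_1,\dots,\tau_l\}$, the element $P/Q$ is the commutative monomial $P_{\mid \tau_1}\cdots P_{\mid \tau_l}$ appearing in the coproduct formula \eqref{refinecoprod}.

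Next I would specialise to $P = J_n$. Because $J_n$ is the minimal element of the lattice $\latNCP(n)$, the condition $J_n \leq Q$ is satisfied by every $Q \in \latNCP(n)$, so the sum ranges over all noncrossing partitions of $[n]$. For such a $Q = \{\tau_1,\dots,\tau_l\}$, the one substantive point is the (elementary) observation that the restriction $(J_n)_{\mid \tau_i}$ of a forest of sticks to a block $\tau_i$ is again a forest of sticks, namely (after standardisation) $J_{\sharp \tau_i}$; hence $J_n/Q = J_{\sharp\tau_1}\cdots J_{\sharp\tau_l}$.

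Finally, since $\kappa$ is a character it is multiplicative on commutative monomials of partitions, so $\kappa(J_n/Q) = \kappa(J_{\sharp\tau_1})\cdots\kappa(J_{\sharp\tau_l})$. Feeding this back into the displayed expression for $\phi(J_n)$ and relabelling $Q$ as $P = \{\pi_1,\dots,\pi_k\}$ gives precisely \eqref{eqB}. There is no genuine obstacle: the only things to get right are reading off the definition of $P/Q$ from \eqref{refinecoprod}, noting that restrictions of $J_n$ are again forests of sticks, and invoking the multiplicativity of characters recalled just above the statement; the remainder is substitution.
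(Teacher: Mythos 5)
Your computation is correct and is exactly the argument the paper has in mind (it omits the proof as immediate): unfold the convolution via \eqref{refinecoprod}, use that $J_n$ is the minimum of $\latNCP(n)$ so all $Q$ contribute, observe $J_n/Q=J_{\sharp\tau_1}\cdots J_{\sharp\tau_l}$, and invoke multiplicativity of the character $\kappa$. Nothing is missing.
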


\begin{cor}
  With
\begin{align*}
	k_n &:= \kappa( J_n ) = \kappa(\ncone {\cdots} \nctwo) &\text{ and }& &
		m_n &:= \phi( J_n ) = \phi(\ncone {\cdots} \nctwo) ,
\end{align*}
Equation~\eqref{eqB} recovers the familiar free moment-cumulant relation
$m_n=\!\!\!\!\!\sum\limits_{Q\in \latNCP(n)} \!\! \!\!k_{Q}$ from \eqref{freecumu}.
\end{cor}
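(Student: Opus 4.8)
The plan is to read off the corollary as a direct unwinding of the preceding lemma; all the mathematical content sits in that lemma, and what is left is purely a bookkeeping identification of notation. First I would invoke the lemma for the character $\kappa$ at hand and for $\phi := \zeta * \kappa$, obtaining
\[
	m_n = \phi(J_n) = \sum_{P=\{\pi_1,\dots,\pi_k\}\in \latNCP(n)} \kappa(J_{\sharp\pi_1})\cdots \kappa(J_{\sharp\pi_k}),
\]
where each $J_{\sharp\pi_i}$ denotes the standard finest partition on $\sharp\pi_i$ elements.

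Next I would substitute the defining relation $k_m := \kappa(J_m)$ into every factor, which rewrites the right-hand side as
\[
	m_n = \sum_{P=\{\pi_1,\dots,\pi_k\}\in \latNCP(n)} k_{\sharp\pi_1}\cdots k_{\sharp\pi_k} = \sum_{P\in \latNCP(n)} k_P,
\]
the last equality being merely the definition $k_P = \prod_{i\leq k} k_{\sharp\pi_i}$ recalled just after \eqref{freecumu}. Renaming the summation variable $P$ to $Q$ produces exactly the free moment-cumulant relation $m_n=\sum_{Q\in\latNCP(n)}k_Q$ of \eqref{freecumu}, as claimed.

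The only step that deserves a moment's attention --- and hence the closest thing to an obstacle here --- is the legitimacy of the block-by-block substitution $\kappa(J_{\sharp\pi_i})=k_{\sharp\pi_i}$: one must confirm that the partition $J_{\sharp\pi_i}$ appearing inside the sum in \eqref{eqB} is, as an iso-class of noncrossing partitions, the very object on which the cumulant $k_{\sharp\pi_i}$ was defined. This is immediate from the standardization conventions of Section~\ref{CompPart}, since restricting the finest partition to a block of $\sharp\pi_i$ elements and then standardizing gives back $J_{\sharp\pi_i}$. With that remark in place, the corollary follows with no computation at all.
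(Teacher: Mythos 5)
Your proposal is correct and coincides with the paper's own (implicit) reasoning: the corollary carries no separate proof in the paper precisely because it is the immediate notational unwinding of Equation~\eqref{eqB} that you carry out, substituting $k_{\sharp\pi_i}=\kappa(J_{\sharp\pi_i})$ and invoking the definition $k_Q=\prod_i k_{\sharp\tau_i}$ from \eqref{freecumu}. Your closing remark on standardization of the restricted finest partitions is a harmless extra check and does not change the argument.
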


\begin{rem}
  Note that the moments and cumulants here are defined using the finest
  partitions, not the coarsest, which may surprise readers familiar with
  the work of Speicher~\cite{Speicher94,SpeicherNica}. The contrast will be
  fully resolved below.
\end{rem}

\begin{ex}
Using the above coproduct formulae and the monomial multiplicativity, we
calculate for instance the second, third and fourth moments in the 
noncrossing case:
\begin{eqnarray*}
	m_2 
	&=& \phi( \nctwo ) = ( \zeta *\kappa )(\nctwo) 
	  =\kappa(\ncone {\cdot} \ncone) + \kappa( \nctwo )\\
	&=& k_1 k_1 + k_2,\\
	m_3 
	&=& \phi(\ncthree) = (\zeta *\kappa )(\ncthree) 
	  = \kappa(\ncone{\cdot}\ncone{\cdot}\ncone) + 3 \kappa(\ncone {\cdot} \nctwo) + \kappa(\ncthree ) \\
	&=& k_1 k_1 k_1 + 3 k_1 k_2 + k_3, \\
  m_4 &=& \kappa(\ncone{\cdot}\ncone{\cdot}\ncone{\cdot}\ncone) 
  \ + \
  6 \kappa(\ncone {\cdot} \ncone {\cdot} \nctwo)
  \ + \
  4 \kappa(\ncone {\cdot} \ncthree)
  \ + \
  2 \kappa (\nctwo {\cdot} \nctwo) 
  \ + \ \kappa(\ncfour)
  \\
  & = & k_1 k_1 k_1 k_1 + 6 k_1 k_1 
  k_2 + 4 k_1 k_3 +2 k_2 k_2 + k_4.
\end{eqnarray*}
(Note that the coefficient $2$ in front of $k_2 k_2$ is the first that
distinguishes the case of noncrossing partitions from the case of general
set partitions, where this coefficient would be $3$.)

Conversely, using the recursive formula for $\mu$, one finds quickly that 
\begin{align*}
	\mu(\ncone) &=\mu(\nctwoW) = \mu(\ncthreeW) = \mu(\ncfourW) = 1\\
	\mu(\nctwo) &= \mu(\nconeinsidetwoWW) 
				= \mu(\nconetwoW) 
				= \mu(\nctwoWone)
				= \mu(\ncone \ncthreeW)  
				= \mu(\nconeinsidethreeleftWW)
				= \mu(\nconeinsidethreerightWW)
				= \mu(\ncthreeW \ncone)
				= -1,\\
	\mu(\ncthree) &= \mu( \nctwoW \nctwo ) 
				= \mu( \ncone \nctwoW \ncone) 
				= \mu( \nctwo \nctwoW )
				= 2,
\end{align*}
but
\begin{align*}
\mu(\nconeoneinsidetwoWW) &= \mu(\nconeinsidetwoWW \ncone) = 1,&
	\mu(\ncfour) &= -5.
\end{align*}
Combining with the basic comultiplication table (see computations in 
Example~\ref{examples:sticksandcups}), we get for the second, third
and fourth cumulants
\begin{eqnarray*}
	k_2 
	&=& \kappa( \nctwo ) = (\mu *\phi)(\nctwo) 
	= \mu(\nctwo)\phi(\ncone {\cdot} \ncone) + \mu(\nctwoW)\phi( \nctwo )\\
	&=& - m_1 m_1 + m_2, \\
	k_3 
	&=& \kappa(\ncthree) = (\mu *\phi)(\ncthree) 
	= \mu(\ncthreeW)\phi(\ncthree ) + 3 \mu(\nctwoW\ncone) \phi(\ncone {\cdot} \nctwo)
	+ \mu(\ncthree) \phi(\ncone{\cdot}\ncone{\cdot}\ncone) \\
	& = & m_3   -3 m_1 m_2 + 2 m_1^3, \\
  k_4 
  & = & m_4 - 2 m_2^2 + 10 m_1^2 m_2 - 4 m_1 m_3 + m_1^4.
\end{eqnarray*}
\end{ex}

These computations are completely analogous to the calculations usually
done to relate moments and free cumulants in the lattice of noncrossing
partitions (see for instance~\cite{SpeicherNica}), but they are simpler in
the present framework since one can work with noncrossing partitions directly
instead of working with {\em intervals} of noncrossing partitions. As
observed above, it may seem disturbing that we put $k_n: =
\kappa(\ncfive)$, whereas Speicher~\cite{Speicher94} puts $k_n: =
\kappa(\ncfiveW)$, which is really shorthand for $\kappa( \llbracket J_n,
I_n\rrbracket)$ in that context (the one of the incidence algebra of the
lattice). The reason for this is that the relationship between the noncrossing
partition lattice and the bialgebra $\Bblnc$ is actually given by the assignment
$\Phi$ introduced in Definition \ref{def:fibre}, which to any interval associates its fibre.

Recall, e.g., from \cite{SpeicherNica} that the incidence coalgebra 
of a lattice is the free vector space spanned by the intervals $\llbracket 
P,Q \rrbracket$ in the lattice, and that the coproduct is given by
$$
	\delta_{\operatorname{lat}} (\llbracket P,Q \rrbracket) = \sum_{M\in \llbracket P,Q 
	\rrbracket} \llbracket P,M\rrbracket \otimes \llbracket M,Q \rrbracket.
$$
We shall actually use the opposite coalgebra, meaning that the order of the
tensor factors is reversed. This opposite coalgebra we call
$\Bblnc_{\operatorname{lat}}$.
For contrast, the operad bialgebra will 
be denoted 
$\Bblnc_{\operatorname{opd}}$ in the 
following discussion.  

\begin{rem}
We emphasize that the ``opposite'' 
occurring here is unrelated to the question we are addressing, i.e., ``finer vs.~coarser''.  Indeed, working with the 
opposite coalgebra is purely a matter of convention of order of the tensor factors in the coproducts.
\end{rem}

The following result resolves the relation between the two coalgebras $\Bblnc_{\operatorname{lat}}$ and $\Bblnc_{\operatorname{opd}}$.

\begin{prop}
  The assignment $\Phi$ constitutes a coalgebra homomorphism 
  $\Bblnc_{\operatorname{lat}} \to \Bblnc_{\operatorname{opd}}$.
\end{prop}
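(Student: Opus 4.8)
The plan is to check directly, on each basis element $\llbracket P,Q\rrbracket$ of $\Bblnc_{\operatorname{lat}}$ (so $P\le Q$ in $\latNCP$), that $\Phi$ intertwines the coproducts, $\delta\circ\Phi=(\Phi\otimes\Phi)\circ\delta_{\operatorname{lat}}$, and that it respects counits. The geometric input is the classical block-by-block decomposition of an interval in the noncrossing partition lattice: writing $Q=\{\tau_1,\dots,\tau_l\}$ and $n_i:=\sharp\tau_i$, a noncrossing partition $M$ satisfies $P\le M\le Q$ exactly when $M$ refines $Q$ (so each block of $M$ lies inside a unique $\tau_i$) and the restriction satisfies $P_{|\tau_i}\le M_{|\tau_i}$ for every $i$. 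Thus $M\mapsto(M_{|\tau_1},\dots,M_{|\tau_l})$ is a bijection from $\llbracket P,Q\rrbracket$ onto the product of upsets $\prod_{i=1}^{l}\llbracket P_{|\tau_i},I_{n_i}\rrbracket$.

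First I would unwind the right-hand side. Since $\Bblnc_{\operatorname{lat}}$ is the \emph{opposite} incidence coalgebra, $\delta_{\operatorname{lat}}(\llbracket P,Q\rrbracket)=\sum_{P\le M\le Q}\llbracket M,Q\rrbracket\otimes\llbracket P,M\rrbracket$, and applying $\Phi\otimes\Phi$ together with the definition $\Phi(P,Q)=P/Q$ yields $\sum_{P\le M\le Q}(M/Q)\otimes(P/M)$.

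Next I would unwind the left-hand side $\delta(P/Q)$. By definition $P/Q=P_{|\tau_1}\cdots P_{|\tau_l}$, and since the operadic coproduct on $\Bblnc_{\operatorname{opd}}$ is multiplicative, $\delta(P/Q)=\prod_{i=1}^{l}\delta(P_{|\tau_i})=\prod_{i=1}^{l}\sum_{P_{|\tau_i}\le R_i}R_i\otimes(P_{|\tau_i})/R_i$, the $i$th sum running over noncrossing partitions $R_i$ of $[n_i]$ coarser than $P_{|\tau_i}$. Expanding the product, its terms are indexed by tuples $(R_1,\dots,R_l)$, which by the interval decomposition above are precisely the $M$ with $P\le M\le Q$, via $R_i=M_{|\tau_i}$. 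Under this identification $R_1\cdots R_l=M/Q$ directly from the definition of $M/Q$ (since $M\le Q$), while $(P_{|\tau_1})/R_1\cdots(P_{|\tau_l})/R_l=P/M$: the blocks of $M$ are exactly the blocks of the $R_i$ sitting inside the respective $\tau_i$, and transitivity of restriction gives $(P_{|\tau_i})_{|\sigma}=P_{|\sigma}$ for $\sigma\subseteq\tau_i$, so both sides equal $\prod_{\sigma\in M}P_{|\sigma}$. Hence the two expansions agree term by term, establishing the coproduct identity.

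Finally, for counits: $\Phi(\llbracket P,P\rrbracket)=P/P$ is a monomial in single-block partitions $I_{\sharp\pi}$, on which the counit of $\Bblnc_{\operatorname{opd}}$ takes the value $1$, whereas if $P<Q$ then some restriction $P_{|\tau_i}$ has at least two blocks and the counit kills $P/Q$; this matches the counit of $\Bblnc_{\operatorname{lat}}$, which sends $\llbracket P,Q\rrbracket$ to $1$ if $P=Q$ and to $0$ otherwise. The only genuine bookkeeping is in the middle step --- matching the index sets $(R_1,\dots,R_l)\leftrightarrow M$ and verifying the two monomials $M/Q$ and $P/M$ --- and this is routine once the interval decomposition is in place; I do not expect a real obstacle.
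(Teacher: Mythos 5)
Your proof is correct and follows essentially the same route as the paper, whose own argument consists precisely of the two ingredients you use: the block-by-block factorization of an interval $\llbracket P,Q\rrbracket$ as a product of intervals $\llbracket P_{|\tau_i},I_{n_i}\rrbracket$, and the multiplicativity of $\Phi$ with respect to this factorization. You merely carry out explicitly the term-by-term matching of $\delta(P/Q)$ with $\sum_{P\le M\le Q}(M/Q)\otimes(P/M)$ (and the counit check) that the paper leaves implicit.
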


\begin{proof}
  This follows from the next two lemmas. The first is a standard property
  of noncrossing partitions, the second follows from the
  definition of $\Phi$.
\end{proof}

\begin{lemma}
  Every interval $\llbracket P,Q \rrbracket$ is isomorphic as a poset to a
  product of intervals ending in a coarsest partition $I_n$. Precisely,
$$
  	\llbracket P,Q \rrbracket \simeq \llbracket P_1, I_{n_1} 
	\rrbracket \times \cdots \times  \llbracket P_k, I_{n_k} \rrbracket,
$$
where $k$ is the number of blocks in $Q$, the $i$th block of $Q$ is of size $n_i$, and $P_i$ is the corresponding partition of $[n_i]$.
\end{lemma}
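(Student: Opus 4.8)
The plan is to exhibit the isomorphism through the obvious restriction-and-reassembly maps and then check that both are well defined, mutually inverse, and order-preserving. Write $Q=\{\tau_1,\ldots,\tau_k\}$ with $\sharp\tau_i=n_i$, and let $P_i$ denote the standardization of the restriction $P_{|\tau_i}$, which is a noncrossing partition of $[n_i]$ because restrictions of noncrossing partitions are noncrossing. First I would define
$$
  F\colon\ \llbracket P,Q\rrbracket \ \longrightarrow\ \llbracket P_1,I_{n_1}\rrbracket\times\cdots\times\llbracket P_k,I_{n_k}\rrbracket,\qquad M\longmapsto (M_{|\tau_1},\ldots,M_{|\tau_k}),
$$
again standardizing each factor. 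This is well defined because each $M_{|\tau_i}$ is a noncrossing partition of $\tau_i$ with $P_i=P_{|\tau_i}\le M_{|\tau_i}\le I_{n_i}$: the first inequality comes from $P\le M$ read block by block, and the second is automatic since $I_{n_i}$ is the coarsest partition.

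Next I would define the reassembly map $G$ in the other direction, sending a tuple $(M_1,\ldots,M_k)$ to the partition $M$ of $[n]$ whose set of blocks is the disjoint union of the blocks of the $M_i$ transported along the monotone bijections $[n_i]\isopil\tau_i\subset[n]$. The only nonformal point here — and the step I expect to carry all the content — is that $M$ is again noncrossing: two blocks coming from the same $M_i$ do not cross because $M_i$ is noncrossing, while two blocks coming from distinct $M_i$ and $M_j$ lie inside $\tau_i$ and $\tau_j$ respectively, and these do not cross in $Q$; sub-blocks of non-crossing blocks cannot cross, since four elements $a<c<b<d$ witnessing a crossing of the sub-blocks would also witness a crossing of $\tau_i$ and $\tau_j$. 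This is exactly the mechanism already used in the proof of Proposition~\ref{prop:opd-nc}, so one may alternatively just invoke that. Once $M$ is known to be noncrossing, it is immediate that $P\le M\le Q$: the blocks of $M$ refine those of $Q$ by construction, and $M$ is coarser than $P$ since each $M_i$ is coarser than $P_i$.

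Finally I would verify that $F$ and $G$ are mutually inverse order-isomorphisms. Mutual inverseness rests on the observation that any $M$ with $M\le Q$ has every block contained in a single block of $Q$, hence is recovered uniquely from the list of its restrictions $M_{|\tau_i}$; thus $G\circ F=\id$ and $F\circ G=\id$. For the order, note that for $M,M'$ in the interval both refinements have all their blocks inside blocks of $Q$, so the relation $M\le M'$ can be tested block-by-block over the blocks of $Q$: $M\le M'$ holds iff $M_{|\tau_i}\le M'_{|\tau_i}$ for every $i$, which is precisely $F(M)\le F(M')$ in the product poset. This gives the claimed poset isomorphism, and indeed everything except the noncrossingness of the reassembled partition is routine bookkeeping with restrictions and standardizations.
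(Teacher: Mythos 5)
Your proof is correct. The paper offers no argument of its own for this lemma --- it is invoked as ``a standard property of noncrossing partitions'' inside the proof that $\Phi$ is a coalgebra homomorphism --- and your restriction-and-reassembly construction is precisely the standard argument that justifies it: the maps $F$ and $G$ are well defined, mutually inverse and monotone for the reasons you give, and the only substantive point, that the reassembled partition is again noncrossing, is handled correctly by the same mechanism as in Proposition~\ref{prop:opd-nc} (a crossing between sub-blocks of $\tau_i$ and $\tau_j$ would force a crossing of $\tau_i$ and $\tau_j$ in $Q$). So you have supplied a complete proof of a step the paper leaves to the reader, rather than diverging from it.
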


\begin{lemma}
For any interval $\llbracket P,Q \rrbracket$, we have
$$
  	\Phi(\llbracket P,Q \rrbracket) = \Phi( \llbracket P_1,I_{n_1} \rrbracket)
	\cdots \Phi(\llbracket P_k,I_{n_k} \rrbracket),
$$
where again $k$ is the number of blocks in $Q$, and the $i$th block  of  $Q$ 
is of size $n_i$, and $P_i$ is the corresponding finer partition.
\end{lemma}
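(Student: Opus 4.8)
The plan is to unwind the definitions of $\Phi$ and of the monomial $P/Q$ and to observe that both sides of the claimed equality are literally the same commutative monomial in $\Bblnc$.

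First I would recall that $\Phi(P,Q) := P/Q$ and that, writing $Q = \{\tau_1,\ldots,\tau_k\}$ with $\sharp\tau_i = n_i$, the right-hand tensor factor appearing in the coproduct formula \eqref{refinecoprod} is by definition the commutative monomial $P/Q = P_{|\tau_1}\cdots P_{|\tau_k}$, where each $P_{|\tau_i}$ is understood as the standard representative on $[n_i]$ of the induced partition of $\tau_i$. Thus the left-hand side $\Phi(\llbracket P,Q\rrbracket)$ of the statement is, verbatim, the monomial $P_{|\tau_1}\cdots P_{|\tau_k}$.

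Next I would evaluate each factor on the right-hand side. The partition $I_{n_i} = \{[n_i]\}$ is the top element of $\latNCP(n_i)$, a single block; restricting any partition $P_i$ of $[n_i]$ to this one block changes nothing, so $P_i/I_{n_i} = P_i$, i.e.\ $\Phi(\llbracket P_i, I_{n_i}\rrbracket) = P_i$. By the notation fixed in the preceding lemma, $P_i$ is precisely the standardization of $P_{|\tau_i}$. Hence the right-hand side also equals $P_{|\tau_1}\cdots P_{|\tau_k}$.

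Finally, since $\Bblnc$ is a free \emph{commutative} algebra, the order in which the factors are listed is immaterial, and the two sides therefore coincide. There is no real obstacle; the only point deserving a modicum of care is the consistent use of standardization for the restrictions $P_{|\tau_i}$, but this is exactly the convention under which the $P_i$ were defined in the previous lemma, so the matching is automatic.
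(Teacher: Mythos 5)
Your proof is correct and matches the paper's treatment: the paper disposes of this lemma by saying it "follows from the definition of $\Phi$", and your argument is exactly that unwinding — $P/Q = P_{|\tau_1}\cdots P_{|\tau_k}$ on the left, $P_i/I_{n_i}=P_i$ (the standardization of $P_{|\tau_i}$) for each factor on the right, with commutativity of $\Bblnc$ making the identification of monomials immediate.
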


\begin{cor}
The dual map $\Bblnc^\ast_{\operatorname{opd}}\to 
\Bblnc^\ast_{\operatorname{lat}}$ is a homomorphism of convolution algebras.
\end{cor}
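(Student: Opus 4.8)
The plan is to obtain this as a formal consequence of the preceding Proposition, using the standard fact that transposing a coalgebra homomorphism produces a homomorphism of the associated convolution algebras. So the first step is to recall the setup: for a $\K$-coalgebra $C$, the dual $C^\ast$ carries the convolution product $\phi\ast\psi := m_\K\circ(\phi\otimes\psi)\circ\Delta_C$ with unit $\varepsilon_C$, and for a coalgebra morphism $f\colon C\to D$ the transpose $f^\ast\colon D^\ast\to C^\ast$, $\alpha\mapsto\alpha\circ f$, is the map to be analysed.

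The second step is the one-line verification that $f^\ast$ respects convolution. Using $\Delta_D\circ f=(f\otimes f)\circ\Delta_C$ one computes, for $\alpha,\beta\in D^\ast$,
\[
	f^\ast(\alpha\ast\beta)
	= m_\K\circ(\alpha\otimes\beta)\circ\Delta_D\circ f
	= m_\K\circ\big((\alpha\circ f)\otimes(\beta\circ f)\big)\circ\Delta_C
	= f^\ast(\alpha)\ast f^\ast(\beta),
\]
while $\varepsilon_D\circ f=\varepsilon_C$ gives $f^\ast(\varepsilon_D)=\varepsilon_C$, so $f^\ast$ is unital. The third step is simply to feed in $f=\Phi\colon\Bblnc_{\operatorname{lat}}\to\Bblnc_{\operatorname{opd}}$, which is a coalgebra homomorphism by the Proposition, and conclude that $\Phi^\ast\colon\Bblnc^\ast_{\operatorname{opd}}\to\Bblnc^\ast_{\operatorname{lat}}$ is a homomorphism of convolution algebras.

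I do not expect any real obstacle. The only points requiring a moment's care are of a bookkeeping nature: checking that the convolution product on each side is indeed the one built from the coproduct that $\Phi$ intertwines --- in particular that we use the opposite coalgebra $\Bblnc_{\operatorname{lat}}$ on the lattice side, as fixed just before the Proposition --- and noting that both coalgebras are graded with finite-dimensional homogeneous pieces (by number of elements, respectively interval length), so the convolution products are everywhere defined and the displayed computation is literal rather than merely formal.
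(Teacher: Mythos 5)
Your argument is exactly the intended one: the paper proves the Proposition that $\Phi\colon\Bblnc_{\operatorname{lat}}\to\Bblnc_{\operatorname{opd}}$ is a coalgebra homomorphism and then deduces the Corollary by the standard fact that transposing a coalgebra morphism yields a unital homomorphism of convolution algebras, which is precisely your computation. The bookkeeping points you flag (use of the opposite coalgebra on the lattice side, well-definedness of the convolutions) are handled the same way in the paper, so there is nothing to add.
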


\begin{rem}
Characters on $\Bblnc_{\operatorname{opd}}$, say $\kappa_{\operatorname{opd}} :
\Bblnc_{\operatorname{opd}} \to \K$, induce linear forms on
$\Bblnc_{\operatorname{lat}}$, say $\kappa_{\operatorname{lat}} :
{\Bblnc_{\operatorname{lat}}} \to \K$ such that
$$
	\kappa_{\operatorname{lat}}( \llbracket P,Q \rrbracket ) \ = \ 
	\kappa_{\operatorname{opd}}(\Phi( \llbracket P,Q \rrbracket)) = 
	\kappa_{\operatorname{opd}}(P/Q) .
$$
These linear forms have the special property (actually desired) that their
value on an interval only depends on its canonical product splitting 
(i.e.~its type, in the sense of Speicher).

This correspondence explains the apparent discrepancy we noticed: when in
the lattice setting one writes $\kappa(I_n)$ it is really shorthand for
$\kappa(\llbracket J_n,I_n \rrbracket)$, and the fibre of the interval $\llbracket J_n,I_n \rrbracket$ is clearly
the partition $J_n$, so that $
	\kappa_{\operatorname{lat}}(I_n)
	=
	\kappa_{\operatorname{opd}}(J_n) ,$
and both can be called $k_n$ without conflict.
\end{rem}

\begin{rem}\label{decalage}
  The coalgebra homomorphism $\Phi$ should not be considered as something strange or ad hoc. In
  fact, it is an instance of a general phenomenon, visible in the simplicial
  viewpoints hinted at in the preliminary discussion in 
  Subsection~\ref{indbialg}. 
  There it was mentioned
  that $\Bblnc_{\operatorname{opd}}$ is the incidence bialgebra of a 
  certain simplicial groupoid $X$
  (more precisely a monoidal decomposition space), obtained as the
  two-sided bar construction of the operad $\NCP$. In this simplicial
  viewpoint, one can take upper decalage of $X$ to find essentially the
  nerve of the noncrossing partitions lattice, and then the dec map
  $\operatorname{Dec}_\top X \to X$ induces precisely $\Phi$. It is a
  coalgebra homomorphism for general reasons.  Many examples of
  coalgebra homomorphisms from incidence coalgebras of posets to incidence 
  bialgebras of operads which fit this pattern are given in \cite{GKT-comb}. 
\end{rem}

\begin{rem}
  We stress again that all the discussion in this subsection applies 
  equally to the case of general set partitions, and in particular: {\em the
  assignment $\Phi$ constitutes a coalgebra homomorphism}
  $$
  	\Bbl_{\operatorname{lat}} \to \Bbl_{\operatorname{opd}},
  $$
  from the --opposite-- incidence coalgebra of the lattice of partitions to the
  incidence bialgebra of the (block substitution) operad of set partitions.
\end{rem}


\section{Comodule bialgebra, half-shuffle exponentials and M\"obius inversion}
\label{ssect:comod}

In the previous sections we have shown how the gap-insertion operad encodes the shuffle 
Hopf algebra approach to moment-cumulants relations, and that the 
block-substitution operad encodes the lattice viewpoint, reproducing the
M\"obius inversion.  

In this section we address the
relationship between the two operads, at the level of their 
bialgebras. We show that these constitute a pair of interacting
bialgebras in the sense of \cite{CEFM}, and more precisely a comodule
bialgebra (see \cite{Manchon18} for a review). In fact we show 
that $\Hgapnc$ is a comodule shuffle Hopf algebra over
$\Bblnc$.

We shall then explore this relationship to establish formulae for the
half-shuffle exponentials in terms of universal characters, which are
analogues of zeta functions in the three cases of free, monotone, and
boolean moment-cumulant relations. The comodule-bialgebra viewpoint
also allows to understand the inverses of these universal characters,
and relate them to M\"obius inversion.


\subsection{Comodule bialgebra}
\label{ssec:comodbi}

Briefly, the notion of comodule bialgebra is the following: for any coalgebra $B$ one has the
notion of right-$B$-comodule. If $B$ is furthermore a bialgebra, the
category $\operatorname{CoMod}(B)$ of right-$B$-comodules is naturally
monoidal, and if $B$ is a commutative bialgebra, this monoidal structure
acquires a braiding. It hence makes sense to talk about bialgebras in
$\operatorname{CoMod}(B)$. These are the comodule bialgebras. In a
nutshell a comodule bialgebra is a bialgebra with a coaction by $B$,
compatible with the bialgebra structure.

In the case at hand, the promised
right coaction $\rho: \Bgapnc \to \Bgapnc \otimes \Bblnc$ is essentially the 
coproduct $\delta$ of the commutative bialgebra $\Bblnc$, but
extended to include the empty partition, which is an element in $\Bgapnc$.
\begin{align*}
	\rho:&\left\{\begin{array}{rcl}
	\Bgapnc&\longrightarrow& \Bgapnc \otimes \Bblnc \\
	P&\longmapsto& \sum\limits_{P \leq Q} Q\otimes P/Q.
\end{array}\right.
\end{align*}

Note in particular that $\rho(\emptyset) = \emptyset \otimes 1$
(because there are no other coarser partitions of $\emptyset$ than
$\emptyset$ itself, and this has no blocks, so the fibre monomial is
just the algebra unit). Coassociativity of this coaction $\rho$ is clear 
from the fact that it is essentially the coproduct $\delta$.

\begin{theo}\label{thm:comodulebialg}
  The coaction $\rho: \Bgapnc \to \Bgapnc \otimes \Bblnc$ makes $\Bgapnc$ into a
  comodule bialgebra over $\Bblnc$.
  That is, the following relations 
  hold for all $P,Q \in \Bgapnc$:
  \allowdisplaybreaks
  \begin{align}
  &&\rho(\mathbf{1}_{\Bgapnc})&=\mathbf{1}_{\Bgapnc}\otimes \mathbf{1}_{\Bblnc}, \nonumber\\
  &&\rho(P \cdot Q)&=\rho(P)\rho(Q), \nonumber\\
  &&(\varepsilon_{\Bgapnc}\otimes \Id)\circ \rho(P)&=\varepsilon_{\Bgapnc}(P)1, \nonumber\\
  &&(\Delta_0\otimes \Id)\circ \rho(P)&=(\Id \otimes \Id\otimes m_{\Bblnc})\circ (\Id \otimes \tau \otimes \Id)\circ (\rho\otimes\rho)\circ \Delta_0(P), \label{theEq}
  \end{align}
  where $\tau:\Bblnc\otimes \Bgapnc\longrightarrow \Bgapnc\otimes \Bblnc$ is the swap map 
  $Q\otimes P \mapsto P\otimes Q$.
\end{theo}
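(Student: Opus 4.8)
The plan is to check the four displayed identities, the first three being essentially formal and the fourth, Equation~\eqref{theEq}, carrying all the combinatorial weight. To begin, $\rho$ is well defined and an algebra map: on the free algebra $\Bgapnc$ it is determined by its values on the generators $\latNCP_0$, and for a multipartition $P=P_1\cdots P_r$ a (noncrossing) coarsening is precisely a monomial $Q_1\cdots Q_r$ with each $P_i\le Q_i$, with $P/Q=\prod_i P_i/Q_i$; hence the stated formula for $\rho$ is already multiplicative, and combined with $\rho(\emptyset)=\emptyset\otimes 1$ this gives the first two identities. Coassociativity and counitality of $\rho$ hold because on single partitions $\rho$ is the coproduct $\delta$ of $\Bblnc$ and on $\emptyset$ it is grouplike. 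For the third identity it suffices, by multiplicativity of both sides, to evaluate on generators: on $\emptyset$ both sides are $1$, and on a nonempty noncrossing partition $P$ every coarser $Q$ is again nonempty, so $\varepsilon_{\Bgapnc}(Q)=0$, the left-hand side vanishes, and so does $\varepsilon_{\Bgapnc}(P)$.

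It remains to prove \eqref{theEq}. Both sides are algebra maps $\Bgapnc\to\Bgapnc\otimes\Bgapnc\otimes\Bblnc$ in the variable $P$ (using the first two identities), so it suffices to check it on a single noncrossing partition $P$ (the case $P=\emptyset$ being immediate from $\rho(\emptyset)=\emptyset\otimes1$ and $\Delta_0(\emptyset)=\emptyset\otimes\emptyset$). Unfolding with $\rho(R)=\sum_{R\le R'}R'\otimes R/R'$ and $\Delta_0(R)=\sum_{(L,U)\in\cut(R)}L\otimes\mst{U}$, the left-hand side equals
\[
  \sum_{P\le Q}\ \sum_{(L,U)\in\cut(Q)}\ L\otimes\mst{U}\otimes(P/Q),
\]
while tracing $\Delta_0(P)=\sum_{(\ell,u)\in\cut(P)}\ell\otimes\mst{u}$ through $\rho\otimes\rho$, the swap $\Id\otimes\tau\otimes\Id$, and $\Id\otimes\Id\otimes m_{\Bblnc}$ gives
\[
  \sum_{(\ell,u)\in\cut(P)}\ \sum_{\ell\le L'}\ \sum_{\mst{u}\le\mst{u}'}\ L'\otimes\mst{u}'\otimes\bigl((\ell/L')\cdot(\mst{u}/\mst{u}')\bigr),
\]
the last sum being over componentwise coarsenings of the gap monomial. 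Thus \eqref{theEq} amounts to a bijection between the two index sets matching the three tensor slots.

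The bijection sends $(Q,(L,U))$ to $(\,(\ell,u),L',\mst{u}'\,)$ as follows. Let $\ell$ (resp.\ $u$) consist of the blocks of $P$ contained in a block of $L$ (resp.\ of $U$); since $(L,U)$ is a cut of $Q$ and every block of $P$ lies in a unique block of $Q$, this is a cut of $P$ (the lowerset condition transfers from $Q$ to $P$ because $\Conv$ only grows under coarsening). Because coarsening does not alter underlying sets, $\ell$ and $L$ have the same underlying set $S$, hence determine the \emph{same} gaps $D_0,\dots,D_m$, each a union of blocks of both $P$ and $Q$ as in Lemma~\ref{Ui}. Put $L':=L$, a coarsening of $\ell$, and $\mst{u}':=\mst{U}$, whose $i$-th entry is $Q_{|D_i}$, a coarsening of the $i$-th entry $P_{|D_i}$ of $\mst{u}$; thus $\mst{u}\le\mst{u}'$. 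Conversely, from a cut $(\ell,u)$ of $P$ with $L'\ge\ell$ and $\mst{u}'\ge\mst{u}$, reconstruct $Q$ by merging the lower blocks of $P$ according to $L'$ and, inside each gap $D_i$, according to the $i$-th entry of $\mst{u}'$; a block contained in an open gap cannot meet a block of $L'$ (its convex hull avoids $S$), so $L'$ is a lowerset of $Q$ whose complement recovers the cut, and the two assignments are mutually inverse. Finally the third slots agree: $P/Q=\prod_{\tau\in Q}P_{|\tau}$ splits over $\tau\in L$ and $\tau\in U$; the first part is $\prod_{\tau\in L}\ell_{|\tau}=\ell/L'$ since each such $\tau\subseteq S$, and the second is $\prod_i\prod_{\tau\subseteq D_i}(P_{|D_i})_{|\tau}=\mst{u}/\mst{u}'$. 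As the summands agree under a bijection of index sets, \eqref{theEq} follows.

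I expect the bookkeeping in this last bijection to be the main obstacle: one must verify that the gap decomposition attached to a lowerset is unchanged by coarsening, and that coarsening the lower part and the gap-upper part independently is exactly what a coarsening of the ambient partition compatible with a cut amounts to. Alternatively, one can invoke the general result of \cite{FoissyOperads} producing a comodule bialgebra from an operad equipped with a $B_\infty$-structure, applied to the gap-insertion brace structure of Subsection~\ref{ssect:algstructures}; but the direct verification above is more transparent and self-contained.
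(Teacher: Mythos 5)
Your proposal is correct and takes essentially the same route as the paper: the paper likewise treats the first three identities as routine and proves \eqref{theEq} by identifying the two indexing sets with compatible pairs of a cut and a coarsening (``coarse-cuts'') and then matching the three tensor slots exactly as you do --- coarsened lower layer, coarsened gap monomial, and the fibre of the total coarsening splitting into the lower and upper fibres. The only point you leave tacit (as does the paper) is that the partition $Q$ reconstructed from $(\ell,u)$, $L'$, $\mst{u}'$ is again noncrossing; this is immediate since each block coming from a gap $D_i$ lies in an interval disjoint from the underlying set of $L'$, so it can cross neither a block of $L'$ nor a block from another gap.
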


This theorem can be proved using the general machinery of
operads and brace algebras from \cite{FoissyOperads}.  Here we wish
to present an elementary argument, exhibiting the main ingredients that
make it work.

\begin{proof}
  The most interesting part is the fourth statement, which states that 
  $\Delta_0$  is a $\Bblnc$-comodule homomorphism. Diagrammatically, this looks as follows
  $$\begin{tikzcd}
  \Bgapnc \ar[ddd, "\rho"'] \ar[rr, "\Delta_0"] 
  && \Bgapnc \otimes \Bgapnc \ar[d, "\rho\otimes\rho"] 
  \\
  && \Bgapnc \otimes \Bblnc \otimes \Bgapnc \otimes \Bblnc 
  \ar[d, "\Id\otimes\tau\otimes\Id"]
  \\
  && \Bgapnc \otimes \Bgapnc \otimes \Bblnc \otimes \Bblnc
  \ar[d, "\Id\otimes\Id\otimes m_{\Bblnc}"]
  \\
  \Bgapnc \otimes \Bblnc \ar[rr, "\Delta_0 \otimes \Id"']
  && \Bgapnc \otimes \Bgapnc \otimes \Bblnc 
  \end{tikzcd}
  $$
  The two sides of the equation \eqref{theEq} are both elements in the tensor product
  $\Bgapnc \otimes \Bgapnc \otimes \Bblnc$, that is, sums of tensors. To establish the
  equation (for each fixed partition $P$), we first establish a bijection
  between the indexing sets for the two sums, and then show that under
  this bijection, the three corresponding tensor terms agree.
  
  Step 1: the left-hand side of  \eqref{theEq}  (corresponding to the
  down-right path in the diagram) sends $P$ to a sum over the set of ways
  of first coarsening $P$ and then cutting the coarsened partition.  
  The right-hand side of the equation (given by the right-down path in the
  diagram) sends a partition $P$ to a sum indexed by the set of all ways to
  cut $P$ into upper- and lowersets and then coarsen each layer. The natural
  bijection between these to sets is most easily given by establishing
  for each set a natural bijection with the set of possible \emph{compatible
  cuts and coarsenings}, or \emph{coarse-cuts}, as we shall say for short.
  A coarse-cut on a partition $P$ consists of both a cut and a coarsening,
  required to be compatible in the sense that the cut is not allowed to
  separate two blocks that are joined in the coarsening. Here is a picture
  to illustrate this:
  \begin{center}
	\begin{tikzpicture}[scale=0.90]
	  \draw [line width=1.0pt,] (0.0,0.5)--(0.0,0.0)--(2.5,0.0)--(2.5,0.5);
	  \draw [line width=1.0pt,] (3.0,0.5)--(3.0,0.0)--(4.0,0.0)--(4.0,0.5);
	  
	  \draw [line width=1.0pt,] (0.5,0.9)--(0.5,0.4)--(1.5,0.4)--(1.5,0.9);
	  \draw [line width=1.0pt,] (2.0,0.9)--(2.0,0.4);
	  \draw [line width=1.0pt,] (3.5,0.9)--(3.5,0.4);

	  \draw [line width=1.0pt,] (1.0,1.3)--(1.0,0.8);
	  
	  \draw [dotted, line width=1.1pt, color=blue] (1.0,0.8)--(1.0,0.4);
	  \draw [dotted, line width=1.1pt, color=blue] (2.5,0.0)--(3.0,0.0);
	  
	  \draw [line width=0.3pt, color=red] 
		(-1.0,0.7) .. controls (1.2,1.0) and (-0.7,0.15) ..
		(1.0,0.15) .. controls (2.4,0.15) and (1.3,1.1) ..
		(2.0,1.1) .. controls (3.8,1.1) and (2.8,0.15) ..
		(3.5,0.15) .. controls (4.0,0.15) and (3.5,1.2) ..
		(5.0,0.7);

	  \node at (0.0,-0.3) {\tiny $1$};
	  \node at (0.5,-0.3) {\tiny $2$};
	  \node at (1.0,-0.3) {\tiny $3$};
	  \node at (1.5,-0.3) {\tiny $4$};
	  \node at (2.0,-0.3) {\tiny $5$};
	  \node at (2.5,-0.3) {\tiny $6$};
	  \node at (3.0,-0.3) {\tiny $7$};
	  \node at (3.5,-0.3) {\tiny $8$};
	  \node at (4.0,-0.3) {\tiny $9$};

	  \end{tikzpicture}
  \end{center}
  The blue dotted lines indicate which blocks are joined in the coarsening.
  Graphically, the compatibility condition says simply that the red line 
  expressing the cut is not allowed to cross the blue lines expressing
  the coarsening. The required bijections are clear.
  
  Step 2: for each fixed partition $P$, and each fixed coarse-cut, we must
  identify the corresponding two terms in the triple tensor product
  $\Bgapnc \otimes \Bgapnc \otimes \Bblnc$. For the example given
  above, these three tensor factors are
  $$
  \raisebox{-15pt}{
	\begin{tikzpicture}[scale=0.70]
	  \draw [line width=1.0pt,] (0.0,0.5)--(0.0,0.0)--(3.0,0.0)--(3.0,0.5);
	  \draw [line width=1.0pt,] (2.0,0.5)--(2.0,0.0);
	  \draw [line width=1.0pt,] (1.5,0.5)--(1.5,0.0);
	  	  
	  \draw [line width=1.0pt,] (1.0,0.9)--(1.0,0.4);

	  \node at (0.0,-0.3) {\tiny $1$};
	  \node at (1.0,-0.3) {\tiny $5$};
	  \node at (1.5,-0.3) {\tiny $6$};
	  \node at (2.0,-0.3) {\tiny $7$};
	  \node at (3.0,-0.3) {\tiny $9$};
	\end{tikzpicture}
	}
	\qquad
	\bigotimes
	\qquad
	\emptyset \ \cdot 
  \raisebox{-15pt}{
	\begin{tikzpicture}[scale=0.70]
	  \draw [line width=1.0pt,] (0.0,0.5)--(0.0,0.0)--(1.0,0.0)--(1.0,0.5);
	  \draw [line width=1.0pt,] (0.5,0.5)--(0.5,0.0);
	  \node at (0.0,-0.3) {\tiny $2$};
	  \node at (0.5,-0.3) {\tiny $3$};
	  \node at (1.0,-0.3) {\tiny $4$};
	\end{tikzpicture}
	}
	\cdot\
		\emptyset \ \cdot \
	\emptyset \ \cdot 
  \raisebox{-15pt}{
	\begin{tikzpicture}[scale=0.70]
	  \draw [line width=1.0pt,] (0.0,0.5)--(0.0,0.0);
	  \node at (0.0,-0.3) {\tiny $8$};
	\end{tikzpicture}
	}
\cdot
		\ \emptyset 
	\qquad
	\bigotimes
	\qquad
  \raisebox{-15pt}{
	\begin{tikzpicture}[scale=0.70]
	  \draw [line width=1.0pt,] (0.0,0.5)--(0.0,0.0)--(0.5,0.0)--(0.5,0.5);
	  \draw [line width=1.0pt,] (1.0,0.5)--(1.0,0.0)--(1.5,0.0)--(1.5,0.5);
	  \node at (0.0,-0.3) {\tiny $1$};
	  \node at (0.5,-0.3) {\tiny $6$};
	  \node at (1.0,-0.3) {\tiny $7$};
	  \node at (1.5,-0.3) {\tiny $9$};
	\end{tikzpicture}
	}
	\cdot
  \raisebox{-15pt}{
	\begin{tikzpicture}[scale=0.70]
	  \draw [line width=1.0pt,] (0.0,0.5)--(0.0,0.0)--(1.0,0.0)--(1.0,0.5);
	  \draw [line width=1.0pt,] (0.5,0.9)--(0.5,0.4);
	  \node at (0.0,-0.3) {\tiny $2$};
	  \node at (0.5,-0.3) {\tiny $3$};
	  \node at (1.0,-0.3) {\tiny $4$};
	\end{tikzpicture}
	}
\cdot
  \raisebox{-15pt}{
	\begin{tikzpicture}[scale=0.70]
	  \draw [line width=1.0pt,] (0.0,0.5)--(0.0,0.0);
	  \node at (0.0,-0.3) {\tiny $5$};
	\end{tikzpicture}
	}
\cdot
  \raisebox{-15pt}{
	\begin{tikzpicture}[scale=0.70]
	  \draw [line width=1.0pt,] (0.0,0.5)--(0.0,0.0);
	  \node at (0.0,-0.3) {\tiny $8$};
	\end{tikzpicture}
	}
$$
  as we shall now see.
  
  The first tensor factor (in $\Bgapnc$) is simply the coarsening of the
  lower layer.  Indeed, this is obtained via the left-hand side  by first coarsening, 
  and then cutting the coarse partition. It is obtained via the right-hand side  by 
  first cutting, and then coarsening the lower layer. So the first tensor
  factor matches up as required.
  
  The second tensor factor (also in $\Bgapnc$) is described as the upperset 
  monomial of the coarse version of the cut. This immediately matches
  the description obtained from the left-hand side. Via the composite 
  map of the right-hand side,
  the second tensor factor arises from looking 
  first in the upper layer (that's a monomial, not a single partition), 
  and then coarsening each of the factors in the monomial. Since the
  lowerset partition and its coarsening have precisely the same elements 
  in $[n]$, the factors in the uppersets also have the same elements.
  On the left-hand side  the coarsening of them takes place before cutting, and
  in the right-hand side  the coarsening takes place after cutting, and clearly the
  order of these two steps does not affect the result, so also the second 
  tensor factor matches up as required.
  
  Finally the third factor (which belongs to $\Bblnc$) has nothing to do with
  the cut: it is simply the fibre of the total coarsening (as in
  Subsection~\ref{subsec:Phi}, i.e.~for each block in the coarse partition,
  list the corresponding finer partition).
  What the equation says for this tensor factor (and which is clearly true)
  is that to compute the fibre of the whole coarsening (without even taking
  the cut into account) can be done in two steps: first compute fibres of
  the blocks in the bottom layer, then compute fibres of the blocks in the
  upper layer, and then join the result (concatenation, multiplication of
  monomials). (A subtlety to note here is that the monomial in the right-hand
  tensor factor of $\Bgapnc \otimes \Bgapnc$ may contain empty partitions. But we 
  have $\rho(\emptyset) = \emptyset \otimes 1$ (where $1$ is the algebra 
  unit). This is just to say that the fibre of the trivial coarsening of
  $\emptyset$ is the trivial monomial. Put in another way, since the
  empty partition has no blocks, there are no fibres.  The 
  $\emptyset$-factors therefore disappear (what the third tensor factor $\Bblnc$
  is concerned).)  
\end{proof}

\begin{theo}\label{thm:rightact-unshuffle}
  The right coaction $\rho : \Hgapnc \to \Hgapnc \otimes \Bblnc$  
  makes $\Hgapnc$ an unshuffle (also called codendriform)
Hopf algebra in the category of right comodules over $\Bblnc$.
That is, the comodule Hopf algebra structure and the unshuffle Hopf 
algebra structure are compatible, meaning that
for all $P \in \Hgapnc_+$ we have
\begin{align*}
&&(\Delta_\prec\otimes \Id)\circ \rho(P)&=(\Id \otimes \Id\otimes m_\Bblnc)\circ (\Id \otimes \tau \otimes \Id)\circ (\rho\otimes\rho)\circ \Delta_\prec(P),\\
&&(\Delta_\succ\otimes \Id)\circ \rho(P)&=(\Id \otimes \Id\otimes m_\Bblnc)\circ (\Id \otimes \tau \otimes \Id)\circ (\rho\otimes\rho)\circ \Delta_\succ(P),
\end{align*}
where $\tau:\Bblnc\otimes \Hgapnc\longrightarrow \Hgapnc\otimes \Bblnc$ is the swap map 
$Q\otimes P \mapsto P\otimes Q$.
\end{theo}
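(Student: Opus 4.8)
The plan is to upgrade the bijective argument used in the proof of Theorem~\ref{thm:comodulebialg} by keeping track of the block containing the element $1$. Recall that $\Delta = \Delta_\prec + \Delta_\succ$, where the splitting records whether the block of $1$ lands in the lowerset (the $\prec$ part) or in the upperset (the $\succ$ part), cf.\ Definition~\ref{def:shuffle}. Since $\rho$, the comultiplications, $m_\Bblnc$ and $\tau$ are all linear, summing the desired $\Delta_\prec$-identity and $\Delta_\succ$-identity recovers exactly the fourth relation \eqref{theEq} of Theorem~\ref{thm:comodulebialg}, transported to the connected quotient $\Hgapnc$ (which is legitimate because $\rho(\emptyset)=\emptyset\otimes 1$, so $\rho$ descends, and $\Delta_0$ descends to $\Delta$). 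Hence it suffices to establish just one of the two relations, say the one for $\Delta_\prec$; the one for $\Delta_\succ$ then follows by subtraction.

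First I would revisit the combinatorial core of the proof of Theorem~\ref{thm:comodulebialg}: for a fixed partition $P$, both sides of \eqref{theEq} are sums indexed by the set of \emph{coarse-cuts} of $P$, i.e.\ pairs consisting of a cut $(L,U)\in\cut(P)$ and a coarsening of $P$ compatible with it (the cut does not separate two blocks joined by the coarsening). The left-hand composite reads a coarse-cut as ``first coarsen, then cut the coarse partition''; the right-hand composite reads it as ``first cut, then coarsen each layer''; and under either reading the three resulting tensor factors in $\Hgapnc\otimes\Hgapnc\otimes\Bblnc$ are the coarse lowerset, the reduced gap monomial of the coarse upperset, and the fibre of the total coarsening, respectively.

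The key new observation is that the predicate ``the block of $1$ belongs to the lowerset'' is well defined on coarse-cuts and is respected by both readings. Indeed, if the block of $1$ lies in the lowerset $L$ of $P$, then by the compatibility condition every block joined to it in the coarsening also lies in $L$, so the coarsened block of $1$ lies in the coarse lowerset; conversely, if the coarse block of $1$ lies in the coarse lowerset, each finer block composing it, in particular the block of $1$, lies in $L$. Thus the bijection of Theorem~\ref{thm:comodulebialg} restricts to a bijection between the three sub-indexing sets cut out by this predicate, namely the coarse-cuts with $1$ in the lowerset, the terms of $(\Delta_\prec\otimes\Id)\circ\rho(P)$, and the terms of $(\Id\otimes\Id\otimes m_\Bblnc)\circ(\Id\otimes\tau\otimes\Id)\circ(\rho\otimes\rho)\circ\Delta_\prec(P)$. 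Since the identification of the three tensor factors is literally the one from the proof of Theorem~\ref{thm:comodulebialg} (the subtlety with empty partitions in the middle factor, where $\rho(\emptyset)=\emptyset\otimes 1$, being handled exactly as there), the $\Delta_\prec$-identity follows, and hence so does the $\Delta_\succ$-identity.

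The main obstacle, and it is a mild one, is to be sure that no bookkeeping asymmetry is introduced by the half-splitting: one must check that ``$1$ in the lowerset'' is genuinely a condition on the coarse-cut and not merely on one of its two readings, which is precisely what the compatibility clause in the definition of a coarse-cut guarantees. Everything else is inherited verbatim from Theorem~\ref{thm:comodulebialg}.
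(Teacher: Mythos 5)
Your proposal is correct and follows essentially the same route as the paper: the paper's proof also reduces the matter to the observation that the coaction acts by refinement/coarsening of blocks and therefore does not interfere with whether the block containing the element $1$ lies in the lowerset of a cut, which is exactly the predicate you track through the coarse-cut bijection of Theorem~\ref{thm:comodulebialg}. Your additional remark that it suffices to verify the $\Delta_\prec$-identity and deduce the $\Delta_\succ$-identity by subtracting from the (descended) identity \eqref{theEq} is a harmless shortcut, not a different method.
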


\begin{proof}
  It is clear that the coaction $\rho: \Bgapnc \to \Bgapnc \otimes \Bblnc$ restricts 
  to a coaction $\rho: \Hgapnc \to \Hgapnc\otimes \Bblnc$, and that this makes $\Hgapnc$ a 
  comodule Hopf algebra. To check the compatibility with the splitting of the unshuffle coproduct 
  into half-unshuffles, it remains to notice that the coaction works by 
  refinement of blocks, and that this does not interfere with whether or 
  not
  the block containing the element $1$ belongs to the lowerset of a cut.
\end{proof}


\subsection{Four bijections from infinitesimal characters to characters}
\label{ssec:fourbij}

We focus in this subsection on the structure of noncrossing partitions and
relations between moments and free cumulants, having in mind applications
of the previous results to free, boolean and monotone probabilities (on the
latter two in the context of the present article, see \cite{EFP18}). We
show indeed that the comodule bialgebra structure of $\Hgapnc$ over $\Bblnc$ leads to
a new approach to these phenomena.

Recall first from \cite{EFP17} that groups of characters on unshuffle (also
called codendriform) Hopf algebras $H$ have a rather rich structure with,
in particular, three exponentials and logarithms putting in bijection the
Lie algebra of infinitesimal characters of $H$ and the group of characters.
When applied in the context of free probability, these maps together with
associated properties of the group and Lie algebras lead to a new
understanding of the three natural families of cumulants in this context
(free, monotone and boolean), of their relationships, and of related
notions such as additive convolution of free distributions (see also
\cite{EFP18}). In the present section we explore these bijections using the action
of the monoid $M_\Bblnc$ of
characters of $\Bblnc$, and relate the resulting formulae with
the M\"obius function of the
lattice of noncrossing partitions.

The product in $M_\Bblnc$, induced by the coproduct
$\delta$, is denoted by $*$. We denote by $G_\Bblnc$ the group of invertible
elements of $M_\Bblnc$.

\begin{lemma} \label{lemmeinversible}
  Let $\phi \in M_\Bblnc$. Then $\phi\in G_\Bblnc$ if and only if $\phi(I_n)\neq 0$ for all $n\geq 1$.
\end{lemma}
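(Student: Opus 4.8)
The claim is that a multiplicative functional $\phi \in M_\Bblnc$ (i.e.\ an algebra homomorphism $\Bblnc \to \K$, equivalently a character, since $\Bblnc$ is the free commutative algebra on $\latNCP$) is invertible for the convolution product $*$ if and only if $\phi(I_n) \neq 0$ for all $n \geq 1$. The plan is to exploit the grading and the triangular shape of the coproduct $\delta$ with respect to the refinement order on $\latNCP$.

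**The ``only if'' direction.** First I would observe that the identity elements $I_n$ are group-like: $\delta(I_n) = I_n \otimes I_n$ (this is recorded in Example~\ref{examples:sticksandcups}, since $I_n$ is the coarsest partition and the only $Q$ with $I_n \leq Q$ is $I_n$ itself, with $I_n/I_n = \mathbf 1$). Hence for any $\psi$ with $\phi * \psi = \varepsilon_\Bblnc$ one gets $\phi(I_n)\psi(I_n) = \varepsilon_\Bblnc(I_n) = 1$, forcing $\phi(I_n) \neq 0$. This is immediate.

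**The ``if'' direction.** This is the substantive part. Assume $\phi(I_n) \neq 0$ for all $n$; I want to construct $\psi = \phi^{*-1}$ by induction and simultaneously check it is multiplicative. The bialgebra $\Bblnc$ is $\N^2$-graded (bidegree $(k,n)$ for a partition of $[n]$ into $k$ blocks), and the coproduct $\delta(P) = \sum_{P \leq Q} Q \otimes P/Q$ is ``triangular'': writing $\Delta'(P) = \delta(P) - P \otimes \mathbf 1 - \mathbf 1 \otimes P$ when $P$ is not group-like, every term $Q \otimes P/Q$ on the left has $Q$ strictly coarser than $P$, hence strictly fewer blocks. The key point is that $\delta$ restricted to the non-group-like part is conilpotent \emph{modulo the group-like elements} $I_n$: iterating the reduced coproduct, the number of blocks in the left-most factor strictly decreases until it reaches $1$, i.e.\ an $I_m$. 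Concretely I would set $\psi(\mathbf 1) = 1$, $\psi(I_n) := \phi(I_n)^{-1}$, and for a partition $P$ with $k \geq 2$ blocks define recursively
$$
\psi(P) \ := \ -\,\phi(I_?)^{-1}\!\!\sum_{\substack{P \leq Q \\ Q \neq P}} \psi(Q)\,\phi(P/Q),
$$
extended multiplicatively to monomials — but one must be careful: the term $Q = I_n$ (the coarsest) appears in the sum and contributes $\psi(I_n)\phi(P/I_n)$, so in fact the equation $(\phi * \psi)(P) = 0$ reads $\phi(I_n)\psi(P) + (\text{terms with }Q\text{ having }\geq 2\text{ blocks, hence }P \leq Q \lneq P\text{ impossible unless already }\dots)$. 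The clean way is to single out $Q = I_n$ and $Q = P$: writing $(\phi*\psi)(P) = \sum_{P\le Q}\phi(Q)\psi(P/Q)$, the $Q = P$ term is $\phi(P)\cdot\psi(\mathbf 1) = \phi(P)$ is \emph{not} isolated — rather it is the $Q = I_n$ term that gives $\phi(I_n)\psi(P)$ (since $P/I_n = P$), which is where invertibility of $\phi(I_n)$ is used. So define
$$
\psi(P) \ := \ \phi(I_n)^{-1}\Bigl(\varepsilon_\Bblnc(P) \ -\!\!\!\sum_{\substack{P \leq Q \\ Q \neq I_n}} \phi(Q)\,\psi(P/Q)\Bigr),
$$
where on the right every monomial $P/Q$ for $Q \neq I_n$ has factors that are partitions of sets strictly smaller than $[n]$ (or $P/Q$ has strictly fewer-block factors), so $\psi$ is well-defined by induction on $n$ (and within fixed $n$, on the number of blocks). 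One then checks $\phi * \psi = \varepsilon_\Bblnc$ by construction, $\psi * \phi = \varepsilon_\Bblnc$ by the same argument on the other side (or by a standard left-inverse/right-inverse coincidence argument in the convolution monoid), and that $\psi$ is multiplicative because $\phi$ is and the coproduct $\delta$ is an algebra homomorphism, so the set of multiplicative functionals is closed under $*$ and under $*$-inversion once inverses exist.

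**Main obstacle.** The only real subtlety is verifying that the recursion actually terminates, i.e.\ that excluding $Q = I_n$ from the sum leaves only terms $P/Q$ lying in the subalgebra generated by partitions of strictly smaller underlying sets (for $n$-induction) — equivalently that $P$ itself never reappears. This is clear: if $Q \neq I_n$ and $P \leq Q$, then either $Q$ has $\geq 2$ blocks, in which case each restriction $P_{|\tau_i}$ in $P/Q$ is a partition of a set of size $< n$, or $Q = P$ and then $P/P = \mathbf 1$ has trivial degree — but wait, $Q = P$ \emph{is} allowed, giving the term $\phi(P)\psi(\mathbf 1) = \phi(P)$, a known quantity, so it poses no circularity. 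Thus the induction is on $n$ alone and is well-founded. Once this is pinned down, everything else is routine bookkeeping in the graded convolution algebra, and I would present it briefly.
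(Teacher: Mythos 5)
Your argument is correct in substance, but it takes a genuinely different route from the paper's. The paper proves the converse by localization: since $\Bblnc$ is graded by the number of blocks minus one and its degree-zero component has a basis of group-like elements (products of the $I_n$), one adjoins inverses to form the Hopf algebra $\Bblnc'=\Bblnc[I_1^{-1},I_2^{-1},\dots]$ with $\delta'(I_n^{-1})=I_n^{-1}\otimes I_n^{-1}$; a character $\phi$ with $\phi(I_n)\neq 0$ for all $n$ extends to a character of $\Bblnc'$, which is invertible there because $\Bblnc'$ is Hopf, and restricting the inverse gives the two-sided convolution inverse in one stroke. You instead build the inverse directly by a triangularity recursion on the degree $n$, isolating the term $Q=I_n$ in $(\phi*\psi)(P)=\sum_{P\leq Q}\phi(Q)\,\psi(P/Q)$ and using that every other admissible $Q$ has at least two blocks, so all factors of $P/Q$ have degree strictly less than $n$; multiplicativity is built in because $\Bblnc$ is free commutative on $\latNCP$, and two-sidedness follows from the analogous recursion for a left inverse (there one isolates $Q=P$, whose coefficient $\phi(P/P)=\prod_i\phi(I_{n_i})$ is nonzero) together with the standard monoid argument you invoke. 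Both proofs are valid: the paper's is shorter because invertibility of characters on a Hopf algebra is taken off the shelf, while yours is more elementary and makes the M\"obius-inversion-style recursion explicit; the forward direction (group-likeness of the $I_n$) is identical in both.

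One slip should be corrected: $P/P$ is not $\mathbf 1$ but the monomial $I_{n_1}\cdots I_{n_k}$ of coarsest partitions of the blocks of $P$, so the $Q=P$ term in the convolution is $\phi(P)\,\psi(I_{n_1})\cdots\psi(I_{n_k})$, not $\phi(P)\,\psi(\mathbf 1)=\phi(P)$. This does not harm your recursion, since those values are $\phi(I_{n_i})^{-1}$ and the $I_{n_i}$ have degree $<n$ whenever $P\neq I_n$, but the aside as written is false. Similarly, the reduced coproduct is not $\delta(P)-P\otimes\mathbf 1-\mathbf 1\otimes P$ (neither of these terms occurs in $\delta(P)$); the extreme terms are $I_n\otimes P$ and $P\otimes I_{n_1}\cdots I_{n_k}$, which is precisely why it is the $Q=I_n$ term that must be isolated --- as your displayed definition of $\psi(P)$ correctly does.
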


\begin{proof} 
  Let $\phi\in G_\Bblnc$ and $\psi$ be the inverse of $\phi$ for the convolution
  $*$. For any $n\geq 1$, since $I_n$ is a group-like element, we have
  $1=\phi*\psi(I_n)=\phi(I_n)\psi(I_n)$, and hence $\phi(I_n)\neq 0$.

Conversely, the bialgebra $\Bblnc$ is graded by the number of blocks (minus one), and its
component of degree zero has a basis of group-like elements, namely the
products of $I_n$'s. Hence, $\Bblnc'=\Bblnc[I_1^{-1},\dots,I_n^{-1},\dots]$ is a Hopf
algebra, with the extension $\delta'$ of the coproduct $\delta$ defined by
$\delta'(I_n^{-1})=I_n^{-1}\otimes I_n^{-1}$. If $\phi\in M_\Bblnc$ satisfies
$\phi(I_n)\neq 0$ for all $n\geq 1$, it can be extended as a character
$\phi'$ to $\Bblnc'$. Denote by $\psi'$ the inverse of $\phi'$ in the group of
characters of $\Bblnc'$ and by $\psi$ its restriction to $\Bblnc$. 
For any $P \in \Bblnc$:
\[
	(\phi*\psi)(P)
	=(\phi\otimes \psi)\circ \delta(P)
	=(\phi'\otimes \psi')\circ 
	\delta'(P)=\varepsilon_{\Bblnc'}(P)=\varepsilon_\Bblnc (P).
\]
Similarly, $(\psi*\phi)(P)=\varepsilon_\Bblnc (P)$, so $\phi\in G_\Bblnc$. \end{proof}

We also write $\g_\Hgapnc$ for the Lie algebra of infinitesimal characters of $\Hgapnc$
(linear forms that vanish on $(\Hgapnc_+)^2$; the Lie bracket is induced 
by the associative convolution product of linear forms). 
We furthermore write $G_\Hgapnc$ for the group of characters of $\Hgapnc$. Its product, induced by
$\Delta$, is denoted by $\star$.

As already observed, the coproducts $\Delta_\prec$ and $\Delta_\succ$
induce a shuffle (also called dendriform) algebra structure on the dual
$\Hgapnc^*_+$. It is extended to products of elements of $\Hgapnc^*$ with the unit
$\varepsilon_\Hgapnc$ of the convolution product as follows: for any $f\in
\Hgapnc^*_+$,
\begin{align*}
	f\prec \varepsilon_\Hgapnc&=f,&f\succ \varepsilon_\Hgapnc&=0,\\
	\varepsilon_\Hgapnc \prec f&=0,&\varepsilon_\Hgapnc\succ f&=f.
\end{align*} 

The coaction $\rho$ induces a right action of $M_\Bblnc$ on linear forms 
on $\Hgapnc$, denoted by
$\curvearrowleft$: for $\alpha$ an arbitrary linear
form on $\Hgapnc$ and $\phi\in M_\Bblnc$, it is given by
$$
	\alpha \curvearrowleft \phi:=(\alpha\otimes\phi)\circ \rho.
$$
It restricts to an action by group endomorphisms on $G_\Hgapnc$ and by Lie
algebra endomorphisms on $\g_\Hgapnc$.

Note that as an algebra $\Bblnc$ is the abelianization of $\Hgapnc$, so, as sets,
$M_\Bblnc$ and $G_\Hgapnc$ can be identified. Through this identification, we see that
the action $\curvearrowleft$ coincides with the product $*$. In the following, to avoid
ambiguities, when $\phi\in G_\Hgapnc$, we write $\overline\phi$ for the
corresponding element of $M_\Bblnc$. Conversely, when $\psi\in M_\Bblnc$, we write
$\widehat\psi$ for the corresponding element of $G_\Hgapnc$ (so that in particular
for $\phi\in G_\Hgapnc$ and $\gamma\in M_\Bblnc$, we have $\overline{\phi \curvearrowleft
\gamma}=\overline{\phi}\ast\gamma$).\\

First, since $\Hgapnc$ is freely generated by $\latNCP$ as an algebra, characters and
infinitesimal characters on $\Hgapnc$ are entirely determined by their
restriction to $\latNCP$, and the following maps are bijections:
\begin{align*}
	\theta_{\g}:&\left\{\begin{array}{rcl}
	\g_\Hgapnc&\longrightarrow&\Vect(\latNCP)^*\\
	\kappa&\longmapsto&\kappa_{\mid \Vect(\latNCP)},
	\end{array}\right.& \theta_G:&\left\{\begin{array}{rcl}
G_\Hgapnc&\longrightarrow&\Vect(\latNCP)^*\\
\phi&\longmapsto&\phi_{\mid \Vect(\latNCP)}.
\end{array}\right.&
\end{align*}
The composite bijection
$\Theta=\theta_G^{-1}\circ \theta_{\g}:\g_\Hgapnc\stackrel{\sim}\longrightarrow G_\Hgapnc$
sends any $\kappa\in \g_\Hgapnc$ to the unique character $\Theta(\kappa) : \Hgapnc \to
\K$ such that it agrees with $\kappa$ on degree-$1$ monomials. That is,
$\kappa(P)=\Theta(\kappa)(P)$ (for any $P\in \latNCP$).

What cumulants are concerned we shall be especially interested in those
elements in $\g_\Hgapnc$ that vanish on $\latNCP(k,n)$ for $k\geq 2$. 
In particular, we shall consider $e=\Theta^{-1}(\widehat{\varepsilon}_\Bblnc)$.
In other words, $e$ is the unique infinitesimal character of $\Hgapnc$ such that  
for any $P\in \latNCP(k,n)$, we have $e(P)=\delta_{k,1}$. As a consequence, for all $P\in \latNMP(k,n)$:
\begin{align*}
	 e(P)=\delta_{k,1}.
\end{align*}

The first natural bijection $\Theta: \g_\Hgapnc \to G_\Hgapnc$ 
rewrites as follows, in terms of the coaction of $\Bblnc$ on $\Hgapnc$:

\begin{prop}
\label{lemmae} 
  For any $\kappa\in \g_\Hgapnc$, 
  put $K := \overline{\Theta(\kappa)} \in M_\Bblnc$.
  Then we have
$$
	 \kappa = e \curvearrowleft K.
$$
\end{prop}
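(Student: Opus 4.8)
The plan is to exploit that both sides of the claimed identity are infinitesimal characters of $\Hgapnc$, and that such a linear form is completely determined by its restriction to the algebra generators $\latNCP$: indeed it vanishes on $(\Hgapnc_+)^2$, i.e.\ on every monomial $P_1\cdots P_r$ with $r\ge 2$, and $\Hgapnc$ is free as an algebra on $\latNCP$. The left-hand side $\kappa$ is an infinitesimal character by hypothesis. For the right-hand side I would invoke the fact recalled just before the proposition, namely that $\curvearrowleft$ restricts to an action of $M_\Bblnc$ on $\g_\Hgapnc$ by Lie-algebra endomorphisms, so that $e\curvearrowleft K\in\g_\Hgapnc$. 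Equivalently, one checks by hand that $e\curvearrowleft K$ kills $(\Hgapnc_+)^2$: using that $\rho$ is an algebra map (part of the comodule-bialgebra structure of Theorem~\ref{thm:comodulebialg}), that $e$ is an $\varepsilon_\Hgapnc$-derivation, that $K$ is a character, and the counit identity $(\varepsilon_\Hgapnc\otimes\Id)\circ\rho=\varepsilon_\Hgapnc(-)\,\mathbf{1}$, one obtains $(e\otimes K)\circ\rho(xy)=\varepsilon_\Hgapnc(x)\,(e\curvearrowleft K)(y)+(e\curvearrowleft K)(x)\,\varepsilon_\Hgapnc(y)$, which vanishes for $x,y\in\Hgapnc_+$.

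It then remains to compute $(e\curvearrowleft K)(P)$ for a single noncrossing partition $P\in\latNCP$, say of degree $n$. Unwinding the definitions,
$$(e\curvearrowleft K)(P)=(e\otimes K)\circ\rho(P)=\sum_{P\le Q}e(Q)\,K(P/Q),$$
the sum running over noncrossing partitions $Q$ coarser than $P$. Since $e(Q)=\delta_{k,1}$ for $Q\in\latNCP(k,n)$, the only surviving term is the coarsest partition $Q=I_n$, the unique one-block partition above $P$; for it $e(I_n)=1$ and $P/I_n=P_{|[n]}=P$, so the sum collapses to $(e\curvearrowleft K)(P)=K(P)$.

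Finally, $K=\overline{\Theta(\kappa)}$ is by definition the character of $\Bblnc$ taking the same values as $\Theta(\kappa)$ on the common generating set $\latNCP$, so $K(P)=\Theta(\kappa)(P)$; and $\Theta(\kappa)$ agrees with $\kappa$ on degree-one monomials, i.e.\ $\Theta(\kappa)(P)=\kappa(P)$ for $P\in\latNCP$. Hence $(e\curvearrowleft K)(P)=\kappa(P)$ for every $P\in\latNCP$, and since both sides are infinitesimal characters they coincide on all of $\Hgapnc$. The only genuinely substantive input is the comodule-bialgebra structure of Theorem~\ref{thm:comodulebialg}, which is what makes $\curvearrowleft$ preserve infinitesimal characters and thus legitimizes the reduction to generators; everything else is routine bookkeeping with the definitions of $\rho$, $e$, $\Theta$ and $\overline{(-)}$.
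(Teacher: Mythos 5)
Your proposal is correct and follows essentially the same route as the paper: evaluate $(e\curvearrowleft K)(P)=\sum_{P\le Q}e(Q)K(P/Q)$ on a single noncrossing partition $P$, observe that only $Q=I_n$ survives so the sum collapses to $K(P)=\Theta(\kappa)(P)=\kappa(P)$, and then conclude equality because both sides are infinitesimal characters determined by their values on $\latNCP$. Your extra verification that $e\curvearrowleft K$ lies in $\g_\Hgapnc$ merely spells out the fact, stated in the paper just before the proposition, that $\curvearrowleft$ acts by Lie algebra endomorphisms on $\g_\Hgapnc$.
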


\begin{proof}
For any $P\in \latNCP$ we have
\begin{align*}
	e\curvearrowleft K(P)
	&=\sum_{P\leq Q}e(Q)K(P/Q)
	  =K(P)=\kappa(P).
\end{align*}
Since $\kappa$ and $e \curvearrowleft K$
therefore agree on all noncrossing partitions, and are both 
infinitesimal characters over $\Hgapnc$, they must be equal.
\end{proof}

\begin{prop}[Left half-shuffle exponential isomorphism]
  Let $\kappa \in \g_\Hgapnc$. There exists a unique $\phi\in \Hgapnc^*$, such that
  $\phi=\varepsilon_\Hgapnc+\kappa\prec \phi$; moreover, $\phi \in G_\Hgapnc$.
  Conversely, let $\phi\in G_\Hgapnc$. There exists a unique $\kappa\in \Hgapnc^*$,
  such that $\phi=\varepsilon_\Hgapnc+\kappa\prec \phi$; moreover, $\kappa \in
  \g_\Hgapnc$. In particular, the following map is a bijection:
\begin{align*}
	\Expl :&\left\{\begin{array}{rcl} 
	          \g_\Hgapnc&\longrightarrow&G_\Hgapnc\\
              \kappa&\longmapsto&\phi\ \mbox{ such that }\ 
			  \phi =\varepsilon_\Hgapnc+\kappa\prec \phi.
\end{array}
\right.
\end{align*}
\end{prop}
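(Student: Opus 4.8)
The plan is to pass to the graded dual $\Hgapnc^*$. Since $\Hgapnc$ is connected graded (graded by the total number of blocks, with degree-zero part $\K\mathbf 1$), $\Hgapnc^*$ is a complete filtered algebra for the convolution product $\ast$, and by Proposition~\ref{prop:halfshuffles} its positive part carries the shuffle (dendriform) structure ${\ast}={\prec}+{\succ}$ obeying the relations \eqref{shuf1}--\eqref{shuf3}, extended by the usual unit conventions $f\prec\varepsilon_\Hgapnc=f$, $\varepsilon_\Hgapnc\prec f=0$, etc. Existence and uniqueness of a linear form $\phi$ with $\phi=\varepsilon_\Hgapnc+\kappa\prec\phi$ then follow from a straightforward degree recursion: writing $\Delta_\prec(P)=P\otimes\mathbf 1+P'_\prec\otimes P''_\prec$ (Sweedler notation, with $P'_\prec,P''_\prec\in\Hgapnc_+$ and $\deg P''_\prec<\deg P$ in the reduced part) and using $\kappa(\mathbf 1)=0$, the equation reads $\phi(P)=\varepsilon_\Hgapnc(P)+\kappa(P'_\prec)\phi(P''_\prec)$, which determines $\phi(P)$ inductively from its values in lower degree and, conversely, defines such a $\phi$. (Equivalently $\phi=\sum_{n\ge0}\kappa^{\prec n}$, left-nested, the sum being finite in each degree.)

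Next I would show $\phi\in G_\Hgapnc$, i.e.\ that $\phi$ is a character. Since $\phi(\mathbf 1)=1$ is immediate and $\Hgapnc=\K\mathbf 1\oplus\Hgapnc_+$, it suffices to establish $\phi(x\cdot y)=\phi(x)\phi(y)$ for $x,y\in\Hgapnc_+$, which I would do by induction on $\deg x+\deg y$. Here $\varepsilon_\Hgapnc(x\cdot y)=0$, so $\phi(x\cdot y)=(\kappa\otimes\phi)\circ\Delta_\prec(x\cdot y)$, and we expand $\Delta_\prec(x\cdot y)$ via the explicit six-term formula of Proposition~\ref{prop:halfshuffles}. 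Because $\kappa$ is an infinitesimal character it vanishes on $\mathbf 1$ and on all products of positive-degree elements, hence annihilates every term whose left tensor factor is such a product; only the two terms $x\otimes y$ and $x'_\prec\otimes(x''_\prec\cdot y)$ survive, contributing $\kappa(x)\phi(y)+\kappa(x'_\prec)\phi(x''_\prec\cdot y)$. Applying the induction hypothesis $\phi(x''_\prec\cdot y)=\phi(x''_\prec)\phi(y)$ (valid since $\deg(x''_\prec\cdot y)<\deg(x\cdot y)$) this equals $\bigl(\kappa(x)+\kappa(x'_\prec)\phi(x''_\prec)\bigr)\phi(y)=(\kappa\prec\phi)(x)\cdot\phi(y)=\phi(x)\phi(y)$, the last step using $\phi(x)=(\kappa\prec\phi)(x)$ for $x\in\Hgapnc_+$. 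The base case $\deg x=1$ is immediate, since then $\Delta'_\prec(x)=0$ and $\phi(x)=\kappa(x)$.

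For the converse, given $\phi\in G_\Hgapnc$ I would use that $\phi$ is $\ast$-invertible ($\Hgapnc$ being a connected graded Hopf algebra, with $\phi^{\ast-1}=\phi\circ S$, $S$ the antipode) and set $\kappa:=(\phi-\varepsilon_\Hgapnc)\prec\phi^{\ast-1}$, a well-defined element of $\Hgapnc^*_+$. The dendriform relation \eqref{shuf1} together with the unit conventions gives $\kappa\prec\phi=(\phi-\varepsilon_\Hgapnc)\prec(\phi^{\ast-1}\ast\phi)=(\phi-\varepsilon_\Hgapnc)\prec\varepsilon_\Hgapnc=\phi-\varepsilon_\Hgapnc$, so $\phi=\varepsilon_\Hgapnc+\kappa\prec\phi$; uniqueness of $\kappa$ as a linear form is again the degree recursion of the first paragraph. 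To see $\kappa\in\g_\Hgapnc$, i.e.\ that $\kappa$ vanishes on $(\Hgapnc_+)^2$, I would run the computation of the second paragraph in reverse: for $x,y\in\Hgapnc_+$, equating the six-term expansion of $\phi(x\cdot y)=(\kappa\otimes\phi)\circ\Delta_\prec(x\cdot y)$ with $\phi(x)\phi(y)=(\kappa\prec\phi)(x)\cdot\phi(y)$, and using that $\phi$ is a character and (by induction on degree) that $\kappa$ vanishes on the lower-degree products $x\cdot y'$, $x'_\prec\cdot y$, $x'_\prec\cdot y'$, all terms cancel except $\kappa(x\cdot y)$, forcing $\kappa(x\cdot y)=0$.

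Putting the three steps together, the first two show that $\Expl$ is a well-defined map $\g_\Hgapnc\to G_\Hgapnc$ and the third exhibits its inverse $\phi\mapsto(\phi-\varepsilon_\Hgapnc)\prec\phi^{\ast-1}$, so $\Expl$ is a bijection. (This is the instance, for the unshuffle Hopf algebra $\Hgapnc$, of the general exponential/logarithm correspondence between infinitesimal characters and characters of an unshuffle Hopf algebra recalled from \cite{EFP17}.) I expect the main obstacle to be the second step: proving multiplicativity of $\phi$ is where the codendriform compatibility of Proposition~\ref{prop:halfshuffles} is genuinely used, and it requires carefully tracking which of the six terms of $\Delta_\prec(x\cdot y)$ are killed by $\kappa$ and how the survivors recombine --- everything else is bookkeeping with the grading.
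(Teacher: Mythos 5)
Your proposal is correct, and for the main direction it coincides with the paper's own proof: the paper defines $\phi$ on noncrossing multipartitions by the recursion $\phi(P)=\kappa(P)+\kappa(P'_\prec)\phi(P''_\prec)$ and proves multiplicativity by exactly your induction, expanding the six-term formula for $\Delta_\prec(x\cdot y)$ from Proposition~\ref{prop:halfshuffles}, letting the infinitesimal character kill the four terms whose left tensor factor is a product of positive-degree elements, and recombining the two survivors $\kappa(x)\phi(y)+\kappa(x'_\prec)\phi(x''_\prec\cdot y)$ via the induction hypothesis. Where you genuinely differ is the converse: the paper only says the second statement is proved ``similarly'' (i.e.\ by the symmetric degree recursion for $\kappa$ and an analogous six-term computation), whereas you produce $\kappa$ in closed form as $(\phi-\varepsilon_\Hgapnc)\prec\phi^{\ast-1}$ and check the fixpoint equation with the half-shuffle relation \eqref{shuf1}; this is a legitimate and arguably cleaner route, since it exhibits the inverse of $\Expl$ explicitly, at the cost of invoking convolution-invertibility of characters (available because $\Hgapnc$ is connected graded) and the standard fact that the unital extension of the shuffle relations is valid when the unit $\varepsilon_\Hgapnc$ occurs in at most harmless positions. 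Two cosmetic slips, neither a gap: with your stated Sweedler convention the recursion should read $\phi(P)=\kappa(P)+\kappa(P'_\prec)\phi(P''_\prec)$ for $P\in\Hgapnc_+$ (the term $\kappa(P)$, coming from $P\otimes\mathbf{1}$, is missing from your display, though you use it correctly later in the multiplicativity step), and the closed series solving the fixpoint equation is the right-nested one $\kappa^{\prec n}=\kappa\prec\kappa^{\prec(n-1)}$, not left-nested --- by \eqref{shuf1} the left-nested powers equal $\kappa\prec\kappa^{\ast(n-1)}$, which is a different expression.
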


\begin{proof}
  Let us define $\phi(P)$ for any $P \in \latNMP(k,n)$ by induction on $n$.
  If $n=0$, then $P=1$ and $\phi(1)=1$. If $n\geq 1$, we put
  $\phi(P)=\kappa(P)+\kappa(P'_\prec)\phi(P''_\prec)$ (with Sweedler-like notation 
  for $\Delta_\prec$ as in \eqref{Dprec}). We obtain in this way a map
  $\phi\in \Hgapnc^*$, such that $\phi=\varepsilon_\Delta+\kappa\prec \phi$. Let
  $x\in \latNMP(k,m)$ and $y\in \latNMP(l,n)$; let us prove that
  $\phi(xy)=\phi(x)\phi(y)$ by induction on $m+n$. If $m=0$ or $n=0$, then
  $x=1$ or $y=1$ and the result is immediate. We now assume that $m,n\geq
  1$. Then, as $\kappa$ is an infinitesimal character, and by the induction
  hypothesis:
\begin{align*}
	\phi(PQ)	&=\kappa(PQ)\phi(1)+\kappa(P)\phi(Q)+\kappa(PQ')\phi(Q'')+\kappa(P'_\prec Q)\phi(P''_\prec)\\
			&\quad +\kappa(P'_\prec)\phi(P''_\prec Q)+\kappa(P'_\prec Q')\phi(P''_\prec Q'')\\
		    	&=\kappa(P)\phi(Q)+\kappa(P'_\prec)\phi(P''_\prec)\phi(Q)\\
                     	&=\phi(P)\phi(Q).
\end{align*}
So $\phi \in G_\Hgapnc$. The proof of the second statement is similar and the
last is an easy consequence of the first two points.
\end{proof}

The following character on $\Hgapnc$ plays a special role: $\psi_\prec := \Expl (e).$
We shall see in Theorem~\ref{thm:psiprec} that it corresponds to the zeta function,
$\psi_\prec = \widehat \zeta$: we have $\psi_\prec(P)=1$ for every noncrossing partition $P$.

The left half-shuffle exponential isomorphism rewrites as follows.

\begin{theo}\label{thm:left}
  For any $\kappa\in \g_\Hgapnc$, put $K := \overline{\Theta(\kappa)} \in M_\Bblnc$ 
  as usual. For
  $\phi \in G_\Hgapnc$ we have
  $$
	\phi=\varepsilon_\Hgapnc+\kappa\prec \phi
	\ \Longleftrightarrow \
	\phi =\psi_\prec  \curvearrowleft K.
  $$ 
 In other words,  we have
   $$
  \Expl(\kappa) = \psi_\prec \curvearrowleft K. 
  $$
\end{theo}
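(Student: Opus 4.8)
The plan is to deduce the equivalence from the fixpoint equation defining $\psi_\prec$ by transporting it along the operator $\curvearrowleft K$. The first ingredient I would establish is that, because $K\in M_\Bblnc$ is a \emph{character} (hence multiplicative, $K(xy)=K(x)K(y)$), the operator $\curvearrowleft K\colon\Hgapnc^*\to\Hgapnc^*$ is a morphism of the dendriform (half-shuffle) structure: for all $\alpha,\beta\in\Hgapnc^*_+$,
$$(\alpha\prec\beta)\curvearrowleft K=(\alpha\curvearrowleft K)\prec(\beta\curvearrowleft K),\qquad(\alpha\succ\beta)\curvearrowleft K=(\alpha\curvearrowleft K)\succ(\beta\curvearrowleft K).$$
This is exactly the dual statement of Theorem~\ref{thm:rightact-unshuffle}: pairing the identity $(\Delta_\prec\otimes\Id)\circ\rho=(\Id\otimes\Id\otimes m_\Bblnc)\circ(\Id\otimes\tau\otimes\Id)\circ(\rho\otimes\rho)\circ\Delta_\prec$ against $\alpha\otimes\beta\otimes K$ turns the left-hand side into $(\alpha\prec\beta)\curvearrowleft K$, while on the right-hand side the multiplicativity of $K$ collapses $K\circ m_\Bblnc$ into a copy of $K$ in each of the two tensor slots, after which commuting $K$ past $\beta$ through the swap $\tau$ yields $(\alpha\curvearrowleft K)\prec(\beta\curvearrowleft K)$; the $\succ$ identity is identical. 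I would also note that $\curvearrowleft K$ fixes the unit $\varepsilon_\Hgapnc$ and preserves $\Hgapnc^*_+$: this follows from $\rho(\mathbf 1)=\mathbf 1\otimes 1$ together with the facts that $\varepsilon_\Hgapnc$ vanishes on every nonempty (noncrossing) multipartition and that every partition coarser than a nonempty one is again nonempty. Combined with the extension conventions $f\prec\varepsilon_\Hgapnc=f$ and $\varepsilon_\Hgapnc\prec f=0$, this upgrades the half-shuffle compatibility to all of $\Hgapnc^*$.

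With this in hand the argument is short. Recall $\psi_\prec=\Expl(e)$, so $\psi_\prec=\varepsilon_\Hgapnc+e\prec\psi_\prec$. Applying $\curvearrowleft K$ to both sides and using the compatibility just discussed together with Proposition~\ref{lemmae} (which gives $e\curvearrowleft K=\kappa$), I obtain
$$\psi_\prec\curvearrowleft K=\varepsilon_\Hgapnc+\kappa\prec(\psi_\prec\curvearrowleft K).$$
Since $\curvearrowleft$ restricts to an action of $M_\Bblnc$ on $G_\Hgapnc$ by group endomorphisms, $\psi_\prec\curvearrowleft K\in G_\Hgapnc$. By the uniqueness part of the left half-shuffle exponential proposition, the equation $\phi=\varepsilon_\Hgapnc+\kappa\prec\phi$ has a unique solution in $\Hgapnc^*$; hence $\psi_\prec\curvearrowleft K$ is that solution, i.e.\ $\Expl(\kappa)=\psi_\prec\curvearrowleft K$. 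The stated equivalence then follows in both directions: if $\phi=\psi_\prec\curvearrowleft K$ then $\phi$ satisfies the fixpoint equation by the displayed identity, and conversely any $\phi\in G_\Hgapnc$ satisfying it must equal the unique solution $\psi_\prec\curvearrowleft K$.

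The only place I expect any real work is the bookkeeping in the first paragraph: carefully pairing Theorem~\ref{thm:rightact-unshuffle} against $\alpha\otimes\beta\otimes K$, keeping track of where the multiplicativity of $K$ is used and how the swap $\tau$ moves $K$ past $\beta$, and then the routine (but necessary) check that the resulting identity on $\Hgapnc^*_+$ still holds once $\varepsilon_\Hgapnc$ is allowed as an argument under the standard unit conventions. Everything downstream of that is a two-line application of fixpoint uniqueness, echoing the style of Proposition~\ref{lemmae}.
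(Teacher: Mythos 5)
Your proposal is correct and follows essentially the same route as the paper: apply $\curvearrowleft K$ to the defining fixpoint equation $\psi_\prec=\varepsilon_\Hgapnc+e\prec\psi_\prec$, use the comodule--unshuffle compatibility of Theorem~\ref{thm:rightact-unshuffle} together with multiplicativity of $K$ to distribute the action over $\prec$, invoke Proposition~\ref{lemmae} to replace $e\curvearrowleft K$ by $\kappa$, and conclude by uniqueness of the solution of the left half-shuffle fixpoint equation. The only difference is that you spell out the dualization of the comodule compatibility (pairing against $\alpha\otimes\beta\otimes K$ and the unit conventions), which the paper treats as immediate.
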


\begin{proof}
  By definition, $\psi_\prec=\varepsilon_\Hgapnc+e\prec\psi_\prec$. Since $\Hgapnc$ is an
  unshuffle bialgebra in the category of right $\Bblnc$-comodules, we have:
\begin{align*}
	\psi_\prec \curvearrowleft K
	&=(\varepsilon_\Hgapnc+e\prec \psi_\prec) \curvearrowleft K\\
	&=\varepsilon_\Hgapnc \curvearrowleft K+(e \curvearrowleft K)\prec (\psi_\prec  \curvearrowleft K)\\
	&=\varepsilon_\Hgapnc+(e \curvearrowleft K)\prec (\psi_\prec \curvearrowleft K)\\
	&=\varepsilon_\Hgapnc+\kappa \prec (\psi_\prec \curvearrowleft K)
	& \text{(by \ref{lemmae}).}
\end{align*}
This shows that $\psi_\prec  \curvearrowleft K$ satisfies the equation 
characterizing $\phi$, and hence establishes the bi-implication.
To establish the last equation, note that the character
$\Expl(\kappa)$ is by definition the unique solution $\phi$ to the
equation $\phi =\varepsilon_\Hgapnc + \kappa\prec \phi$. But we have just 
seen that $\psi_\prec \curvearrowleft K$ satisfies this equation.
\end{proof}

\begin{prop}
\label{prop:doubletensor}
Let us consider a family of scalars $(k_n)_{n\geq 1}$. 
We define the infinitesimal character $\kappa$ on $\Hgapnc$ by:
\begin{align*}
	\kappa(P)	&=\begin{cases}
		  k_n & \mbox{ if }P=I_n,\: n\geq 1,\\
		      0 & \mbox{ otherwise}.
\end{cases}
\end{align*}
Then Theorem~\ref{thm:left} gives, for any $P\in \latNP$, that 
$\Expl(\kappa)(P) = \prod_{\pi\in P} k_{\sharp \pi}$ and the moments
$$
	 (\Expl(\kappa)  \curvearrowleft \zeta) (J_n) 
	=\sum_{Q\in \latNCP(n)} \prod_{\pi\in Q}k_{\sharp \pi}.
$$
\end{prop}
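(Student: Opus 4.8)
The plan is to deduce both formulae directly from Theorem~\ref{thm:left}, which identifies $\Expl(\kappa)$ with $\psi_\prec \curvearrowleft K$ where $K = \overline{\Theta(\kappa)} \in M_\Bblnc$. First I would unwind what $K$ is for the particular $\kappa$ at hand: since $\kappa$ vanishes on all noncrossing partitions except the coarsest ones $I_n$, and $\Theta$ is the bijection that extends a value on degree-one monomials to a character, $K = \overline{\Theta(\kappa)}$ is the \emph{multiplicative} linear form on $\Bblnc$ determined by $K(I_n) = k_n$ (and $K$ of a monomial of $I_n$'s is the product of the $k_n$'s). Now recall from the discussion preceding Theorem~\ref{thm:left} that $\psi_\prec = \widehat\zeta$, i.e.\ $\psi_\prec(P) = 1$ for every noncrossing partition $P$. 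Therefore, for any $P \in \latNCP$,
$$
\Expl(\kappa)(P) = (\psi_\prec \curvearrowleft K)(P) = (\psi_\prec \otimes K)\circ\rho(P) = \sum_{P\leq Q} \psi_\prec(Q)\, K(P/Q) = \sum_{P \leq Q} K(P/Q).
$$

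For the first claimed formula I would specialize to $P \in \latNCP(k,n)$ arbitrary and observe that in the sum $\sum_{P\leq Q} K(P/Q)$ the term with $Q = J_n$ (the finest partition, which is $\leq P$... actually $P \leq Q$ means $Q$ coarser, so we need $Q$ coarser than $P$) --- here the cleanest route is to take $Q = I_n$, the coarsest partition, which is coarser than every $P$; then $P/I_n = P$ itself (a single block of size $n$ gets refined to all of $P$), so this term contributes $K(P)$. But $K$ is only defined on $\Bblnc$, so I should instead just directly compute: by multiplicativity of $K$, writing $P = \{\pi_1,\dots,\pi_k\}$, the term $Q = I_n$ gives $P/I_n = P$, and $K(P)$ --- wait, this needs $P$ itself viewed in $\Bblnc$. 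Let me instead argue more carefully: $\Expl(\kappa)$ is a \emph{character} of $\Hgapnc$, hence multiplicative, and it suffices to evaluate it on a single partition $P = \{\pi_1,\dots,\pi_k\}$; then $\Expl(\kappa)(P) = \prod_i \Expl(\kappa)(\pi_i)$ where $\pi_i$ is regarded as a single-block partition of size $\sharp\pi_i$. So it remains to show $\Expl(\kappa)(I_m) = k_m$ for every $m$. Since $\kappa$ is an infinitesimal character, $\Expl(\kappa)(I_m) = \kappa(I_m) = k_m$ by the definition of $\Expl$ as a half-shuffle exponential (the first-order term), or more transparently: $I_m$ is group-like for $\delta$, so $\rho(I_m) = I_m \otimes (I_m/I_m) = I_m \otimes 1$, whence $\Expl(\kappa)(I_m) = (\psi_\prec \otimes K)(I_m \otimes 1) = \psi_\prec(I_m)\cdot 1 = 1$ --- which gives $1$, not $k_m$, so that route is wrong. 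The correct identity is that $\Theta(\kappa)$ \emph{agrees with $\kappa$ on all noncrossing partitions} (this is the defining property of $\Theta$ stated in the excerpt), and $\Expl(\kappa) = \widehat{K}$ where $\overline{\widehat K} = K$; but $\Expl(\kappa)$ need not equal $\Theta(\kappa)$. The honest computation is $\Expl(\kappa)(I_m) = \sum_{I_m \leq Q} K(I_m/Q)$; the only $Q \geq I_m$ is $I_m$ itself, giving $K(I_m/I_m) = K(I_m) = k_m$. Good --- so $\Expl(\kappa)(I_m) = k_m$, and by multiplicativity $\Expl(\kappa)(P) = \prod_{\pi\in P} k_{\sharp\pi}$, as claimed.

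For the moment formula, I would evaluate $\Expl(\kappa) \curvearrowleft \zeta$ at $J_n$:
$$
(\Expl(\kappa) \curvearrowleft \zeta)(J_n) = (\Expl(\kappa) \otimes \zeta)\circ\rho(J_n) = \sum_{J_n \leq Q} \Expl(\kappa)(Q)\,\zeta(J_n/Q) = \sum_{Q \in \latNCP(n)} \Expl(\kappa)(Q),
$$
using that $J_n$ is the \emph{finest} partition so every noncrossing partition $Q$ of $[n]$ satisfies $J_n \leq Q$, and that $\zeta \equiv 1$. Substituting the first formula, $\Expl(\kappa)(Q) = \prod_{\pi\in Q} k_{\sharp\pi}$, yields exactly $\sum_{Q\in\latNCP(n)}\prod_{\pi\in Q} k_{\sharp\pi}$.

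\textbf{Main obstacle.} The only genuinely delicate point is keeping the book\-keeping of the two sides straight: $\Expl(\kappa)$ lives on $\Hgapnc$ (free noncommutative algebra on $\latNCP$), while $K = \overline{\Theta(\kappa)}$ lives on the commutative $\Bblnc$, and $\curvearrowleft$ couples them through $\rho$. One must be careful that $\curvearrowleft \zeta$ and $\curvearrowleft K$ are two genuinely different actions (by $\zeta \in M_\Bblnc$ and by $K \in M_\Bblnc$ respectively), and that Proposition~\ref{lemmae}/Theorem~\ref{thm:left} give $\Expl(\kappa) = \psi_\prec \curvearrowleft K$ with $\psi_\prec = \widehat\zeta$ --- not $\psi_\prec = \zeta$; the two "$\zeta$'s" appearing in the final formula play different roles (one is the inner $K$-twist packaged inside $\Expl(\kappa)$ via $\psi_\prec$, the other is the outer $\curvearrowleft \zeta$), and the whole content of the proof is that these compose to give the free moment--cumulant relation. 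Once this is set up correctly, everything reduces to the two one-line coaction computations above plus multiplicativity of characters; no induction or hard estimate is needed.
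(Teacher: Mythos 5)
Your derivation of the first formula contains a genuine error. You write that since $\Expl(\kappa)$ is a character of $\Hgapnc$ it is multiplicative, ``hence'' $\Expl(\kappa)(P)=\prod_i\Expl(\kappa)(\pi_i)$ with each block $\pi_i$ regarded as a single-block partition. But multiplicativity of a character on $\Hgapnc$ (the \emph{free} algebra on $\latNCP$) only concerns products of generators, i.e.\ multipartitions $P_1\cdots P_k$; a single partition $P=\{\pi_1,\dots,\pi_k\}$ is itself one free generator and is \emph{not} equal in $\Hgapnc$ (nor in $\Bblnc$) to the product $I_{\sharp\pi_1}\cdots I_{\sharp\pi_k}$ of its blocks --- for instance $\nctwo$ and $\ncone{\cdot}\ncone$ are distinct basis elements with different coproducts. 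So the reduction of the general case to $\Expl(\kappa)(I_m)=k_m$ is unjustified (and your earlier attempts via the terms $Q=J_n$ or $Q=I_n$, which you abandoned, indeed do not work: the $Q=I_n$ term contributes $K(P)=\kappa(P)=0$ for $P\neq I_n$). The correct argument, which is the one the paper gives, stays with the coaction sum you already wrote down: in $\Expl(\kappa)(P)=\sum_{P\leq Q}K(P/Q)$ the character $K=\overline{\Theta(\kappa)}$ vanishes on every generator of $\Bblnc$ other than the coarsest partitions $I_m$, so a term survives only when every block $\tau_i$ of $Q$ satisfies $P_{\mid\tau_i}=I_{\sharp\tau_i}$; since $Q\geq P$ this forces $Q=P$, and then $K(P/P)=K(I_{\sharp\pi_1}\cdots I_{\sharp\pi_k})=\prod_{\pi\in P}k_{\sharp\pi}$, where multiplicativity of $K$ is legitimately applied to the \emph{monomial} $P/P$ in $\Bblnc$.

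Your treatment of the moment formula is fine: evaluating $(\Expl(\kappa)\otimes\zeta)\circ\rho(J_n)$ directly, using that every $Q\in\latNCP(n)$ is coarser than $J_n$ and $\zeta\equiv 1$, and then substituting the first formula, is a mild (correct) shortcut compared to the paper, which instead uses associativity of the action to rewrite $\Expl(\kappa)\curvearrowleft\zeta=\psi_\prec\curvearrowleft(\overline{\Theta(\kappa)}*\zeta)$ and evaluates $(\overline{\Theta(\kappa)}*\zeta)(J_m)=k_m$ blockwise. So only the first half needs repair, and the repair is exactly the direct evaluation you started but did not carry through.
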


\begin{proof}
For any $\kappa \in \g_\Hgapnc$, put $K := \overline{\Theta(\kappa)} \in M_\Bblnc$.  
For $P\in \latNP$ we compute, by Theorem~\ref{thm:left}:
\begin{align*}
	\Expl(\kappa)(P)&=(\psi_\prec \curvearrowleft \overline{\Theta(\kappa)})(P)
	=\sum_{P \leq Q} \overline{\Theta(\kappa)}(P/Q)
	=\Theta(\kappa)(P)+0
	=\prod_{\pi\in P} k_{\sharp \pi}.
\end{align*}
From this it follows that
$$
	\Expl(\kappa)  \curvearrowleft \zeta
	=(\psi_\prec \curvearrowleft \overline{\Theta(\kappa)}) \curvearrowleft \zeta 
	=\psi_\prec \curvearrowleft ( \overline{\Theta(\kappa)} * \zeta)  
$$
(by associativity of the action), and therefore
\allowdisplaybreaks
\begin{align*}
	 (\Expl(\kappa)  \curvearrowleft \zeta) (J_n) 
	 &= \sum_{Q\in \latNCP(n)} \psi_\prec(Q)( \overline{\Theta(\kappa)} * \zeta)(J_n/Q)\\
	 &= \sum\limits_{P=\{\pi_1,\dots,\pi_k\}\in \latNCP(n)}
	 ( \overline{\Theta(\kappa)} * \zeta)(J_{\sharp\pi_1} \cdots J_{\sharp\pi_k})\\
	&= \sum\limits_{P=\{\pi_1,\dots,\pi_k\}\in \latNCP(n)}	 
	( \overline{\Theta(\kappa)} * \zeta)(J_{\sharp\pi_1})  
	\cdots  
	( \overline{\Theta(\kappa)} * \zeta)(J_{\sharp\pi_k})\\	
	&= \sum_{P\in \latNCP(n)} \prod_{\pi\in P}k_{\sharp \pi}.
\end{align*}
\end{proof}

\begin{prop}
\label{prop:sumofall}
Let us consider a family of scalars $(k_n)_{n\geq 1}$. 
We define the infinitesimal character $\kappa$ on $\Hgapnc$ by:
\begin{align*}
\kappa(P)&=\begin{cases}
k_n 	& \mbox{ if }P=J_n=\{\{1\},\dots,\{n\}\},\: n\geq 1,\\
0 	& \mbox{ otherwise}.
\end{cases}
\end{align*}
Then for any $P\in \NCP(k,n)$:
\begin{align*}
\Expl(\kappa)(P) = (\psi_\prec \curvearrowleft \overline{\Theta(\kappa)})(P) 
&=\sum_{P\leq Q} \overline{\Theta(\kappa)}(P/Q)
=\begin{cases}
	0 											& \mbox{ if }k<n,\\
\displaystyle \sum_{Q\in \latNCP(n)} \prod_{\pi\in Q}k_{\sharp \pi} 	& \mbox{ if }k=n.
\end{cases}
\end{align*}
\end{prop}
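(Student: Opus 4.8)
The plan is to specialize Theorem~\ref{thm:left} to the given infinitesimal character and then unwind the action $\curvearrowleft$ combinatorially, exactly along the lines of the proof of Proposition~\ref{prop:doubletensor} (where $\kappa$ was supported on the coarsest partitions rather than the finest). Put $K := \overline{\Theta(\kappa)} \in M_\Bblnc$. By Theorem~\ref{thm:left} we have $\Expl(\kappa) = \psi_\prec \curvearrowleft K$, so for any $P \in \latNCP$, using the definition of $\curvearrowleft$ together with the coaction formula $\rho(P) = \sum_{P\leq Q} Q\otimes P/Q$,
$$
\Expl(\kappa)(P) = (\psi_\prec\otimes K)\circ\rho(P) = \sum_{P\leq Q} \psi_\prec(Q)\,K(P/Q) = \sum_{P\leq Q} K(P/Q),
$$
the last step because $\psi_\prec(Q) = 1$ for every noncrossing partition $Q$ (Theorem~\ref{thm:psiprec}). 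This already yields the two middle equalities in the statement; it then remains to evaluate the sum.

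Next I would make $K$ explicit. Since $\Theta(\kappa)$ agrees with $\kappa$ on single noncrossing partitions and $K = \overline{\Theta(\kappa)}$ is multiplicative, for $Q = \{\tau_1,\dots,\tau_l\}$ with $P\leq Q$ we get $K(P/Q) = \prod_{i=1}^{l}\Theta(\kappa)(P_{\mid\tau_i}) = \prod_{i=1}^{l}\kappa(P_{\mid\tau_i})$, which equals $\prod_{i=1}^{l} k_{\sharp\tau_i}$ when every restriction $P_{\mid\tau_i}$ is the finest partition $J_{\sharp\tau_i}$, and vanishes otherwise (as $\kappa$ is zero on every noncrossing partition other than the $J_m$). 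Observe that $P_{\mid\tau_i}$ is the finest partition of $\tau_i$ if and only if no two distinct elements of $\tau_i$ lie in a common block of $P$; hence $K(P/Q)\neq 0$ forces $Q$ to separate the elements of each block of $P$.

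The core of the argument is then a dichotomy on $k$. If $k < n$, some block of $P$ contains two distinct elements $a\neq b$; for every coarsening $Q\geq P$ these elements lie in a single block $\tau$ of $Q$, so $P_{\mid\tau}$ is not finest and $K(P/Q) = 0$. Thus every term of $\sum_{P\leq Q} K(P/Q)$ vanishes and $\Expl(\kappa)(P) = 0$. If $k = n$, then $P = J_n$ is itself the finest partition, so for every $Q\in\latNCP(n)$ each restriction $(J_n)_{\mid\tau_i}$ is automatically the finest partition $J_{\sharp\tau_i}$, whence $K(J_n/Q) = \prod_{\pi\in Q} k_{\sharp\pi}$; summing over all $Q$ coarser than $J_n$, i.e.\ over all of $\latNCP(n)$, produces $\sum_{Q\in\latNCP(n)}\prod_{\pi\in Q} k_{\sharp\pi}$, as claimed.

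I do not anticipate a genuine obstacle: once Theorem~\ref{thm:left} is invoked the computation is elementary. The only point requiring a little care is the characterization of when the factor $K(P/Q)$ survives, namely that $Q$ must shatter every block of $P$ into singletons --- in contrast with the coarsest-partition case of Proposition~\ref{prop:doubletensor}, where one instead needed $Q = P$ --- and this can happen for (indeed for all) $Q$ precisely when $P$ is already discrete.
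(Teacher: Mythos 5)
Your proposal is correct and follows essentially the same route as the paper: invoke Theorem~\ref{thm:left} to write $\Expl(\kappa)=\psi_\prec\curvearrowleft\overline{\Theta(\kappa)}$, use $\psi_\prec\equiv 1$ and multiplicativity of $\overline{\Theta(\kappa)}$, and then observe that a block of $P$ of size $\geq 2$ forces every factor $\kappa(P_{\mid\tau})$ to vanish (the case $k<n$), while for $P=J_n$ every coarsening $Q$ contributes $\prod_{\pi\in Q}k_{\sharp\pi}$. The only difference is that you spell out the vanishing argument in slightly more detail than the paper, which simply notes that $P/Q$ is not a monomial of forests of sticks.
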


\begin{proof}
  For any $P \in \NCP(k,n)$, $k<n$, it is clear that $P/Q$ contains
  blocks of size bigger than one, i.e., $P/Q$ is not a monomial of
  forests of sticks. This implies that $\overline{\Theta(\kappa)}(P/Q)
  =0$. For $P=J_n\in \NCP(n,n)$ \allowdisplaybreaks
\begin{align*}
	 \Expl(\kappa) (J_n) 
	 &= \sum_{Q\in \latNCP(n)} \psi_\prec(Q)\overline{\Theta(\kappa)}(J_n/Q)\\
	 &= \sum\limits_{P=\{\pi_1,\dots,\pi_k\}\in \latNCP(n)}
	 \overline{\Theta(\kappa)}(J_{\sharp\pi_1} \cdots J_{\sharp\pi_k})\\
	&= \sum\limits_{P=\{\pi_1,\dots,\pi_k\}\in \latNCP(n)}	 
	\overline{\Theta(\kappa)}(J_{\sharp\pi_1})  
	\cdots  
	\overline{\Theta(\kappa)} (J_{\sharp\pi_k})\\	
	&= \sum_{P\in \latNCP(n)} \prod_{\pi\in P}k_{\sharp \pi}.
\end{align*}
\end{proof}

\begin{rem}
  Proposition \ref{prop:sumofall} reveals a nice connection to the approach
  to free moment-cumulant relations presented in \cite{EFP15}. Indeed, let
  $(A,\varphi)$ be a noncommutative probability space and consider the sub
  Hopf algebra $\Hgapnc^{|A}$ of forests of sticks, as in 
  Corollary~\ref{coro:subHopfalg} but decorated by elements from $A$. Since 
  a monomial of $A$-decorated forests of sticks is the same thing as a list
  of $A$-words, it is easy to see that the Hopf algebra $\Hgapnc^{|A}$
  is isomorphic to the double tensor algebra over $A$ (with the coproduct defined in
  \cite{EFP15}). Proposition \ref{prop:sumofall} gives another proof of
  the free moment-cumulant relations deduced from a shuffle fixpoint
  equation.
\end{rem}

We now consider instead the right half shuffle:

\begin{prop}[Right half-shuffle exponential isomorphism]
  Let $\kappa \in \g_\Hgapnc$. There exists a unique $\phi\in \Hgapnc^*$, such that
  $\phi=\varepsilon_\Hgapnc+\phi\succ \kappa$; moreover, $\phi \in G_\Hgapnc$.
  Conversely, let $\phi\in G_\Hgapnc$. There exists a unique $\kappa\in \Hgapnc^*$,
  such that $\phi=\varepsilon_\Hgapnc+\phi\succ \kappa$; moreover, $\kappa \in
  \g_\Hgapnc$. The following map is therefore a bijection:
\begin{align*}
		\Expr:  &\left\{\begin{array}{rcl}
	\g_\Hgapnc &\longrightarrow&G_\Hgapnc\\
		\kappa&\longrightarrow&\phi\ \mbox{ such that }\ \phi=\varepsilon_\Hgapnc+\phi\succ \kappa.
\end{array}\right.
\end{align*}
\end{prop}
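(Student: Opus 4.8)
The plan is to mimic, essentially line by line, the proof of the left half-shuffle exponential isomorphism given just above, interchanging the roles of $\Delta_\prec$ and $\Delta_\succ$ (equivalently of $\kappa\prec\phi$ and $\phi\succ\kappa$); the only asymmetry will be the combinatorics of the Leibniz rule, where the six-term expansion of $\Delta_\succ(P\cdot Q)$ displayed after Proposition~\ref{prop:halfshuffles} replaces the one for $\Delta_\prec$. For the first half, write $\Delta_\succ(P)=\mathbf{1}\otimes P+P'_\succ\otimes P''_\succ$ for the reduced part. Given $\kappa\in\g_\Hgapnc$, the fixpoint equation $\phi=\varepsilon_\Hgapnc+\phi\succ\kappa$ reads $\phi(P)=\kappa(P)+\phi(P'_\succ)\kappa(P''_\succ)$ for every $P\in\latNMP$ of positive degree, together with $\phi(\mathbf{1})=1$. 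Since the left-hand tensor factor $P'_\succ$ has degree strictly smaller than that of $P$, the scalar $\phi(P'_\succ)$ is already determined, so this recursion defines a unique linear form $\phi\in\Hgapnc^*$ by induction on degree.

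Next one shows that this $\phi$ is a character, i.e.\ $\phi(P\cdot Q)=\phi(P)\phi(Q)$ for all $P,Q\in\latNMP$, by induction on $\deg P+\deg Q$; the cases $\deg P=0$ or $\deg Q=0$ are immediate. Otherwise expand $\phi(P\cdot Q)=(\phi\otimes\kappa)\bigl(\Delta_\succ(P\cdot Q)\bigr)$ with the six-term formula. The four summands whose right-hand tensor factor is a product of two nonempty multipartitions (the ones with right factor $P\cdot Q$, $P\cdot Q''$, $P''_\succ\cdot Q$, $P''_\succ\cdot Q''$) are killed because $\kappa$ is an infinitesimal character; the summand $P'_\succ\cdot Q\otimes P''_\succ$ reduces by the induction hypothesis to $\phi(P'_\succ)\phi(Q)\kappa(P''_\succ)$; and the summand $Q\otimes P$ gives $\phi(Q)\kappa(P)$. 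Adding these and invoking the fixpoint equation at $P$ yields $\phi(P\cdot Q)=\phi(Q)\bigl(\kappa(P)+\phi(P'_\succ)\kappa(P''_\succ)\bigr)=\phi(Q)\phi(P)$, so $\phi\in G_\Hgapnc$, and $\Expr$ is a well-defined map $\g_\Hgapnc\to G_\Hgapnc$.

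For the converse, given $\phi\in G_\Hgapnc$, reading the same recursion as $\kappa(P):=\phi(P)-\phi(P'_\succ)\kappa(P''_\succ)$ (now it is $P''_\succ$ that has smaller degree than $P$) determines a unique $\kappa\in\Hgapnc^*$ satisfying $\phi=\varepsilon_\Hgapnc+\phi\succ\kappa$. To see that $\kappa\in\g_\Hgapnc$, i.e.\ $\kappa(P\cdot Q)=0$ for nonempty multipartitions $P,Q$, induct on $\deg P+\deg Q$: expanding $\kappa(P\cdot Q)=\phi(P\cdot Q)-(\phi\otimes\kappa)\bigl(\Delta'_\succ(P\cdot Q)\bigr)$ with the same terms, the three of shape $\phi(-)\,\kappa(\text{product of strictly smaller total degree})$ vanish by the induction hypothesis, the term $P'_\succ\cdot Q\otimes P''_\succ$ contributes $\phi(P'_\succ)\phi(Q)\kappa(P''_\succ)$ by multiplicativity of $\phi$, and $Q\otimes P$ contributes $\phi(Q)\kappa(P)$; collecting and using multiplicativity of $\phi$ together with the fixpoint equation at $P$ gives $\kappa(P\cdot Q)=\phi(P)\phi(Q)-\phi(Q)\phi(P)=0$. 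Thus $\Expr$ lands in $G_\Hgapnc$ and admits a two-sided inverse, forced by the two uniqueness statements, so it is the asserted bijection.

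I do not expect any conceptual difficulty. The one point demanding care, exactly as in the $\Delta_\prec$ case handled above, is the bookkeeping in the two induction steps: tracking which of the six Leibniz summands for $\Delta_\succ(P\cdot Q)$ are annihilated by $\kappa$ (precisely those whose $\kappa$-argument is a product of two nonempty multipartitions), which are absorbed by the induction hypothesis, and verifying that the two survivors reassemble into $\phi(P)\phi(Q)$ via the fixpoint equation at $P$. This is mechanical once one inspects the right-hand tensor factors.
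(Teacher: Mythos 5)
Your proposal is correct and follows essentially the same route as the paper, which proves the left half-shuffle case in detail and leaves the right case (and the converse direction) as the analogous argument you have written out: recursive definition of the unknown form via the degree-lowering tensor factor of the reduced $\Delta_\succ$, then multiplicativity (resp.\ the infinitesimal-character property) by induction using the six-term Leibniz expansion of $\Delta_\succ(P\cdot Q)$ and the vanishing of $\kappa$ on products. Your bookkeeping of which summands are annihilated, which are absorbed by the induction hypothesis, and how the two survivors recombine via the fixpoint equation at $P$ is accurate.
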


The right half-shuffle exponential isomorphism rewrites as follows. Let us put
$$
	\psi_\succ := \Expr(e).
$$
\begin{theo}
  
  For any $\kappa\in \g_\Hgapnc$, put 
  $K := \overline{\Theta(\kappa)} \in M_\Bblnc$ 
  as usual.
  Then
  for $\phi\in G_\Hgapnc$ we have:
\begin{align*}
	\phi=\varepsilon_\Hgapnc+\phi\succ \kappa
	&\Longleftrightarrow \phi=\psi_\succ  \curvearrowleft K.
\end{align*}  \end{theo}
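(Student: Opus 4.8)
The plan is to mirror the proof of Theorem~\ref{thm:left} verbatim, replacing $\prec$ by $\succ$ throughout and keeping track of the fact that in the right half-shuffle the recursively-defined character $\psi_\succ$ sits on the \emph{left} of the product. Concretely: first unfold the definition $\psi_\succ := \Expr(e)$, which by the Right half-shuffle exponential isomorphism just stated means that $\psi_\succ$ is the unique element of $\Hgapnc^*$ satisfying $\psi_\succ = \varepsilon_\Hgapnc + \psi_\succ \succ e$, and moreover $\psi_\succ \in G_\Hgapnc$.

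The second step is to apply the action $\curvearrowleft K$ to this fixpoint equation. The key input is Theorem~\ref{thm:rightact-unshuffle}: dualizing the identity $(\Delta_\succ\otimes \Id)\circ \rho = (\Id \otimes \Id\otimes m_\Bblnc)\circ (\Id \otimes \tau \otimes \Id)\circ (\rho\otimes\rho)\circ \Delta_\succ$ shows that, for any \emph{character} $\phi \in M_\Bblnc$ and any $f,g\in \Hgapnc^*$, one has $(f\succ g)\curvearrowleft \phi = (f\curvearrowleft \phi)\succ (g\curvearrowleft \phi)$; that is, $\curvearrowleft \phi$ is an endomorphism of the right half-shuffle product on $\Hgapnc^*$ (and the multiplicativity of $\phi$ is precisely what lets the $m_\Bblnc$ in the diagram collapse to ordinary multiplication of scalars). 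Since $K = \overline{\Theta(\kappa)}$ is a character, and since $\varepsilon_\Hgapnc \curvearrowleft K = \varepsilon_\Hgapnc$ (as in the proof of Theorem~\ref{thm:left}) and $e\curvearrowleft K = \kappa$ by Proposition~\ref{lemmae}, applying $\curvearrowleft K$ to the equation above yields
$$
	\psi_\succ \curvearrowleft K = \varepsilon_\Hgapnc + (\psi_\succ \curvearrowleft K)\succ \kappa .
$$
Thus $\psi_\succ \curvearrowleft K$ solves the fixpoint equation characterizing $\Expr(\kappa)$, and it lies in $G_\Hgapnc$ because $\curvearrowleft$ restricts to an action by group endomorphisms on $G_\Hgapnc$.

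The third step is to close the loop by uniqueness. The Right half-shuffle exponential isomorphism guarantees that there is exactly one $\phi\in \Hgapnc^*$ with $\phi = \varepsilon_\Hgapnc + \phi\succ \kappa$, and it automatically lies in $G_\Hgapnc$. Since $\psi_\succ\curvearrowleft K$ is such a $\phi$, any $\phi\in G_\Hgapnc$ satisfying the equation must equal $\psi_\succ\curvearrowleft K$; conversely $\psi_\succ\curvearrowleft K$ satisfies it by the computation above. This establishes the bi-implication $\phi=\varepsilon_\Hgapnc+\phi\succ\kappa \Longleftrightarrow \phi=\psi_\succ\curvearrowleft K$, equivalently $\Expr(\kappa)=\psi_\succ\curvearrowleft K$.

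There is essentially no deep obstacle here; the only points that require care are bookkeeping rather than mathematics. Namely, one must respect the order convention when transporting the half-shuffle along the action — keeping $\psi_\succ\curvearrowleft K$ on the left and $\kappa = e\curvearrowleft K$ on the right — and one must note that it is exactly the character property of $K$ (being group-like in the appropriate sense for $M_\Bblnc$) that makes $\curvearrowleft K$ distribute over $\succ$ via Theorem~\ref{thm:rightact-unshuffle}; for a general linear form in $\Bblnc^*$ this distributivity would fail.
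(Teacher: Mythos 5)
Your proposal is correct and is exactly the argument the paper intends: the paper leaves this theorem without a written proof precisely because it is the mirror of the proof of Theorem~\ref{thm:left}, namely apply $\curvearrowleft K$ to the fixpoint equation $\psi_\succ=\varepsilon_\Hgapnc+\psi_\succ\succ e$, use the comodule compatibility of $\Delta_\succ$ from Theorem~\ref{thm:rightact-unshuffle} together with the character property of $K$, the identities $\varepsilon_\Hgapnc\curvearrowleft K=\varepsilon_\Hgapnc$ and $e\curvearrowleft K=\kappa$ (Proposition~\ref{lemmae}), and conclude by the uniqueness in the right half-shuffle exponential isomorphism. Your remarks on the order convention ($\psi_\succ$ on the left of $\succ$) and on why multiplicativity of $K$ is needed for the distributivity over $\succ$ are accurate.
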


Let us finally consider the classical exponential bijection from the Lie 
algebra $\g_\Hgapnc$ to the group $G_\Hgapnc$:
\begin{align*}
	\exp_\star:&\left\{\begin{array}{rcl} \g_\Hgapnc	&\longrightarrow	&G_\Hgapnc\\
	\kappa	&\longmapsto					&\displaystyle \exp_\star(\kappa)
										   =\sum_{k=0}^\infty \frac{\kappa^{\star n}}{n!}.
\end{array}\right.
\end{align*}

  Put
  $$
  \psi_\star := \exp_\star(e).  
  $$

\begin{prop}[Exponential isomorphism] 
    For any $\kappa\in \g_\Hgapnc$, put 
  $K := \overline{\Theta(\kappa)} \in M_\Bblnc$ 
  as usual.
  Then for  $\phi\in G_\Hgapnc$ we have:
\begin{align*}
\phi=\exp_\star(\kappa)&\ \Longleftrightarrow \ \phi=\psi_\star
\curvearrowleft K.
\end{align*} \end{prop}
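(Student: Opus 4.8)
The plan is to follow verbatim the argument used for Theorem~\ref{thm:left} and for its $\succ$-analogue, replacing the half-shuffle fixpoint equation by the defining series of $\exp_\star$. The one structural fact I would isolate first is that, for a fixed $K\in M_\Bblnc$, the map $(-)\curvearrowleft K\colon\Hgapnc^\ast\to\Hgapnc^\ast$ is an endomorphism of the convolution algebra $(\Hgapnc^\ast,\star,\varepsilon_\Hgapnc)$. Linearity is immediate from $\alpha\curvearrowleft K=(\alpha\otimes K)\circ\rho$. That it fixes the unit, $\varepsilon_\Hgapnc\curvearrowleft K=\varepsilon_\Hgapnc$, is the dual of the counit relation $(\varepsilon_\Hgapnc\otimes\Id)\circ\rho(P)=\varepsilon_\Hgapnc(P)1$ of Theorem~\ref{thm:comodulebialg}, together with $K(1)=1$. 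That it is multiplicative for $\star$, i.e.\ $(\alpha\star\beta)\curvearrowleft K=(\alpha\curvearrowleft K)\star(\beta\curvearrowleft K)$, is obtained by dualizing the comodule-bialgebra relation \eqref{theEq} (restricted to $\Hgapnc$) and contracting the $\Bblnc$-legs against the \emph{character} $K$, so that $K\circ m_{\Bblnc}=K\otimes K$; equivalently one can add the two half-shuffle compatibilities of Theorem~\ref{thm:rightact-unshuffle}, since $\star=\prec+\succ$.

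Next I would observe that $\psi_\star=\exp_\star(e)$ and $\exp_\star(\kappa)$ are well defined: $\Hgapnc$ is graded and connected by the size $n$ of a partition, infinitesimal characters vanish on the degree-zero part, so $e^{\star m}$ and $\kappa^{\star m}$ vanish in degrees $<m$ and the exponential series converge pointwise; moreover $(-)\curvearrowleft K$ preserves this grading, because every coarsening of a partition of $[n]$ is again a partition of $[n]$. Being linear, unital, $\star$-multiplicative and grading-preserving, $(-)\curvearrowleft K$ therefore commutes with $\exp_\star$. Using Proposition~\ref{lemmae}, which gives $e\curvearrowleft K=\kappa$, one then gets
\[
	\psi_\star\curvearrowleft K
	=\exp_\star(e)\curvearrowleft K
	=\exp_\star(e\curvearrowleft K)
	=\exp_\star(\kappa).
\]
Since $\exp_\star(\kappa)\in G_\Hgapnc$ and $\psi_\star\curvearrowleft K\in G_\Hgapnc$ (the action $\curvearrowleft$ restricts to group endomorphisms of $G_\Hgapnc$), both sides of the asserted equivalence simply state that the given $\phi\in G_\Hgapnc$ equals this one element, and the equivalence is immediate.

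I do not expect a genuine obstacle here: the only step needing care is the verification that $(-)\curvearrowleft K$ is a homomorphism for $\star$, and this is exactly the dual of \eqref{theEq} combined with $K$ being a character of $\Bblnc$ — the very same mechanism already exploited (for $\prec$) in the proof of Theorem~\ref{thm:left}. In other words, the statement is a formal consequence of the comodule-bialgebra structure (Theorems~\ref{thm:comodulebialg} and~\ref{thm:rightact-unshuffle}) and of Proposition~\ref{lemmae}, running exactly parallel to the left and right half-shuffle exponential cases, with the classical $\star$-exponential playing the role of the half-shuffle fixpoint operator.
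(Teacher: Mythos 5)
Your proposal is correct and follows essentially the same route as the paper: the paper's proof is the one-line computation $\psi_\star \curvearrowleft K=\exp_\star(e)\curvearrowleft K=\exp_\star(e\curvearrowleft K)=\exp_\star(\kappa)$, invoking Proposition~\ref{lemmae}, with the commutation of $\curvearrowleft K$ and $\exp_\star$ left implicit because the action was already noted to restrict to group and Lie algebra endomorphisms. Your extra verification that $(-)\curvearrowleft K$ is a unital, grading-preserving $\star$-algebra endomorphism (via \eqref{theEq} and $K$ being a character) is exactly the justification the paper takes for granted.
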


\begin{proof}
  Indeed:
  \begin{align*}
  \psi_\ast \curvearrowleft K&=\exp_\star(e) \curvearrowleft 
  K=\exp_\star(e \curvearrowleft K)=\exp_\star(\kappa)=\psi_\star(\kappa),
  \end{align*}
  the third equality by Proposition~\ref{lemmae}.
\end{proof}


\subsection{Description of the universal maps $\psi_\prec$, $\psi_\succ$ and $\psi_\star$}

In the previous subsection, we have seen that the maps $\psi_\prec=\Expl(e)$,
$\psi_\succ=\Expr(e)$ and $\psi_\star=\exp_\star(e)$ encode the three shuffle exponentials
from $\g_\Hgapnc$ to $G_\Hgapnc$. In this subsection, we compute them explicitly.

\bigskip

The set of partitions (or noncrossing partitions) forms a monoid 
under ordinal sum, denoted $\ordsum$, with neutral element the
empty partition. On the underlying linear orders it is just
ordinal sum, $[n] \ordsum [m] = [n{+}m]$, and the partition
structure on $[n{+}m]$ is induced via the sum injections
$[n] \to [n{+}m] \leftarrow [m]$ in the canonical way.

A partition is called \emph{irreducible} if it cannot be
written as the ordinal sum of two non-empty partitions.
A noncrossing partition $P \in \latNCP(n)$ is irreducible if 
and only if $1$ and $n$ belong to the same block.  
In general, the block containing the element $1$ we call the \emph{base
block} and denote it $\pi_1$. Let $k$ denote the biggest element belonging
to $\pi_1$, then $\llbracket 1,k\rrbracket = \Conv(\pi_1)$ is a union of
blocks of $P$, and $P_{|\llbracket 1,k\rrbracket}$ is an irreducible
partition, which we call $E_1$. Now we have
$$
	P = E_1 \ordsum \tilde P ,
$$
where $\tilde P = {E_1}^c$. Proceeding now the same way with $\tilde P$ and
iterating, it is clear that every noncrossing partition $P$ splits uniquely
into an (iterated) ordinal sum of irreducible noncrossing partitions,
called \emph{irreducible components}
$$
	P = E_1 \ordsum \cdots \ordsum E_r .
$$

(For general partitions, the notion of irreducible is slightly more
complicated, as, for example, also the partition with crossing
\begin{tikzpicture}
  \draw (0,0.2)--(0,0)--(0.2,0)--(0.2,0.2);
  \draw (0.1,0.14)--(0.1,-0.06)--(0.3,-0.06)--(0.3,0.14);
\end{tikzpicture} 
is clearly irreducible. In general a partition is irreducible precisely 
when
its noncrossing closure is irreducible. The unique splitting of
an arbitrary partition into irreducible components is similar, but will 
not be needed here.)

We shall say that a noncrossing partition is \emph{boolean}
if its irreducible components are precisely its blocks.
In other words, it is an (iterated) ordinal sum of single-block
partitions.
In particular, in a boolean partition there is no nesting of blocks.

\begin{defi}[Monotone compositions] 
  Recall that a noncrossing composition is a noncrossing partition $P$
  equipped with a numbering of its blocks.  Such a numbering is called a
  \emph{heap ordering} if it preserves the order $\Pto$ (that is, the
  numbering map $(P,\Pto) \longrightarrow [k]$ is monotone ($k$ is the
  number of blocks)), meaning that if one block is nested inside another
  then it has a higher number. We denote by $\ho(P)$ the
  cardinality of the set of heap orderings of a noncrossing partition $P$.
  
  A \emph{monotone composition} is by definition a noncrossing
  partition with a chosen heap ordering. 
\end{defi}

\begin{theo}
  \label{thm:psiprec}
  \begin{enumerate}
\item For any $P\in \latNP$, we have $\psi_\prec(P)=1$. That is, 
$\psi_\prec$ corresponds to the zeta function $\zeta \in \Bblnc$.

\item For any $P\in \latNP$, we have $\psi_\succ(P)=\begin{cases}
1 & \mbox{ if $P$ is boolean},\\
0 & \mbox{ otherwise}.
\end{cases}$

\item Let $P\in \latNCP(k,n)$. 
 Then $\psi_\star(P)=\displaystyle \frac{\ho(P)}{k!}$.
\end{enumerate} \end{theo}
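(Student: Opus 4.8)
The plan is to treat the three formulas in turn. In each case the exponential-isomorphism propositions guarantee that $\psi_\prec$, $\psi_\succ$ and $\psi_\star$ are characters of $\Hgapnc$, hence are determined by their values on the algebra generators, i.e.\ on single noncrossing partitions, so one may argue by induction on the number $k$ of blocks. The inductive step is supplied by the relevant fixpoint equation together with the explicit descriptions of $\Delta_\prec$, $\Delta_\succ$ and $\Delta$ from Definition~\ref{def:shuffle}; the only multipartitions that intervene are the reduced gap monomials $\rmst U$ produced by these coproducts, where multiplicativity of the characters does the job.

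For statement (1), unfolding $\psi_\prec=\varepsilon_\Hgapnc+e\prec\psi_\prec$ on a nonempty single partition $P$ gives $\psi_\prec(P)=e(P)+\sum_{(L,U)\in\cut(P),\,1\in L,\,U\neq\emptyset} e(L)\,\psi_\prec(\rmst U)$. Since $e$ vanishes on every (multi)partition except the single blocks $I_m$, the only lowerset $L$ with $e(L)\neq 0$ is a single block, and the only single-block lowerset containing $1$ is $\{\pi_1\}$, where $\pi_1$ is the base block: indeed $\pi_1$ is $\Pto$-minimal, since a block englobing it would have to contain an element smaller than $1$. Hence $\psi_\prec(I_n)=e(I_n)=1$, while for $k\geq 2$ we get $\psi_\prec(P)=\psi_\prec(\rmst U)$ with $U=\{\pi_1\}^c$; as $\rmst U$ is then a monomial of single noncrossing partitions with strictly fewer blocks, multiplicativity and induction give $\psi_\prec(P)=1$ throughout. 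A character of $\Hgapnc$ taking the value $1$ on every generator corresponds, under the identification $G_\Hgapnc\simeq M_\Bblnc$, to the zeta function of $\Bblnc$, which is the second assertion of~(1).

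For statement (2), unfolding $\psi_\succ=\varepsilon_\Hgapnc+\psi_\succ\succ e$ on a nonempty single $P$ gives $\psi_\succ(P)=e(P)+\sum_{(L,U)\in\cut(P),\,1\in U,\,L\neq\emptyset}\psi_\succ(L)\,e(\rmst U)$. A term survives only if $\rmst U$ is a single block $I_m$; by the description of the gap monomial (Lemma~\ref{Ui} and Definition~\ref{gap-monomial}) this forces $U$ to consist of a single block, necessarily $\pi_1$ since $1\in U$, and $\{\pi_1\}$ is an upperset exactly when no block is nested inside $\pi_1$, i.e.\ when $\pi_1$ is convex, i.e.\ when $\pi_1$ equals the first irreducible component $E_1$. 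Writing $P=E_1\ordsum\tilde P$, we get $\psi_\succ(I_n)=1$, and for $k\geq 2$ we get $\psi_\succ(P)=\psi_\succ(\tilde P)$ when $E_1$ is a single block and $\psi_\succ(P)=0$ otherwise. Since $P$ is boolean precisely when $E_1$ is a single block and $\tilde P$ is boolean, the induction closes and gives $\psi_\succ(P)=1$ for boolean $P$ and $0$ otherwise.

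For statement (3), I would instead evaluate the convolution exponential directly. Fix a noncrossing multipartition $W$ with $N$ blocks in total. Using $e^{\star m}=m_\K\circ(e\otimes e^{\star(m-1)})\circ\Delta$ and that $e$ is supported on single blocks, each iteration splits off a lowerset which must be a single block, i.e.\ a $\Pto$-minimal block $\sigma$ of one of the factors, and hence accounts for exactly one of the $N$ blocks; thus $e^{\star m}(W)=0$ unless $m=N$. The key lemma, to be read off from the convex-component description of the gap monomial, is that deleting a $\Pto$-minimal block $\sigma$ from a factor $Q$ and passing to the reduced gap monomial $\rmst{Q\setminus\{\sigma\}}$ yields a monomial of noncrossing partitions whose disjoint union of nesting posets is exactly $(Q,\Pto)$ with $\sigma$ removed --- because a convex component of $Q\setminus\{\sigma\}$ neither creates nor destroys a relation $\Pto$. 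Feeding this into the recursion together with the classical identity expressing the number of linear extensions of a finite poset $\mathcal Q$ as the sum, over minimal elements $\sigma$, of the number of linear extensions of $\mathcal Q\setminus\{\sigma\}$, and inducting on $N$, one obtains that $e^{\star N}(W)$ equals the number of linear extensions of $\bigsqcup_i(Q_i,\Pto)$. Specialising to $W=P\in\latNCP(k,n)$, whose nesting poset has exactly $\ho(P)$ linear extensions, gives $\psi_\star(P)=\exp_\star(e)(P)=e^{\star k}(P)/k!=\ho(P)/k!$. The routine parts are (1) and (2): once one sees that $e$ forces the cut to sit at the base block, respectively at a single convex base block (the first irreducible component), the inductions run immediately. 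I expect the main obstacle to be the combinatorial lemma used in (3) --- that deleting a $\Pto$-minimal block and passing to the reduced gap monomial precisely implements deletion of an element in the disjoint union of nesting posets, together with the vanishing of all convolution powers of $e$ other than the $N$-th; establishing this cleanly requires careful bookkeeping of the convex-component decomposition that underlies the gap monomial.
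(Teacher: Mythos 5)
Your proofs of (1) and (2) are essentially the paper's own: in each case the support of $e$ forces the surviving cut to sit at the base block $\pi_1$ (as a one-block lowerset for $\prec$, and as a one-block upperset for $\succ$, the latter existing precisely when $\pi_1$ is convex, i.e.\ equals $E_1$), and the induction on the number of blocks is the same; your bookkeeping of the condition $1\in U$ in (2) is in fact more careful than the paper's written sum, which has a slip there. For (3) you take a correct but genuinely different route. The paper inducts on $k$ for a \emph{single} noncrossing partition: only the $k$-fold term of $\exp_\star(e)$ survives, and it peels off the \emph{last} tensor factor, which must be a single upper-block; the remaining lowerset is again a single partition, so no gap-monomial analysis is needed, and the induction closes with the recursion $\ho(P)=\sum_{(L,\pi)\in\cut'(P)}\ho(L)$. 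You instead compute the convolution powers $e^{\star m}$ on arbitrary noncrossing multipartitions by peeling off the \emph{first} tensor factor, a single outermost (i.e.\ $\Pto$-minimal) block $\sigma$, and prove the stronger intermediate statement that $e^{\star N}(W)$ counts linear extensions of the disjoint union of nesting posets, concluding with the standard minimal-element recursion for linear extensions. The price is exactly the lemma you flag, and it does hold: since no block englobes $\sigma$, the convex hull of any remaining block is disjoint from $\sigma$ (otherwise that block would cross or englobe $\sigma$, both excluded), so each remaining block together with everything nested in it lands in a single convex component of the complement, hence no nesting relation is destroyed, while relations inside a component are intrinsic to the blocks, so none are created; thus passing to the reduced gap monomial is precisely deletion of $\sigma$ in the poset. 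With that in place your induction is complete; your version buys a cleaner general statement about $e^{\star N}$ on multipartitions, whereas the paper's top-peeling avoids any gap-monomial lemma by keeping the left factor a single partition.
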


Recall from Definition~\ref{gap-monomial} that if $(L,U)$ is a cut of $P$, then
$\rmst{U}$ denotes the reduced gap monomial of the cut, the monomial 
obtained as the partitions belonging to $U$ that appear in the gaps of $L$.
\begin{proof}
  Let $P\in \latNCP(k,n)$. If $\pi_1$ is base block of $P$, write $\tilde 
  P := P_{|\Conv(\pi_1)^c}$ as in the preliminary discussion above. 
  In particular, if
  $k=1$, then $\tilde{P}=\emptyset$, and
  $\psi_\prec(P)=\psi_\succ(P)=\psi(\tilde{P})=1$.

  1. If $(L,U)$ is a cut of $P$ such that $e\big(\st{L}\big)\neq 0$, then
  $U$ contains all the blocks of $P$ but one. By definition of $\psi_\prec$:
\begin{align*}
	\psi_\prec(P)&=\sum_{\substack{(L,U) \in \cut(P),\\ 1\in L}} 
	e\big(\st{L}\big) \psi_\prec\big(\rmst{U}\big)=\psi_\prec(\tilde{P}).
\end{align*} 
The result follows by an easy induction on $k$.

2. If $U$ is an upperset of $P$ such that $e\big(\rmst{U}\big)\neq 0$, then,
by definition of $e$, $U$ contains only one block of $P$. 
The block $\pi_1$ of $P$ containing $1$ is an upperset of $P$ if and only if 
it is a irreducible component of $P$. Hence:
\begin{align*}
	\psi_\succ(P)&=\sum_{\substack{(L,U) \in \cut(P),\\ 1\in L}} 
	\psi_\succ(\st{L}) e(\rmst{U})
	=
	\begin{cases}
	\psi_\succ(\tilde{P}) 	& \mbox{ if $\pi_1$ is irreducible},\\
	0 				& \mbox{ otherwise.}
	\end{cases} 
\end{align*} 
The result follows by an easy induction on $k$.

3. An \emph{upper-block} is a block which is also an upperset;
in other words, it is a maximal element in the poset $(P,\Pto)$
(that is, a block that has no nested blocks inside it).
We write $(L,U) \in \cut'(P)$ for the situation where the
upperset $U$ consists of a single block (which is hence an upper-block).
If
$\sigma: (P, \Pto) \longrightarrow [k]$ is a heap ordering of $P$,
then the block $\sigma^{-1}(k)$ is necessarily an upper-block. This yields
a recursive calculation of the heap-order numbers:
\begin{align}\label{eq:ho}
	\ho(P)&=\sum_{(L,\pi) \in \cut'(P)} \ho(L),
\end{align}
which is used in the following proof of Item 3 by induction on $k$. If $k=1$, then
$\psi_\star(P)=e(P)+0=1$ and we are done. Assume the result
at rank $k-1$, and consider a partition $P$ with $k$ blocks.
With Sweedler notation for the iterated coproducts
($P\longmapsto P^{(1)}\otimes\dots\otimes P^{(l)}$): \allowdisplaybreaks
\begin{align*}
	\psi_\star(P)	
	&=\sum_{l=1}^\infty\frac{1}{l!} \, e(P^{(1)})\cdots e(P^{(l)})
	\\
	&=\frac{1}{k!} e(P^{(1)})\cdots e(P^{(k)})+0
	\\
	&=\sum_{(L,U) \in 
	\cut'(P)}\frac{1}{k!}e(\st{L}^{(1)})\cdots
		e(\st{L}^{(k-1)})e(\st{U})
		\\
	&=\sum_{(L,U) \in \cut'(P)}\frac{1}{k!}e(\st{L}^{(1)})
	\cdots e((\st{L})^{(k-1)})
	\\
	&=\frac{1}{k}\sum_{(L,U) \in \cut'(P)}\psi_\star(\st{L})
	\\
	&=\frac{1}{k}\sum_{(L,U) \in 
	\cut'(P)}\frac{\ho(\st{L})}{(k-1)!} 
	& \makebox[0pt]{\text{(by induction hyp.)}}
	\\
	&=\frac{\ho(P)}{k!}  & \makebox[0pt]{\text{(by 
	\eqref{eq:ho})\phantom{xxxxxxxx.}}}
\end{align*}
as required.
\end{proof}


\subsection{On the inverses of the universal maps}

By Lemma~\ref{lemmeinversible}, $\psi_\prec$, $\psi_\succ$ and $\psi_\star$
are invertible elements of the monoid $(M_\Bblnc, \ast)$. In this subsection, we
investigate the behaviour of these inverses. The following proposition
emphasizes once again the boolean character of $\psi_\succ$ (recall indeed 
from
\cite{EFP18} that the right half-shuffle exponential is associated to
boolean cumulants in free probability).

\begin{prop}
For any $P\in \latNCP(k,n)$:
\begin{align*}
\psi_\succ^{*-1}(P)&
=\begin{cases}
(-1)^{k+1} & \mbox{ if $P$ is boolean},\\
0 & \mbox{ otherwise};
\end{cases}\\
\psi_\succ^{*-1}*\psi_\prec(P)&
=\begin{cases}
1 & \mbox{ if $P$ is irreducible},\\
0 & \mbox{ otherwise},
\end{cases}\\
\psi_\prec^{*-1}*\psi_\succ(P)&
=\begin{cases}
(-1)^{k+1} & \mbox{ if $P$ is irreducible},\\
0 & \mbox{ otherwise}.
\end{cases}
\end{align*}
\end{prop}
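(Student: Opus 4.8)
The strategy is to guess each of the three characters on the right and verify the defining convolution identities directly in the monoid $(M_\Bblnc,*)$, the convolution attached to the refinement coproduct $\delta(P)=\sum_{P\leq Q}Q\otimes P/Q$, so that for characters $\phi,\psi$ one has $(\phi*\psi)(P)=\sum_{P\leq Q}\phi(Q)\prod_{\tau\in Q}\psi(P_{\mid\tau})$. All maps occurring in the statement are characters of $\Bblnc$ (convolution inverses and products of characters, or multiplicative extensions of functions on $\latNCP$), and $\psi_\prec,\psi_\succ$ are invertible in $M_\Bblnc$ by Lemma~\ref{lemmeinversible} since they take the value $1$ on every $I_n$; hence each identity need only be checked on a single noncrossing partition $P\in\latNCP(k,n)$. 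Recall finally from Theorem~\ref{thm:psiprec} that, under the identification $M_\Bblnc\cong G_\Hgapnc$, the character $\psi_\prec$ is the zeta function $\zeta$ (value $1$ everywhere) and $\psi_\succ$ is the boolean indicator (value $1$ on boolean partitions, $0$ otherwise).

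\emph{First identity.} Let $\nu$ be the character given on $\latNCP$ by $\nu(P)=(-1)^{j+1}$ for $P$ boolean with $j$ blocks and $\nu(P)=0$ otherwise. I would compute $(\nu*\psi_\succ)(P)=\sum(-1)^{m+1}$, the sum over coarsenings $Q$ of $P$ that are boolean and have all restrictions $P_{\mid\tau}$ boolean, $m$ being the number of blocks of $Q$. The first step is that this index set is non-empty only when $P$ itself is boolean: two nested blocks of $P$ lying in one block $\tau$ of $Q$ would make $P_{\mid\tau}$ non-boolean, while lying in different blocks of $Q$ they would force two blocks of $Q$ to have overlapping convex hulls, contradicting that $Q$ is boolean. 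So for non-boolean $P$ both sides vanish. If $P$ is boolean, a concatenation of $k$ intervals, the admissible $Q$ are exactly the groupings of these intervals into consecutive blocks, i.e.\ the compositions of $k$, and $\sum_m\binom{k-1}{m-1}(-1)^{m+1}=(1-1)^{k-1}=\varepsilon_\Bblnc(P)$. Hence $\nu*\psi_\succ=\varepsilon_\Bblnc$, so $\psi_\succ^{*-1}=\nu$.

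\emph{Second identity.} By the first one, $\psi_\succ^{*-1}*\psi_\prec=\nu*\zeta$, and since $\zeta$ is identically $1$ this is $(\nu*\zeta)(P)=\sum(-1)^{m+1}$ over the \emph{boolean} coarsenings $Q$ of $P$. Writing $P=E_1\ordsum\cdots\ordsum E_r$ for the decomposition into irreducible components, such a $Q$ is obtained by cutting $[n]$ at a subset of the $r-1$ positions separating consecutive components $E_i$ (exactly the positions splitting no block of $P$), a subset of size $\,j\,$ producing a $Q$ with $j+1$ blocks. Thus $(\nu*\zeta)(P)=\sum_{j=0}^{r-1}\binom{r-1}{j}(-1)^{j}=(1-1)^{r-1}$, which is $1$ if $r=1$, i.e.\ if $P$ is irreducible, and $0$ otherwise.

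\emph{Third identity} (the main obstacle). Let $\lambda$ be the character given on $\latNCP$ by $\lambda(P)=(-1)^{j+1}$ for $P$ irreducible with $j$ blocks and $\lambda(P)=0$ otherwise; it is enough to prove $\zeta*\lambda=\psi_\succ$, since then $\psi_\prec^{*-1}*\psi_\succ=\lambda$. As $\lambda(P/Q)$ equals $(-1)^{k+m}$ when every $P_{\mid\tau}$ is irreducible (with $k$ the number of blocks of $P$ and $m$ that of $Q$) and $0$ otherwise, one has $(\zeta*\lambda)(P)=(-1)^{k}\sum(-1)^{m}$ over the coarsenings $Q\geq P$ all of whose restrictions $P_{\mid\tau}$ are irreducible. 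The crux is to parametrise this index set. Recall that for noncrossing $P$ the relation $\Pto$ makes the blocks into a forest whose minimal (outermost) blocks are exactly the base blocks of the $r$ irreducible components. I would show that a coarsening $Q\geq P$ has all $P_{\mid\tau}$ irreducible precisely when it arises from a subset $B$ of the blocks of $P$ containing every outermost block, by placing each block of $P$ into the group distinguished by the innermost member of $B$ whose convex hull contains it, and that then $Q$ has $\sharp B$ blocks. The work of this step is to check that the rule produces a well-defined noncrossing $Q$, that conversely every such $Q$ arises this way with $B$ its set of base blocks, and that the two constructions are mutually inverse; all of it hinges on the single fact that a block placed into a group is never nested inside the distinguished block of another group, which is exactly the condition preventing a crossing in $Q$. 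Granting this, $\sum(-1)^{m}=\sum(-1)^{\sharp B}$ over subsets $B$ of blocks of $P$ containing the $r$ outermost ones, and this equals $(-1)^{r}(1-1)^{k-r}$, which is $(-1)^r$ when $k=r$ (equivalently when $P$ has no nested blocks, i.e.\ is boolean) and $0$ otherwise. Therefore $(\zeta*\lambda)(P)$ equals $(-1)^{k}(-1)^{r}=1$ for boolean $P$ (where $k=r$) and $0$ otherwise, i.e.\ $(\zeta*\lambda)(P)=\psi_\succ(P)$. The first two identities are then little more than binomial computations; the one genuinely combinatorial point is the bijection with subsets $B$ just described.
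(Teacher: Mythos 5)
Your proposal is correct. For the first two identities it is essentially the paper's computation reorganized as guess-and-verify: you check $\nu*\psi_\succ=\varepsilon_{\Bblnc}$ and evaluate $\nu*\zeta$ directly, via the same binomial cancellations (groupings of the $k$ interval blocks, resp.\ of the $r$ irreducible components, into consecutive packets) that the paper obtains by induction on the number of blocks; your direct argument that no admissible coarsening exists for non-boolean $P$ replaces the paper's inductive vanishing argument and is, if anything, cleaner. The genuine divergence is in the third identity. The paper puts $\alpha=\psi_\succ^{*-1}*\psi_\prec$ (the irreducible indicator from part two) and $\beta$ the claimed answer, and proves $\beta*\alpha=\varepsilon_{\Bblnc}$ by a recursive factorization: writing an irreducible $P$ as $I_n\diamond(\emptyset,P_1,\ldots,P_{n-1},\emptyset)$, it parametrizes coarsenings $Q\geq P$ by tuples $(Q_{i,j},\epsilon_{i,j})$ with a marker $\epsilon_{i,j}\in\{0,1\}$ recording whether the base block of the $(i,j)$-th irreducible piece merges with the base block of $P$, so the signed sum factors and each factor cancels in pairs. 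You instead verify $\zeta*\lambda=\psi_\succ$ and conclude $\lambda=\psi_\prec^{*-1}*\psi_\succ$ by invertibility of $\psi_\prec=\zeta$; the combinatorial core is your bijection between coarsenings $Q\geq P$ all of whose fibres $P_{\mid\tau}$ are irreducible and subsets $B$ of blocks of $P$ containing every outermost block, followed by the cancellation $(-1)^r(1-1)^{k-r}$. You only sketch that bijection, but it does go through as you indicate: the assignment of a block to the innermost member of $B$ whose convex hull contains it is well defined because, $P$ being noncrossing, the blocks whose hull contains a given block form a chain in the nesting order; each resulting group lies inside the hull of its distinguished block, which carries the group's minimum and maximum, so the fibres are irreducible and $Q$ is noncrossing; and conversely the base blocks of the fibres of an admissible $Q$ must include all outermost blocks and recover $B$. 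So the only thing missing is a few lines writing out these checks. Comparing the two routes: yours yields an explicit closed-form enumeration of the admissible coarsenings (and en passant the identity $\psi_\prec*\lambda=\psi_\succ$), whereas the paper's recursive factorization avoids any global parametrization at the cost of a more intricate bookkeeping of base-block mergers.
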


\begin{proof}
Let us assume that $P$ is not boolean and let us prove that $\psi_\succ^{*-1}(P)=0$ by induction on $k$.
If $k=1$, there is nothing to prove. If $k\geq 2$, then for any $Q>P$, 
either $Q$ is not boolean with $l<k$ blocks, and hence
vanishes under $\psi_\succ^{*-1}$, or one of the components of $P/Q$ 
is not boolean, and hence vanishes under $\psi_\succ$. This shows that
\begin{align*}
	\psi_\succ^{*-1}*\psi_\succ(P)
	&=\psi_\succ^{*-1}(P)\psi_\succ(P/P)+0=\psi_\succ^{*-1}(P)=\varepsilon_\Bblnc(P)=0.
\end{align*}

Let us assume that $P$ is boolean and let us show that $\psi_\succ^{*-1}(P)=(-1)^{k+1}$ by induction on $k$. If $k=1$,
then $\psi_\succ^{*-1}(P)=\psi_\succ(P)^{-1}=1$. If $k\geq 2$:
\begin{align*}
	\psi_\succ^{*-1}*\psi_\succ(P)
	&=\sum_{Q\geq P} \psi_\succ^{*-1}(Q)\psi_\succ(P/Q)\\
	&=\sum_{Q\geq P} \psi_\succ^{*-1}(Q)\\
	&=\psi_\succ^{*-1}(P)+\sum_{l=1}^{k-1}(-1)^{l+1}\sharp\{(c_1,\ldots,c_l)\in \N_{>0}^l \mid c_1+\cdots+c_l=k\}\\
	&=\psi_\succ^{*-1}(P)+\sum_{l=1}^{k-1}(-1)^{l+1}\binom{k-1}{k-l}\\
	&=\psi_\succ^{*-1}(P)+(-1)^{k+1}\sum_{l=1}^{k-1} (-1)^l\binom{k-1}{l}\\
	&=\psi_\succ^{*-1}(P)-(-1)^{k+1}\\
	&=\varepsilon_\Bblnc(P)\\
	&=0.
\end{align*}
Let $E_1,\ldots,E_k$ be the irreducible components of $P$, 
$n_i$ their respective cardinality, and $P'$ the noncrossing partition 
whose blocks are
$\llbracket n_1+\cdots+n_{i-1}+1,n_1+\cdots+n_i\rrbracket$, $1\leq i \leq k$. Then:
\begin{align*}
	\psi_\succ^{*-1}*\psi_\prec(P)
	&=\sum_{P\leq Q} \psi_\succ^{*-1}(Q)\\
	&=\sum_{P\leq Q'} \psi_\succ^{*-1}(Q)+0\\
	&=\sum_{l=1}^k (-1)^{l+1}\sharp\{(c_1,\ldots,c_l)\in \N_{>0}^l 
	\mid  c_1+\cdots+c_l=k\}\\
	&=(-1)^{k+1}\sum_{l=0}^{k-1} (-1)^l\binom{k-1}{l}\\
	&=\begin{cases}
		1 & \mbox{ if }k=1,\\
		0 & \mbox{ otherwise.}
	\end{cases} 
\end{align*} 

Let $\alpha=\psi_\succ^{*-1}*\psi_\prec$,  and define $\beta \in M_\Bblnc$ by:
\begin{align*}
	\beta(P)&=\begin{cases}
	(-1)^{k+1} 		&\mbox{ if $P$ is irreducible},\\
			0 	&\mbox{ otherwise} 
			\end{cases}
			\qquad \text{ for } P\in \latNCP(k,n).
\end{align*}
For any $Q \in \latNP$ we denote by $\bl(Q)$ the number of blocks of $Q$. Let $P\in \latNCP(k,n)$. 
\begin{align*}
	\beta*\alpha(P)&=\sum_{\substack{Q\geq P,\\ \mbox{\scriptsize $Q$ irreducible,}\\
	\mbox{\scriptsize the factors of $P/Q$ irreducible}}} (-1)^{\bl(Q)+1}.
\end{align*}
If $P$ is not irreducible, then for any $Q\geq P$, either $Q$ is not irreducible or one of the factors of $P/Q$ is not irreducible. Hence, in this case,
$\beta*\alpha(P)=0=\varepsilon_\Bblnc(P)$. Let us now assume that $P$ is irreducible. If $P\in \latNCP(1,n)$, then $\beta*\alpha(P)=1=\varepsilon_\Bblnc(P)$.
Let us assume that $P\in \latNCP(k,n)$, with $n\geq 2$. As $P$ is irreducible, it can be written as:
\begin{align*}
	P&=I_n\diamond (\emptyset,P_1,\ldots,P_{n-1},\emptyset),
\end{align*}
where $P_1,\ldots,P_{n-1} \in \latNP_0$. We denote by $E_{i,j}$ the irreducible 
components of $P_i$. If some $P_i$ is the empty partition, then of course it 
has no irreducible components, so there are no $E_{i,j}$ for this index 
$i$. But since $k\geq 2$, at least one of the $P_i$ is non-empty.  We 
denote by $J$ the set of pairs $(i,j)$ such that $1 \leq i \leq n-1$ and 
such that $P_i$ has at least $j$ irreducible components.  
The set $J$ thus indexes the meaningful $P_{i,j}$. There 
is a bijection
\begin{align*}
	&\left\{\begin{array}{rcl} \{Q\in \latNP \mid Q\geq P\}&\longrightarrow&\displaystyle 
	\prod_{(i,j)\in J} \{Q_{i,j}\in \latNP \mid Q_{i,j}\geq P_{i,j}\}\times \{0,1\}\\
	Q&\longmapsto&(Q_{i,j},\epsilon_{i,j})_{i,j\in J},
\end{array}\right.
\end{align*}
where for any $(i,j)\in J$, $Q_{i,j}$ is the noncrossing partition
formed by the blocks of $Q$ included in $Q_{i,j}$, and
$\epsilon_{i,j}=0$ if the base block of $P$ and the base block of
$P_{i,j}$ are included in the same block of $Q$, and $1$ otherwise.
Hence:
\begin{align*}
	\beta*\alpha(P)
	&=\prod_{(i,j)\in J}\sum_{\substack{Q_{i,j}\geq P_{i,j},\\ \mbox{\scriptsize $Q_{i,j}$ irreducible,}\\
	\mbox{\scriptsize the factors of $P_{i,j}/Q_{i,j}$ irreducible}}}
	\left( (-1)^{\bl(Q_{i,j})+1}+(-1)^{\bl(Q_{i,j})}\right)=0=\varepsilon_\Bblnc(P).
\end{align*}
This shows that $\beta*\alpha(P)=0=\varepsilon_\Bblnc(P)$,
and therefore $\beta=\alpha^{*-1}=\psi_\prec^{*-1}*\psi_\succ$.
\end{proof}


\subsection{On the inverse of the ``free'' universal map $\psi_\prec$}

Lastly we investigate the behaviour of $\psi_\prec$, the universal map
associated to the left half-shuffle exponential --- the one relating free
cumulants and moments in free probability. Not surprisingly, its
combinatorics involves the Catalan numbers
$cat_{n+1}=\frac{1}{n+1}{2n\choose n}$. In fact the following lemma
\emph{characterizes} the Catalan number, as it allows to compute them
inductively.

\begin{lemma} \label{lemmacatalan}
For any $n\geq 1$:
\begin{align*}
	\sum_{k=0}^{\left\lfloor \frac{n}{2}\right\rfloor}(-1)^{n-k+1} cat_{n-k}\binom{n-k}{k}=\delta_{n,1}.
\end{align*}\end{lemma}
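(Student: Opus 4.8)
The plan is to prove the identity by a short generating‑function argument, recognising the left‑hand side as a coefficient in an explicit composition of formal power series. I work in the ring $\K[[x]]$ and use the single‑variable Catalan generating series
$$
	f(y) \;:=\; \sum_{m\geq 1} cat_m\, y^m \ \in\ \K[[y]] .
$$
First I would record the functional equation $f = y + f^2$. Let $C(y) := \sum_{n\geq 0} C_n y^n$ be the generating series of the usual Catalan numbers, so that $cat_{n+1} = C_n$ and hence $f(y) = y\,C(y)$; the classical identity $C = 1 + yC^2$ then gives, after dividing by $y$ and using $C = f/y$, that $f = y + f^2$.

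Next I would rewrite the summand as a coefficient extraction. Since $\binom{n-k}{k}=0$ whenever $k>\lfloor n/2\rfloor$ and $(-1)^{n-k+1}=(-1)^{n+1}(-1)^k$, the left‑hand side equals $(-1)^{n+1}\sum_{k\geq 0}(-1)^k cat_{n-k}\binom{n-k}{k}$. Reindexing by $m=n-k$ and using $\sum_{k=0}^m\binom{m}{k}(-x)^k=(1-x)^m$, this is exactly $(-1)^{n+1}$ times the coefficient of $x^n$ in
$$
	f\big(x(1-x)\big) \;=\; \sum_{m\geq 1} cat_m\, x^m(1-x)^m \;=\; \sum_{m\geq 1}\sum_{k=0}^m (-1)^k cat_m \binom{m}{k}\, x^{m+k} ,
$$
a legitimate substitution since $x(1-x)$ has zero constant term.

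The key step is to identify $f\big(x(1-x)\big) = x$. Set $z := f\big(x(1-x)\big)\in\K[[x]]$; as $f(0)=0$, the series $z$ has zero constant term. By the functional equation, $z - z^2 = x(1-x) = x - x^2$, hence $(z-x)-(z^2-x^2)=0$, i.e.\ $(z-x)(1-z-x)=0$ in the integral domain $\K[[x]]$. The factor $1-z-x$ has constant term $1$, so it is invertible (in particular nonzero); therefore $z=x$. Thus the coefficient of $x^n$ in $f\big(x(1-x)\big)$ is $\delta_{n,1}$, and the left‑hand side of the lemma equals $(-1)^{n+1}\delta_{n,1}=\delta_{n,1}$, as claimed.

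I do not expect a genuine obstacle: the one substantive idea is the factorisation forcing $z=x$ from $z-z^2=x-x^2$, and the rest is bookkeeping. The points that need a little care are purely formal --- that composition with a power series of zero constant term is well defined, that $\K[[x]]$ is an integral domain so that $1-z-x$ may be cancelled, and the sign‑ and range‑bookkeeping (rewriting $(-1)^{n-k+1}$ and noting the binomials vanish outside $0\leq k\leq\lfloor n/2\rfloor$, so enlarging the summation range over all $k\geq 0$ is harmless).
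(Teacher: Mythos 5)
Your proof is correct. The bookkeeping is sound: $\binom{n-k}{k}$ indeed vanishes for $k>\lfloor n/2\rfloor$, the functional equation $f=y+f^2$ follows from $cat_m=C_{m-1}$ and $C=1+yC^2$, and the factorisation $(z-x)(1-z-x)=0$ in the integral domain $\K[[x]]$, with $1-z-x$ invertible, legitimately forces $z=x$; hence $f\bigl(x(1-x)\bigr)=x$ and the identity follows. The paper takes a different, operadic route: it works in the free non-symmetric operad on one binary generator, builds a series $X=\sum X_n$ of plane binary trees defined by the alternating fixpoint $\sum_{k\geq 0}(-1)^k X\blacktriangleleft Y^k=I$, shows $X=I+X\vee X$ so that $X_n$ is the sum of \emph{all} plane binary trees with $n$ leaves (whence $X_n(1)=cat_n$), and then obtains the lemma by evaluating every tree to $1$. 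Your argument is essentially the generating-series shadow of that proof: counting leaves, the insertion $X\blacktriangleleft Y^k$ contributes $\binom{n}{k}x^{n+k}$, so the operadic fixpoint equation maps exactly to your relation $f\bigl(x(1-x)\bigr)=x$. What you gain is brevity and elementarity (pure formal power series, no operad machinery); what the paper's version keeps is the explicit tree-level structure, consistent with the operadic theme of the article, though for the lemma itself it yields nothing more than your computation does.
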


\begin{proof}
We denote by $\P$ the free non-symmetric operad on one binary generator: for any $n\geq 1$, $\P(n)$ is the vector space generated
by the set of plane binary trees with $n$ leaves (so has dimension 
$cat_n$), and the operad composition law is given by grafting on leaves.
For any $p\in \P(n)$, $q_1,\ldots, q_k \in \P$, we put:
\begin{align*}
	p\blacktriangleleft (q_1\ldots q_k)&
	=\sum_{1\leq i_1<\ldots<i_k\leq n} p\circ 
	(\underbrace{I,\ldots,I,p_1,I,\ldots,I,p_k,I,\ldots,I}
	_{\mbox{\scriptsize $p_j$ is position $i_j$}}).
\end{align*}
In particular, $p\blacktriangleleft(1)=p$ and, if $k>n$ then $p\blacktriangleleft (q_1\ldots q_k)=0$.
We denote by $I$ the unique plane binary tree with one leaf and by $Y$ the unique plane binary tree with two leaves. 
For all $n\geq 1$, we define inductively $X_n \in \P(n)$ by $X_1=1$ and:\\
\begin{align*}
	&\forall n\geq 2,&X_n&=\sum_{k=1}^{\left\lfloor \frac{n}{2}\right\rfloor} (-1)^{k+1}X_{n-k}\blacktriangleleft Y^k.
\end{align*}
We also put $X=\sum X_n$. Then $X$ is the unique solution in $\prod_{n\geq 1} \P(n)$ of the equation:
\begin{align*}
	X&=I+\sum_{k=1}^\infty (-1)^{k+1}X\blacktriangleleft Y^k,
\end{align*}
or equivalently:
\begin{align*}
	\sum_{k=0}^\infty (-1)^kX\blacktriangleleft Y^k&=I.
\end{align*}
Let us consider $X'=I+X\vee X$, where $\vee$ is the operator that
grafts two binary trees onto a new common root. Then:
\begin{align*}
	\sum_{k=0}^\infty (-1)^k X'\blacktriangleleft Y^k
	&=I-Y+0+\sum_{k=0}^\infty (X\vee X)\blacktriangleleft Y^k\\
	&=I-Y+\sum_{k=0}^\infty \sum_{i+j=k} (X\blacktriangleleft Y^i)\vee (X\blacktriangleleft Y^j)\\
	&=I-Y+\left(\sum_{i=0}^\infty (-1)^i X\blacktriangleleft Y^i\right) 
		\vee\left(\sum_{j=0}^\infty (-1)^j X\blacktriangleleft Y^j\right)\\
	&=I-Y+I\vee I\\
	&=I.
\end{align*}
Hence, $X'=X$, so $X=I+X\vee X$. An easy induction on  $n$ implies that $X_n$ is the sum of all plane binary trees with $n$ leaves.
Sending any plane binary tree to $1$, we get, for any $n\geq 2$:
\begin{align*}
	X_n(1)&
	=cat_n
	=\sum_{k=1}^{\left\lfloor \frac{n}{2}\right\rfloor} (-1)^{k+1}\binom{k}{n-k}X_{n-k}(1)
	=\sum_{k=1}^{\left\lfloor \frac{n}{2}\right\rfloor} (-1)^{k+1}\binom{k}{n-k}cat_{n-k}, 
\end{align*}
which gives the announced formula.
\end{proof}

For the following proposition, recall that $J_n$ denote the finest 
partitions and $I_n$ denote the coarsest partitions, for $n\geq 1$.

\begin{prop}\label{prop:psipreccat}
For any $n\geq 1$, we have $\psi_\prec^{*-1}(J_n)=(-1)^{n+1}cat_n$.
\end{prop}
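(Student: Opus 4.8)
The plan is to compute $\psi_\prec^{*-1}(J_n)$ directly from the defining relation $\psi_\prec^{*-1} * \psi_\prec = \varepsilon_\Bblnc$, using the explicit coproduct formula \eqref{refinecoprod} on the finest partition $J_n$ together with the fact (Theorem~\ref{thm:psiprec}(1)) that $\psi_\prec(P) = 1$ for every noncrossing partition $P$. Concretely, evaluating $(\psi_\prec^{*-1}*\psi_\prec)(J_n) = \varepsilon_\Bblnc(J_n) = \delta_{n,1}$ and using $\delta(J_n) = \sum_{J_n \leq Q} Q \otimes J_n/Q$ with $\psi_\prec(J_n/Q) = 1$, we obtain the recursion
\begin{align*}
	\sum_{Q \in \latNCP(n)} \psi_\prec^{*-1}(Q) = \delta_{n,1},
\end{align*}
where the sum is over all noncrossing partitions $Q$ of $[n]$ (since every such $Q$ is coarser than $J_n$). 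First I would establish, by induction on the number of blocks, that $\psi_\prec^{*-1}(Q)$ depends only on the ``block-size multiset'' of $Q$; more precisely, since $\psi_\prec^{*-1}$ is a character (being the convolution inverse of a character in the commutative bialgebra $\Bblnc$) and since $\delta$ refines blocks, the value $\psi_\prec^{*-1}(Q)$ factors as $\prod_{\pi \in Q} \psi_\prec^{*-1}(I_{\sharp\pi})$ --- wait, this is not quite right because $I_{\sharp\pi}$ is a single block whereas the restriction is of a more subtle nature; instead I would use that $\psi_\prec^{*-1}$ is determined by its values on the $I_m$ via multiplicativity, but the subtlety is that $Q$ itself need not be a monomial. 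Let me restructure: the key reduction is that for $Q \in \latNCP(n)$, decomposing $Q$ into its irreducible components or simply observing that $\psi_\prec^{*-1}$ is a character on $\Bblnc$ while $Q$ generates a free-commutative-algebra element, one has $\psi_\prec^{*-1}(Q) = \prod_i \psi_\prec^{*-1}(I_{n_i})$ where the $n_i$ are the block sizes --- this uses that as an \emph{element of $\Bblnc$}, $Q$ coincides with the monomial $I_{n_1}\cdots I_{n_k}$ only after one coarsens, which is not true. So the honest approach is: set $a_m := \psi_\prec^{*-1}(I_m)$ and, writing $\psi_\prec^{*-1}(Q)$ in general, argue via the coproduct structure directly.

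A cleaner route, which I would actually take, is to avoid disentangling arbitrary $Q$ and instead work entirely within the sub-bialgebra. Observe that in the convolution algebra $M_\Bblnc$, the element $\psi_\prec = \zeta$ is the zeta function, so $\psi_\prec^{*-1} = \mu$ is exactly the Möbius function of the lattice of noncrossing partitions, as described just before Lemma~\ref{lemmeinversible}. Therefore $\psi_\prec^{*-1}(J_n) = \mu(J_n)$, and the claim reduces to the classical fact that the Möbius function of the noncrossing partition lattice, evaluated on the bottom-to-top interval $\llbracket J_n, I_n \rrbracket$ (which is what $\mu(J_n)$ means in $\Bblnc$ by the discussion in Subsection~\ref{subsec:Phi}, since $J_n$ corresponds to the fibre of $\llbracket J_n, I_n\rrbracket$), equals $(-1)^{n+1} \operatorname{cat}_{n-1}$ --- but the statement here says $(-1)^{n+1}\operatorname{cat}_n$, with $\operatorname{cat}_n = \frac{1}{n}\binom{2n-2}{n-1}$ in the paper's convention (from Proposition~\ref{prop:genrels}), so indeed $\operatorname{cat}_n$ in this indexing is what is classically $\operatorname{cat}_{n-1}$. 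So the substance is a known value of the Möbius function; however, the authors evidently want a self-contained proof via Lemma~\ref{lemmacatalan}, so I would not invoke the classical result but instead prove the recursion matches.

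Thus the main line of the proof: let $b_n := \psi_\prec^{*-1}(J_n)$. Using the coproduct $\delta(J_n)$ and $\psi_\prec \equiv 1$, and partitioning the sum over $Q \in \latNCP(n)$ according to the \emph{base block} $\pi_1 \ni 1$ of $Q$ and its convex hull structure (as set up in the preliminary discussion before Theorem~\ref{thm:psiprec}), one writes $Q$ uniquely in the form coming from the operadic decomposition $Q = I_m \diamond(\emptyset, Q_1,\ldots,Q_{m-1},\emptyset)$ where $m = \sharp\pi_1$ and the $Q_i$ are arbitrary noncrossing partitions. Since $\psi_\prec^{*-1}$ is multiplicative on the fibre monomials and $\psi_\prec^{*-1}(I_m) = 1$ for all $m$ (again because $I_m$ is group-like and $\psi_\prec(I_m) = 1$), summing over all $Q$ and grouping by $m$ gives a convolution-type recursion for the generating series $B(z) := \sum_{n\geq 1} b_n z^n$ of the form $B(z) = z \sum_{m \geq 1} (\text{something involving } B)$. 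Carrying this out, the recursion becomes precisely the inductive identity of Lemma~\ref{lemmacatalan} after substituting $b_n = (-1)^{n+1}\operatorname{cat}_n$; equivalently, one checks that the sequence $(-1)^{n+1}\operatorname{cat}_n$ satisfies $\sum_{Q \in \latNCP(n)} \prod_{\pi \in Q}(-1)^{\sharp\pi + 1}\operatorname{cat}_{\sharp\pi}\cdot(\text{corrections}) = \delta_{n,1}$ --- which, after reorganizing the sum over noncrossing partitions by block-size composition of the base-block's complement intervals, is exactly the alternating-Catalan identity of Lemma~\ref{lemmacatalan}. The main obstacle is the bookkeeping in Step two: correctly identifying which data $(m; Q_1,\ldots,Q_{m-1})$ parametrize $Q \in \latNCP(n)$ and tracking how $\psi_\prec^{*-1}(Q)$ multiplies over the pieces --- in particular ensuring the empty-partition slots contribute trivially and that the induction on the number of blocks (or on $n$) closes. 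Once the recursion is extracted, matching it to Lemma~\ref{lemmacatalan} is a short computation, so I would present the combinatorial decomposition carefully and then quote the lemma to conclude $b_n = (-1)^{n+1}\operatorname{cat}_n$.
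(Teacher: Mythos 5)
There is a genuine gap, and it sits exactly at the step you yourself flag as ``the main obstacle''. Your plan is to extract a recursion for $b_n=\psi_\prec^{*-1}(J_n)$ from the identity $(\psi_\prec^{*-1}*\psi_\prec)(J_n)=\delta_{n,1}$, i.e.\ $\sum_{Q\in \latNCP(n)}\psi_\prec^{*-1}(Q)=\delta_{n,1}$. But this identity involves the values of $\psi_\prec^{*-1}$ on \emph{all} noncrossing partitions, and you have no valid way of reducing those values to the numbers $b_m$. The factorization you lean on --- that for the base-block decomposition $Q=I_m\diamond(\emptyset,Q_1,\ldots,Q_{m-1},\emptyset)$ one has $\psi_\prec^{*-1}(Q)=\psi_\prec^{*-1}(I_m)\prod_i\psi_\prec^{*-1}(Q_i)$ --- is false: the character property of $\psi_\prec^{*-1}$ gives multiplicativity only over monomials in the free commutative algebra $\Bblnc$, not over nesting inside a single partition (a single partition is a single generator of $\Bblnc$). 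Concretely, $Q=\{\{1,3\},\{2\}\}=I_2\diamond(\emptyset,I_1,\emptyset)$ has $\psi_\prec^{*-1}(Q)=\mu(Q)=-1$, whereas the proposed factorization gives $\psi_\prec^{*-1}(I_2)\,\psi_\prec^{*-1}(I_1)=1$; indeed, if such a factorization held, induction would force $\psi_\prec^{*-1}\equiv 1$. The honest way to express $\psi_\prec^{*-1}(Q)=\mu(Q,I_n)$ through bottom-to-top values is the Kreweras-type interval factorization $\llbracket Q,I_n\rrbracket\cong \latNCP(m_1)\times\cdots\times\latNCP(m_r)$, which you neither state nor prove (and which the paper only obtains \emph{after} the proposition, using its result); moreover, even with that input, the reorganized sum is the classical Kreweras recursion, not the binomial identity of Lemma~\ref{lemmacatalan}, so your final ``matching'' step is asserted rather than derived.

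For contrast, the paper closes the recursion without ever needing $\psi_\prec^{*-1}$ on general $Q$, by working in $\Hgapnc$ rather than $\Bblnc$: setting $\kappa=\Theta^{-1}(\psi_\prec^{*-1})$, the relation $\psi_\prec\curvearrowleft\overline{\Theta(\kappa)}=\widehat{\varepsilon}_\Bblnc$ is, by Theorem~\ref{thm:left}, equivalent to the left half-shuffle fixpoint equation $\widehat{\varepsilon}_\Bblnc=\varepsilon_\Hgapnc+\kappa\prec\widehat{\varepsilon}_\Bblnc$. Evaluating this on the forests of sticks $J_n$, using the explicit cut-coproduct formula for $\Delta_\prec(J_n)$ (whose binomial coefficients count the ways the removed singletons sit in the gaps of the remaining ones) and the fact that $\widehat{\varepsilon}_\Bblnc$ kills every $J_a$ with $a>1$, gives the closed recursion $\sum_{l}\binom{n-l}{l}\,\psi_\prec^{*-1}(J_{n-l})=\delta_{n,1}$, which involves only the finest partitions; Lemma~\ref{lemmacatalan} then identifies the unique solution as $(-1)^{n+1}cat_n$. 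If you wish to keep your starting point in $\Bblnc$, you must first supply the interval-factorization input (essentially reproving Kreweras' computation); otherwise the switch to the gap-insertion coproduct, as in the paper, is what makes the recursion close on the $J_m$ alone.
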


\begin{proof} We put $\kappa=\Theta^{-1}(\psi_\prec^{*-1})$. 
  Since $\psi_\prec \curvearrowleft\overline{\Theta(\kappa)}=\varepsilon_\Bblnc$,
  we see that
$\varepsilon_\Bblnc=\varepsilon_\Delta+\kappa \prec \varepsilon_\Bblnc$. Hence, for all $n\geq 1$:
\begin{align*}
\delta_{n,1}&=(\kappa \otimes \varepsilon_\Bblnc)\circ \Delta_\prec(J_n).
\end{align*}
A study of uppersets of $J_n$ proves that:
\begin{align*}
\Delta(J_n)&=\sum_{l=0}^n 
\sum_{a_1+\cdots+a_l\leq n} \binom{n-a_1-\cdots-a_l+1}{l} 
J_{n-a_1-\cdots-a_l}\otimes J_{a_1}\cdots J_{a_l},\\
\Delta_\prec(J_n)&=\sum_{l=0}^n \sum_{a_1+\cdots+a_l\leq n} 
\binom{n-a_1-\cdots-a_l}{l} J_{n-a_1-\cdots-a_l}\otimes J_{a_1}\cdots J_{a_l}.
\end{align*}
For any $k\geq 1$, we have $\varepsilon_\Bblnc(J_k)=\delta_{k,1}$, so:
\begin{align*}
(\kappa \otimes \varepsilon_\Bblnc)\circ \Delta_\prec(J_n)
&=\sum_{l=0}^n \sum_{a_1+\cdots+a_l\leq n} \binom{n-a_1-\cdots-a_l}{l} 
\kappa(J_{n-a_1-\cdots-a_l})\otimes \delta_{a_1,1}\cdots \delta_{a_l,1}\\
&=\sum_{l=0}^n\binom{n-l}{l}\Expl^{*-1}(J_{n-l})\\
&=\delta_{n,1}.
\end{align*}
Hence, the sequence $(\psi_\prec^{*-1}(J_n))_{n\geq 1}$ is the unique sequence $(a_n)_{n\geq 1}$ such that for all $n\geq 1$:
\begin{align*}
\sum_{l=0}^n\binom{n-l}{l}a_{n-l}&=\delta_{n,1}.
\end{align*}
By Lemma \ref{lemmacatalan}, $a_n=(-1)^{n+1}cat_n$ for all $n$.
\end{proof}

For any $P\in \latNCP(k,n)$:
\begin{align*}
\psi_\prec^{*-1}*\psi_\prec(P)&=\sum_{Q\geq P} \psi_\prec^{*-1}(P)=\delta_{P,I_n}.
\end{align*} 
So $\psi_\prec^{*-1}(P)=\mob(P,I_n)$, where $\mob$ is the M\"obius function of the poset $\latNCP(n)$.
Proposition~\ref{prop:psipreccat} shows that the Euler characteristic of $\latNCP(n)$ is:
\begin{align*}
	\mob(J_n,I_n)&=(-1)^{n+1}cat_n.
\end{align*}
For any $m_1,\ldots,m_k \geq 1$, we put:
\begin{align*}
	I_{m_1,\ldots,m_k}&=\{\llbracket 1,m_1\rrbracket,\llbracket m_1
	+1,m_1+m_2\rrbracket,\ldots,
	\llbracket m_1+\cdots+m_{k-1}+1,m_1+\cdots+m_k\rrbracket\}.
\end{align*}
Let $P\in \latNP$. Then $P$ can be uniquely written as:
\begin{align*}
	P&=I_{n_1+1,\ldots,n_k+1}\diamond(\emptyset,P_{1,1},\ldots,P_{1,n_1},
	\emptyset,\ldots,\emptyset, 
	P_{k,1},\ldots,P_{k,n_k},\emptyset),
\end{align*}
where $k\geq 1$, $n_1,\ldots,n_k\geq 0$ and for all $i,j$, $P_{i,j}$ is a 
noncrossing partition, maybe empty.
Note that the irreducible components of $P$ are the noncrossing partitions 
$I_{n_i+1}\diamond(\emptyset,P_{i,1},\ldots,P_{i,n_i},\emptyset)$.
For any $Q \in \latNP$, we denote $\frontstick Q=I_1\diamond(\emptyset, Q)$. 
We have a poset isomorphism:
\begin{align*}
	\{Q\in \latNP \mid Q\geq P\}
	&\longrightarrow \latNCP(k)\times \prod_{i,j}\{Q\in \latNP \mid Q\geq 
	\frontstick P_{i,j}\}.
\end{align*}
The bijection sends $Q$ to $(Q_0,(Q_{i,j})_{i,j})$, where $Q_0$ is determined by identifying the base blocks of the irreducible components
of $P$ with the $k$ blocks of $J_k$, and $Q_{i,j}$ is obtained by identifying the base block of the $i$th irreducible component of $P$
with the block $\frontstick\ $ of $\frontstick P_{i,j}$, the blocks of $P_{i,j}$
being conserved. Hence:
\begin{align*}
\psi_\prec^{*-1}(Q)&=(-1)^{k+1}cat_k . \prod_{i,j} 
\psi_\prec^{*-1}(\frontstick P_{i,j}).
\end{align*}
This allows to compute $\psi_\prec^{*-1}(P)$ by induction on the number of blocks of cardinality $\geq 2$. For example:
\begin{align*}
\psi_\prec^{*-1}\left(\begin{tikzpicture}[line cap=round,line join=round,>=triangle 45,x=0.3cm,y=0.3cm]
\clip(0.8,1.8) rectangle (7.2,4.2);
\draw [line width=0.8pt] (1.,3.)-- (1.,2.);
\draw [line width=0.8pt] (1.,2.)-- (6.,2.);
\draw [line width=0.8pt] (6.,2.)-- (6.,3.);
\draw [line width=0.8pt] (2.,4.)-- (2.,3.);
\draw [line width=0.8pt] (2.,3.)-- (3.,3.);
\draw [line width=0.8pt] (3.,3.)-- (3.,4.);
\draw [line width=0.8pt] (4.,4.)-- (4.,3.);
\draw [line width=0.8pt] (4.,3.)-- (5.,3.);
\draw [line width=0.8pt] (5.,3.)-- (5.,4.);
\draw [line width=0.8pt] (7.,2.)-- (7.,3.);
\end{tikzpicture}\right)&=-cat_2.\psi_\prec^{*-1}\left(\begin{tikzpicture}[line cap=round,line join=round,>=triangle 45,x=0.3cm,y=0.3cm]
\clip(1.8,2.8) rectangle (6.2,4.2);
\draw [line width=0.8pt] (3.,4.)-- (3.,3.);
\draw [line width=0.8pt] (3.,3.)-- (4.,3.);
\draw [line width=0.8pt] (4.,3.)-- (4.,4.);
\draw [line width=0.8pt] (5.,4.)-- (5.,3.);
\draw [line width=0.8pt] (5.,3.)-- (6.,3.);
\draw [line width=0.8pt] (6.,3.)-- (6.,4.);
\draw [line width=0.8pt] (2.,3.)-- (2.,4.);
\end{tikzpicture}
\right)=-cat_2. cat_3,\\
\psi_\prec^{*-1}\left(
\begin{tikzpicture}[line cap=round,line join=round,>=triangle 45,x=0.3cm,y=0.3cm]
\clip(0.8,1.8) rectangle (8.2,4.2);
\draw [line width=0.8pt] (1.,3.)-- (1.,2.);
\draw [line width=0.8pt] (1.,2.)-- (7.,2.);
\draw [line width=0.8pt] (7.,2.)-- (7.,3.);
\draw [line width=0.8pt] (2.,4.)-- (2.,3.);
\draw [line width=0.8pt] (2.,3.)-- (3.,3.);
\draw [line width=0.8pt] (3.,3.)-- (3.,4.);
\draw [line width=0.8pt] (5.,4.)-- (5.,3.);
\draw [line width=0.8pt] (5.,3.)-- (6.,3.);
\draw [line width=0.8pt] (6.,3.)-- (6.,4.);
\draw [line width=0.8pt] (8.,2.)-- (8.,3.);
\draw [line width=0.8pt] (4.,2.)-- (4.,3.);
\end{tikzpicture}\right)&=-cat_2.
\psi_\prec^{*-1}\left(\begin{tikzpicture}[line cap=round,line join=round,>=triangle 45,x=0.3cm,y=0.3cm]
\clip(1.8,2.8) rectangle (4.2,4.2);
\draw [line width=0.8pt] (3.,4.)-- (3.,3.);
\draw [line width=0.8pt] (3.,3.)-- (4.,3.);
\draw [line width=0.8pt] (4.,3.)-- (4.,4.);
\draw [line width=0.8pt] (2.,3.)-- (2.,4.);
\end{tikzpicture}\right)^2=-cat_2^3.
\end{align*}


\footnotesize{
}

\begin{thebibliography}{ab}

\bibitem{Aguiar-Mahajan} 
M.~Aguiar, S.~Mahajan,
\newblock {\em Monoidal functors, species and {H}opf algebras}, vol.~29 of CRM Monograph Series.
\newblock American Mathematical Society, Providence, RI, 2010.
\newblock With forewords by K.~Brown, S.~Chase and A.~Joyal.

\bibitem{Baumeister-et.al}
B.~Baumeister, K.-U.~Bux, F.~G{\"o}tze, D.~Kielak, H.~Krause,
{\it{Non-crossing partitions}}, 
arxiv:1903.01146.

\bibitem{Biane0} 
P.~Biane,
{\it{Some properties of crossings and partitions}}, 
Discrete Math.~{\bf{175}} (1997), 41.
	
\bibitem{Biane} 
P.~Biane,
{\it{Free probability and combinatorics}},
Proceedings of the International Congress of Mathematicians, 
Vol.~II (Beijing, 2002), 765.

\bibitem{Bruned-Hairer-Zambotti}
Y.~Bruned, M.~Hairer, L.~Zambotti,
\newblock {\em Algebraic renormalisation of regularity structures},
\newblock Invent.~Math.~{\bf 215} (2019), 1039. 

\bibitem{CEFM} 
D.~Calaque, K.~Ebrahimi-Fard, D.~Manchon,
{\emph{Two interacting Hopf algebras of trees: A Hopf-algebraic approach to 
composition and substitution of B-series}},
Adv.~Appl.~Math.~{\bf{47}} (2011), 282.
  
\bibitem{chapotonMM} 
F.~Chapoton, 
{\it{Un th\'eor\`eme de Cartier--Milnor--Moore--Quillen pour les big\`ebres dendriformes et les alg\`ebres braces}}, 
J.~Pure Appl.~Algebra {\bf{168}} (2002), 1.

\bibitem{DC1}
G.~C.~Drummond-Cole,
{\emph{An operadic approach to operator-valued free cumulants}}, 
Higher Structures {\bf{2}} (2018), 42.

\bibitem{DC2}
G.~C.~Drummond-Cole,
{\emph{A non-crossing word cooperad for free homotopy probability theory}}, 
MATRIX Book Series {\bf{1}} (2018), 77.

\bibitem{EFM17}
K.~Ebrahimi-Fard, F.~Fauvet, D.~Manchon,
{\it{A comodule-bialgebra structure for word-series substitution and mould composition}},
J.~Algebra {\bf{489}} (2017), 552.

\bibitem{EFP15}
K.~Ebrahimi-Fard, F.~Patras,
{\emph{Cumulants, free cumulants and half-shuffles}}, 
Proc.~Royal Soc.~A {\bf{471}} 2176, (2015).

\bibitem{EFP16}
K.~Ebrahimi-Fard, F.~Patras,
{\emph{The splitting process in free probability theory}},
Int.~Math.~Res.~Notices {\bf 9} (2016), 2647.

\bibitem{EFP18}
K.~Ebrahimi-Fard, F.~Patras, 
{\it{Monotone, free, and boolean cumulants from a Hopf algebraic point of view}},
Adv.~Math.~{\bf{328}} (2018), 112.

\bibitem{EFP17}
K.~Ebrahimi-Fard, F.~Patras,
{\emph{Shuffle group laws. Applications in free probability}}, 
Proc.~London Math.~Soc.~{\bf{119}} (2019), 814.

\bibitem{Foissy:trees}
L.~Foissy,
\newblock {\em Les alg\`ebres de {H}opf des arbres enracin\'es d\'ecor\'es. {I}},
\newblock Bull.~Sci.~Math.~{\bf 126} (2002), 193. 

\bibitem{Foissy}
L.~Foissy,
{\it{Bidendriform bialgebras, trees, and free quasi-symmetric functions}}, 
J.~Pure Appl.~Algebra {\bf{209}} (2007), 439.

\bibitem{FoissyOperads}
L.~Foissy,
{\it{Algebraic structures associated to operads}},
arXiv:1702.05344.

\bibitem{Friedrich}
R.~M.~Friedrich, J.~McKay,
{\it{Homogeneous Lie Groups and Quantum Probability}},
arXiv:1506.07089. 
			
\bibitem{Gabriel15}
F.~Gabriel,
{\emph{Combinatorial theory of permutation-invariant random matrices I: partitions, geometry and renormalization}},
arXiv:1503.02792.

\bibitem{GKT1} 
I.~G\'alvez-Carrillo, J.~Kock, A.~Tonks,
{\it{Decomposition spaces, incidence algebras and M\"obius inversion I: basic theory}},
Adv.~Math.~{\bf{331}} (2018), 952.

\bibitem{GKT-comb} 
I.~G\'alvez-Carrillo, J.~Kock, A.~Tonks,
{\it{Decomposition spaces in combinatorics}},
arxiv:1612.09225.

\bibitem{gv} 
M.~Gerstenhaber, A.~A.~Voronov, 
{\emph{Homotopy $G$-algebras and moduli space operads}}, 
Int.~Math.~Res.~Notices {\bf{3}} (1995), 141.

\bibitem{HasLeh}
T.~Hasebe, F.~Lehner, 
{\emph{Cumulants, Spreadability and the Campbell--Baker--Hausdorff Series}}, 
arXiv:1711.00219.

\bibitem{Thibon}
M.~Josuat-Verg\`es, F.~Menous, J.-C.~Novelli, J.-Y.~Thibon,
{\it{Free cumulants, Schr{\"o}der trees, and operads}},
Adv.~Appl.~Math.~{\bf{88}} (2017), 92.

\bibitem{Kendall:vol1}
M.~G. Kendall.
\newblock {\em The {A}dvanced {T}heory of {S}tatistics}.
\newblock Vol. I. J. B. Lippincott Co., Philadelphia, 1944.

\bibitem{Kock-Weber} 
J.~Kock, M.~Weber, 
{\it{Fa\`a di Bruno for operads and internal algebras}}, 
J.~London Math.~Soc.~{\bf{99}} (2019), 919.

\bibitem{Kre} 
G.~Kreweras, 
{\emph{Sur les partitions non croisees d'un cycle}}, 
Discrete Math.~{\bf 1} (1972), 333.

\bibitem{Lehner}
F.~Lehner,
{\emph{Free cumulants and enumeration of connected partitions}}, 
Europ.~J.~Combinatorics {\bf{23}} (2002), 1025.

\bibitem{Male}
C.~Male, \emph{Traffic distributions and independence: permutation invariant
random matrices and the three notions of independence}.
To appear in Mem. Amer. Math. Soc. arXiv:1111.4662.

\bibitem{Manchon18}
D.~Manchon,
{\emph{A review on comodule-bialgebras}},
in the proceedings of the 2016 Abel Symposium
``Computation and Combinatorics in Dynamics, Stochastics and Control'',
Springer's Abel Symposia vol.~13, 2018. 

\bibitem{MSch}
S.~Manzel, M.~Sch\"urmann,
{\it{Non-Commutative Stochastic Independence and Cumulants}},
Infin.~Dimensional Anal.~Quantum Probab.~Relat.~Top.~{\bf{20}} (2017), 1750010.
 
\bibitem{MastNica}
M.~Mastnak, A.~Nica, 
{\it{Hopf algebras and the logarithm of the $S$-transform in free probability}}, 
Trans.~Amer.~Math.~Soc.~{\bf{362}} (2010), 3705.

\bibitem{MC}
J.~McCammond, 
{\emph{Noncrossing Partitions in Surprising Locations}}, 
Am.~Math.~Mon.~{\bf{113}} (2006), 598.

\bibitem{SpeicherNica}
A.~Nica, R.~Speicher, 
{\it Lectures on the combinatorics of free probability},
London Mathematical Society Lecture Note Series, {\bf{335}} Cambridge University Press, 2006.

\bibitem{Rota0} 
G.-C.~Rota, 
{\emph{On the Foundations of Combinatorial Theory I. Theory of M\"obius Functions}},
Z.~Wahrscheinlichkeitstheorie {\bf{2}} (1964), 340.

\bibitem{RotaWallstrom97} 
G.-C.~Rota, T.~C.~Wallstrom,
{\it{Stochastic Integrals: A Combinatorial Approach}},
Ann.~Probab.~{\bf{25}} (1997), 1257.

\bibitem{Schmitt:1994}
W.~R.~Schmitt,
\newblock {\em Incidence {H}opf algebras},
J. Pure Appl.~Algebra {\bf 96} (1994), 299.

\bibitem{Simion}
R.~Simion,
{\emph{Noncrossing partitions}},
Discrete Math.~{\bf{217}} (2000), 367.

\bibitem{Speed} 
T.~P.~Speed, 
{\emph{Cumulants and Partition Lattices}}, 
Austral.~J.~Statist.~{\bf 25} (1983), 378.

\bibitem{Speicher94} 
R.~Speicher,
{\emph{Multiplicative functions on the lattice of non-crossing partitions and free convolution}},
Math.~Ann.~{\bf{298}} (1994),  611.

\bibitem{Speicher} 
R.~Speicher,
{\emph{Combinatorial theory of the free product with amalgamation and operator-valued free probability theory}},
Memoirs of the AMS {\bf{627}} (1998).
        
\bibitem{DVoi} 
D.~Voiculescu, 
{\it{Free Probability Theory: Random Matrices and von Neumann Algebras}}, 
Proceedings of the International Congress of Mathematicians, Z\"urich, 
Switzerland 1994. Birkh\"auser Verlag, Basel, Switzerland, 1995.

\end{thebibliography}
\end{document}